\numberwithin{equation}{section} %number equations by sections
\theoremstyle{plain}
\newtheorem{theorem}{Theorem}[section]
\newtheorem{corollary}[theorem]{Corollary}
\newtheorem{lemma}[theorem]{Lemma}
\newtheorem{proposition}[theorem]{Proposition}
\newtheorem{definition}[theorem]{Definition}
\theoremstyle{definition}
\newtheorem{remark}[theorem]{Remark}
\renewcommand{\i}{i\mkern1mu}
\newcommand{\N}{\mathbb{N}}
\newcommand{\C}{\mathbb{C}}
\newcommand{\R}{\mathbb{R}}
\newcommand{\Z}{\mathbb{Z}}
\newcommand{\tr}{\operatorname{tr}}
\newcommand{\Tr}{\operatorname{Tr}}
\newcommand{\End}{\mathrm{End}}
\renewcommand{\Im}{\mathfrak{I}}
\newcommand{\Cl}{\mathrm{Cl}}
\renewcommand{\Re}{\operatorname{Re}}
\renewcommand{\Im}{\operatorname{Im}}
\newcommand{\coker}{\operatorname{coker}}
\renewcommand{\epsilon}{\varepsilon}
\newcommand{\inner}[1]{\langle#1\rangle}    %inner product
\newcommand{\norm}[1]{\lVert#1\rVert}       %norm
\newcommand{\abs}[1]{\lvert#1\rvert}       %absolute value
\renewcommand\d {\,\mathrm{d}}              %differential operator
\newcommand{\spec}{\operatorname{spec}}  %谱
\newcommand{\ind}{\mathrm{ind}}
\newcommand{\vertiii}[1]{{\left\vert\kern-0.25ex\left\vert\kern-0.25ex\left\vert #1 
    \right\vert\kern-0.25ex\right\vert\kern-0.25ex\right\vert}}  %|||~||| norm
\title{Adiabatic Limit and Analytic Torsion of Vector Bundles}
\author{Xianzhe Dai and Debin Liu}
\begin{document}
\maketitle

%\nocite{*} %show all references

\begin{abstract}
    For a vector bundle $E^{n+k}$ over a closed manifold $M^n$ with $k$ even and $n$ odd, we equip the metric with an adiabatic parameter, and prove that the index of $E$ is the same as the index of $M$. We also introduce an analog of analytic torsion on $E$ using the Witten Laplacian. Moreover, we prove that the Quillen metric associated with this analytic torsion coincides with that of $M$.
\end{abstract}

\tableofcontents

\section{Introduction}
The adiabatic limit refers to the geometric degeneration in which the metric is blown up along certain directions. This blow-up phenomenon usually reveals important information about the metric that cannot be seen using regular methods. In particular, it is useful in the study of geometric invariants. Historically, the study of adiabatic limit was initiated by E. Witten in \cite{Witten85}, where he related the adiabatic limit of the $\eta$-invariant to the "global anomaly". 

Another important geometric invariant is the analytic torsion, which is an analytic analog of the R-torsion in topology. It was defined by D. B. Ray and I. M. Singer \cite{RS71} as a combination of the determinants of Hodge Laplacians on differential forms. They also conjectured that the topological R-torsion coincides with the analytic torsion. This conjecture was verified independently by J. Cheeger \cite{Cheeger77},\cite{Cheeger79} and W. M{\" u}ller \cite{Muller78} and this result is now known as Cheeger-M{\" u}ller theorem. To be more precise, let $(M, g)$ be a closed Riemannian manifold. Using supertrace, one can define the torsion zeta function to be
\begin{equation} \label{equa:zeta_func}
    \zeta_T(s) = \frac{1}{\Gamma(s)} \int_0^\infty t^{s-1} \tr_s \left( Ne^{-t \underline{\Delta}}\right) \d t,
\end{equation}
where $\Gamma(s)$ is the Gamma function, $N$ is the number operator (see section \ref{section:prel}), and $\underline{\Delta}$ is the Hodge Laplacian with its null space removed. Intuitively, $\zeta_T$ defines a holomorphic function only when $\mathrm{Re} (s)$ is sufficiently large. But it can be proved, by a careful study of the asymptotic expansion of the heat kernel, that $\zeta_T(s)$ has an analytic continuation as a meromorphic function which is regular near $s=0$. The Ray-Singer analytic torsion is then defined as follows.
\begin{definition}
    The analytic torsion $T(M)$ of $(M, g)$ is defined by
    \begin{equation} \label{equa:analytic_torsion}
        \log T(M) = -\frac{1}{2} \zeta'_T(0).
    \end{equation}
\end{definition}

The adiabatic limit of analytic torsion was studied by R. B. Melrose and the first author in \cite{DM}, where they considered a compact fibration $Y \hookrightarrow E \xrightarrow{\pi} M$ over a closed manifold $M$. Given a submersion metric $g = \pi^* g_M + g_Y$ on $E$, the adiabatic limit refers to the limit as $\epsilon \to 0$ of 
\begin{equation}
    g_\epsilon = \frac{1}{\epsilon^2} \pi^*g_M + g_Y.
\end{equation}
Employing the technique of  microlocal analysis, the adiabatic limit of the analytic torsion of the total space is related to those of the base and the fiber, as well as certain topological invariants of the fibration.

The main purpose of this paper is to study the behavior of the adiabatic limit of the index and analytic torsion of the total space of a vector bundle. The challenge of this problem comes from the noncompactness of the fibers. As a result, the spectrum of some important geometric operators, such as Hodge Laplacian, is usually not discrete, causing many technical difficulties. To overcome them, we introduce a Morse-Bott function $h$ on $E$ which satisfies the strong tame condition in the sense of X. Dai and J. Yan \cite{DY22}. Then the Witten Laplacian $D^2_{\epsilon, \tau h}$ of $g_\epsilon$ possesses a spectrum which is discrete, and the analytic torsion for the total space of a vector bundle can be defined in the same way as in the compact case but using the asymptotic expansion developed in \cite{DY22}.

More precisely, let $Y \hookrightarrow E \xrightarrow{\pi} M$ be a vector bundle over a closed manifold $M$. Denote by $D^2_{\epsilon, \tau h}$ the Witten Laplacian associated with $g_\epsilon$ and $h$ (see section \ref{section:asymtotic_expansion} for details). Let $H^*_{(2)}(E, d_{\tau h})$ denote the $L^2$-cohomology of $E$ with respect to the Witten deformed exterior differentiation. Following the ideas of \cite{DY22} we first prove
%Intuitively, since each fiber is a flat Euclidean space, invariants of E should be essentially the
%same as the corresponding invariants of the base manifold. First, we checked this intuition for primary invariants. In particular, by carefully studying the heat operator under adiabatic limit, we equate the index of the total space with the index of the base manifold.

\begin{theorem} \label{thm:ind_equal}
    Suppose $E$ is a vector bundle over a compact manifold $M$. Then 
     \begin{equation}
       H^*_{(2)}(E, d_{\tau h}) \simeq H^*(M).
    \end{equation}
In particular, 
    \begin{equation}
        \ind(E) = \ind(M).
    \end{equation}
\end{theorem}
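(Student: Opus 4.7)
The plan is to run a Witten deformation argument in the Morse-Bott setting, following the framework of \cite{DY22}. The strategy hinges on choosing a Morse-Bott function $h$ on the total space $E$ whose critical locus is exactly the zero section $M \hookrightarrow E$ and whose normal Hessian is positive definite. A natural candidate is the fiber-norm-squared function $h(v) = \tfrac{1}{2}|v|_{g_Y}^2$; one first needs to verify that this $h$ satisfies the strong tame hypothesis of \cite{DY22}, which amounts to standard growth estimates on $h$ and its derivatives at fiber infinity, and is straightforward given the quadratic behavior of $h$ in the fibers together with the compactness of $M$.

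With $h$ in hand, the key input from \cite{DY22} is the existence of a uniform spectral gap for $D^2_{\epsilon, \tau h}$ as $\tau \to \infty$: the spectrum decomposes into a finite set of exponentially small eigenvalues and a complement bounded below by a positive constant independent of $\tau$. The small-eigenvalue subspace $F^*_\tau \subset L^2\Omega^*(E)$ is preserved by $d_{\tau h}$, and one checks that the inclusion $(F^*_\tau, d_{\tau h}) \hookrightarrow (L^2\Omega^*(E), d_{\tau h})$ induces an isomorphism on cohomology, yielding $H^*(F^*_\tau, d_{\tau h}) \simeq H^*_{(2)}(E, d_{\tau h})$.

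Next, I would apply the Morse-Bott localization principle: the small complex $(F^*_\tau, d_{\tau h})$ is quasi-isomorphic to the de Rham complex of the critical submanifold, twisted by the orientation line of the negative normal bundle. For our choice of $h$, the negative normal bundle vanishes, so no twist appears and one obtains $H^*(F^*_\tau, d_{\tau h}) \simeq H^*(M)$, proving the cohomology isomorphism. The index equality $\ind(E) = \ind(M)$ follows immediately by taking alternating sums of dimensions, or equivalently via the McKean-Singer formula applied to $\tr_s(e^{-t D^2_{\epsilon, \tau h}})$ in the $t \to \infty$ limit.

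I expect the main obstacle to be confirming that the spectral and localization estimates of \cite{DY22} hold for the rescaled metric family $g_\epsilon$ rather than a single fixed Riemannian metric, and in particular controlling the $L^2$ condition in the presence of the adiabatic rescaling. Compactness of $M$ together with the strong tame condition on $h$ should ultimately resolve this, but the estimates will need to be revisited with the $\epsilon$-dependence made explicit. A secondary technical point is the concrete identification of the small complex with $(\Omega^*(M), d_M)$, which should arise from the explicit Gaussian ground state of the fiber-wise harmonic oscillator in degree zero and the absence of low-lying modes in positive fiber degrees.
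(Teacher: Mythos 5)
Your strategy is in the same broad tradition of Witten deformation as the paper, but takes a genuinely different route, and there is one gap worth flagging.

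For the cohomology isomorphism, the paper does not run a $\tau\to\infty$ Morse--Bott localization. It instead invokes \cite[Theorem~4.1]{DY23} (see equation~(\ref{eq:cohom_isom})) to conclude that for the strongly tame potential $h$ the Witten-deformed $L^2$-cohomology agrees with the absolute de~Rham cohomology, $H^*_{(2)}(E,d_{\tau h})\cong H^*_{dR}(E)$, at any fixed $\tau>0$; homotopy invariance then gives $H^*_{dR}(E)\cong H^*(M)$. Your proposal localizes directly onto the critical manifold by a $\tau\to\infty$ Helffer--Sj\"ostrand--type analysis, and you correctly observe that the negative normal bundle of the zero section vanishes so no orientation twist appears. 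That is a valid route, but it establishes the isomorphism only for $\tau$ sufficiently large. The theorem as stated, and as used downstream (Theorems~\ref{thm:main1_anal_tor} and~\ref{thm:main2_limit} are asserted for all $\tau\in(0,\infty)$, and the Quillen-metric identification needs it), requires the isomorphism for every $\tau>0$. On a noncompact total space the complexes $(L^2\Omega^*(E),d_{\tau h})$ are genuinely different for different $\tau$, and the Betti numbers $\dim\ker(D^2_{\tau h}|_{\Lambda^p})$ are not automatically $\tau$-independent; supplying exactly this is what the appeal to \cite{DY23} is for, and your argument would need to add it.

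For the index equality, your observation that it follows immediately from the cohomology isomorphism by alternating sums (or by $t\to\infty$ in McKean--Singer) is correct and more economical than the paper's proof. The paper instead derives $\ind(E)=\ind(M)$ via the adiabatic limit $\epsilon\to 0$: it establishes $\lim_{\epsilon\to 0}\tr_s\big(e^{-\frac{t}{\epsilon^2}D^2_{\epsilon,\tau h}}\big)=\tr_s\big(e^{-tD_0^2}\big)$ (Theorem~\ref{thm:tr_s_limit}), where $D_0=p\tilde D_M p$ acts on $L^2(\ker\tilde D_Y)$, and then writes $D_0$ as the Gauss--Bonnet operator of $M$ twisted by the trivial line $\ker(\tilde D_Y|_{\Lambda^*Y})$, so $\ind D_0^+=\ind(M)$. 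This detour is not wasted: the resolvent decomposition and Theorem~\ref{thm:tr_s_limit} are essential ingredients for the analytic-torsion comparison in Theorem~\ref{thm:main2_limit}, so the paper is deliberately proving more than the index statement alone requires.

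Finally, your closing worry about $\epsilon$-dependence is misplaced for this particular theorem. For each fixed $\epsilon>0$, $g_\epsilon=\tfrac{1}{\epsilon^2}g_M+g_Y$ is quasi-isometric to $g_1$, so the spaces $L^2(\Lambda^*E,g_\epsilon)$ coincide as topological vector spaces and $H^*_{(2)}(E,d_{\tau h})$ is the same for all $\epsilon$. The $\epsilon$-dependence matters for the finer spectral and torsion quantities treated later, not for the cohomology isomorphism itself.
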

%By a careful study of the heat operator under adiabatic limit, we equate the index of the total space with the index of the base manifold.

Next, we extend the analytic torsion to the vector bundle case, and study its relationship with the usual analytic torsion of the base manifold.
As in (\ref{equa:zeta_func}) and (\ref{equa:analytic_torsion}), one can formally define the torsion zeta function $\zeta_{\epsilon, \tau h}(s)$ and the analytic torsion $T_{\epsilon, \tau h}(E)$ by
\begin{equation} \label{equa:zeta_func_noncompact}
    \zeta_{\epsilon, \tau h}(s) = \frac{1}{\Gamma(s)} \int_0^\infty t^{s-1} \tr_s \left( Ne^{-t \underline{D}^2_{\epsilon, \tau h}}\right) \d t,
\end{equation}

and 

\begin{equation}\label{equa:analytic_torsion}
    \log T_{\epsilon, \tau h}(E) = -\frac{1}{2} \zeta'_{\epsilon, \tau h}(0).
\end{equation}

First, using the asymptotic expansions given in \cite{DY22}, we show that these definitions are indeed valid.
\begin{theorem} \label{thm:main1_anal_tor}
    For any $\epsilon > 0$, $\zeta_{\epsilon, \tau h}(s)$ admits an analytic continuation as a meromorphic function on $\C$, and it is analytic near $0$. Consequently, $\log T_{\epsilon, \tau h} (E)$ is well-defined.
\end{theorem}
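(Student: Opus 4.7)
The plan is to parallel the classical Ray-Singer argument while substituting, at the crucial steps, the short-time asymptotic expansion of the Witten heat kernel developed in \cite{DY22}. The two inputs needed are a good small-time expansion of $\tr_s(Ne^{-t\underline{D}^2_{\epsilon,\tau h}})$ and exponential decay of the same quantity as $t\to\infty$; the remaining steps reduce to standard Mellin transform manipulation.

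Concretely, I would split
\begin{equation*}
\Gamma(s)\,\zeta_{\epsilon,\tau h}(s) = \int_0^1 t^{s-1}\tr_s\bigl(Ne^{-t\underline{D}^2_{\epsilon,\tau h}}\bigr)\,\d t + \int_1^\infty t^{s-1}\tr_s\bigl(Ne^{-t\underline{D}^2_{\epsilon,\tau h}}\bigr)\,\d t.
\end{equation*}
For the long-time piece, the strong tame condition on the Morse-Bott function $h$ forces $D^2_{\epsilon,\tau h}$ to have purely discrete spectrum with finite-dimensional kernel, the latter identified with $H^*(M)$ via Theorem \ref{thm:ind_equal}. Removing the null space in $\underline{D}^2_{\epsilon,\tau h}$ therefore produces a strictly positive spectral gap, so $\tr_s(Ne^{-t\underline{D}^2_{\epsilon,\tau h}})$ decays exponentially as $t\to\infty$ and the second integral defines an entire function of $s$.

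For the short-time piece, I would invoke the small-$t$ asymptotic expansion from \cite{DY22}, which in the present setting yields
\begin{equation*}
\tr_s\bigl(Ne^{-t\underline{D}^2_{\epsilon,\tau h}}\bigr) = \sum_{i=0}^{I} a_i(\epsilon,\tau)\,t^{\alpha_i} + O\bigl(t^{\alpha_{I+1}}\bigr) \qquad \text{as } t\to 0^+,
\end{equation*}
with exponents $\alpha_i\uparrow+\infty$. Inserting this into $\int_0^1 t^{s-1}(\cdots)\,\d t$ produces, term by term, meromorphic summands $a_i/(s+\alpha_i)$ plus a remainder that is holomorphic in a half-plane whose lower boundary retreats to $-\infty$ as $I\to\infty$. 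This extends $\Gamma(s)\,\zeta_{\epsilon,\tau h}(s)$ to a meromorphic function on $\C$ with at worst simple poles at $s=-\alpha_i$. Multiplying by $1/\Gamma(s)$, which is entire and vanishes to first order at each $s\in\{0,-1,-2,\ldots\}$, removes any pole originating from a non-positive integer exponent. In particular, the potential simple pole at $s=0$ is canceled, so $\zeta_{\epsilon,\tau h}(s)$ is holomorphic near the origin and $\log T_{\epsilon,\tau h}(E)=-\tfrac12\zeta'_{\epsilon,\tau h}(0)$ is well defined.

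The main obstacle will be verifying that the expansion from \cite{DY22}, which is established for heat traces of the form $\tr_s(e^{-t\underline{D}^2_{\epsilon,\tau h}})$, continues to apply after the insertion of the number operator $N$, and that the exponents $\alpha_i$ and the uniformity of the remainder estimates are compatible with the $1/\Gamma(s)$ cancellation at $s=0$. Since $N$ acts as a grading operator which multiplies the $p$-form component of the heat trace by $p$, it should enter the expansion coefficient by coefficient without worsening either the orders or the remainders; making this rigorous, together with keeping track of how the non-compactness of the fibers interacts with the localization estimates of \cite{DY22}, is the technical heart of the argument. Once that compatibility is established, the Mellin transform argument proceeds exactly as in the Ray-Singer compact case.
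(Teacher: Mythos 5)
Your proposal follows essentially the same route as the paper's proof of Theorem~\ref{thm:main1_anal_tor}: split the Mellin integral at $t=1$, use the spectral gap coming from the strong tame condition (which makes the spectrum of $D^2_{\epsilon,\tau h}$ discrete, hence gives exponential decay of the large-time piece once the kernel is removed), and feed the small-time asymptotic expansion of $\tr_s(Ne^{-tD^2_{\epsilon,\tau h}})$ derived from \cite{DY22} into the $[0,1]$ piece so that the simple zero of $1/\Gamma(s)$ at $s=0$ kills the resulting simple pole there. The compatibility concern you flag --- that inserting the number operator $N$ into the \cite{DY22} expansion preserves the orders and remainder estimates --- is exactly what the paper establishes in Theorem~\ref{thm:tr_s_asymp_expan} and Remark~\ref{rmk:simple_asymp_expan}, so your plan lines up with the paper's argument step for step.
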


We further assume that $\dim M = n$ is odd, and $\dim Y = k$ is even. When $\epsilon = 1$, we will write $\log T_{\tau h}(E)$ for $\log T_{\epsilon, \tau h}(E)$. The main result of our paper is 
\begin{theorem} \label{thm:main2_limit}
    Suppose $M^n$ is a closed manifold, and $E^{n+k} \to M^n$ is a vector bundle of rank $k$. Assume $n$ is odd and $k$ is even. Then for any $\tau \in (0, \infty)$, 
    \begin{equation}
        \log T_{\tau h}(E) - \log \left(\frac{\leftindex_\infty{\abs{~}}}{\abs{~}}\right) = \log T(M).
    \end{equation}
\end{theorem}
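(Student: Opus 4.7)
The strategy is to interpret the identity as a comparison of two Quillen metrics on the canonically isomorphic determinant lines $\det H^*_{(2)}(E, d_{\tau h}) \cong \det H^*(M)$, where the isomorphism is supplied by Theorem \ref{thm:ind_equal}. Let $\norm{\cdot}_{Q,\epsilon,\tau} := T_{\epsilon,\tau h}(E)\cdot\norm{\cdot}_{L^2,\epsilon,\tau}$ denote the Quillen metric on $\det H^*_{(2)}(E,d_{\tau h})$ induced by $g_\epsilon$ and the Witten deformation, and $\norm{\cdot}_{Q,M} := T(M)\cdot\norm{\cdot}_{L^2,M}$ the corresponding Quillen metric on $\det H^*(M)$. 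Since both are metrics on the same one-dimensional line, the identity is equivalent to the assertion that, at $\epsilon = 1$, the ratio $\norm{\cdot}_{Q,1,\tau}/\norm{\cdot}_{Q,M}$ equals the Gaussian fiber factor $\leftindex_\infty{\abs{~}}/\abs{~}$.

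The first step is to establish a Bismut-Freed style variation formula for $\log \norm{\cdot}_{Q,\epsilon,\tau}^2$ as $\epsilon$ ranges in $(0,\infty)$. For the adiabatic rescaling $g_\epsilon = \epsilon^{-2}\pi^* g_M + g_Y$, the horizontal and vertical parts of the Levi-Civita connection decouple and the anomaly integrand splits into a base piece and a fiberwise Gaussian piece. The parity hypothesis ($n$ odd, $k$ even) ensures that the leading local anomaly integrated over the base vanishes, while the fiber contribution is precisely the change of the $L^2$ metric on the Witten-harmonic representatives. Consequently $\norm{\cdot}_{Q,\epsilon,\tau}$ is $\epsilon$-invariant modulo the Gaussian normalization, which reduces the identity at $\epsilon = 1$ to an explicit computation in the adiabatic limit.

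The second step is this limit computation. Here the uniform heat-kernel asymptotic expansion from \cite{DY22} shows that the spectrum of $D^2_{\epsilon,\tau h}$ separates into two scales as $\epsilon$ tends to the appropriate extreme: a finite set of small eigenvalues whose eigenforms, under the isomorphism of Theorem \ref{thm:ind_equal}, converge to harmonic forms on $(M,g_M)$, and a high-frequency fiberwise Witten harmonic-oscillator spectrum whose eigenforms are Gaussian ground states or excited oscillator states times pullbacks from $M$. The supertrace $\tr_s(Ne^{-tD^2_{\epsilon,\tau h}})$ factors accordingly, so $\zeta_{\epsilon,\tau h}(s)$ splits as the base torsion zeta function plus an oscillator contribution; evaluating derivatives at $s = 0$ yields $-2\log T(M)$ from the base and $-2\log(\leftindex_\infty{\abs{~}}/\abs{~})$ from the fiber, which is the desired identity.

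The principal obstacle is to make the spectral separation and the interchange of the adiabatic limit with the analytic continuation of $\zeta_{\epsilon,\tau h}(s)$ to $s = 0$ fully rigorous. Because the fibers of $E$ are noncompact, uniform control of $\tr_s(Ne^{-tD^2_{\epsilon,\tau h}})$ for both small and large $t$ requires the strong tame condition on $h$ together with the uniform short- and long-time asymptotic expansions developed in \cite{DY22}, as well as a uniform spectral gap separating the small and oscillator eigenvalues. Once these uniform estimates are in place, combining the $\epsilon$-invariance from the first step with the explicit computation in the limit delivers the identity at $\epsilon = 1$ for every $\tau \in (0,\infty)$.
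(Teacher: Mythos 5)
Your high-level reading of the theorem as a comparison of Quillen metrics on $\det H^*_{(2)}(E,d_{\tau h}) \cong \det H^*(M)$ is correct and matches the paper's framing, and your qualitative picture of the spectral separation — small eigenvalues converging to harmonic forms on $M$ versus a fiberwise oscillator spectrum — is the right intuition. However, the proposal has a genuine gap that the paper flags explicitly and spends most of Sections 4--8 resolving.

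The gap is in your second step. You assert that $\tr_s(Ne^{-tD^2_{\epsilon,\tau h}})$ "factors accordingly, so $\zeta_{\epsilon,\tau h}(s)$ splits as the base torsion zeta function plus an oscillator contribution; evaluating derivatives at $s=0$ yields" the two pieces. This hides exactly the problem: the zeta function is an integral over $t \in (0,\infty)$, and the adiabatic limit $\epsilon\to 0$ does not commute with the large-time limit $t\to\infty$ needed to evaluate $\zeta'(0)$, because there is no uniform-in-$\epsilon$ control on the full supertrace for all $t$. You acknowledge that uniform estimates are needed but do not say how to obtain them; simply invoking the short-time expansion of \cite{DY22} and a spectral gap is insufficient, since the difficulty lies in the cross-regime $t \sim 1/\epsilon$. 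The paper's device for this is not a variation-of-Quillen-metric argument at all: it constructs the closed $1$-form $\alpha_{t,T}$ on $\R_+^*\times\R_+$ (Proposition \ref{prop:alpha}), integrates it around a rectangular contour $\Gamma_1\cup\Gamma_2\cup\Gamma_3\cup\Gamma_4$, pushes three sides to infinity or zero, and matches the divergent terms (the $\log A$ and $\sigma^{-1}$ singularities cancel via Theorem \ref{thm:a=b}). Without a replacement for this mechanism, the claimed splitting of $\zeta_{\epsilon,\tau h}(s)$ is not justified.

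Your first step also asserts a Bismut-Freed anomaly formula showing the Quillen metric is $\epsilon$-invariant "modulo the Gaussian normalization." This is not established anywhere in the paper and is nontrivial: the local anomaly integrand for the Witten-deformed Laplacian on a noncompact total space is not the standard one, and the parity condition $n$ odd, $k$ even eliminates constant-in-$t$ terms but does not by itself make the anomaly vanish pointwise. The paper avoids proving any such invariance; the identity emerges only after the contour cancellation. As written, your plan front-loads a claim that would itself require a proof comparable in difficulty to the theorem, and omits the specific technical inputs — Theorems \ref{thm:interm_result_1}, \ref{thm:interm_result_L^2_norm}, \ref{thm:a=b}, \ref{thm:interm_result_I_4_1}--\ref{thm:interm_result_I_4_3} — that the contour argument is built from.
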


By Theorem \ref{thm:ind_equal}, the determinant line bundle of $E$ can be identified with that of $M$. We can equip each determinant line bundle with a Quillen metric $\norm{~}$, which is the product of the standard $L^2$ metric and the analytic torsion. In this way, Theorem \ref{thm:main2_limit} can be interpreted as $\norm{~}_{E, \tau h} = \norm{~}_M$.

Intuitively, analyzing the behavior of the analytic torsion under the adiabatic limit may link $\log T_{\epsilon, \tau h} (E)$ to $\log T(M)$. Unfortunately, there are two main issues. First, since the torsion zeta function is an integral from $0$ to infinity, when the time $t$ is large, the supertrace behaves by nature differently than when $t$ is small. This forces us to deal with large and small time separately. If $t$ is large, we decompose $\frac{1}{\epsilon}D_{\epsilon, \tau h}$ under the splitting $L^2(\Lambda^* E) = L^2(\ker \tilde{D}_Y)^\perp \oplus L^2(\ker \tilde{D}_Y)$, where $\tilde{D}_Y$ is the Witten-Dirac operator on the fiber $Y$. Then a careful study of the resolvent yields the desired estimate. On the other hand, local index theory plays an important role in the small time case. Motivated by the rescaling technique of Getzler \cite{G86}, we introduce several different rescalings that fit into various situations, and use them to derive the corresponding local index theorems as well as some small time estimates.

Second, due to the lack of a uniform estimate, the two limits $\lim_{\epsilon \to 0}$ and $\lim_{t \to \infty}$ may not commute. To solve this issue, we follow the idea of A. Berthomieu and J.-M. Bismut \cite{BB1994} (see also \cite{BL}) to construct a closed $1$-form $\alpha_{t, T}$ on $\R_+ \times \R_+$ and integrate it over a closed rectangular contour. We analyze the asymptotic behavior of the integral of $\alpha_{t, T}$ over each side as we push two sides to infinity and one side to the $x$-axis. Then Theorem \ref{thm:main2_limit} follows from matching the divergent terms that appear in the above limiting process.

This paper is organized as follows: Section \ref{section:prel} collects results about fiber bundle, Clifford multiplication, and functional calculus. In section \ref{section:asymtotic_expansion}, we introduce a Witten Laplacian with a particular Morse-Bott function as the potential function, and calculate the asymptotic expansion of the supertrace. Then we give a proof of Theorem \ref{thm:main1_anal_tor}. In section \ref{section:inter_result} we collect some technical results. Some of the proofs are postponed to section \ref{sect:large_time}-\ref{sect:proof_last_thm}. In section \ref{section:proof_main}, we use results in the previous section to prove Theorem \ref{thm:main1_anal_tor}.

In section \ref{sect:large_time}, we study the large time adiabatic limit of the heat operator for both the compact fiber case and the noncompact fiber case. We also prove Theorem \ref{thm:ind_equal}. In section \ref{sect:local_ind_thm}, we prove several local index theory type results, and use them to obtain small time estimates. In section \ref{sect:proof_last_thm}, we study the uniform behavior of the heat kernel carefully, and provide a uniform estimate for the supertrace when the time is large.

\section*{Acknowledgments}
The first author was partially supported by the Simons Foundation. % would like to thank

\section{Preliminaries}\label{section:prel}

\subsection{Elementary geometry of fiber bundles}

In this section, we collect some preliminaries about fiber bundles and give a brief review of some constructions in \cite{B1_family} and \cite{BC}.

Let $Y^k \hookrightarrow E^{n+k} \xrightarrow{\pi} M^n$ be a fiber bundle over a compact manifold $M$, with (possibly noncompact) fiber $Y$. Then there is a naturally defined subbundle $VE \subset TE$, called the vertical subbundle, given by 
\[ VE = \ker \pi_* = \{ X \in \Gamma(TE) \mid \pi_*(X) = 0 \}. \]
For any $x \in M$, we denote by $E_x = \pi^{-1}(x)$ the fiber of $E$ at $x$. Then for any $p \in E_x$, $V_p E$ consists of vectors tangent to the fiber $E_x$ at $p$, i.e. $V_pE \cong T_p E_x$. 

Now we fix a connection $\nabla$ on the vector bundle $E$. Recall that this amounts to a splitting of its tangent bundle:
\[ TE = HE \oplus VE, \]
where $HE$ is called the horizontal subbundle of the connection.
Let $g_M$ be a smooth metric on $M$. For convenience, we still denote by $g_M = \pi^* g_M$ the pullback metric on $HE$. Suppose $g_Y$ is a smooth family of flat metrics on the fibers, or equivalently a metric on the vector bundle $VE$.
Then we equip $TE$ with a metric
\begin{equation} \label{equa:metric}
    g_E = g_M + g_Y
\end{equation}
 by requiring that $g_E$ preserves the splitting. As a result, the splitting $TE = HE \oplus VE$ is orthogonal with respect to $g_E$.

Let $P^H$, $P^V$ denote the projections onto $HE$, $VE$ respectively. Throughout the paper, we will use $X \in \Gamma(TE)$ to denote a tangent vector on $E$, with $X^H \in \Gamma(HE)$ and $X^V \in \Gamma(VE)$ being its horizontal and vertical components. Also, we denote by $U\in \Gamma(TM)$ a tangent vector on the base manifold $M$ as well as its horizontal lift to $E$. $W \in \Gamma(VE)$ will denote a vertical vector field.

Now we introduce a natural connection that faithfully reflects the splitting structure of the tangent bundle (see  \cite[section 1(b)]{B1_family}, \cite[section 4(a)]{BC}). Let $\nabla^{L}$ denote the Levi-Civita connection on $TE$ with respect to $g_E$.

\begin{definition} \label{def:conn}
    The connection $\nabla^E$ on $TE$ is defined by 
    \begin{equation} \label{equa:conn}
        \begin{split}
            &\nabla^E_{U_1} U_2 = \nabla^M_{U_1} U_2, ~~~ \nabla^E_U W = P^V(\nabla^L_U W) \\
            &\nabla^E_W U = 0,  ~~~~~~~~~~~~ \nabla^E_{W_1}W_2 = P^V(\nabla^L_{W_1} W_2),
        \end{split}
    \end{equation}
    where $U_i, U \in \Gamma(TM)$, $W_i, W \in \Gamma(VE)$.
    Let $S$ be the difference tensor $S = \nabla^L - \nabla^E$.
\end{definition}

The following proposition summarizes some properties of this connection.

\begin{proposition} \label{prop:S}
    Let $U_i, U \in \Gamma(TM)$, $W_i, W \in \Gamma(VE)$. Then the only nonvanishing components of $S$ are 
    \begin{equation}
        \inner{S(W_1)W_2, U} = - \inner{S(W_1)U, W_2} = - \inner{\nabla^L_{W_1}U, W_2}
    \end{equation} 
    and
    \begin{equation}
        \begin{split}
            \inner{S(U_1)W,  U_2} = - \inner{S(U_1)U_2, W} = \inner{S(W)U_1, U_2} = -\frac{1}{2} \inner{[U_1, U_2], W}.
        \end{split}
    \end{equation}
    Moreover, we have 
    \begin{enumerate}
        \item $S(U_1) U_2 \in \Gamma(VE)$,  $S(\cdot)W \in \Gamma(HE)$.
        \item $S(W)U \in \Gamma(HE) \oplus \Gamma(VE)$.
    \end{enumerate}
\end{proposition}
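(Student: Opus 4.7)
The plan is to compute $S(X)Y = \nabla^L_X Y - \nabla^E_X Y$ separately in each of the four type combinations with $X, Y$ either a horizontal lift of a base vector field or a vertical vector field, and in each case to project the result onto $HE$ and $VE$ individually. Since the definition (\ref{equa:conn}) of $\nabla^E$ is arranged so that, in each of these cases, at least one projection of $\nabla^E_X Y$ already coincides with the same projection of $\nabla^L_X Y$, the corresponding projection of $S(X)Y$ vanishes for free, and only the other projection, extracted from $\langle \nabla^L_X Y, Z\rangle$ via Koszul's formula, actually requires computation.

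Three elementary inputs will do the work. First, $[U, W]$ is vertical whenever $U$ is a horizontal lift and $W$ is vertical, since $\pi_*[U, W] = [\pi_* U, 0] = 0$. Second, $\langle U_1, U_2\rangle$ is a pullback from $M$ and $\langle U, W\rangle \equiv 0$, so each of $W\langle U_1, U_2\rangle$, $U\langle U', W\rangle$, and $W_1\langle W_2, U\rangle$ vanishes. Third, metric compatibility of $\nabla^L$ combined with these vanishings produces the sign-switching identities $\langle \nabla^L_{W_1}W_2, U\rangle = -\langle W_2, \nabla^L_{W_1}U\rangle$ and $\langle \nabla^L_U W, U'\rangle = -\langle W, \nabla^L_U U'\rangle$ that appear in the statement of the proposition.

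Case by case this becomes routine. For $(U_1, U_2)$: Koszul on three horizontal lifts reduces to Koszul on $(M, g_M)$ because of the above inputs together with the fact that the horizontal part of $[U_1, U_2]$ is the horizontal lift of $[\pi_*U_1, \pi_*U_2]$; hence $P^H(\nabla^L_{U_1}U_2) = \nabla^M_{U_1}U_2$, so $S(U_1)U_2 \in \Gamma(VE)$, and testing against $W$ gives $\langle \nabla^L_{U_1}U_2, W\rangle = \tfrac{1}{2}\langle [U_1, U_2], W\rangle$. For $(U, W)$: $S(U)W = P^H(\nabla^L_U W) \in \Gamma(HE)$ by the definition of $\nabla^E_U W$, and pairing with $U'$ reduces via metric compatibility to $-\tfrac{1}{2}\langle [U, U'], W\rangle$. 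For $(W, U)$: $S(W)U = \nabla^L_W U$, and the torsion-free identity $\nabla^L_W U = \nabla^L_U W + [W, U]$ together with verticality of $[W, U]$ yields the horizontal pairing, while pairing with $W'$ via metric compatibility yields the vertical pairing in the form $-\langle \nabla^L_{W_1}U, W_2\rangle$. Finally for $(W_1, W_2)$: $S(W_1)W_2 = P^H(\nabla^L_{W_1}W_2)$ directly, and pairing with $U$ gives $-\langle \nabla^L_{W_1}U, W_2\rangle$. Conclusions (1) and (2) then read off the resulting projections.

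No serious obstacle is expected: each case is mechanical once Koszul's formula and the three inputs above are in place. The only delicate point is bookkeeping, since the several equalities stated in the proposition are different expressions of the same quantities obtained by repeatedly applying metric compatibility or by swapping $S$ and $\nabla^L$ on terms whose $\nabla^E$-correction is already known to vanish.
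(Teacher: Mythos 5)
Your proof is correct, and it follows the standard argument (the paper itself does not spell one out, deferring to \cite{B1_family} and \cite{BC}): expand $S = \nabla^L - \nabla^E$ case by case, use Koszul's formula together with the three structural facts about the submersion metric (verticality of $[U,W]$, pullback nature of $\langle U_1,U_2\rangle$, orthogonality $\langle U,W\rangle=0$), and read off which projections of $\nabla^L$ already agree with $\nabla^E$ so that the corresponding components of $S$ vanish for free. The only cosmetic remark is in the $(W,U)$ case: the sign in front of $\langle\nabla^L_{W_1}U,W_2\rangle$ belongs to the identity $\langle S(W_1)W_2,U\rangle=-\langle\nabla^L_{W_1}U,W_2\rangle$ coming from metric compatibility, while $\langle S(W_1)U,W_2\rangle=+\langle\nabla^L_{W_1}U,W_2\rangle$ directly since $\nabla^E_{W}U=0$; your sentence conflates the two sides of this chain, but the computation you describe is the right one.
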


Next, we consider an adiabatic metric $g_\epsilon = \frac{1}{\epsilon^2} g_M + g_Y$. Recall that  $\nabla^L$ denotes the Levi-Civita connection for $g_E = g_1$. Let $S^\epsilon = \nabla^{L,\epsilon} - \nabla^E$ be the difference tensor. 
\begin{proposition} \label{prop:S_epsilon}
    \begin{enumerate}
    \item $S^\epsilon = \epsilon^2 P^H S + P^V S$.
        \item $\nabla^{L,\epsilon} = \nabla^E + S^\epsilon =  \nabla^E + \epsilon^2 P^H S + P^V S$
        \item $g_E(S(\cdot)\cdot, \cdot)$ is independent of $g^M$. In particular, $g_\epsilon(S^\epsilon(X)Y, Z) = g_E(S(X)Y, Z)$ for any $X, Y, Z \in \Gamma(TE)$.
    \end{enumerate}
\end{proposition}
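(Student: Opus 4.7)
The plan is to extract everything from the component formulas in Proposition \ref{prop:S}. I will first establish (3), then deduce (1) by decomposition, with (2) an immediate reformulation.

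I begin with the independence claim. Reading off Proposition \ref{prop:S}, the nonvanishing components of $g_E(S(X)Y, Z)$ come in two types. The cases with two horizontal and one vertical argument are given by $\pm\tfrac12 \inner{[U_1,U_2], W}$, which only involves the vertical part of the bracket and the fiber metric $g_Y$. The cases with two vertical and one horizontal argument reduce to $\pm \inner{\nabla^L_{W_1} U, W_2}$; a Koszul calculation using $g_E(U, W) = 0$, $[W_1, W_2] \in VE$, and $[W_1, U] \in VE$ (because $\pi_* W_1 = 0$) collapses this to
\begin{equation*}
    2\inner{\nabla^L_{W_1} U, W_2} = U\, g_Y(W_1, W_2) + g_Y([W_1, U], W_2) - g_Y([U, W_2], W_1),
\end{equation*}
in which every $g_M$-dependent term has vanished. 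Hence $g_E(S(X)Y, Z)$ depends only on $g_Y$ and the Lie-bracket structure of $E$, which is the first assertion of (3). Applying this observation to the metric $\tfrac{1}{\epsilon^2} g_M + g_Y$ in place of $g_E$ then yields the ``in particular'' statement $g_\epsilon(S^\epsilon(X)Y, Z) = g_E(S(X)Y, Z)$.

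For (1), decompose $Z = Z^H + Z^V$ and use $g_\epsilon|_{HE} = \epsilon^{-2} g_E|_{HE}$ together with $g_\epsilon|_{VE} = g_E|_{VE}$:
\begin{align*}
    g_\epsilon(S^\epsilon(X)Y, Z^H) &= g_E(P^H S(X)Y, Z^H) = \epsilon^2 g_\epsilon(P^H S(X)Y, Z^H), \\
    g_\epsilon(S^\epsilon(X)Y, Z^V) &= g_E(P^V S(X)Y, Z^V) = g_\epsilon(P^V S(X)Y, Z^V).
\end{align*}
Nondegeneracy of $g_\epsilon$ then gives $S^\epsilon = \epsilon^2 P^H S + P^V S$, and (2) is the definition $S^\epsilon = \nabla^{L,\epsilon} - \nabla^E$ rewritten using (1).

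The main subtlety lies in the independence claim: it is not a priori obvious that the vertical projection of $\nabla^L_{W_1} U$ is $g_M$-independent, since $\nabla^L$ is built from the full metric $g_E$. The point is to choose the three-argument Koszul pairing against a vertical $W_2$, so that every term involving the horizontal metric either vanishes outright or collapses via $[W_1, U] \in VE$. Once that observation is in hand, the rest is essentially bookkeeping.
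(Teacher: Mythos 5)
The paper itself gives no proof of Proposition \ref{prop:S_epsilon} (it is quoted as standard, following \cite{B1_family}, \cite{BC}), so I am evaluating your argument on its own. The overall strategy is right and matches what the cited references do: read off the nonzero components of $g_E(S(\cdot)\cdot,\cdot)$ from Proposition \ref{prop:S}, show by Koszul that each one involves only $g_Y$ and the Lie-bracket structure, and then deduce (1) by pairing $S^\epsilon(X)Y$ against horizontal and vertical $Z$ separately, using $g_E|_{HE}=\epsilon^2 g_\epsilon|_{HE}$ and $g_E|_{VE}=g_\epsilon|_{VE}$. Your enumeration of the nonvanishing components is complete and the Koszul reduction of $2\inner{\nabla^L_{W_1}U,W_2}$ is correct.

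There is, however, one elided step in the passage from the first assertion of (3) to the ``in particular'' identity $g_\epsilon(S^\epsilon(X)Y,Z)=g_E(S(X)Y,Z)$. You ``apply the observation to $g_\epsilon$,'' which means invoking the analogue of Proposition \ref{prop:S} for the difference tensor attached to $g_\epsilon$. But $S^\epsilon$ is by definition $\nabla^{L,\epsilon}-\nabla^E$ with $\nabla^E$ the \emph{fixed} connection of Definition \ref{def:conn} (built from $\nabla^L=\nabla^{L,1}$), whereas the Koszul argument applied to $g_\epsilon$ a priori controls $\nabla^{L,\epsilon}-\nabla^{E,\epsilon}$, where $\nabla^{E,\epsilon}$ is the connection of Definition \ref{def:conn} built from $\nabla^{L,\epsilon}$. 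To transfer the conclusion you need $\nabla^{E,\epsilon}=\nabla^E$; concretely, that $P^V\nabla^{L,\epsilon}_U W = P^V\nabla^L_U W$, $P^V\nabla^{L,\epsilon}_{W_1}W_2 = P^V\nabla^L_{W_1}W_2$, and that the pullback of $\nabla^M$ is unchanged under the constant rescaling $g_M\mapsto\epsilon^{-2}g_M$. All three are true and are verified by Koszul pairings against vertical arguments of exactly the flavor you already carried out for $\nabla^L_{W_1}U$, so the fix is short; but as written, ``apply the observation to $g_\epsilon$'' silently replaces the reference connection, and that replacement is the real content of the claim. Once this is made explicit, the proof is complete.
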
 

\subsection{Clifford multiplication} \label{subsection:Clifford multiplication}
In this section we recall several facts concerning Clifford multiplications (see \cite{BZ92} and \cite{Freed_notes} for references).

We start with some algebraic preliminaries. Let $V$ be an $n$-dimensional real vector space, equipped with an inner product $g_V$. Let $\{e_1, \cdots, e_n\}$ be an orthonormal basis of $V$, and $\{e^1, \cdots, e^n\}$ be its dual basis. Then the Clifford algebra $\Cl(V^*)$ is the algebra spanned by $V^*$, subject to the relations
\begin{equation}
    e^i \cdot e^j + e^j \cdot e^i = -2 \delta_{ij}.
\end{equation}
The canonical isomorphism between $\Cl(V^*)$ and $\Lambda V^*$ is given by identifying $e^{i_1} e^{i_2} \cdots e^{i_k}$ with $e^{i_1} \wedge e^{i_2} \wedge \cdots \wedge e^{i_k}$, provided that $i_j$'s are mutually distinct. Under this isomorphism, the left Clifford multiplication by $e^i$ becomes $c(e^i) = e^i \wedge - \iota_{e_i}$. In particular, this equips $\Lambda V^*$ with a Clifford module structure $c: \Cl(V^*) \to \End(\Lambda V^*)$. 

\begin{remark}
    Suppose we rescale the metric by a factor $\epsilon^{-2}$, i.e. we consider $\epsilon^{-2} g_V$. 
    %We identify, as vector spaces, the corresponding Clifford algebra $\Cl_\epsilon(V^*)$ with $\Cl(V^*)$ via the identity map. Equivalently, we view $\Cl_\epsilon(V^*)$ as an algebra spanned by $V^*$, but now subject to the relations 
%\begin{equation} \label{equa:Cliff_mul_with_epsilon}
    %e^i \cdot_\epsilon e^j + e^j \cdot_\epsilon e^i = - 2 \epsilon \delta_{ij}.
%\end{equation}
%Under the isomorphisms among the vector spaces $\Cl_\epsilon(V^*) \cong \Cl(V^*) \cong \Lambda V^*$, the left Clifford multiplication $e^i \cdot_\epsilon$ acting on the exterior algebra is the same as $c_\epsilon(e^i) = e^i\wedge - \epsilon^2 \iota_{e_i} \in \End(\Lambda V^*)$.
Using the isometry $(V, \epsilon^{-2}g_V) \to (V,  g_V)$ induced by $\epsilon e_i := \tilde{e}_i \mapsto e_i$, we can identify $\Cl_\epsilon(V^*)$ with $\Cl(V^*)$ as algebras. In this way, we see that the natural Clifford multiplication of $\Cl_\epsilon(V^*)$ on $\Lambda V^*$ is given by 
\[ c(\tilde{e}^i) = \tilde{e}^i \wedge - \iota_{\tilde{e}_i}, \]
and it is independent of $\epsilon$. 
\end{remark}

Now suppose $Y \hookrightarrow E \xrightarrow{\pi} M$ is  a fibration equipped with the same metric $g_E$ as in (\ref{equa:metric}).
Let $\{e_1, \cdots, e_n\}$ and $\{f_1, \cdots, f_k\}$ denote local orthonormal frames for $TM$ and $TY$ respectively. The readers should notice that the notations we adopt here are opposite to most of the conventions in the literature, but are more natural in our setting. 
Let $c(e^i) = e^i \wedge - \iota_{e_i}$ and $\hat{c}(e^i) = e^i + \iota_{e_i}$ denote the left and "right" Clifford multiplications (for $f_i$ they are defined similarly). It is not hard to verify that
\begin{proposition} For convenience, we temporarily denote $f_i$ by $e_{n + i}$.
    \begin{enumerate}
        \item As a $C^\infty(E)$-algebra, $\Gamma\big(\End (\Lambda^* E)\big)$ is generated by $1$ and the $c(e^i), c(f^i)$, $ \hat{c}(e^j), \hat{c}(f^j)$'s.
        \item $c(e^i) c(e^j) + c(e^j) c(e^i) = -2 \delta_{ij}$
        \item $\hat{c}(e^i) \hat{c}(e^j) + \hat{c}(e^j) \hat{c}(e^i) = 2 \delta_{ij}$
        \item $c(e^i) \hat{c}(e^j) + \hat{c}(e^j) c(e^i) = 0 $, i.e. $c$ and $\hat{c}$ anti-commute.
    \end{enumerate}
\end{proposition}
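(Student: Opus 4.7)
The plan is straightforward: all four claims are pointwise statements about the algebra $\End(\Lambda^* V)$ on a single fiber $V = T_pM \oplus V_pE$, and they reduce to direct expansion once the definitions of $c$ and $\hat c$ are inserted. Since $\End(\Lambda^* E)$ is a locally trivial bundle of algebras and every operator in sight is tensorial, the fiberwise identities automatically upgrade to $C^\infty(E)$-algebra statements in the local orthonormal frame $\{e_i, f_j\}$.

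For the anticommutation relations (2)--(4), the approach is to substitute $c(v) = v\wedge\,\cdot - \iota_v$ and $\hat c(v) = v\wedge\,\cdot + \iota_v$ and apply the three standard bilinear identities
\[
v\wedge w\wedge + w\wedge v\wedge = 0, \qquad \iota_v \iota_w + \iota_w \iota_v = 0, \qquad v\wedge\iota_w + \iota_w(v\wedge) = g(v,w).
\]
Expanding $c(e^i)c(e^j) + c(e^j)c(e^i)$ kills the pure-wedge and pure-contraction pieces by the first two identities, and the two surviving mixed cross-terms produce $-2\delta_{ij}$. The same expansion for $\hat c$ yields $+2\delta_{ij}$ since now both factors carry a plus sign on the contraction; and in the mixed anticommutator (4) the two mixed cross-terms appear with opposite signs and cancel. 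The analogous computations for $f_i$, or for one $e$ and one $f$, are identical, since the two sets of frame vectors are mutually orthogonal with respect to $g_E$.

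For (1), the crucial observation is the elementary recovery formulas
\[
e^i\wedge = \tfrac12\bigl(c(e^i) + \hat c(e^i)\bigr), \qquad \iota_{e_i} = \tfrac12\bigl(\hat c(e^i) - c(e^i)\bigr),
\]
and similarly for $f^i$, so the subalgebra generated by $1$ together with all the $c$'s and $\hat c$'s already contains every exterior and interior multiplication by a frame covector. It then remains to invoke the classical fact that products of such operators span $\End(\Lambda^* V)$: the $4^{n+k}$ monomials of the form $e^I\wedge f^J\wedge \iota_{e_K}\iota_{f_L}$ act as a complete family of rank-one operators on the monomial basis of $\Lambda^* V$, and a dimension count confirms they span the full endomorphism algebra. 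I do not anticipate any substantive obstacle; the proposition is purely algebraic and each step is forced by the definitions. The only bookkeeping to watch is the sign convention, since it is precisely the minus sign in $c$ paired with the plus sign in $\hat c$ that is responsible for the sign flip between (2) and (3) and the exact cancellation in (4).
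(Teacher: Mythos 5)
Your proof is correct, and since the paper states this proposition without proof (``it is not hard to verify''), the direct expansion you give is exactly the intended verification. One small imprecision in part (1): the monomials $e^I\wedge f^J\wedge\iota_{e_K}\iota_{f_L}$ are not themselves rank-one operators (e.g.\ $e^1\wedge$ has rank $2^{n+k-1}$); what you want is that they are $4^{n+k}$ in number, linearly independent (by the triangularity of their action on the monomial basis of $\Lambda^*V$), and hence a basis of $\End(\Lambda^*V)$, which has the same dimension. Everything else, including the sign bookkeeping in (2)--(4), is right.
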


Now we introduce a supertrace defined on the endomorphism bundle, and illustrate a fantastic cancellation concerning the supertrace of Clifford multiplications. We start with a natural $\Z_2$-grading on the space of differential forms:
\begin{equation} \label{equa:exterior_bundle_decomp}
    \Omega^* E = \Omega^+ E \oplus \Omega^- E,
\end{equation}
where $\Omega^+ E$ (or $\Omega^- E$) consists of even (or odd) forms. On the section level, this leads to a $\Z_2$-grading on the endomorphism bundle:
\begin{equation}
    \Gamma\big(\End(\Lambda^* E)\big) = \Gamma\big(\End^+(\Lambda^* E)\big) \oplus \Gamma\big(\End^-(\Lambda^* E)\big).
\end{equation}
In particular, $A \in \Gamma\big(\End^+(\Lambda^* E)\big)$ if $A: \Omega^\pm E \to \Omega^\pm E$, and $A \in \Gamma\big(\End^-(\Lambda^* E)\big)$ if $A: \Omega^\pm E \to \Omega^\mp E$. 
\begin{definition}
    The (global) supertrace $\tr_s : \Gamma\big(\End(\Lambda^* E)\big) \to \C^\infty (M)$ is defined as follows: If $A = \begin{pmatrix}
        A_{11} & A_{12} \\
        A_{21} & A_{22}
    \end{pmatrix}$ with respect to (\ref{equa:exterior_bundle_decomp}), then 
    \begin{equation}
        \tr_s A = \tr A_{11} - \tr A_{22}.
    \end{equation}
\end{definition}

For $I = \{i_1, \cdots, i_p\} \subset \{1, 2, \cdots, n\}$, let $c(e^I)$ denote $c(e^{i_1}) c(e^{i_2}) \cdots c(e^{i_p})$. We have the following well known fact.
\begin{proposition}  \label{prop:tr_s_cliff_multip}
    \begin{equation}
        \tr_s c(e^I f^J) \hat{c}(e^{I'} f^{J'}) = \begin{cases}
            (-1)^{\frac{(n+k)(n+k+1)}{2}} 2^{n+k}, & \text{if } \begin{aligned}[t]
            &I = I' = \{1, \cdots, n\},\\
            &J = J' = \{1, \cdots, k\}; \end{aligned}\\
            0, \text{ if otherwise }.
        \end{cases}
    \end{equation}
\end{proposition}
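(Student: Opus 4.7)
My plan is to establish the formula in two complementary steps: a vanishing argument that kills every basis element except the top one, and a direct computation of the top one. For notational convenience I combine the two orthonormal frames into a single frame and write $\tilde e_\alpha$ for $\alpha=1,\dots,N$ with $N=n+k$, and set $A = c(\tilde e^L)\hat c(\tilde e^{L'})$ with $L,L'\subseteq\{1,\dots,N\}$.

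The vanishing argument runs as follows. First, $\tr_s(A)=0$ trivially whenever $|L|+|L'|$ is odd, since then $A$ reverses the $\Z_2$-grading and so has vanishing diagonal blocks. Assume therefore $|L|+|L'|$ is even. Let $\tau$ denote the $\Z_2$-grading operator on $\Lambda^*E$, so that $\tr_s(X)=\tr(\tau X)$. The relations $\tau c(\tilde e^\alpha)=-c(\tilde e^\alpha)\tau$ and $c(\tilde e^\alpha)^2=-\Id$, together with the cyclicity of $\tr$, yield
\[
\tr_s(A)\;=\;\tr_s\bigl(c(\tilde e^\alpha)\,A\,c(\tilde e^\alpha)\bigr),
\]
and an analogous manipulation using $\hat c(\tilde e^\alpha)^2=\Id$ gives
\[
\tr_s(A)\;=\;-\tr_s\bigl(\hat c(\tilde e^\alpha)\,A\,\hat c(\tilde e^\alpha)\bigr).
\]
Now if $\alpha\notin L$, the anticommutation rules from the preceding proposition give $c(\tilde e^\alpha) A=(-1)^{|L|+|L'|}A\,c(\tilde e^\alpha)=A\,c(\tilde e^\alpha)$, hence $c(\tilde e^\alpha) A c(\tilde e^\alpha)=-A$ and therefore $\tr_s(A)=-\tr_s(A)=0$. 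The parallel statement with $\hat c(\tilde e^\alpha)$ and $\alpha\notin L'$ likewise forces $\tr_s(A)=0$. Consequently $\tr_s(A)\ne 0$ only if $L=L'=\{1,\dots,N\}$.

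For the top case, I would rearrange
\[
c(\tilde e^1)\cdots c(\tilde e^N)\,\hat c(\tilde e^1)\cdots\hat c(\tilde e^N)\;=\;(-1)^{N(N-1)/2}\prod_{\alpha=1}^{N}c(\tilde e^\alpha)\,\hat c(\tilde e^\alpha),
\]
where the sign comes from moving each $\hat c(\tilde e^\beta)$ leftward past the trailing $c$'s via the anticommutation $c\hat c=-\hat c c$. A direct calculation on the two-dimensional factor $\Lambda^*\langle\tilde e^\alpha\rangle$ shows that $c(\tilde e^\alpha)\hat c(\tilde e^\alpha)$ acts as $-1$ on the constant and as $+1$ on $\tilde e^\alpha$, that is, as minus the chirality on that factor; taking the product over $\alpha$ yields $(-1)^N\tau$. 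Collecting signs, the original operator equals $(-1)^{N(N+1)/2}\tau$, and its supertrace equals $(-1)^{N(N+1)/2}\tr(\tau^2)=(-1)^{N(N+1)/2}\,2^{N}$, which with $N=n+k$ is precisely the asserted value.

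The only real bookkeeping issue, and therefore the hardest point, will be verifying the sign $(-1)^{N(N-1)/2}$ in the rearrangement step cleanly; the vanishing portion is short once the two supertrace identities above are in hand.
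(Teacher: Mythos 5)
Your proof is correct. The paper states Proposition \ref{prop:tr_s_cliff_multip} as a well-known fact and gives no proof, so there is nothing to compare against; your argument is a complete and valid derivation. The vanishing step is airtight: the conjugation identities $\tr_s(A)=\tr_s(c(\tilde e^\alpha)Ac(\tilde e^\alpha))$ and $\tr_s(A)=-\tr_s(\hat c(\tilde e^\alpha)A\hat c(\tilde e^\alpha))$ both check out against $\tau c = -c\tau$, $\tau\hat c = -\hat c\tau$, $c^2=-\Id$, $\hat c^2=\Id$, and cyclicity, and together with the anticommutation rules they force $L=L'=\{1,\dots,N\}$ as you say. The rearrangement sign $(-1)^{N(N-1)/2}$ is right (move $\hat c_1$ past $N-1$ operators, $\hat c_2$ past $N-2$, and so on), the local computation $c(\tilde e^\alpha)\hat c(\tilde e^\alpha)=-\tau_\alpha$ is correct, and the final bookkeeping $\frac{N(N-1)}{2}+N=\frac{N(N+1)}{2}$ gives the claimed sign, so $\tr_s(A)=(-1)^{N(N+1)/2}\tr(\tau^2)=(-1)^{N(N+1)/2}2^N$ with $N=n+k$. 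One very minor stylistic point: when you take the product $\prod_\alpha c(\tilde e^\alpha)\hat c(\tilde e^\alpha)$ and read it off factor by factor under the super-tensor-product decomposition $\Lambda^*(\R^N)\cong\bigotimes_\alpha\Lambda^*\langle\tilde e^\alpha\rangle$, it is worth remarking that each block $c(\tilde e^\alpha)\hat c(\tilde e^\alpha)$ is even, so the factors commute and the naive tensor product is exactly right; this closes the one place where a sceptical reader might worry about hidden signs.
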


Following \cite[section 4(d)]{BZ92}, we separate the contributions of left and right Clifford multiplications. Consider a vector bundle $T^*(E) \oplus T^*(\hat{E})$ over $E$, where $T^*(\hat{E})$ denotes another copy of $T^*(E)$. Let $e^1, \cdots, e^n, f^1$, $\cdots, f^k$ denote the local orthonormal basis of the first copy $T^*(E)$, and $\hat{e}^1, \cdots, \hat{e}^n, \hat{f}^1, \cdots, \hat{f}^k$ denote the corresponding orthonormal basis of the second copy $T^*(\hat{E})$. Set $c(e^i) = e^i \wedge - \iota_{e_i}$, $c(f^i) = f^i \wedge - \iota_{f_i}$, and $\hat{c}(e^i) = \hat{e}^i + \iota_{\hat{e_i}}$, $\hat{c}(f^i) = \hat{f}^i + \iota_{\hat{f_i}}$. Then in view of $\Lambda^*(E \oplus \hat{ E}) =  \Lambda^*E \hat{\otimes} \Lambda^* \hat{E}$, $c(e^i)$ and $c(f^i)$ only act on $\Lambda^*E$, while $\hat{c}(f^i)$ and $\hat{c}(f^i)$ only act on $\Lambda^* \hat{E}$. Clearly Proposition \ref{prop:tr_s_cliff_multip} still holds with respect to the induced $\Z_2$-grading on $\End\left( \Lambda^*E \hat{\otimes} \Lambda^* \hat{E} \right)$. 

For any $a = \sum a_{I,J,I'J'} c(e^I f^J) \hat{c}(e^{I'} f^{J'}) \in \End\left( \Lambda^*E \hat{\otimes} \Lambda^* \hat{E} \right)$, let $a' := \sum a_{I,J,I'J'} e^I \wedge f^J \wedge e^{I'} \wedge f^{J'}$ be the associated differential form obtained by replacing the Clifford multiplications with $e^I \wedge f^J \wedge e^{I'} \wedge f^{J'}$. Moreover, we define the Berezin integral $\int^B a'$ to be the coefficient of the top form term. Then we have 

\begin{proposition} \label{prop:tr_s_Berezin_int}
    \begin{equation}
        \tr_s a = (-1)^{\frac{(n+k)(n+k+1)}{2}} 2^{n+k} \int^B a' .
    \end{equation}
\end{proposition}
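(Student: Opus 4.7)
The plan is to reduce the identity to a term-by-term check on monomials and then invoke Proposition \ref{prop:tr_s_cliff_multip} directly. Since both $\tr_s$ and $\int^B$ are linear, I would first expand $a$ as a sum of monomials $c(e^I f^J)\hat{c}(e^{I'} f^{J'})$ with scalar coefficients $a_{I,J,I',J'}$, and reduce to verifying the identity on a single such term paired with its symbol $a' = e^I \wedge f^J \wedge \hat{e}^{I'} \wedge \hat{f}^{J'}$ inside $\Lambda^*(E \oplus \hat{E})$.

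For each monomial, Proposition \ref{prop:tr_s_cliff_multip} immediately reports that the supertrace vanishes unless $I = I' = \{1,\ldots,n\}$ and $J = J' = \{1,\ldots,k\}$, and equals $(-1)^{(n+k)(n+k+1)/2} 2^{n+k}$ in the nonvanishing case. The Berezin integral side is constrained by exactly the same top-form condition: $\int^B$ extracts the coefficient of the canonical top form
\[ e^1 \wedge \cdots \wedge e^n \wedge f^1 \wedge \cdots \wedge f^k \wedge \hat{e}^1 \wedge \cdots \wedge \hat{e}^n \wedge \hat{f}^1 \wedge \cdots \wedge \hat{f}^k, \]
so $\int^B a'$ vanishes unless $I,J,I',J'$ are all maximal, in which case $a'$ already sits in the standard ordering and the coefficient equals $1$. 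Matching the two monomial outcomes yields the identity with the advertised prefactor $(-1)^{(n+k)(n+k+1)/2} 2^{n+k}$, with the case of vanishing supertrace being handled tautologically since both sides are $0$.

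The only real obstacle is bookkeeping: one must check that the sign and ordering convention adopted for $\int^B$, following \cite[\S 4(d)]{BZ92}, is consistent with the phrase ``coefficient of the top form term'' in such a way that the maximal-index monomial produces $+1$ rather than some other sign. Once the ordering is fixed (unhatted block before hatted block, with $e$'s before $f$'s within each block), every sign is forced and the proposition becomes a direct consequence of Proposition \ref{prop:tr_s_cliff_multip}; the entire analytic content of the identity is already packaged in that earlier statement, and the Berezin reformulation is essentially a change of language from endomorphism algebra to exterior algebra.
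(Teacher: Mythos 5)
Your argument is correct, and since the paper states this proposition without proof (it is a repackaging of Proposition~\ref{prop:tr_s_cliff_multip} in the language of \cite[\S 4(d)]{BZ92}), the monomial-by-monomial reduction you give is precisely the intended one: both $\tr_s$ and $\int^B$ are linear, the Clifford monomials $c(e^I f^J)\hat{c}(e^{I'}f^{J'})$ span $\End(\Lambda^*E\hat{\otimes}\Lambda^*\hat{E})$, and Proposition~\ref{prop:tr_s_cliff_multip} together with the top-degree condition for $\int^B$ makes both sides vanish except on the single maximal monomial, where the prefactor forces equality. One point you silently fix and which is worth making explicit: as printed, the paper's definition of $a'$ reads $\sum a_{I,J,I',J'}\,e^I\wedge f^J\wedge e^{I'}\wedge f^{J'}$ without hats on the second block, which would make $a'$ vanish identically in the only nonzero case $I=I'=\{1,\dots,n\}$, $J=J'=\{1,\dots,k\}$; the correct reading, which you use, is $a'=\sum a_{I,J,I',J'}\,e^I\wedge f^J\wedge\hat{e}^{I'}\wedge\hat{f}^{J'}$ in $\Lambda^*(E\oplus\hat{E})$, so that the maximal monomial sits in the canonical ordering with Berezin coefficient $1$.
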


%As an analog in the adiabatic setting, the horizontal orthonormal frame should be replaced by $\tilde{e}_i = \sqrt{\epsilon}e_i$. As in the discussion after (\ref{equa:Cliff_mul_with_epsilon}), the corresponding Clifford multiplications are 
%\begin{equation}
    %\tilde{c}(e^i) = e^i \wedge - \epsilon \iota_{e_i}, \tilde{\hat{c}}(e^i) = e^i \wedge + \epsilon \iota_{e_i} .
%\end{equation}

%Similarly,
%\begin{proposition} 
    %\begin{equation}
        %\tr_s \tilde{c}(e^I f^J) \tilde{\hat{c}}(e^{I'} f^{J'}) = \begin{cases}
            %(-1)^{\frac{(n+k)(n+k+1)}{2}} 2^{n+k}, & \text{if } \begin{aligned}[t]
            %&I = I' = \{1, \cdots, n\},\\
            %&J = J' = \{1, \cdots, k\}; \end{aligned}\\
            %0, \text{ if otherwise }.
        %\end{cases}
    %\end{equation}
%\end{proposition}

\subsection{Riesz-Dunford functional calculus}
In this section, we review the so-called Riesz-Dunford functional calculus for closed operators. This serves as a powerful tool for analyzing various properties of operators constructed out of (local-)holomorphic functions. For the purpose of this paper, we will present only a baby version of it. For more details, we refer to \cite[Chapter VII]{dunford1988linear}. 

To begin with, let $H$ be a Hilbert space, $T: H \to H$ be a closed linear operator (possibly unbounded). Let $\spec(T)$ denote the spectrum of $T$. Suppose $f$ is a complex function over $\C$, which is analytic near $\spec(T) \cup \{\infty\}$. 
\begin{theorem} \label{thm:funcional_calculus}
    Suppose 
    \begin{enumerate}
        \item $V \subset \C$ is an open subset containing $\spec(T)$;
        \item The boundary $\Gamma$ of $V$ consists of a finite number of Jordan curves, equipped with positive orientation with respect to $V$;
        \item $f$ is holomorphic on $V \cup \Gamma$.
    \end{enumerate}
    Then $f(T)$ is a well-defined linear operator over $H$, and we have the following Cauchy-type integral formula 
    \begin{equation}
        f(T) = \frac{1}{2\pi i} \int_\Gamma f(\lambda) (\lambda  - T)^{-1} \d \lambda,
    \end{equation}
    provided that the integral on the right-hand side is absolutely convergent.
    Moreover, $\spec\big(f(T)\big) = f\big(\spec(T)\cup \{\infty\}\big)$.
\end{theorem}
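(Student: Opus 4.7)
The plan is to regard $f(T) = \frac{1}{2\pi i}\int_\Gamma f(\lambda)(\lambda - T)^{-1}\d\lambda$ as a Bochner integral in the Banach space of bounded operators on $H$, and then to prove everything by combining the operator-valued Cauchy theorem with the resolvent identity $(\lambda - T)^{-1}(\mu - T)^{-1} = (\mu - \lambda)^{-1}\bigl[(\lambda - T)^{-1} - (\mu - T)^{-1}\bigr]$. First I would check that $f(T)$ is well-defined: since $V \supset \spec(T)$ we have $\Gamma \subset \rho(T)$, so $\lambda \mapsto (\lambda - T)^{-1}$ is norm-holomorphic on a neighborhood of $\Gamma$. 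Multiplying by $f$, which is continuous on $\Gamma$, gives a norm-continuous integrand, and the absolute convergence hypothesis turns this into a Bochner-integrable operator-valued function, producing a bounded operator on $H$.

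Next I would verify independence of the contour. Given two admissible contours $\Gamma, \Gamma'$ both enclosing $\spec(T)$ within a common open set of holomorphy of $f$, the function $\lambda \mapsto f(\lambda)(\lambda - T)^{-1}$ is operator-holomorphic on the annular region between them, so the operator-valued Cauchy theorem yields the same integral. This in turn is the main ingredient for the homomorphism property $(fg)(T) = f(T) g(T)$: take $\Gamma_1$ strictly inside $\Gamma_2$, write
\begin{equation*}
f(T)g(T) = \frac{1}{(2\pi i)^2}\int_{\Gamma_2}\!\!\int_{\Gamma_1} f(\lambda)g(\mu)(\lambda - T)^{-1}(\mu - T)^{-1}\d\lambda\,\d\mu,
\end{equation*}
apply the resolvent identity, and evaluate the two resulting scalar contour integrals (one vanishes because $f$ is holomorphic inside $\Gamma_1$, the other picks out $g(\lambda)$ by the usual Cauchy formula). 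Along the way, linearity and the fact that $1$ and $\lambda$ are sent to $\Id$ and $T$ respectively are immediate from the Bochner-integral viewpoint.

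The spectral mapping statement is the substantive part. For $\mu \in \spec(T)$ finite, I would factor $f(\lambda) - f(\mu) = (\lambda - \mu)g_\mu(\lambda)$ where $g_\mu$ is the holomorphic extension of the difference quotient across the removable singularity at $\mu$; applying the homomorphism property gives $f(T) - f(\mu)\Id = (T - \mu)g_\mu(T) = g_\mu(T)(T - \mu)$, so if $f(\mu)\notin \spec(f(T))$ then $T - \mu$ would be boundedly invertible, contradicting $\mu \in \spec(T)$. Inclusion of $f(\infty)$ requires a separate argument using the analyticity of $f$ near $\infty$ and the behavior of the resolvent for large $|\lambda|$. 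For the reverse inclusion, if $\nu \notin f(\spec(T)\cup\{\infty\})$ then $(f - \nu)^{-1}$ is holomorphic on a neighborhood of $\spec(T)\cup\{\infty\}$, and its operator realization is the inverse of $f(T) - \nu\,\Id$ by the homomorphism property.

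The main obstacle is the unbounded case, where $\spec(T)$ itself may be unbounded, so $\Gamma$ is non-compact and the operator-valued Cauchy manipulations above need justification. The remedy is to cap $\Gamma$ by an arc $|\lambda| = R$, apply the compact-contour arguments there, and then let $R \to \infty$, using the analyticity of $f$ at $\infty$ together with standard decay estimates on $\|(\lambda - T)^{-1}\|$ to send the truncation error to zero; this is exactly where the absolute convergence hypothesis is used, and also where one sees why $\infty$ must be adjoined to the spectrum in the spectral mapping formula.
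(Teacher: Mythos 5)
The paper does not actually prove this theorem: it is stated as a ``baby version'' of the Riesz--Dunford calculus with an explicit pointer to \cite[Chapter VII]{dunford1988linear}, and no argument is given. So there is no in-paper proof to compare against. Your sketch follows the standard Dunford--Schwartz route (Bochner integral, contour independence via operator-valued Cauchy, multiplicativity via the resolvent identity and a nested double contour, spectral mapping by factoring out the zero), which is the right framework and for a \emph{bounded} operator would be complete modulo the usual details.

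There is, however, a genuine gap in the spectral mapping step for the case the paper actually cares about, namely unbounded closed $T$. You write that factoring $f(\lambda) - f(\mu) = (\lambda - \mu)\,g_\mu(\lambda)$ and ``applying the homomorphism property'' gives $f(T) - f(\mu)\,\Id = (T - \mu)\,g_\mu(T)$. But the homomorphism property $(fg)(T) = f(T)g(T)$ you established is for $f,g$ both analytic on a neighborhood of $\spec(T)\cup\{\infty\}$, i.e.\ both in the admissible class. The factor $\lambda - \mu$ has a pole at $\infty$ and is \emph{not} in that class, so the homomorphism property simply does not apply to the pair $(\lambda - \mu,\, g_\mu)$. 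The identity $f(T) - f(\mu)\Id = (T - \mu)\,g_\mu(T)$ on $\mathcal{D}(T)$ is true, but it has to be proved directly from the contour integral by writing $(\lambda - \mu) = (\lambda - T) + (T - \mu)$ under the integral sign and using $(\lambda - T)(\lambda - T)^{-1} = \Id$, after which one integral of a holomorphic scalar function vanishes and the other pulls $(T - \mu)$ outside the integral (legitimately, since $(T-\mu)$ is closed and $g_\mu(T)$ maps into $\mathcal D(T)$). Alternatively, and this is what Dunford--Schwartz actually do, one conjugates by the M\"obius map $\phi(\lambda)=(\lambda-\alpha)^{-1}$ for $\alpha\in\rho(T)$ to reduce the whole unbounded theory to the bounded one, where your factoring argument is literally correct. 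As written, invoking the homomorphism property is an error, and the promised ``separate argument'' for $f(\infty)\in\spec(f(T))$ when $T$ is unbounded is the other place where the reduction to the bounded case (or a direct estimate that $f(T)-f(\infty)\Id$ cannot be surjective) is really needed and is not supplied.

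A smaller mismatch worth flagging: for unbounded $T$ the Dunford--Schwartz definition carries an extra $f(\infty)\Id$ term, $f(T) = f(\infty)\Id + \frac{1}{2\pi i}\int_\Gamma f(\lambda)(\lambda-T)^{-1}\,\d\lambda$, with $\Gamma$ a finite system of Jordan curves separating $\spec(T)$ from $\infty$. The paper's formula drops that term and instead allows an unbounded $\Gamma$ with an absolute-convergence caveat, which amounts to restricting to $f$ decaying along $\Gamma$ (this is the situation in its application $f(z)=e^{-tz^2}$). Your proof should say explicitly which convention it is proving, since the multiplicativity and spectral-mapping computations change slightly depending on whether the $f(\infty)\Id$ term is present.
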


As an example, suppose $D$ is an unbounded self-adjoint operator. Then from basic functional analysis theory, we know that $\spec(D) \subset \R$. Consider the function $f(z) = e^{- t z^2}$, where $t>0$ is any fixed number. If we choose $\Gamma$ properly such that $(\lambda - D)^{-1}$ is uniformly bounded, Theorem \ref{thm:funcional_calculus} implies that 
\begin{equation}
    e^{-t D^2} = f(D) = \frac{1}{2\pi i} \int_\Gamma e^{-t \lambda^2} (\lambda - D)^{-1} \d \lambda.
\end{equation}
Moreover, if $\{D_n\}$ is a sequence of such operators that converges to $D$ in the norm resolvent sense (i.e. $(\lambda - D_n)^{-1}$ converges to $(\lambda - D)^{-1}$ under operator norm topology), then it is easy to see that $e^{-t D_n^2}$ and $e^{-t D^2}$ are all bounded operators, and 
\[  \lim_{n \to \infty} e^{-t D^2_n} = e^{-t D^2} \]
in operator norm.

\section{Torsion zeta function and analytic torsion} \label{section:asymtotic_expansion}

In this section, we will apply results in \cite{DY23} and \cite{DY22} to calculate the asymptotic expansion of $\tr_s\Big(N e^{-t D^2_{\epsilon, \tau h}}\Big)$ and provide a proof of theorem \ref{thm:main1_anal_tor}. 

%In , to overcome a similar difficulty, Dai and Yan considered the Witten Laplacian $D^2_{Tf}$ with $f$ satisfying a strong tame condition. In fact, they showed that the spectrum of $D^2_{Tf}$ is discrete near $0$. 
To begin with, consider a Morse-Bott function $h$ locally defined by 
    \[h(x, y) = \frac{1}{2}(y^1)^2 + \frac{1}{2}(y^2)^2 + \cdots  + \frac{1}{2}(y^k)^2,\]
where $y^1, \cdots, y^k$ are the coordinates of the fibers. We will always choose a local chart so that $\abs{\nabla h}^2 = \abs{y^2}$. By choosing the transition functions to be $O(k)$-valued, $h$ can be defined globally.  Moreover, one can easily check that $h$ satisfies the strong tame property, namely:
    \begin{enumerate}
        \item $\limsup_{p \to \infty} \frac{\abs{\nabla^2 h}}{ \abs{\nabla h}^2}(p) = 0$
        \item $\lim_{p \to \infty} \abs{\nabla h} = \infty$.
    \end{enumerate}

\begin{definition} \label{def:Witten_Lap}
    We define the Witten-Dirac operator to be 
    \begin{equation}
        D_{\epsilon, \tau h} = \d_{\tau h} + \d^{*, \epsilon}_{\tau h},
    \end{equation}
    where $\d_{\tau h} = \d + \tau \d h \wedge$ is the Witten deformation of de Rham operator, and $\d^{*, \epsilon}_{\tau h} = \d^{*, \epsilon} + \tau \iota_{\nabla h}$ is its dual operator with respect to the metric $g_{ \epsilon}$. The Witten Laplacian $D_{\epsilon, \tau h}^2$ is defined as the square of the Witten-Dirac operator.
\end{definition}

An observation is since $h$ is strong tame, it follows from \cite[Theorem 2.2]{DY23} that the spectrum of $D^2_{\epsilon,\tau h}$ is discrete.

Let $K_{\epsilon, \tau}$ denote the heat kernel of $e^{-t D^2_{\epsilon, \tau h}}$. Then \cite[Theorem 1.1]{DY22} shows that, for $t \in (0, 1]$ and $ \tau \in (0, \sqrt{t}]$, we have the following asymptotic expansion for $K_{\epsilon, \tau}$ along the diagonal:
\begin{align*}
    K_{\epsilon, \tau}(t, p, p) &\sim \frac{1}{(4 \pi t)^{\frac{n+k}{2}}} e^{-t\tau^2 \abs{y}^2} \sum_{j = 0}^\infty t^j A_{\epsilon, \tau, j}(p),
\end{align*}
where $A_{\epsilon, \tau, j}$'s are smooth sections of $\End (\Lambda^* E)$. Also, for $r$ large enough we have the following remainder estimate:
\[  \Big| K_{\epsilon, \tau} - \frac{1}{(4 \pi t)^{\frac{n+k}{2}}} e^{-t\tau^2 \abs{y}^2} \sum_{j = 0}^r t^j A_{\epsilon, \tau, j} \Big| \le C t^{\frac{r + 1}{3} - n-k} \tau^{\frac{-2r + 4}{3}}  e^{-a \tilde{d}_\tau(t, y)}, \]
where $\tilde{d}_\tau$ is a parabolic distance satisfying $\tilde{d}_\tau(t, y) \ge \min \{ c_1 \tau \abs{y}, c_2 \tau^2 \abs{y}^2\}$.

From section \ref{subsection:Clifford multiplication} we know that each $A_{\epsilon, \tau, j}$ can be written as 
\begin{equation}
    A_{\epsilon, \tau, j} = \sum_{\substack{I, I' \subset \{1, 2, \cdots, n\} \\ J, J' \subset \{1, 2, \cdots, k\}} } A_{\epsilon, \tau, j}^{I, J, I', J'} \tilde{c}(e^I f^J) \tilde{\hat{c}}(e^{I'} f^{J'}),
\end{equation}
and $A_{\epsilon, \tau, j}^{I, J, I', J'}$'s are algebraic combinations of derivatives of the curvature of $g_\epsilon$ and the function $h$. In particular, we have
\[ A_{\epsilon, \tau, j}^{I, J, I', J'} = \sum_{0 \le l \le j}\sum_{\abs{\alpha_1} + \cdots + \abs{\alpha_l} \le 2j} c^{I, J, I', J'}_{\alpha_1, \cdots, \alpha_l}(\epsilon, \tau) \partial^{\alpha_1}h \cdots \partial^{\alpha_l}h.\]

Let $N$ be the number operator, namely $N: \Lambda^p (E) \to \Lambda^p(E), N(\omega) = p \, \omega$. Then clearly $N K_{\epsilon, \tau}$ is the heat kernel of $N e^{-t D^2_{\epsilon, \tau h}}$. Let $V(y) = \abs{\nabla h}^2$. Although as a homogeneous function, $h$ is degenerate (i.e. $0$ is not the only critical point), one can check directly that $t V(y)  = V(t^{1/2} y)$. So by a similar argument as in \cite[Section 6]{DY22}, we conclude that $N e^{-t D^2_{\epsilon, \tau h}}$ is of trace class. Thus the McKean-Singer formula implies 
\[ \tr_s\Big(N e^{-t D^2_{\epsilon, \tau h}}\Big) = \int_E \tr_s\Big(N K_{\epsilon, \tau}(t, p, p)\Big)\d\mathrm{vol}_{g_\epsilon} .\]
Combining this with the asymptotic expansion of the heat kernel $K_{\epsilon, \tau}$ gives
\begin{align*}
    &\tr_s\Big(N e^{-t D^2_{\epsilon, \tau h}}\Big) \sim \\
    &\frac{1}{(4 \pi t)^{\frac{n+k}{2}}}  \sum_{j = 0}^\infty \sum_{I, J, I', J'} t^j \Big(\int_E e^{-t\tau^2 \abs{y}^2} A^{I, J, I', J'}_{\epsilon, \tau, j}(p) \d\mathrm{vol}_{g_\epsilon}\Big) \tr_s\Big(N c(\tilde{e}^I f^J) c(\tilde{e}^{I'} f^{J'})\Big).
\end{align*}

The supertrace terms are always real numbers, so it remains to evaluate the integrals above. First, recall that each $\tr A^{I, J}_{\epsilon, \tau, j}$ is a linear combination of $\partial^{\alpha_1}h \cdots \partial^{\alpha_l}h$'s with $l \le j$ and $\sum \abs{\alpha_i} \le 2j$. One can check directly that the factors satisfy  $(\partial^{\alpha_i}h)(y) = t^{\frac{\abs{\alpha_i}}{2}-1}(\partial^{\alpha_i}h)(t^{1/2}y) $. Then under the substitution $\tilde{y} = t^{1/2} y$, we have 
\begin{align*}
    \int_{R^k}e^{-t \tau^2 \abs{y}^2} &\partial^{\alpha_1}h \cdots \partial^{\alpha_l}h \d y \\
    &= \int_{R^k}e^{-t \tau^2 \abs{y}^2} t^{\frac{\abs{\alpha_1} + \cdots + \abs{\alpha_l}}{2}-l} \partial^{\alpha_1}h(t^{1/2}y) \cdots \partial^{\alpha_l}h(t^{1/2}y) \d y  \\
    &= \int_{R^k}e^{-\tau^2 \abs{\tilde{y}}^2} t^{\frac{\abs{\alpha_1} + \cdots + \abs{\alpha_l}}{2}-l} \partial^{\alpha_1}h(\tilde{y}) \cdots \partial^{\alpha_l}h(\tilde{y}) t^{-k/2} \d {\tilde{y}} \\
    &= C_{\alpha_1, \cdots, \alpha_l}(\tau) t^{\frac{\abs{\alpha_1} + \cdots + \abs{\alpha_l}}{2}-l-k/2}.
\end{align*}
Therefore, by Fubini's theorem and the assumption that $M$ is compact, we obtain 
\begin{align*}
    &\int_E e^{-t\tau^2 \abs{y}^2} A^{I, J, I', J'}_{\epsilon, \tau, j}(p) \d vol_{g_\epsilon} \\
    &= \int_M \Big( \int_{E_x}e^{-t\tau^2 \abs{y}^2} A^{I, J, I', J'}_{\epsilon, \tau, j}(x, y) \d y \Big) \epsilon^{-\frac{n}{2}} \d vol_x \\
    &= \sum_{0 \le l \le j}\sum_{\alpha_1, \cdots, \alpha_l} \int_M  c^{I, J, I', J'}_{\alpha_1, \cdots, \alpha_l}(\epsilon, \tau) \Big(\int_{R^k}e^{-t \tau^2 \abs{y}^2} \partial^{\alpha_1}h \cdots \partial^{\alpha_l}h \d y\Big) \epsilon^{-\frac{n}{2}}\d vol_x \\
    &= C^{I, J, I', J'}_{j}(\epsilon, \tau) \sum_{0 \le l \le j} \sum_{\abs{\alpha_1} + \cdots + \abs{\alpha_l} \le 2j} t^{\frac{\abs{\alpha_1} + \cdots + \abs{\alpha_l}}{2} -l - \frac{k}{2}}.
\end{align*}

This leads to the following asymptotic expansion for the supertrace: 
\begin{theorem} \label{thm:tr_s_asymp_expan}
    For any $\tau \in (0, \sqrt{t}]$, we have 
    \begin{equation*}
        \tr_s\Big(N e^{-t D^2_{\epsilon, \tau h}}\Big) \sim \frac{1}{(4 \pi t)^{\frac{n+k}{2}}} \sum_{j = 0}^\infty \sum_{l \le j} \sum_{\sum\abs{\alpha_i} \le 2j} C_{\alpha_1, \cdots, \alpha_l}(\epsilon, \tau) t^{j + \frac{\sum \abs{\alpha_i} }{2} -l - \frac{k}{2}}
    \end{equation*}
    as $t \to 0$. Moreover, for $r$ large enough, we have the corresponding remainder estimate:
    \begin{align*}
        &\left|\tr_s\Big(N e^{-t D^2_{\epsilon, \tau h}}\Big) - \frac{1}{(4 \pi t)^{\frac{n+k}{2}}} \sum_{j = 0}^r \sum_{l \le j} \sum_{\sum\abs{\alpha_i} \le 2j} C_{\alpha_1, \cdots, \alpha_l}(\epsilon, \tau) t^{j + \frac{\sum \abs{\alpha_i} }{2} -l - \frac{k}{2}}\right| \\
        &\le C(\epsilon, \tau, r) t^{\frac{r+3}{1} -3n - 3k}.
    \end{align*}
\end{theorem}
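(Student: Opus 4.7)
The strategy is to follow the calculation already sketched in the paragraphs leading up to the theorem, organized into three clean steps: (i) reduce the supertrace to a fiber/base integral of the pointwise heat kernel via McKean--Singer, (ii) insert the heat kernel asymptotics of \cite{DY22} and carry out the fiber Gaussian integrals using the homogeneity of $h$, and (iii) obtain the remainder estimate by integrating the pointwise remainder bound.

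For step (i), I would first note that since $h$ is strong tame, \cite[Theorem 2.2]{DY23} gives discreteness of the spectrum of $D^2_{\epsilon,\tau h}$ and the weighted heat kernel decays fast enough at infinity that $Ne^{-tD^2_{\epsilon,\tau h}}$ is trace class (this is the argument referenced to \cite[Section 6]{DY22}). The McKean--Singer identity
\[ \tr_s\Big(Ne^{-tD^2_{\epsilon,\tau h}}\Big)=\int_E \tr_s\Big(NK_{\epsilon,\tau}(t,p,p)\Big)\,d\mathrm{vol}_{g_\epsilon}\]
then holds, and by Fubini (legitimate because $M$ is compact) the integration splits as $\int_M\int_{E_x}\cdots\,\epsilon^{-n/2}d\mathrm{vol}_x$.

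For step (ii), I would plug in the pointwise heat kernel expansion and use the Clifford decomposition
\[A_{\epsilon,\tau,j}=\sum_{I,J,I',J'}A_{\epsilon,\tau,j}^{I,J,I',J'}\,\tilde c(e^If^J)\,\tilde{\hat c}(e^{I'}f^{J'}),\]
together with the fact, recorded above, that each scalar coefficient is a polynomial in products $\partial^{\alpha_1}h\cdots\partial^{\alpha_l}h$ with $l\le j$ and $\sum|\alpha_i|\le 2j$. The key computation is the fiber Gaussian integral: since $h$ is quadratic in $y$ in each chart, one checks directly that $(\partial^{\alpha_i}h)(y)=t^{|\alpha_i|/2-1}(\partial^{\alpha_i}h)(t^{1/2}y)$, so the substitution $\tilde y=t^{1/2}y$ turns $\int_{\R^k}e^{-t\tau^2|y|^2}\partial^{\alpha_1}h\cdots\partial^{\alpha_l}h\,dy$ into $C_{\alpha_1,\dots,\alpha_l}(\tau)\,t^{(\sum|\alpha_i|)/2-l-k/2}$. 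Assembling the resulting powers of $t$ with the prefactor $(4\pi t)^{-(n+k)/2}$, and absorbing the (real) supertraces of Clifford monomials into the constants $C_{\alpha_1,\dots,\alpha_l}(\epsilon,\tau)$, yields the claimed asymptotic sum.

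For step (iii), the remainder estimate is the real analytic content. I would take the pointwise remainder bound from \cite[Theorem 1.1]{DY22},
\[\Big|K_{\epsilon,\tau}-\tfrac{1}{(4\pi t)^{(n+k)/2}}e^{-t\tau^2|y|^2}\sum_{j=0}^{r}t^jA_{\epsilon,\tau,j}\Big|\le Ct^{(r+1)/3-n-k}\tau^{(-2r+4)/3}e^{-a\tilde d_\tau(t,y)},\]
multiply by $\tr_s(N\,\cdot)$ (which is bounded), and integrate over $E$. The subtle point, and the main obstacle I expect, is handling the noncompact fiber integral of the Gaussian-like factor $e^{-a\tilde d_\tau(t,y)}$ uniformly in $t\in(0,1]$: because $\tilde d_\tau(t,y)\ge\min\{c_1\tau|y|,c_2\tau^2|y|^2\}$, one must split the fiber into the regions $\{c_1\tau|y|\le c_2\tau^2|y|^2\}$ and its complement, bound each by an explicit Gaussian integral, and track the resulting powers of $\tau$ and $t$ carefully. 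Combined with the uniform bound $\int_M\epsilon^{-n/2}d\mathrm{vol}_x\le C$, this produces a remainder of the stated polynomial order in $t$ with constant $C(\epsilon,\tau,r)$, which completes the proof.
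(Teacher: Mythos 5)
Your proposal follows the same route as the paper: the paper derives the expansion by exactly your steps (i)--(ii), namely McKean--Singer, the Clifford decomposition of $A_{\epsilon,\tau,j}$ into homogeneous products $\partial^{\alpha_1}h\cdots\partial^{\alpha_l}h$, the scaling identity $(\partial^{\alpha_i}h)(y)=t^{|\alpha_i|/2-1}(\partial^{\alpha_i}h)(t^{1/2}y)$, the fiber Gaussian substitution, and Fubini over the compact base. Your step (iii) fills in the remainder integral (which the paper leaves implicit, simply asserting the bound after the computation), and your plan of splitting the fiber according to which branch of $\min\{c_1\tau|y|,c_2\tau^2|y|^2\}$ is active is the natural way to carry that out; note that the paper's displayed exponent $\tfrac{r+3}{1}-3n-3k$ appears to be a typographical slip and the only thing actually used downstream (Remark~\ref{rmk:simple_asymp_expan}) is that the exponent grows linearly in $r$, which your computation delivers.
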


\begin{remark} \label{rmk:simple_asymp_expan}
    Obviousely $j + \frac{\sum\abs{a_i}}{2} - l \in [0, 3j]$. The asymptotic expansion above can be written as 
    \[ \tr_s\Big(N e^{-t D^2_{\epsilon, \tau h}}\Big) \sim \frac{1}{(4 \pi t)^{\frac{n+k}{2}}} \sum_{p = -k}^\infty a_p t^{\frac{p}{2}}. \]
    In particular, the leading term is $a_{-k}t^{-\frac{n+2k}{2}}$. Moreover, for $r$ sufficiently large, the remainder estimate implies 
    \[ \left|\tr_s\Big(N e^{-t D^2_{\epsilon, \tau h}}\Big) - \frac{1}{(4 \pi t)^{\frac{n+k}{2}}} \sum_{p = -k}^r a_p t^{\frac{p}{2}}\right| \le C t. \]
\end{remark}

As an upshot of this, we prove the main theorem in this section.
\begin{proof}[Proof of Theorem \ref{thm:main1_anal_tor}]
    Let $D^2_{q}$ denote the restriction of $D^2_{\epsilon, \tau h}$ on $\Lambda^q E$. Also, let $\Sigma_0 =  \frac{1}{(4 \pi t)^{\frac{n+k}{2}}} \sum_{p = -k}^r a_p t^{\frac{p}{2}}$ be a partial sum of the series in the asymptotic expansion, with $r$ large enough such that Remark \ref{rmk:simple_asymp_expan} applies. 
    
    We consider two intervals $[0, 1]$ and $[1, +\infty]$ separately. We first deal with the second interval. Let $\lambda_0(\epsilon)$ be the smallest nonzero eigenvalue of $N e^{-t D^2_{\epsilon, \tau h}}$. Then it is easy to see 
    \[ \abs{\tr_s (N e^{-t \underline{D^2}_{\epsilon, \tau h}})} \le C e^{-\frac{\lambda_0(\epsilon)}{2}t} \]
    for some constant $C$. Therefore integrating over the second interval yields a holomorphic function on $\C$.
    
    For the first interval, it follows from the definition that 
    \begin{align*}
        \int_0^1 &t^{s-1} \tr_s(N e^{-t \underline{D^2}_{\epsilon, \tau h}}) \d t \\
        = &\int_0^1 t^{s-1} \left(\tr_s( N e^{-t D^2_{\epsilon, \tau h}}) - \Sigma_0\right) \d t + \int_0^1 t^{s-1} \Sigma_0 \d t \\
        & - \sum_{q = 1}^{n + k} (-1)^q q \int_0^1 t^{s-1}  \dim(\ker D^2_{q})  \d t.
        %&= \sum_{p = 1}^{n + k} (-1)^p p \int_0^1 t^{s-1} \left(\tr e^{-t D^2_{\epsilon, \tau h}} - \dim \ker D^2^p_{\epsilon, \tau h}\right) \d t\\
        %&= \sum_{p = 1}^{n + k} (-1)^p p \int_0^1 t^{s-1} \left(\tr e^{-t D^2_{\epsilon, \tau h}} -  \Sigma_0\right) \d t \\
        %&+ \sum_{p = 1}^{n + k} (-1)^p p \int_0^1 t^{s-1} \left(\frac{1}{(4\pi t)^{(n+k)/2}}\Big( a_{-\frac{n+2k}{2}} t^{-\frac{n+2k}{2}} + \cdots +  a_{\frac{n+k}{2}-1} t^{\frac{n+k}{2}-1} \Big) - \dim \ker D^2^p_{\epsilon, \tau h} \right) \d t
    \end{align*}
    By Remark \ref{rmk:simple_asymp_expan}, the first integral above is holomorphic provided that $\mathrm{Re}(s) > - 1$. Next, notice that each term in 
    \[\int_0^1 t^{s-1} \Sigma_0 \d t = \sum_{p = -k}^{r} \int_0^1 t^{s-1} \frac{1}{(4 \pi t)^{\frac{n+k}{2}}}  a_p \, t^\frac{p}{2} \d t\]
    creates a single pole at $s = -\frac{k}{2}, -\frac{k+1}{2}, \cdots$. Also, we see that 
    \begin{align*}
        - \sum_{q = 1}^{n + k} (-1)^q q \int_0^1 t^{s-1}  \dim(\ker D^2_{q}) \d t = - \frac{1}{s} \sum_{q = 1}^{n + k} (-1)^q q \dim(\ker D^2_{q})
    \end{align*}
    has a simple pole at $s = 0$. Thus the proof is completed by noticing that $\frac{1}{\Gamma(s)}$ is an entire function with a simple zero at $s = 0$.
\end{proof}

\section{Several intermediate results} \label{section:inter_result}

For reader's convenience, in this section we collect some technical results which will be crucial in proving the main theorem. Some of the theorems will be proved in later sections. In particular, the proof of Theorem \ref{thm:interm_result_1} will be postponed to section \ref{subsect:noncpt_fiber}, Theorem \ref{thm:a=b} to section \ref{subsect:a=b}, Theorem \ref{thm:interm_result_I_4_1} to section \ref{subsect:proof_int_res_I_4_1}, Theorem \ref{thm:interm_result_I_4_2} to section \ref{subsect:proof_int_res_I_4_2}, and Theorem \ref{thm:interm_result_I_4_3} to section \ref{sect:proof_last_thm}.

Recall that we equip $E$ with the metric $g_\epsilon = \frac{1}{\epsilon^2} g_M + g_Y$.
For $a>0$, we use $P_\epsilon^{(0,a]}$ to denote the orthogonal projection from $L^2(\Lambda^*E)$ onto the eigenspaces $\oplus_{0 < \abs{\lambda'_\epsilon} \le a}E(\lambda'_\epsilon)$ of $\frac{1}{\epsilon^2}D_{\epsilon, \tau h}^2$ corresponding to small eigenvalues. We define $P_\epsilon^{[0, a]}$ and $P_\epsilon^{(a, \infty)}$ accordingly.

\begin{theorem} 
\label{thm:interm_result_spec}
    \begin{enumerate}
        \item There exists $\lambda'_0>0$ such that for any $\epsilon>0$ sufficiently small, we have the following uniform bound for non-zero eigenvalues $\lambda'_\epsilon$ of $\frac{1}{\epsilon}D_{\epsilon, \tau h}$: 
        \[ \abs{\lambda'_\epsilon} \ge \lambda'_0, ~~~~ \forall \lambda'_\epsilon \ne 0 \in \spec\left(\frac{1}{\epsilon}D_{\epsilon, \tau h}\right). \]
        \item Let $a = \frac{\lambda'_0}{2}$. Then $\lim_{\epsilon \to 0} \tr_s \left(\frac{1}{\epsilon^2}D_{\epsilon, \tau h}^{2}P_\epsilon^{[0,a]}\right) = 0$.
    \end{enumerate}
\end{theorem}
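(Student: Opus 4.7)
The plan is to prove (1) by adapting the standard adiabatic-limit spectral analysis to the Witten-deformed setting, whereupon (2) will be an immediate consequence. Let $\tilde{D}_Y$ denote the fiberwise Witten--Dirac operator: on each fiber $Y_x \cong \R^k$ (equipped with the flat metric $g_Y$) it is the Witten deformation associated with $h|_{Y_x}(y) = \tfrac{1}{2}|y|^2$, a Morse function with a unique non-degenerate critical point of index zero. Standard Witten theory on $\R^k$ then gives a spectral gap $\mu_0 > 0$, uniform in $x \in M$, separating $0$ from the rest of $\spec(\tilde{D}_Y)$, while $\ker \tilde{D}_Y$ is fiberwise one-dimensional, spanned by the Gaussian $e^{-\tau|y|^2/2}$ (a $0$-form in the fiber directions), as one checks directly from $\tilde{D}_Y(e^{-\tau h}) = -\tau e^{-\tau h}dh + \tau e^{-\tau h}dh = 0$. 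Because the transition functions of $E$ are $O(k)$-valued and the Gaussian is $O(k)$-invariant, these kernels assemble into a trivial line bundle, so $\mathcal H := L^2(\ker \tilde{D}_Y)$ is canonically isomorphic to $L^2(\Omega^* M)$ as a $\Z_2$-graded Hilbert space.

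Decompose $L^2(\Lambda^* E) = \mathcal H \oplus \mathcal H^\perp$ and write $\tfrac{1}{\epsilon} D_{\epsilon, \tau h}$ in block form $\begin{pmatrix} A_\epsilon & B_\epsilon \\ C_\epsilon & E_\epsilon \end{pmatrix}$. Using the adiabatic scaling of Proposition \ref{prop:S_epsilon} together with the $g_\epsilon$-orthonormal horizontal coframe $\tilde{e}^i = \epsilon^{-1} e^i$ (which introduces factors of $\epsilon$ into horizontal Clifford multiplications while leaving vertical ones unchanged), one obtains $D_{\epsilon,\tau h} = \tilde{D}_Y + \epsilon D_M + O(\epsilon^2)$ in the appropriate sense, so that $\tfrac{1}{\epsilon}D_{\epsilon,\tau h}$ has $E_\epsilon = \tfrac{1}{\epsilon}\tilde{D}_Y + O(1)$ (hence $|E_\epsilon| \ge \mu_0/\epsilon - C$ on $\mathcal H^\perp$); meanwhile $A_\epsilon$ converges in a strong sense to the de Rham operator $D_M = d_M + d_M^*$ on $\Omega^*(M)$, and $B_\epsilon, C_\epsilon$ remain uniformly bounded as $\epsilon \to 0$. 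A Schur-complement/resolvent-comparison argument (in the spirit of the tame-manifold analysis of \cite{DY22} and the effective-operator philosophy of \cite{BB1994}) then shows that inside any fixed bounded neighborhood of $0$, the spectrum of $\tfrac{1}{\epsilon}D_{\epsilon,\tau h}$ converges in Hausdorff distance to the spectrum of $D_M$, with matching multiplicities. Since $M$ is compact, $D_M$ has discrete spectrum with a strictly positive gap $\mu$ between $0$ and its smallest nonzero eigenvalue; combined with Theorem \ref{thm:ind_equal}, which gives $\dim \ker D_{\epsilon, \tau h} = \dim \ker D_M$ for all small $\epsilon$, setting $\lambda'_0 := \mu/2$ yields (1). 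Part (2) is then immediate: for $\epsilon$ sufficiently small and $a = \lambda'_0/2$, the interval $[0,a]$ contains no nonzero $|\lambda'_\epsilon|$, so $P_\epsilon^{[0,a]}$ projects onto $\ker D_{\epsilon,\tau h}$, on which $D^2_{\epsilon,\tau h}$ vanishes identically; hence $\tr_s\bigl(\tfrac{1}{\epsilon^2}D^2_{\epsilon,\tau h}P_\epsilon^{[0,a]}\bigr) = 0$ for all such $\epsilon$, and the limit statement follows.

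The main obstacle is controlling the off-diagonal blocks $B_\epsilon, C_\epsilon$ and establishing the convergence $A_\epsilon \to D_M$ in a sense strong enough to transfer the spectral gap from $D_M$ to $\tfrac{1}{\epsilon}D_{\epsilon,\tau h}$. This requires uniform resolvent estimates for the noncompact-fiber operator $\tilde{D}_Y$ compatible with the adiabatic scaling, together with commutator estimates between the horizontal covariant derivative $\nabla^E$ and the fiberwise spectral projection onto $\ker \tilde{D}_Y$, so that pulling an element of $\mathcal H$ through $\tfrac{1}{\epsilon}D_{\epsilon,\tau h}$ and projecting back onto $\mathcal H$ produces precisely $D_M$ plus a uniformly controlled remainder. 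These estimates should follow by combining the strong-tame analysis of \cite{DY22} with the standard adiabatic-limit machinery of \cite{DM}, but they constitute the technically most delicate part of the argument.
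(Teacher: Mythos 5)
Your approach is correct in spirit but takes a genuinely different route from the paper. You propose a block decomposition of $\tfrac{1}{\epsilon}D_{\epsilon,\tau h}$ relative to $\mathcal{H}\oplus\mathcal{H}^\perp$ with $\mathcal{H}=L^2(\ker\tilde{D}_Y)$, followed by a Schur-complement/resolvent-comparison argument yielding norm-resolvent convergence of $\tfrac{1}{\epsilon}D_{\epsilon,\tau h}$ to $D_0\cong D_M$ on $\mathcal{H}$; combined with the cohomological identity $\dim\ker D_{\epsilon,\tau h}=\dim\ker D_M$, this forces all eigenvalues of $\tfrac{1}{\epsilon}D_{\epsilon,\tau h}$ inside a small ball $B_b(0)$ to equal exactly zero, giving the gap. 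The paper instead proves part (1) by a purely algebraic spectral-sequence comparison following \cite[\S 4.2]{Dai91}: it compares the spectral sequence $(E_r,\epsilon^{-r}d_{\tau h})$ recording eigenvalue-decay rates against the Leray spectral sequence $(\bar{E}_r,\bar{d}_{r,\tau h})$ coming from Serre's filtration, uses $\ker D_{\epsilon,\tau h}\cong H^*_{dR}(M)$ (from \cite{DY23}) to show $\bar{E}_2\cong\bar{E}_\infty$, hence that both sequences degenerate at page two, and concludes that every nonzero $\lambda_\epsilon$ decays exactly linearly. The paper's route thus gives a sharper qualitative statement (exact linear decay rate) at the cost of invoking the machinery of \cite{Dai91}, whereas your route is more hands-on analysis but proves only the gap. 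Note that both arguments hinge on the same cohomological input $\ker D_{\epsilon,\tau h}\cong H^*(M)$.

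One caution: the norm-resolvent convergence that you invoke is essentially what the paper proves in Section~\ref{sect:large_time}, but there the choice of contour in the noncompact-fiber case explicitly uses the constant $\lambda'_0$ from the very theorem you are trying to prove (the paper chooses $b$ so that $20b<\min\{\lambda'_0,\lambda_1(D_0)\}$). If you execute your plan, you must avoid this circularity by noting that the estimates on $T_1$, $T_2$, $T_3$ in Lemma~\ref{lem:T_i} hold on the small circle $\Gamma'=\partial B_b(0)$ for any fixed $b<\lambda_1(D_0)$ without prior knowledge of $\lambda'_0$; the spectral projection $\tfrac{1}{2\pi i}\int_{\Gamma'}(\lambda-\tfrac{1}{\epsilon}D_{\epsilon,\tau h})^{-1}d\lambda$ then converges in norm to the projection onto $\ker D_0$, and the rank comparison with $\dim\ker D_{\epsilon,\tau h}$ closes the argument without ever needing the full contour $\Gamma'\cup\Gamma''$. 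With this care taken, your proposal is sound.
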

\begin{proof}
    We will follow \cite[section 4.2]{Dai91} and prove (1) by comparing two spectral sequences. For references of spectral sequences, we refer to \cite[section 4.2]{Dai91}, \cite[section 1]{BB1994} as well as \cite[chapter 3.5]{GHbook}.
        
    It is clear that every $\lambda'_\epsilon$ corresponds to a non-zero eigenvalue $\lambda_\epsilon = \epsilon \lambda'_\epsilon$ of $D_{\epsilon, \tau h}$. To prove the uniform lower bound, it suffices to consider $\lambda_\epsilon$ which decays to $0$ as $\epsilon \to 0$. Let $G_{\Lambda^0_r}$ be the direct sum of eigenspaces of $D_{\epsilon, \tau h}$, such that the corresponding eigenvalues decay at least like $\epsilon^r$, i.e. 
    \[ G_{\Lambda^0_r} = \bigoplus_{\lambda_\epsilon = o(\epsilon^r)} E(\lambda_\epsilon). \]
    Define $E_r := \lim_{\epsilon \to 0} G_{\Lambda^0_r}$. Then it is easy to check that $(E_r, \epsilon^{-r}d_{\tau h})$ forms a spectral sequence. A direct observation is that the sequence terminates at $E_k$, i.e. $E_{k-1} \ne E_k = E_{k+1} = \cdots = E_\infty$, if and only if every $\lambda_\epsilon \ne 0$ decays at most like $\epsilon^{k-1}$. 
        
    Next, we consider Serre's filtration 
    \begin{align*}
        F^i :&= \left\{ a(z, y) \d x^\alpha \wedge \d y^\beta \mid \abs{\alpha} \ge i \right\} \\
        &= \bigoplus_{p \ge i} \Lambda^pM \otimes \Lambda^*Y.
    \end{align*}
    This induces the Leray spectral sequence $(\bar{E}_{r}, \bar{d}_{r, \tau h})$. It is well known that 
    \begin{equation}
        \begin{split}
            \bar{E}_1 &\cong \Lambda^*M \otimes \ker(\tilde{D}_Y|_{\Lambda^*Y}) \cong \Lambda^*M \otimes \mathrm{span}_\R\left\{e^{-\tau \frac{\abs{y}^2}{2}}\right\} \\
            \bar{E}_2 &\cong H^*(\bar{E}_1) \cong H_{dR}^*M \otimes \mathrm{span}_\R\left\{e^{-\tau \frac{\abs{y}^2}{2}}\right\} \\
            \bar{E}_\infty &\cong \ker (D_{\epsilon, \tau h}).
        \end{split}
    \end{equation} 
    By Theorem 1.3 in \cite{DY23}, for $\epsilon>0$ small enough, we have 
    \begin{equation}
        \ker (D_{\epsilon, \tau h}) \cong H_{dR}^*(E, U_c) = H_{dR}^*(E) \cong H^*_{dR}(M),
    \end{equation}
    where the second equality comes from the fact that $U_c := \{p \in E \mid h(p) < -c < 0\}$ is an empty set, and the last equality holds true because de Rham cohomology groups are homotopy invariant. As a consequence, we see that $\bar{E}_2 \cong \bar{E}_\infty$, and therefore the Leray spectral sequence terminates at $\bar{E}_2$. 

    Now notice that the proof of Theorem 0.2 in \cite{Dai91} extends to our situation trivially. In particular, we have $E_r \cong \bar{E}_r$. As a result, every $\lambda_\epsilon$ has exactly linear decay. Thus we can find a constant $\lambda'_0 > 0$ such that 
    \[ \abs{\lambda'_\epsilon} = \frac{1}{\epsilon} \abs{\lambda_\epsilon} \ge \lambda'_0.  \]
    This proves item (1). Since (2) is a direct consequence of (1), the proof is completed.
\end{proof}

The next three theorems describe the large time behavior of the supertrace and the $L^2$-norm on the determinant line bundle. 

\begin{theorem} \label{thm:interm_result_1}
    For any $t> 0$ fixed, 
    \begin{enumerate}
        \item $\lim_{\epsilon \to 0} \tr_s \left(N e^{-\frac{t}{\epsilon^2}D_{\epsilon, \tau h}^2}\right) = \tr_s \left(N_M e^{-t D_M^2}\right)$.
        \item $\left|\tr_s \left(N_Y e^{-\frac{t}{\epsilon^2}D_{\epsilon, \tau h}^2}\right)\right| < C\epsilon$.
        \item $\left|\tr_s \left( e^{-\frac{t_0}{\epsilon^2}D_{\epsilon, \tau h}^2}\right)\right| < C$, for any $t_0 \ge t$ and $0<\epsilon<1$.
    \end{enumerate}
\end{theorem}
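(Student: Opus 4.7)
The plan is to dispose of (3) by a supersymmetric cancellation together with Theorem~\ref{thm:ind_equal}, and to treat (1) and (2) by an adiabatic block decomposition of $\tfrac{1}{\epsilon^2}D^2_{\epsilon,\tau h}$ with respect to the fiberwise Witten--Dirac operator $\tilde{D}_Y$. For (3), since $D_{\epsilon,\tau h}$ anticommutes with the $\Z_2$-grading on $\Lambda^* E$, the standard McKean--Singer cancellation yields
\[
\tr_s\bigl(e^{-s D^2_{\epsilon,\tau h}}\bigr) = \tr_s\bigl(P_{\ker D_{\epsilon,\tau h}}\bigr)
\]
for every $s > 0$. Theorem~\ref{thm:ind_equal} identifies $\ker D_{\epsilon,\tau h}$ with $H^*(M)$ as $\Z$-graded vector spaces, so the supertrace collapses to the $\epsilon$-independent constant $\chi(M)$; taking $s = t_0/\epsilon^2$ yields the uniform bound in (3).

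For (1) and (2), I would decompose $L^2(\Lambda^* E) = \mathcal{H}_+ \oplus \mathcal{H}_-$ with $\mathcal{H}_+ := L^2(\ker \tilde{D}_Y)$. Because $h$ is strong tame with $h|_{\mathrm{fiber}} = |y|^2/2$, the kernel $\ker \tilde{D}_Y$ is one-dimensional on every fiber and spanned by the Gaussian ground state $e^{-\tau|y|^2/2}$, which has vertical form-degree zero. Consequently
\[
N_Y\big|_{\mathcal{H}_+} = 0, \qquad N\big|_{\mathcal{H}_+} = N_M,
\]
and $\mathcal{H}_+$ is canonically isometric to $L^2(\Omega^*M)$. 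On the orthogonal complement one has the uniform positivity $\tilde{D}_Y^2 \ge \mu_Y^2 I$ for some $\mu_Y > 0$.

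Written as a $2 \times 2$ block operator with respect to this splitting, the adiabatic scaling of $g_\epsilon$ makes $\tfrac{1}{\epsilon^2}D^2_{\epsilon,\tau h}$ have a $(+,+)$-block that formally converges to $D_M^2$, a $(-,-)$-block bounded below by $\mu_Y^2/\epsilon^2 - O(1)$, and off-diagonal blocks of size $O(1/\epsilon)$. The Riesz--Dunford formula of Theorem~\ref{thm:funcional_calculus},
\[
e^{-\frac{t}{\epsilon^2}D^2_{\epsilon,\tau h}} = \frac{1}{2\pi \mathrm{i}}\int_{\Gamma} e^{-t\lambda}\Bigl(\lambda - \tfrac{1}{\epsilon^2}D^2_{\epsilon,\tau h}\Bigr)^{-1} \d \lambda,
\]
combined with a Schur-complement analysis of the block resolvent and the uniform spectral gap of Theorem~\ref{thm:interm_result_spec}(1), shows that as $\epsilon \to 0$ the resolvent converges to $(\lambda - D_M^2)^{-1}$ on $\mathcal{H}_+$ and to $0$ on $\mathcal{H}_-$. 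Integrating over $\Gamma$ yields trace-norm convergence $e^{-(t/\epsilon^2)D^2_{\epsilon,\tau h}} \to e^{-tD_M^2} \oplus 0$. Composing with $N = N_M + N_Y$ and invoking the identities above produces (1); for (2), since $N_Y$ kills $\mathcal{H}_+$, the supertrace reduces to contributions from $\mathcal{H}_-$ and from off-diagonal coupling, which are $O(e^{-c/\epsilon^2})$ and $O(\epsilon)$ respectively.

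The main obstacle is making the block-resolvent analysis quantitative, especially to obtain the sharp $O(\epsilon)$ rate in (2). Because the off-diagonal blocks are only $O(1/\epsilon)$ while the inverse of the $(-,-)$-block is $O(\epsilon^2)$, the cross-term contribution to the Schur complement is not automatically small, and pinning it down requires careful control of how the horizontal part of $D_{\epsilon,\tau h}$ interacts with $\ker \tilde{D}_Y$. The Morse--Bott structure of $h$ together with the heat-kernel asymptotics of \cite{DY22} should supply the bookkeeping needed to close the estimate.
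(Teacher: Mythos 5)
Your blueprint — part (3) via index/cohomology considerations, parts (1)--(2) via the block decomposition of $L^2(\Lambda^*E)$ along $\ker\tilde{D}_Y$ — matches the paper's. Your (3) is actually a clean shortcut: McKean--Singer constancy $\tr_s(e^{-sD^2_{\epsilon,\tau h}}) = \ind(D^+_{\epsilon,\tau h})$ (valid here since the Witten deformation makes the heat operator trace class on the noncompact $E$) combined with the $L^2$-cohomology isomorphism $H^*_{(2)}(E,d_{\tau h})\cong H^*(M)$ of Theorem~\ref{thm:ind_equal} — established independently via \cite{DY23}, so no circularity — gives a bound independent of both $t_0$ and $\epsilon$ at once, without invoking the oddness of $\dim M$ as the paper's proof does.

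The gap is in (1)--(2), and it is precisely the one you flag at the end. If you run Riesz--Dunford on the \emph{second-order} operator $\tfrac{1}{\epsilon^2}D^2_{\epsilon,\tau h}$, the $(-,-)$-resolvent block is $O(\epsilon^2)$, the off-diagonal blocks are $O(1/\epsilon)$, and the $(+,+)$-block also acquires a spurious $O(1)$ term of the form $A_2^*A_2$; closing the argument requires tracking the exact Schur cancellation $A_2^*A_1A_1^{-2}A_1A_2 = A_2^*A_2$, and the small-$t$ heat-kernel asymptotics of \cite{DY22} that you reach for will not do this — they are the wrong regime. The fix, which the paper uses following \cite{Dai91} and \cite{BC}, is to integrate the \emph{first-order} resolvent:
\[
e^{-\frac{t}{\epsilon^2}D^2_{\epsilon,\tau h}} = \frac{1}{2\pi\i}\int_{\Gamma} e^{-t\lambda^2}\left(\lambda - \tfrac{1}{\epsilon}D_{\epsilon,\tau h}\right)^{-1}\d\lambda.
\]
In the block decomposition of $\tfrac{1}{\epsilon}D_{\epsilon,\tau h}$ with respect to $L^2(\ker\tilde{D}_Y)^\perp\oplus L^2(\ker\tilde{D}_Y)$, the off-diagonal block $\tilde{A}_2$ is \emph{uniformly bounded} in $\epsilon$, the block on $L^2(\ker\tilde{D}_Y)$ is $D_0+O(\epsilon)$, and the perpendicular block has spectral gap of order $\lambda_0/\epsilon$ — note this is the analogue of Lemma~\ref{lem:spec}, not the uniform lower bound on nonzero eigenvalues of the full operator from Theorem~\ref{thm:interm_result_spec}(1) that you cite. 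The resolvent blocks then satisfy $\|\widetilde{T}_1\|,\|\widetilde{T}_2\|,\|\widetilde{T}_3-(\lambda-D_0)^{-1}\|\le C\epsilon$ on the relevant part of $\Gamma$ (Remark~\ref{rmk:T_tilde_i}), the $O(\epsilon)$ rate in (2) is immediate since $N_Y$ annihilates $L^2(\ker\tilde{D}_Y)$, and (1) follows by passing to the limit under the integral.
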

\begin{proof}
     See section \ref{subsect:noncpt_fiber}.
\end{proof}

\begin{definition}
    We define the secondary Euler characteristics of $M$ and $E$ to be 
    \begin{equation}
        \begin{split}
            \chi_2(M) := \sum_{p = 0}^n (-1)^p p b_p(M),\\
            \chi_2(E) := \sum_{p = 0}^{n+k} (-1)^p p b_p(E),
        \end{split}
    \end{equation}
    where $b_p(M)$ and $b_p(E)$ denote the $p$-th betti numbers of $M$ and $E$ respectively.
\end{definition}

Since the total space $E$ is homotopically equivalent to the image of the zero section, which is diffeomorphic to $M$, we see that 
\begin{equation} \label{eq:chi_2}
    b_p(E) = b_p(M), ~~~ \chi_2(E) = \chi_2(M).
\end{equation}

\begin{theorem} \label{thm:interm_result_2}
     Let $a$ be the constant given in Theorem \ref{thm:interm_result_spec}. For any $t > 0$ fixed, 
    \begin{enumerate}
        \item $\lim_{\epsilon \to 0} \tr_s \left(N e^{-\frac{t}{\epsilon^2}D_{\epsilon, \tau h}^2}P_\epsilon^{(a, \infty)}\right) = \tr_s \left(N_M e^{-t \underline{D}_M^2}\right)$.
        \item $\left|\tr_s \left(N e^{-\frac{t}{\epsilon^2}D_{\epsilon, \tau h}^2}P_\epsilon^{(a, \infty)}\right)\right| < Ce^{-C t}$ .
    \end{enumerate}
    Here $\underline{D}_M^2$ denotes the Hodge Laplacian with the null space removed.
\end{theorem}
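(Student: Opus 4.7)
The plan is to exploit the uniform spectral gap of Theorem~\ref{thm:interm_result_spec}(1): every non-zero eigenvalue $\lambda'_\epsilon$ of $\frac{1}{\epsilon}D_{\epsilon,\tau h}$ satisfies $|\lambda'_\epsilon| \geq \lambda'_0 = 2a$, so for every sufficiently small $\epsilon$, $P_\epsilon^{[0,a]}$ coincides with the orthogonal projection $P_{\ker}$ onto $\ker D_{\epsilon,\tau h}$ and $P_\epsilon^{(a,\infty)} = I - P_{\ker}$. Moreover, writing $A_\epsilon := \frac{1}{\epsilon^2}D^2_{\epsilon,\tau h}$, the non-zero eigenvalues of $A_\epsilon$ are at least $(\lambda'_0)^2$. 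Both parts of the theorem then reduce to analyzing $e^{-tA_\epsilon}$ on the complement of the kernel.

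For part~(1), I split
\[
\tr_s\!\left(N e^{-tA_\epsilon}\right) = \tr_s\!\left(N P_{\ker}\right) + \tr_s\!\left(N e^{-tA_\epsilon} P_\epsilon^{(a,\infty)}\right).
\]
By Theorem~\ref{thm:ind_equal}, every $L^2$-harmonic form on $E$ restricts fiberwise to a multiple of the fiber ground state $e^{-\tau|y|^2/2}$, which is a $0$-form; hence the identification $H^*_{(2)}(E,d_{\tau h}) \simeq H^*(M)$ respects the grading, which yields $\tr_s(N P_{\ker}) = \chi_2(E) = \chi_2(M)$ via (\ref{eq:chi_2}). Combining this with the limit $\tr_s(N e^{-tA_\epsilon}) \to \tr_s(N_M e^{-tD_M^2})$ provided by Theorem~\ref{thm:interm_result_1}(1) and the identity $\tr_s(N_M e^{-tD_M^2}) = \chi_2(M) + \tr_s(N_M e^{-t\underline{D}_M^2})$ proves part~(1).

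For part~(2), using $\|N\| \leq n+k$ and the gap $A_\epsilon|_{(I-P_{\ker})L^2} \geq (\lambda'_0)^2$, for $t \geq 1$ one gets
\[
\left|\tr_s\!\left(N e^{-tA_\epsilon}(I - P_{\ker})\right)\right| \leq (n+k)\, e^{-(t-1)(\lambda'_0)^2}\, \tr\!\left(e^{-A_\epsilon}(I - P_{\ker})\right),
\]
while $t \in (0,1)$ is absorbed into an enlarged constant. It remains to bound $\tr(e^{-A_\epsilon}(I - P_{\ker}))$ uniformly in $\epsilon$. I would do this via the splitting $L^2(\Lambda^*E) = L^2(\ker \tilde D_Y) \oplus L^2(\ker \tilde D_Y)^\perp$: on the first factor the rescaled eigenvalues converge, by the spectral-sequence argument behind Theorem~\ref{thm:interm_result_spec}(1), to the non-zero eigenvalues of $D_M^2$, giving a uniformly bounded count of $O(1)$ eigenvalues; on the second, the fiber gap $\tilde D_Y^2 \geq 2\tau$ forces eigenvalues of $A_\epsilon$ above $c/\epsilon^2$, so their contributions are $O(e^{-c/\epsilon^2})$ against an at-most-polynomial-in-$1/\epsilon$ Weyl count and vanish as $\epsilon \to 0$.

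The main obstacle is precisely this uniform trace bound on the transverse part: the short-time heat trace on $E$ with metric $g_\epsilon$ blows up like $\epsilon^{-n}$ because the base is stretched by $\epsilon^{-1}$, so one cannot simply pass to a short reference time. The needed super-exponential-in-$\epsilon^{-2}$ decay must come from the Gaussian confinement in the fiber supplied by the Witten deformation together with fiberwise resolvent estimates on $L^2(\ker \tilde D_Y)^\perp$, aligning with the ``careful study of the resolvent'' flagged in the introduction for the large-time regime.
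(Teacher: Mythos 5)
Your approach is essentially the paper's. Part (1) matches: you subtract the kernel contribution $\tr_s(N P_{\mathrm{ker}}) = \chi_2(E) = \chi_2(M)$, where $P_{\mathrm{ker}}$ denotes the orthogonal projection onto $\ker D_{\epsilon,\tau h}$, and then invoke Theorem \ref{thm:interm_result_1}(1) together with $\tr_s(N_M e^{-tD_M^2}) = \chi_2(M) + \tr_s(N_M e^{-t\underline{D}_M^2})$; this is exactly the paper's calculation. One small caveat: your heuristic that harmonic forms ``restrict fiberwise to a multiple of $e^{-\tau\abs{y}^2/2}$'' is only true to leading order in $\epsilon$ (a kernel element also has a small $L^2(\ker\tilde D_Y)^\perp$ component); the rigorous fact you need is the degree-preserving isomorphism (\ref{eq:cohom_isom}). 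For part (2), you correctly identify the point the paper's one-sentence proof leaves implicit: the spectral gap supplies the decay factor $e^{-Ct}$, but to make the constant uniform in $\epsilon$ --- which is what the dominated-convergence step in Section \ref{sect:I_1} actually needs --- one must bound $\tr\left(e^{-\frac{1}{\epsilon^2}D^2_{\epsilon,\tau h}}P_\epsilon^{(a,\infty)}\right)$ uniformly in $\epsilon$, and this does not follow from the gap alone since the unsigned heat trace of $e^{-\frac{t}{\epsilon^2}D^2_{\epsilon,\tau h}}$ grows like $\epsilon^{-n}$. Your sketched remedy --- split along $L^2(\ker\tilde D_Y)\oplus L^2(\ker\tilde D_Y)^\perp$, use the $O(1/\epsilon^2)$ spectral lower bound on the transverse part to beat the Weyl count, and use resolvent convergence to $D_0$ on $L^2(\ker\tilde D_Y)$ --- is the correct route and is precisely what Remark \ref{rmk:T_tilde_i} and the reference to \cite[Section 5(c)--5(e)]{BB1994} in the proof of Theorem \ref{thm:interm_result_1} are meant to supply, but you leave it as an acknowledged obstacle rather than carrying it out. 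So this is not a genuinely different argument from the paper's, though your explicit flagging of the needed uniform trace bound is a fair critique of the terseness of the paper's own proof of (2).
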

\begin{proof}
    It follows from the proof of Theorem \ref{thm:interm_result_spec} that for $\epsilon$ small enough, the only eigenvalue of $\frac{1}{\epsilon^2}D_{\epsilon, \tau h}^2$ that lies in $[0,a]$ is $0$. From this and (\ref{eq:chi_2}), we deduce 
    \begin{align*}
        \tr_s \left(N e^{-\frac{t}{\epsilon^2}D_{\epsilon, \tau h}^2}P_\epsilon^{(a, \infty)}\right) &= \tr_s \left(N e^{-\frac{t}{\epsilon^2}D_{\epsilon, \tau h}^2}\right) - \sum_{p = 0}^{n+k} (-1)^p p \dim \ker D_{\epsilon, \tau h}^2 \\
        &= \tr_s \left(N e^{-\frac{t}{\epsilon^2}D_{\epsilon, \tau h}^2}\right) - \chi_2(E) \\
        &= \tr_s \left(N e^{-\frac{t}{\epsilon^2}D_{\epsilon, \tau h}^2}\right) - \chi_2(M).
    \end{align*}
    Therefore (1) follows from a similar argument as Theorem \ref{thm:interm_result_1}.

    Since $a$ is the uniform lower bound of all non-zero eigenvalues of $\frac{1}{\epsilon^2}D_{\epsilon, \tau h}^2$, 
    we obtain the exponential decay in (2).
\end{proof}

Using \cite{DY23},  more specifically arguing as in the proof of \cite[Theorem 4.1]{DY23}, %Theorem \ref{thm:interm_result_spec}, 
one can show that the Witten deformed $L^2$-cohomology of $E$ is isomorphic to the (absolute) de Rham cohomology: 
\begin{equation} \label{eq:cohom_isom}
    H_{(2)}^* (E, d_{\tau h}) \cong H^*_{dR}(E) \cong H^*_{dR}(M).
\end{equation} 
Hence we have the following isomorphisms between determinant line bundles: 
\begin{equation}
    (\det E_1)^{-1} \cong \left[\det (\ker \tilde{D}_{T, \tau h}^2)\right]^{-1} \cong (\det E_2)^{-1} = (\det E_\infty)^{-1} := (\det E)^{-1}.
\end{equation}

Before stating the rest of the results, we need to introduce a new metric which is essentially conformal to $g_\epsilon$.
\begin{definition}
    For any $T > 0$, we define a metric on $E$ by 
    \[ \tilde{g}_{T} = g_M + \frac{1}{T^2}g_Y. \]
    Let $\tilde{D}_{T}$ be the corresponding Dirac operator, and 
    \[ \tilde{D}_{T, \tau h} := \tilde{D}_T + \tau \hat{c}(\tilde{\nabla}^T h) \]
    be the Witten-Dirac operator, where $\tilde{\nabla}^L$ is the Levi-Civita connection of $\tilde{g}_T$.
\end{definition}

Let $\abs{~}_T$ denote the $L^2$-norm on $(\det E)^{-1}$ induced by $\tilde{g}_T$. When $T=1$, we write $\abs{~}:= \abs{~}_1$ for convenience. On the other hand, the metric $\tilde{g}_1 = g_1$ induces an $L^2$-norm $\leftindex_1{\abs{~}}$ on $(\det E_1)^{-1} \cong (\det E)^{-1}$, and hence a norm $\leftindex_\infty{\abs{~}}$ on $(\det E_\infty)^{-1} \cong (\det E)^{-1}$. 

\begin{theorem}  \label{thm:interm_result_L^2_norm}
    \[ \lim_{T \to \infty } \log \left(\frac{\abs{~}_{T}}{\abs{~}}\right) = \log \left(\frac{\leftindex_\infty{\abs{~}}}{\abs{~}}\right) .\]
\end{theorem}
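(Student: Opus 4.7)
The plan is to identify the $T\to\infty$ adiabatic limit of the $L^2$-norm $\abs{~}_T$ on $(\det E)^{-1}$ with the norm $\leftindex_\infty{\abs{~}}$ obtained by passing through the Leray spectral sequence $(\bar E_r, \bar d_r)$ used in the proof of Theorem \ref{thm:interm_result_spec}. Both norms are Gramian determinants of $L^2$-orthonormal bases of $\ker \tilde D_{T,\tau h}$ (with $T=1$ giving $\abs{~}$), so the strategy is to exhibit, for $T \gg 1$, an explicit basis of harmonic forms whose $L^2(\tilde g_T)$-Gramian matches, after normalization, the Gramian that defines $\leftindex_\infty{\abs{~}}$ relative to $\abs{~}$.

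First I would use the fiberwise splitting $L^2(\Lambda^* E) = L^2(\ker \tilde D_Y)^\perp \oplus L^2(\ker \tilde D_Y)$. On the orthogonal complement the vertical Witten-Dirac operator $\tilde D_Y$ is bounded below by a positive constant depending on $\tau$, so under $\tilde g_T$ the vertical contribution to $\tilde D_{T,\tau h}^2$ scales like $T^2$ and one has $\tilde D_{T,\tau h}^2 \geq c T^2$ on that summand. A resolvent/perturbation argument, in the spirit of \cite{Dai91} and the large-time analysis sketched in the introduction, then shows that every element of $\ker \tilde D_{T,\tau h}$ is of the form $\omega_i \wedge e^{-\tau\abs{y}^2/2}$ modulo an $O(T^{-1})$ correction in the $L^2(\tilde g_T)$ norm, where $\{\omega_i\}$ is a $g_M$-harmonic basis of $H^*(M)$ provided by the terminating spectral sequence $\bar E_2 \cong \bar E_\infty$ (and $\ker \tilde D_Y|_{\Lambda^*Y}$ is one-dimensional, spanned by the Gaussian $e^{-\tau\abs{y}^2/2}$ in vertical degree zero).

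Next I would compute the Gramians. The volume form of $\tilde g_T$ is $T^{-k}\,\d\mathrm{vol}_{g_M}\wedge \d\mathrm{vol}_{g_Y}$, and since each $\omega_i$ is pure horizontal its pointwise inner product under $\tilde g_T$ agrees with that under $g_M$. Hence
\[ \inner{\omega_i \wedge e^{-\tau\abs{y}^2/2},\, \omega_j \wedge e^{-\tau\abs{y}^2/2}}_{\tilde g_T} = T^{-k}\,\inner{\omega_i, \omega_j}_{g_M}\int_{\R^k}e^{-\tau\abs{y}^2}\d y + O(T^{-k-1}). \]
On the $\Z$-graded determinant line $(\det E)^{-1}$ these $T$-powers combine to $T^{-k\chi(M)}$; since $n = \dim M$ is odd we have $\chi(M) = 0$, so the overall factor is $1$ and the limit Gramian equals the Gramian of $\{\omega_i \otimes e^{-\tau\abs{y}^2/2}\}$ in the product $L^2$-inner product on $H^*(M)\otimes \ker\tilde D_Y|_{\Lambda^*Y}$ taken with $g_1 = g_M + g_Y$. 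But this product Gramian is exactly what defines $\leftindex_\infty{\abs{~}}$ via the transport of $\leftindex_1{\abs{~}}$ through the chain of isomorphisms $(\det E_1)^{-1} \to (\det E_2)^{-1} = (\det E_\infty)^{-1}$ induced by the terminating spectral sequence, so taking logarithms and letting $T \to \infty$ yields the claimed identity.

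The main obstacle I expect is justifying the $O(T^{-1})$ approximation in the second paragraph: the off-diagonal block of $\tilde D_{T,\tau h}$ coupling $L^2(\ker \tilde D_Y)$ to its complement involves horizontal exterior derivation and connection terms and is itself of order $T$, so a naive norm-resolvent bound is borderline. I would address this by a two-step adiabatic projection as in \cite{Dai91} and \cite{BB1994}: first project onto the fiberwise kernel, where the residual operator descends to $\underline D_M^2$ acting on $H^*(M)\otimes e^{-\tau\abs{y}^2/2}$, and then use the spectral gap of $\underline D_M$ combined with the $cT^2$ gap on the orthogonal complement to control the remaining perturbation uniformly in $T$.
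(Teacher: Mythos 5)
Your proposal correctly reconstructs the argument the paper delegates entirely to \cite[section 6(e)]{BB1994}: adiabatic splitting of $L^2(\Lambda^*E)$ over $\ker\tilde{D}_Y$, convergence of the $\tilde g_T$-harmonic forms to $\omega_i\wedge e^{-\tau\abs{y}^2/2}$, cancellation of the $T^{-k}$ volume factors on the determinant line via $\chi(M)=0$ (from $n$ odd), and identification of the limit Gramian with the spectral-sequence norm $\leftindex_\infty{\abs{~}}$. One small miscalibration in your last paragraph: by Lemma \ref{lem:D_sigma_T} one has $\tilde{D}_{T,\tau h}=\tilde{D}_M+T\tilde{D}_{Y,T}-\tfrac14\mathrm{Tor}$, so the block of $\tilde{D}_{T,\tau h}$ coupling $L^2(\ker\tilde{D}_Y)$ to its orthogonal complement is $O(1)$ rather than $O(T)$, and the resolvent estimate you worry is borderline is in fact the standard one (with a $T^2$ spectral gap on the complement against an $O(1)$ coupling).
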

\begin{proof}
    See \cite[section 6(e)]{BB1994} for the proof.
\end{proof}

In the end, we discuss the contribution of supertrace when the time is small. Recall that by assumption, $M$ is compact and odd-dimensional. It follows from \cite[Theorem 5.1]{DM} that 
\begin{equation} \label{equ:tr_s_expantion_on_M}
    \tr_s \left(N_M e^{-t D_M^2}\right) \sim b_{-1/2}t^{-\frac 12} + \sum_{j \ge 1, odd} b_{j/2} t^{\frac{j}{2}},
\end{equation}
and
\begin{equation} \label{equ:b_-1/2}
    b_{-1/2} e^1 \wedge \cdots \wedge e^n =  2\i (-1)^{\frac{n+1}{2}} (16\pi)^{-\frac{n}{2}} \sum_{k=1}^n (-1)^k \mathrm{Pf}(R_k) \wedge e^k,
\end{equation}
where $R_k = \left( \sum_{p, q \ne k} R_{ijkl} e^p \wedge e^k \right)_{i, j \ne k}$ is the matrix $2$-form obtained by deleting the $k$-th row and column from the curvature matrix $2$-form. 

\begin{theorem} \label{thm:a=b}
    The asymptotic expansion of $\tr_s \left(N e^{-t D^2_{\tau h}}\right)$ as $t \to 0$ starts with $a_{-1/2} t^{-\frac{1}{2}}$ and does not contain a constant term. Moreover, for the leading coefficient we have 
    \begin{equation}
        a_{-1/2} = b_{-1/2}.
    \end{equation}
\end{theorem}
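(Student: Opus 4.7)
The strategy is to apply a Getzler-type rescaling to the Witten-Dirac operator $D_{\tau h}$ on $E$, combined with a Berezin-integral analysis of the fantastic cancellation of the supertrace, and then match the resulting fiber-integrated local density to the Dai-Melrose formula (\ref{equ:b_-1/2}).

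First, I would refine the analysis of Theorem \ref{thm:tr_s_asymp_expan} by applying a Getzler rescaling: $c(e^i) \mapsto t^{-1/2} e^i \wedge - t^{1/2} \iota_{e_i}$ (and analogously for the other Clifford generators $c(f^j), \hat{c}(e^i), \hat{c}(f^j)$), together with the fiber rescaling $y \mapsto t^{-1/2} \tilde{y}$. Using the Berezin-integral formalism of Proposition \ref{prop:tr_s_Berezin_int}, the supertrace picks out only the top-degree part in $\Lambda^*(E \oplus \hat{E})$, and a Getzler degree count---combined with the parity constraint that $n+k$ is odd---shows that only half-integer powers of $t$ can appear, with leading behavior $t^{-1/2}$. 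This is what rules out the constant term and all lower-order terms predicted formally by Theorem \ref{thm:tr_s_asymp_expan}.

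Next, the conjugated operator $t D^2_{\tau h}$ should converge under this rescaling to a model operator that splits into a base ``harmonic oscillator'' (built from the curvature $\Rm^M$ of $M$ acting on $\Lambda^* M \otimes \Lambda^* \hat{E}_{\mathrm{horiz}}$) and a fiber Witten harmonic oscillator on $\R^k$ with quadratic potential $\tau |y|^2 / 2$. Both factors have explicit Mehler-type heat kernels; the fiber factor projects onto its unique zero-form ground state, proportional to $e^{-\tau |y|^2 / 2}$. The Gaussian $e^{-t \tau^2 |y|^2}$ present in the heat kernel expansion becomes $e^{-\tau^2 |\tilde{y}|^2}$ after rescaling, and integrating out the fiber---including the Berezin integral over the vertical Clifford variables $f^j, \hat{f}^j$---collapses the computation to a local density on the zero section, i.e.~on $M$.

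To conclude, I would match this density on $M$ with the Dai-Melrose density for $b_{-1/2}$ in (\ref{equ:b_-1/2}). The normalization constants from the fiber harmonic oscillator and the fiber Gaussian integration should combine to reproduce the factor $(16\pi)^{-n/2}$ and the Pfaffian/sign conventions appearing in (\ref{equ:b_-1/2}), yielding $a_{-1/2} = b_{-1/2}$. The main obstacle will be the Witten-deformation term $\tau \hat{c}(\nabla h)$, which couples $y$ with the fiber Clifford generators $\hat{c}(f^j)$ and must be tracked carefully through the Getzler rescaling; in particular, the Berezin factor of $f^1 \wedge \cdots \wedge f^k$ must come out exactly from the ground-state projection and cannot be obtained by a pure dimensional argument. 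A secondary delicate point is verifying that the parity argument actually eliminates the constant term, which requires checking that all polynomials in curvature and derivatives of $h$ contributing to $t^0$ pair to zero under the Berezin integral once $n$ is odd.
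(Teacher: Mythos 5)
Your overall strategy matches the paper's: apply a Getzler-type rescaling that also dilates the fiber coordinate and rescales the Clifford generators, then use the Berezin-integral formalism of Proposition~\ref{prop:tr_s_Berezin_int} to reduce the supertrace to a top-form coefficient, integrate out the fiber, and match with~(\ref{equ:b_-1/2}). So the shape of the argument is right, and you even anticipate some of the delicate points (the $\tau\hat c(\nabla h)$ coupling, the fiber Berezin factors).

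There is, however, a concrete gap in the step where you assert that the rescaled operator converges to a model that ``splits into a base harmonic oscillator built from the curvature $R^M$ of $M$'' tensored with a fiber Witten oscillator. That is not what the rescaling limit actually produces. The limit operator computed in Lemma~\ref{lem:G_1_Lap_limit} contains the \emph{full} Levi--Civita curvature $R^{L}_{ijkl}$ of $(E,g_E)$ with all index types, including the mixed horizontal--vertical components $R^L_{ij\bar k\bar l}$, $R^L_{i\bar j\bar k\bar l}$, etc., because the trivialization is by $\nabla^L$-parallel transport. These cross terms do not vanish pointwise, so the model operator does not factor as you describe. The paper disposes of them only \emph{after} integrating over $E$: it introduces the pullback connection $\tilde\nabla$ with curvature $R^M$, writes the Chern--Weil transgression $\int^B e^{-R^L/2}=\int^B e^{-R^M/2}+\d\omega$, and then uses a generalized Stokes theorem together with the observation that $\omega$ is $y$-independent to show $\int_E e^{-\tau^2|y|^2}\d\omega=0$. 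Without this transgression argument, your passage from the rescaled heat kernel to a density on $M$ involving only $R^M$ is unjustified, and the identification $a_{-1/2}=b_{-1/2}$ does not follow.

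Two smaller points. First, the paper's rescaling $G_1$ also sends $\tau\mapsto t^{-1/2}\tau$; this is what turns the original $(t,\tau)$-expansion~(\ref{equ:tr_s_expantion_on_E}) into~(\ref{eq:tr_s_G_1_tau}), and the $\tau$-independence of the limit $b_{-1/2}$ is then precisely what kills the spurious coefficients $a_{p/2,q}$ with $q=p+1\neq0$. Your proposal rescales $y$ but not $\tau$, so you lose this self-consistency argument and would need another mechanism to control the $\tau$-dependence. Second, for the absence of the constant term the paper does not rely on a Berezin parity count at all---it invokes the structure of the expansion in \cite{DY22}, which forces all exponents $p/2$ with $p$ odd. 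Your parity heuristic is plausible but is a genuinely different (and not yet rigorous) claim; if you pursue it you would need to verify that the Getzler weights of every monomial contributing to $t^0$ are indeed killed by the Berezin integral, which is exactly the point you flag as a ``secondary delicate point'' but do not resolve.
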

\begin{proof}
    See section \ref{subsect:a=b}.
\end{proof}

\begin{theorem} \label{thm:interm_result_I_4_1}
    For any $T >0$ fixed, 
    \[ \lim_{\sigma \to 0}\tr_s \left(N_Y e^{ - \sigma^2 \tilde{D}_{T/\sigma,\tau h}^2}\right) = 0. \]
\end{theorem}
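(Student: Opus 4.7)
The plan is to reduce to a standard Witten Laplacian via an isometric rescaling, and then to invoke a Bismut-type semiclassical analysis combined with a parity argument on the odd-dimensional base $M$. First, the fiberwise dilation $\phi_{T/\sigma}\colon (E,g_1) \to (E,\tilde g_{T/\sigma})$ given by $(x,z)\mapsto (x,(T/\sigma)z)$ is an isometry satisfying $\phi_{T/\sigma}^{*}\tilde D_{T/\sigma,\tau h}= D_{1,\tau_0 h/\sigma^{2}}$ with $\tau_0:=\tau T^{2}$ fixed, and it preserves the vertical grading, so $\phi^{*}N_Y=N_Y$. Hence the problem becomes $\lim_{\sigma\to 0}\tr_s\bigl(N_Y e^{-\sigma^{2} D^{2}_{1,\tau_0 h/\sigma^{2}}}\bigr)=0$, a small-time heat trace with large Witten parameter $\tau_0/\sigma^{2}$ coupled so that $\sigma^{2}\cdot(\tau_0/\sigma^{2})=\tau_0$ stays fixed, which forces the heat kernel to concentrate near the zero section.

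Next I would perform a semiclassical fiber rescaling $z=(\sqrt{\tau_0}/\sigma)\,y$. Using the Clifford identity $\{c(f^i)\partial_{y^i},\,y^j\hat c(f^j)\}=2N_Y-k$, a direct computation yields
\[
\sigma^{2} D^{2}_{1,\tau_0 h/\sigma^{2}}=\tau_0\,\mathcal L_Y(z,\partial_z)+\sigma^{2}(D^{H})^{2}+\sigma R_\sigma,
\]
where $\mathcal L_Y=-\Delta_z+|z|^{2}+(2N_Y-k)$ is the standard vertical Witten Laplacian on $\mathbb R^{k}$ at unit parameter, $D^{H}$ is the horizontal Dirac operator, and $R_\sigma$ collects the bundle-connection mixing corrections, bounded uniformly as $\sigma\to 0$. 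Combining this with Getzler rescaling on the \emph{horizontal} Clifford generators $c(e^i),\hat c(e^i)$ while keeping the vertical Clifford action intact, the on-diagonal rescaled heat kernel converges as $\sigma\to 0$ to $\alpha(x)\otimes K^Y_{\tau_0}(z,z)$, where $K^Y_{\tau_0}$ is the Mehler-type kernel of $e^{-\tau_0\mathcal L_Y}$ and $\alpha(x)\in\Omega^{\bullet}(M)$ is a Chern--Weil-type form in the curvature of $\nabla^{E}$. By standard local index theory $\alpha(x)$ is purely of \emph{even} degree.

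Combining the identity $N_Y=\tfrac k2+\tfrac 12\sum_i c(f^i)\hat c(f^i)$ with the Berezin integral of Proposition \ref{prop:tr_s_Berezin_int}, the vertical contribution $\int_{\mathbb R^{k}}\tr_s^{Y}\bigl(N_Y K^Y_{\tau_0}(z,z)\bigr)\,dz$ is a finite constant $c(\tau_0)$, computable in closed form from the spectrum of $\mathcal L_Y$. Therefore
\[
\lim_{\sigma\to 0}\tr_s\bigl(N_Y e^{-\sigma^{2}\tilde D^{2}_{T/\sigma,\tau h}}\bigr)=c(\tau_0)\int_M\alpha,
\]
and since $\dim M=n$ is odd while $\alpha$ has only even-degree components, its degree-$n$ part vanishes identically, so $\int_M\alpha=0$.

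The main obstacle is making the local index convergence rigorous in the non-product bundle case, where the mixing term $R_\sigma$ does not vanish at finite $\sigma$. I would control it uniformly in $\sigma$ via Duhamel's formula together with the Riesz--Dunford resolvent calculus (Section \ref{section:prel}), in the spirit of Bismut--Berthomieu \cite{BB1994}, and verify that $R_\sigma$ contributes only lower-order corrections that do not affect the degree-$n$ component of the limit.
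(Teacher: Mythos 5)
Your overall strategy — rescale along the base in a Getzler fashion, identify the limiting operator, then kill the supertrace by a parity argument from $\dim M = n$ odd — is exactly the strategy the paper uses (via the rescaling $G_2$ in Section~\ref{subsect:proof_int_res_I_4_1}). The algebra in your reduction is fine: the isometric dilation does pull $\tilde D_{T/\sigma,\tau h}$ back to $D_{1,\tau_0 h/\sigma^2}$ with $\tau_0 = \tau T^2$, and the anticommutator identity $\{c(f^i)\partial_{y^i}, y^j\hat c(f^j)\} = 2N_Y - k$ is correct, giving the harmonic oscillator $\mathcal L_Y = -\Delta_z + |z|^2 + (2N_Y - k)$ after your semiclassical fiber substitution. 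This dilation step is not actually needed — the paper works directly with $g_{\sigma,T} = \frac{1}{\sigma^2}g_M + \frac{1}{T^2}g_Y$, which already keeps the fiber metric and Witten parameter fixed, so only a base rescaling is required — but it is a valid alternative setup.

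The genuine gap is the claimed factorization of the limiting on-diagonal heat kernel as $\alpha(x)\otimes K^Y_{\tau_0}(z,z)$. This does not hold. The limit operator (the analogue of the paper's $L_2$ from Lemma~\ref{lem:G_2_Lap_limit}) contains nonvanishing \emph{mixed} curvature terms coupling horizontal wedge products with vertical Clifford generators — for instance a term proportional to $R^E_{ij\bar k\bar l}\, e^i\wedge e^j\wedge\hat c(\tilde f^k)\hat c(\tilde f^l)$ — so the horizontal and vertical pieces of the model operator do not commute and the Mehler kernel does not split into a base factor times a fiber factor. Your conclusion can still be rescued, but not by invoking "standard local index theory" for even-ness of a hypothetical $\alpha(x)$: one has to compute the limit operator explicitly from the BLW formula and check by inspection that every summand has even degree in the horizontal generators $e^i,\hat e^i$ (it helps here that $R^E(e_i,f_j)f_k = 0$ for a metric vector bundle, so the potentially dangerous odd-degree mixed term $R^E_{i\bar j\bar k\bar l}$ drops out; this is not obvious and needs to be stated). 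Once that is established, the Berezin-integral picture of Proposition~\ref{prop:tr_s_Berezin_int} forces the degree-$n$ horizontal coefficient to vanish for odd $n$, which is the paper's actual argument. The final sentence of your proposal acknowledges that the uniform control of the error $\sigma R_\sigma$ and the convergence of the rescaled kernel are only sketched; in the paper this is precisely the content of the curvature-order estimates $R^{L,\sigma,T}_{ijkl} = O(\sigma^2)$, etc.\ in Lemma~\ref{lem:G_2_Lap_limit}, so that part still needs to be filled in.
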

\begin{proof}
    See section \ref{subsect:proof_int_res_I_4_1}.
\end{proof}

\begin{theorem}  \label{thm:interm_result_I_4_2}
    There exists a constant $C > 0$ such that for any $\sigma\in (0, 1]$ and $T \in [\sigma, 1]$,
    \[ \left|\frac{1}{T}\tr_s\left(N_Y e^{-\sigma^2\tilde{D}_{T/\sigma, \tau h}^2}\right) \right| \le C.\]
\end{theorem}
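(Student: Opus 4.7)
The plan is to adapt the Getzler-type rescaling from the proof of Theorem~\ref{thm:interm_result_I_4_1} in a uniform way, combined with the heat-kernel expansion of Theorem~\ref{thm:tr_s_asymp_expan}. By the McKean--Singer formula I write
\begin{equation*}
\frac{1}{T}\tr_s\!\left(N_Y e^{-\sigma^2\tilde D_{T/\sigma,\tau h}^2}\right) = \frac{1}{T}\int_E \tr_s\!\bigl(N_Y \tilde K_{T/\sigma}(\sigma^2,p,p)\bigr)\,\d\mathrm{vol}_{\tilde g_{T/\sigma}},
\end{equation*}
where $\tilde K_{T/\sigma}$ is the heat kernel of $\tilde D^2_{T/\sigma,\tau h}$. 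Using Proposition~\ref{prop:tr_s_Berezin_int}, the pointwise supertrace is the top coefficient of an associated Berezin integral. Since $N_Y=\sum_j f^j\wedge \iota_{f_j}$ only detects vertical form degree, the $N_Y$ insertion forces at least one fermion pair $f^j\wedge\hat f^j$ to appear in any surviving term; this is the structural cancellation on which the bound rests.

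The next step is a Getzler rescaling around a fixed $p_0=(x_0,0)\in E$: base coordinates $x$ are dilated by $\sigma^{-1}$, fiber coordinates $y$ by $T^{-1}$, and the Clifford generators $c(e^i),\hat c(e^i),c(f^j),\hat c(f^j)$ are rescaled accordingly. These are the natural scales: $\sigma$ is the standard small-time Getzler scale for the base, while $T$ is the scale on which the Witten deformation in the fiber has unit size \emph{after} the metric has been contracted by the factor $(\sigma/T)^2$. Under this rescaling the operator $\sigma^2\tilde D^2_{T/\sigma,\tau h}$ converges, uniformly in $(\sigma,T)$ with $T\in[\sigma,1]$, to a model operator
\begin{equation*}
    \mathcal L \;=\; \mathcal K_M\otimes 1\;+\;1\otimes \mathcal H_Y,
\end{equation*}
where $\mathcal K_M$ is a generalized harmonic oscillator on $T_{x_0}M$ with Mehler kernel built from $R^{TM}$, and $\mathcal H_Y$ is the supersymmetric harmonic oscillator on the fiber with quadratic potential $\tau^2T^2|\tilde y|^2$ and fermion mass proportional to $\tau T$; both pieces admit explicit Mehler formulas.

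Given the explicit Mehler kernel, the Berezin calculus of Proposition~\ref{prop:tr_s_Berezin_int} reduces the $N_Y$-weighted supertrace to a Pfaffian in the $e^i$-variables (of $R^{TM}$, integrated over the compact base) times a fermion Berezin integral in the $f^j$-variables weighted by an explicit bosonic Gaussian in $\tilde y$. A direct computation shows that this fiber contribution carries a net factor of $T$ (coming from the $f^j$-part of the fermion Mehler kernel, which is of the form $T\sinh(\tau T)^{-1}\cdot(\text{bounded})$); this single power of $T$ exactly cancels the $1/T$ prefactor. The volume Jacobians $(\sigma/T)^k$ coming from $\d\mathrm{vol}_{\tilde g_{T/\sigma}}$ and from the Getzler rescaling compensate, and the resulting integral is bounded uniformly in $(\sigma,T)$ thanks to the Gaussian ground state of $\mathcal H_Y$ (a consequence of the strong tame property of $h$).

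The principal obstacle is establishing the uniformity of the Getzler convergence in the whole range $(\sigma,T)\in(0,1]\times[\sigma,1]$, rather than pointwise for fixed $T$ (which essentially reduces to Theorem~\ref{thm:interm_result_I_4_1}). The remainder constants in the asymptotic expansion of Theorem~\ref{thm:tr_s_asymp_expan} depend on the adiabatic parameter (here $\sigma/T$) and on $\tau$, and must be shown to remain bounded as $T\to 0^+$ with $T\ge\sigma$. I would control this via a finite-propagation-speed localization that replaces $\tilde D^2_{T/\sigma,\tau h}$ by its harmonic-oscillator model inside a ball of radius comparable to $T$ in the fiber, with error terms dominated uniformly in $(\sigma,T)$ by the tame estimates on $\nabla h$ and $\nabla^2 h$; outside this ball, the exponential decay furnished by the same Gaussian weight $e^{-a\tilde d_\tau(\sigma^2,y)}$ appearing in the remainder estimate of \cite{DY22} gives a uniformly negligible contribution.
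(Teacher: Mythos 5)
Your proposal takes a genuinely different route from the paper's.  The paper's proof substitutes $a = T/\sigma$ and rewrites
\[
\tr_s\!\left(N_Y e^{-\sigma^2\tilde D_{T/\sigma,\tau h}^2}\right)
= \tr_s\!\left(N_Y e^{-T^2 D_{1/a,1,\tau h}^2}\right),
\]
where $D_{1/a,1,\tau h}$ is the Witten--Dirac operator of $g_{1/a,1}=a^2g_M+g_Y$.  It then invokes a small--$T$ asymptotic expansion $\tr_s(N_Y e^{-T^2 D_{1/a,1,\tau h}^2}) \sim d_1(a)T + d_3(a)T^3 + \cdots$, shows that each $d_i(a)\to 0$ as $a\to\infty$ via the local index vanishing in Theorem~\ref{thm:interm_result_I_4_1} (the supertrace of the Getzler limit $N_Y e^{T^2L_3}$ vanishes because $n$ is odd), and finally uses the integer-power large-$a$ expansion of the heat coefficients, as in \cite{BC} and \cite{Cheeger87eta}, to conclude $d_i(a)=O(1/a)$, whence $\left|\tfrac1T\tr_s(\cdots)\right|\le C/a\le C$.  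You instead propose a simultaneous two-parameter Getzler rescaling ($\sigma^{-1}$ on the base, $T^{-1}$ on the fiber) and an explicit Mehler-kernel computation.

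There are two concrete gaps.  First, the pivotal claim that the fiber piece ``carries a net factor of $T$'' is not justified and appears to be incorrect.  For the one-dimensional supersymmetric oscillator $\mathcal H=-\partial_y^2+\tau^2 y^2+\tau c(f)\hat c(f)$ one finds $\tr_s(N_Y e^{-t\mathcal H})=-e^{-2\tau t}/(1-e^{-2\tau t})$, which diverges like $-1/(2\tau t)$ as $t\to0$; tensoring $k$ copies gives the same order.  What makes the limit supertrace vanish in the paper is not a positive power of $T$ in the fiber kernel, but the fact that after the base rescaling the horizontal piece only produces even-degree forms on the odd-dimensional $M$, so its Berezin integral is zero.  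Second, even granting the vanishing limit, the theorem is a rate statement across the whole rectangle $\sigma\in(0,1]$, $T\in[\sigma,1]$, and what supplies the rate in the paper is exactly the $O(1/a)$ decay of the $d_i(a)$ coming from the integer-power expansion lemma.  Your sketch replaces this with finite propagation speed and the tame estimates: the former localizes but does not give a rate in the adiabatic parameter $T/\sigma$, and the latter controls the Witten remainder in $\tau$ and $t$ but not the $a$-dependence of the heat coefficients.  Without a substitute for the integer-power argument (or some other explicit rate mechanism), the proof does not close.
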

\begin{proof}
    See section \ref{subsect:proof_int_res_I_4_2}.
\end{proof}

\begin{theorem}  \label{thm:interm_result_I_4_3}
    There exist $\delta \in (0,1]$ and $C>0$ such that for any $\sigma \in (0,1]$ and $T\in [1, \infty)$, 
    \[ \left|\frac{1}{T}\tr_s\left(N_Y e^{-\sigma^2 \tilde{D}_{T/\sigma, \tau h}^2}\right)\right| \le \frac{C}{T^{1+\delta}} .\]
\end{theorem}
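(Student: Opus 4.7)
My plan is to exploit the vertical spectral gap of the Witten--Dirac operator $\tilde D_Y$ together with Riesz--Dunford functional calculus to show that $\tr_s(N_Y e^{-\sigma^2\tilde D_{T/\sigma,\tau h}^2})$ decays super-polynomially in $T$, uniformly in $\sigma\in(0,1]$, which is strictly stronger than the required $C/T^{1+\delta}$. The starting observation is that $h|_{E_x}=|y|^2/2$ is a harmonic-oscillator potential, so the fiberwise Witten--Dirac $\tilde D_{Y,\tau h}$ on $(Y,g_Y)$ has a one-dimensional pointwise kernel spanned by the Gaussian $e^{-\tau|y|^2/2}$, together with a uniform spectral gap $\mu_0>0$ above it. Because this Gaussian is a scalar function, it lies in $N_Y$-degree zero. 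Let $P_0$ be the pointwise orthogonal projection onto $\ker\tilde D_{Y,\tau h}$, $P_1=\mathrm{Id}-P_0$, and $\mathcal H_1:=\operatorname{Im}P_1$. Since $N_YP_0=0$,
\[
\tr_s\bigl(N_Y e^{-\sigma^2\tilde D_{T/\sigma,\tau h}^2}\bigr)=\tr_s\bigl(N_Y e^{-\sigma^2\tilde D_{T/\sigma,\tau h}^2}P_1\bigr).
\]

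Next I would decompose the rescaled Dirac operator using the analogue of Proposition \ref{prop:S_epsilon} for $\tilde g_{T/\sigma}=g_M+(\sigma/T)^2 g_Y$ and the scaling of the Clifford action from Section \ref{subsection:Clifford multiplication}, obtaining a representation of the form
\[
\sigma\,\tilde D_{T/\sigma,\tau h}=T\,\tilde D_{Y,\tau h}+\sigma\,D^H_{\tau h}+\sigma T\,C,
\]
where $D^H_{\tau h}$ is a horizontal Witten--Dirac operator and $C$ is the $0$-order cross term built from Clifford commutators $[c(e^i),\hat c(f^j)]$ contracted with the second fundamental form of the fibration. Squaring and using the uniform bound $\tilde D_{Y,\tau h}^2|_{\mathcal H_1}\ge\mu_0$, a Bismut--Lebeau-type Young's-inequality argument absorbs the lower-order terms into the principal term to give
\[
\sigma^2\tilde D_{T/\sigma,\tau h}^2\bigm|_{\mathcal H_1}\ge cT^2\qquad\text{for all }T\ge T_0,\ \sigma\in(0,1],
\]
with $c,T_0$ independent of $\sigma$. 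The small window $T\in[1,T_0]$ is absorbed into the constant via Theorem \ref{thm:interm_result_I_4_2} (the regimes overlap at $T=1$ when $\sigma=1$, and compactness of $[1,T_0]\times(0,1]$ together with norm resolvent continuity extends the bound).

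The final step uses Theorem \ref{thm:funcional_calculus} to represent $e^{-\sigma^2\tilde D_{T/\sigma,\tau h}^2}P_1$ as a contour integral along a curve at distance of order $T$ from $\spec(\tilde D_{T/\sigma,\tau h}|_{\mathcal H_1})$; the resolvent bound of order $1/T$ along this contour then yields $\|e^{-\sigma^2\tilde D_{T/\sigma,\tau h}^2}P_1\|\le Ce^{-cT^2}$ in operator norm. To upgrade to trace norm, factor $e^{-\sigma^2 D^2}P_1=e^{-\sigma^2 D^2/2}\cdot e^{-\sigma^2 D^2/2}P_1$ and control $\|e^{-\sigma^2 D^2/2}\|_1$ by a polynomial in $T$ using the heat kernel expansion of Theorem \ref{thm:tr_s_asymp_expan} after rescaling to unit fiber metric; the Gaussian factor dominates, giving $|\tr_s(N_Y e^{-\sigma^2\tilde D_{T/\sigma,\tau h}^2})|\le Ce^{-cT^2/2}$, which implies the stated bound for any $\delta\in(0,1]$. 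The main obstacle is the uniform spectral gap in the previous paragraph: when $\sigma$ is close to $1$, the cross term $\sigma T\,C$ has the same scale $T$ as $\sqrt{T^2\mu_0}$, so its absorption into the principal vertical term is not automatic and will require a careful commutator estimate exploiting the off-diagonal structure of $C$ with respect to the decomposition $P_0\oplus P_1$.
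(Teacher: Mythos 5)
Your first step—$\tr_s\bigl(N_Y e^{-\sigma^2\tilde D_{T/\sigma,\tau h}^2}\bigr) = \tr_s\bigl(N_Y P_1 e^{-\sigma^2\tilde D_{T/\sigma,\tau h}^2} P_1\bigr)$ because $N_Y P_0=0$ and $N_Y$ commutes with both projections—is correct, and a quadratic-form lower bound of the form $\langle\sigma^2\tilde D_{T/\sigma,\tau h}^2 s,s\rangle\ge cT^2\|s\|^2$ for $s\in\mathcal H_1$ can be arranged as you sketch. However, the jump from there to $\|e^{-\sigma^2\tilde D_{T/\sigma,\tau h}^2}P_1\|\le Ce^{-cT^2}$ is a genuine gap, and the claimed super-polynomial decay is in fact false. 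The issue is that $\mathcal H_1$ is not invariant under $\tilde D_{T/\sigma,\tau h}^2$: the off-diagonal coupling in the $\mathcal H_0\oplus\mathcal H_1$ block decomposition has order $T$ (the same order as $\sqrt{T^2\mu_0}$, as you note), so eigenvectors of $\tilde D^2$ belonging to \emph{small} eigenvalues acquire $\mathcal H_1$-components of order $1/T$, and these contribute a polynomial tail to $P_1 e^{-\tilde D^2}P_1$. A $2\times2$ model makes this concrete. Taking $\mathcal H_1$ as the first coordinate and
\begin{equation*}
D=\begin{pmatrix} T & 1 \\ 1 & 0\end{pmatrix},\qquad D^2=\begin{pmatrix} T^2+1 & T \\ T & 1\end{pmatrix},
\end{equation*}
one has $P_1 D^2 P_1=T^2+1\ge T^2$, yet the eigenvector of $D^2$ for the small eigenvalue $\lambda_-\approx 1/T^2$ has $\mathcal H_1$-component $\approx -1/T$, so the $(1,1)$ entry of $e^{-D^2}$ is $\approx 1/T^2$, not $e^{-cT^2}$. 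The true decay rate is polynomial; the paper proves $C/T^{1/2}$ (Theorem \ref{thm:last_one}, Proposition \ref{prop:last_prop}), which is precisely why the statement asks only for $C/T^{1+\delta}$ with some $\delta\in(0,1]$.

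Because the $\mathcal H_0$--$\mathcal H_1$ coupling cannot be absorbed into the vertical gap, the paper's proof takes a quite different route. It splits the heat operator via $e^{-\sigma^2\tilde D^2}=F_\sigma(D_{\sigma,T,\tau h})+H_\sigma(\tilde D_{T/\sigma,\tau h})$ (Proposition \ref{prop:F_and_G}), disposes of the $H_\sigma$ piece with a crude Sobolev estimate giving $C/T$ (Proposition \ref{prop:tr_s_H}), and localizes $F_\sigma$ using finite propagation speed. On the localized, $G_2$-rescaled operator $\tilde L_{\sigma,T}$ it sets up weighted Sobolev norms (Definition \ref{def:norm_sigma_T}), proves uniform resolvent bounds (Propositions \ref{prop:lambda-L_-1_to_1}, \ref{prop:rel_m_m+1}) and a uniform pointwise heat-kernel estimate (Theorem \ref{thm:unif_bound}), and compares $\tilde F_\sigma(\tilde L_{\sigma,T})$ with $\tilde P\tilde F_\sigma(\Theta_\sigma)\tilde P$, where $\Theta_\sigma$ is the Schur complement $L_3-L_2'L_1^{-1}L_2$ of the limiting block decomposition; the difference is $O(T^{-1/2})$ in an appropriate sense (Proposition \ref{prop:last_prop}). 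Finally, the supertrace of the limiting term vanishes by the parity argument of Section \ref{subsect:proof_int_res_I_4_1} (using Remark \ref{rmk:ident_Theta_D_M}). Your closing diagnosis—that the difficulty lives in the cross term when $\sigma$ is close to $1$—is accurate, but the cure is a Schur-complement comparison tracking the effective operator on $\mathcal H_0$, not absorption of the cross term into the vertical spectral gap.
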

\begin{proof}
    See section \ref{sect:proof_last_thm}.
\end{proof}

\section{Proof of main theorem} \label{section:proof_main}
We will proceed as in \cite[Section 4]{BB1994} to prove Theorem \ref{thm:main2_limit}. For convenience, let $T = \frac{1}{\epsilon}$.

\begin{definition}
\begin{enumerate}
    \item For any $t, T > 0$, we define a Riemannian metric $\tilde{g}_{t,T}$ on $E$ by 
    \begin{align*}
        \tilde{g}_{t, T} = \frac{1}{t^2} \tilde{g}_T =  \frac{1}{t^2} \left(g_M + \frac{1}{T^2}g_Y \right).
    \end{align*}
    Let $\tilde{*}_{t, T}$ and $\tilde{D}_{t,T}$ denote the corresponding Hodge star operator and Dirac operator. Moreover, let 
    \begin{align*}
        \tilde{D}_{t, T, \tau h} := \tilde{D}_{t, T} + \tau \hat{c}(\nabla^{t, T} h) 
    \end{align*}
    denote the associated Witten-Dirac operators.
    \item Let $\alpha_{t, T, \tau h}$ be a $1$-form  on $\R_+^* \times \R_+$ defined by  
    \begin{equation*}
        \alpha_{t, T} = \d t \tr_s\left[\tilde{*}_{t, T}^{-1} \partial_t\tilde{*}_{t, T} e^{-\tilde{D}_{t, T, \tau h}^2}\right] + \d T \tr_s\left[\tilde{*}_{t, T}^{-1} \partial_T\tilde{*}_{t, T} e^{-\tilde{D}_{t, T, \tau h}^2}\right].
    \end{equation*}
\end{enumerate}
\end{definition}

\begin{remark} \label{rmk:metrics}
    Under the identification $T = \frac{1}{\epsilon}$, we have the following relations between metrics:
    \begin{align*}
        \tilde{g}_{T} = \epsilon^2 g_\epsilon, ~~~  \tilde{g}_{t, T} = \left(\frac{\epsilon}{t}\right)^2 g_\epsilon.
    \end{align*}
\end{remark}

\begin{proposition} \label{prop:alpha}
    \begin{enumerate}
        \item $\alpha_{t, T}$ is closed.
        \item $\tilde{*}_{t, T}^{-1} \partial_t\tilde{*}_{t, T} = \frac{1}{t}\left(2N - n-k\right)$, $\tilde{*}_{t, T}^{-1} \partial_T\tilde{*}_{t, T} = \frac{1}{T}\left(2N_Y-k\right)$. Therefore 
        \begin{equation} \label{eq:alpha_new}
            \alpha_{t, T} = \frac{2\d t}{t} \tr_s\left[N e^{-\tilde{D}_{t, T, \tau h}^2}\right] + \frac{2\d T}{T} \tr_s\left[N_Y e^{-\tilde{D}_{t, T, \tau h}^2}\right].
        \end{equation}
        \item If $\tilde{g} = \frac{1}{\sigma^2} g$, then $\sigma^N \tilde{D} \sigma^{-N} = \sigma D$, and 
        \begin{align*}
            \tr_s\left(N e^{-\tilde{D}^2}\right) = \tr_s\left(N e^{-\sigma^2D^2}\right).
        \end{align*}
        In particular, 
        \begin{equation*} 
            \tr_s\left(N e^{-\tilde{D}_{t, T, \tau h}^2}\right) = \tr_s\left(N e^{-\left(\frac{t}{\epsilon}\right)^2D_{\epsilon, \tau h}^2}\right).
        \end{equation*}
    \end{enumerate}
\end{proposition}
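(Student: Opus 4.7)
The plan is to prove (2) and (3) by direct metric-scaling calculations and then deduce (1) via a Duhamel identity combined with a supertrace cancellation in the spirit of \cite{BB1994}.

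For (2), I would use the standard scaling of the Hodge star: if $g' = c^2 g$ on an $N_d$-manifold, then $*_{g'}|_{\Omega^p} = c^{N_d-2p}\,*_g$. Applied to $\tilde{g}_{t,T} = t^{-2}\tilde{g}_{1,T}$ with $N_d = n+k$, this gives $\tilde{*}_{t,T}|_{\Omega^p} = t^{2p-(n+k)}\,\tilde{*}_{1,T}|_{\Omega^p}$, so logarithmic differentiation in $t$ yields $\tilde{*}_{t,T}^{-1}\partial_t\tilde{*}_{t,T} = (2p - n - k)/t = (2N - n - k)/t$. Since the $T$-variation only rescales the vertical metric $g_Y$, the identical argument applied bundle-wise to bidegree $(p_M, p_Y)$ forms produces the factor $T^{2p_Y - k}$, hence $\tilde{*}_{t,T}^{-1}\partial_T\tilde{*}_{t,T} = (2N_Y - k)/T$. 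Substituting into the definition of $\alpha_{t,T}$ produces scalar contributions $-(n+k)/t$ and $-k/T$ multiplying $\tr_s\, e^{-\tilde{D}^2_{t,T,\tau h}}$; by McKean--Singer combined with Theorem \ref{thm:ind_equal} and the fact that $n$ is odd (so Poincar\'e duality forces $\chi(M)=0$), this supertrace equals $\chi(E) = \chi(M) = 0$, and the claimed formula (\ref{eq:alpha_new}) follows.

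For (3), I would verify $\sigma^N \tilde{D}_{\tau h}\sigma^{-N} = \sigma D_{\tau h}$ one building block at a time. Since $d$ is metric-independent, $\sigma^N d \sigma^{-N}$ acts on $\omega \in \Omega^p$ as $\sigma^{p+1}\sigma^{-p}d\,\omega = \sigma\,d\,\omega$. Comparing $L^2$ inner products under $\tilde{g} = \sigma^{-2}g$ yields $\tilde{d}^* = \sigma^2 d^*$ on $\Omega^{p+1}$, and the parallel conjugation gives $\sigma^N\tilde{d}^*\sigma^{-N} = \sigma d^*$. Finally $\tilde{\nabla}h = \sigma^2\nabla h$, whence $\hat{c}_{\tilde{g}}(\tilde{\nabla}h) = dh\wedge + \sigma^2\iota_{\nabla h}$, and an identical two-step conjugation produces $\sigma^N\hat{c}_{\tilde{g}}(\tilde{\nabla}h)\sigma^{-N} = \sigma\hat{c}_g(\nabla h)$. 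Adding these, $\sigma^N \tilde{D}_{\tau h}\sigma^{-N} = \sigma D_{\tau h}$, so $\sigma^N e^{-\tilde{D}^2_{\tau h}}\sigma^{-N} = e^{-\sigma^2 D^2_{\tau h}}$; since $[N,\sigma^N] = 0$, cyclicity of $\tr_s$ delivers the supertrace identity. The specific formula for $\tilde{g}_{t,T}$ results from Remark \ref{rmk:metrics} by taking $\sigma = t/\epsilon$.

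Finally, for the closedness in (1), substituting (2) reduces $d\alpha_{t,T} = 0$ to the identity $\frac{1}{t}\partial_T\tr_s[N e^{-\tilde{D}^2_{t,T,\tau h}}] = \frac{1}{T}\partial_t\tr_s[N_Y e^{-\tilde{D}^2_{t,T,\tau h}}]$. Using part (3) with $\sigma = t$, both sides may be re-expressed via the fixed reference metric $\tilde{g}_{1,T}$ and the scaled time $u = t^2$, and the identity becomes $T\partial_T \tr_s[N e^{-u\tilde{D}^2_{1,T,\tau h}}] = 2u\,\partial_u \tr_s[N_Y e^{-u\tilde{D}^2_{1,T,\tau h}}]$. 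The right-hand side equals $-2u\tr_s[N_Y\tilde{D}^2_{1,T,\tau h} e^{-u\tilde{D}^2_{1,T,\tau h}}]$. For the left-hand side I would apply Duhamel's formula, using that $\tilde{d}_{\tau h} = d + \tau\,dh\wedge$ is metric-independent so $\partial_T \tilde{D}^2_{1,T,\tau h} = \{\tilde{d}_{\tau h}, \partial_T\tilde{d}^*_{1,T,\tau h}\}$, and that $\tilde{d}_{\tau h}$ commutes with the heat semigroup. The two sides should then match after cyclic trace manipulations combined with the vanishing of supertraces of graded commutators and the infinitesimal vertical-rescaling identity for $\tilde{d}^*_{1,T,\tau h}$ derived as in (3). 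I expect the main obstacle to lie in carefully tracking $\Z_2$-graded signs through the cyclic manipulations and in verifying that all cross-terms cancel under $\tr_s$; this is essentially the Bismut--Berthomieu transgression computation of \cite[\S 4]{BB1994} adapted to our Witten-deformed setting.
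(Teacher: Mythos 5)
Your proof of (2) and (3) matches the paper's direct metric-scaling calculations, and your vanishing argument $\tr_s\big(e^{-\tilde{D}^2_{t,T,\tau h}}\big)=\chi(E,d_{\tau h})=\chi(M)=0$ (via Theorem~\ref{thm:ind_equal} and $n$ odd) is a valid variant of the paper's more direct observation that $\dim E=n+k$ is odd. For (1), the paper simply cites \cite[Theorem~4.3]{BB1994}, while you correctly reduce closedness to the Duhamel/transgression identity and identify it as the Bismut--Berthomieu computation; you leave the cross-term cancellations unverified, but since those are exactly the content of the cited theorem, your outline is consistent with what the paper does.
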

\begin{proof}
    \begin{enumerate}
        \item It follows from \cite[Theorem 4.3]{BB1994}. %\textcolor{red}{TBC}
        \item Recall that $\{e^1,\cdots,e^n,f^1,\cdots,f^k\}$ denotes an orthonormal coframe of $g = g_1$. Let $\{\bar{e}^1,\cdots,\bar{e}^n,\bar{f}^1,\cdots,\bar{f}^k\}$ be the corresponding orthonormal basis with respect to $g_{t, T}$. Then clearly we have $\bar{e}^i = \frac{1}{t}e^i$ and $\bar{f}^j = \frac{1}{tT}f^i$. Now if 
        \[\tilde{*}_{t, T}\left(\bar{e}^{i_1} \cdots \bar{e}^{i_p} \wedge \bar{f}^{j_1} \cdots \bar{f}^{j_q}\right) = \bar{e}^{i_{p+1}} \cdots  \bar{e}^{i_n} \wedge \bar{f}^{j_{q+1}}  \cdots \bar{f}^{j_k}, \]
        then equavilently we have 
        \begin{align*}
            \tilde{*}_{t, T} &\left(e^{i_1} \cdots e^{i_p} f^{j_1} \cdots f^{j_q}\right) \\
            &= t^{2(p+q) - (n+k)} T^{2q - k} e^{i_{p+1}} \cdots e^{i_n} f^{j_{q+1}} \cdots f^{j_k}.
        \end{align*}
        This proves the first line of (2). (\ref{eq:alpha_new}) follows from that fact that the dimension of $E$ is odd and thus 
        \[ \tr_s \left(e^{-\tilde{D}^2_{t, T, \tau h}}\right) = \chi(E, d_{\tau h}) = 0. \]
        
        \item The first part follows from a straightworward calculation. Using Remark \ref{rmk:metrics} we obtain the last line.
    \end{enumerate}
\end{proof}

Consider the following loop $\Gamma=\Gamma_1 \cup \Gamma_2 \cup \Gamma_3 \cup \Gamma_4$ in $\R^2$ with counterclockwise orientation, where 
\begin{align*}
    \Gamma_1 &= \{(T_0, t) \mid \sigma \le t \le A\}, \\
    \Gamma_2 &= \{(T, A) \mid 1 \le T \le T_0\}, \\
    \Gamma_3 &= \{(1, t) \mid \sigma \le t \le A\}, \\
    \Gamma_4 &= \{(T, \sigma) \mid 1 \le T \le T_0\}. 
\end{align*}

Denote $I_k^0 := \int_{\Gamma_k} \alpha_{t, T}$ for $k = 1, 2, 3, 4$. Then Proposition \ref{prop:alpha} implies that 
\begin{equation}
    I_1^0 + I_2^0 + I_3^0 + I_4^0 = 0.
\end{equation}

The main theorem will then follow from letting $A \to \infty$, $T_0 \to \infty$ (or equivalently $\epsilon_0 \to 0$) and $\sigma \to 0$ sequentially for each $I_k^0$ (subtracting divergent terms if necessary), and comparing the limits.

\subsection{$\mathbf{I_1^0}$} \label{sect:I_1}
Observe that 
\begin{align*}
    I_1^0 &= 2 \int_\sigma^A \tr_s\left(N e^{-\tilde{D}_{t, T}^2}\right) \frac{\d t}{t} \\
    &= 2 \int_\sigma^A \tr_s\left(N e^{-\left(\frac{t}{\epsilon_0}\right)^2D_{\epsilon_0, \tau h}^2}\right) \frac{\d t}{t} \\
    &= \int_{\sigma^2}^{A^2} \tr_s\left(N e^{-\frac{\tilde{t}}{\epsilon_0^2}D_{\epsilon_0, \tau h}^2}\right) \frac{\d \tilde{t}}{\tilde{t}},
\end{align*}
where in the last equality we made a substitution $\tilde{t} = t^2$. We will still write the last line as $\int_{\sigma^2}^{A^2} \tr_s\left(N e^{-\frac{t}{\epsilon_0^2}D_{\epsilon_0, \tau h}^2}\right) \frac{\d t}{t}$ when there is no confusion of notation.
%\vspace{.3cm}
\begin{enumerate}
    \item $\underline{A \to \infty}$: To cancel the term which diverges at infinity, we subtract 
\end{enumerate}
a term $2 \chi_2(E) \log A$ to obtain
\begin{equation}
    \begin{split}
        &I^0_1 - 2 \chi_2(E) \log A \to \\
        &I^1_1 := \int_{\sigma^2}^1 \tr_s\left(N e^{-\frac{t}{\epsilon_0^2}D_{\epsilon_0, \tau h}^2}\right) \frac{\d t}{t} + \int_1^\infty \tr_s\left(N e^{-\frac{t}{\epsilon_0^2}\underline{D}_{\epsilon_0, \tau h}^2}\right) \frac{\d t}{t}.
    \end{split}
\end{equation}
%\vspace{.3cm}
\begin{enumerate}
    \item[(2)] $\underline{\epsilon_0 \to 0}$: Theorem \ref{thm:interm_result_1} shows that 
\end{enumerate}
    \begin{equation*}
    \int_{\sigma^2}^1 \tr_s\left(N e^{-\frac{t}{\epsilon_0^2}D_{\epsilon_0, \tau h}^2}\right) \frac{\d t}{t} \to \int_{\sigma^2}^1 \tr_s\left(N_M e^{-tD_M^2}\right) \frac{\d t}{t}. 
\end{equation*}
On the other hand, 
\begin{equation*}
    \begin{split}
        &\int_1^\infty \tr_s\left(N e^{-\frac{t}{\epsilon_0^2}\underline{D}_{\epsilon, \tau h}^2}\right) \frac{\d t}{t} \\
        &= \int_1^\infty \tr_s\left(N e^{-\frac{t}{\epsilon_0^2}D_{\epsilon_0, \tau h}^2}P_{\epsilon_0}^{(a, \infty)}\right) \frac{\d t}{t} + \int_1^\infty \tr_s\left(N e^{-\frac{t}{\epsilon_0^2}D_{\epsilon_0, \tau h}^2}P_{\epsilon_0}^{(0, a]}\right) \frac{\d t}{t}.
    \end{split}
\end{equation*}

For the first term, it follows from Theorem \ref{thm:interm_result_2} that 
\begin{equation*}
    \int_1^\infty \tr_s\left(N e^{-\frac{t}{\epsilon_0^2}D_{\epsilon_0, \tau h}^2}P_{\epsilon_0}^{(a, \infty)}\right) \frac{\d t}{t} \to \int_1^\infty \tr_s\left(N_M e^{-t\underline{D}_{M}^2}\right) \frac{\d t}{t}
\end{equation*}
For the second term, Theorem \ref{thm:interm_result_spec} implies that $P^{(0,a]}_{\epsilon_0} = 0$ for $\epsilon_0$ small enough. Therefore if we let $u = \frac{t}{\epsilon_0^2}D_{\epsilon_0, \tau h}^2$, then 
\begin{equation}
    \begin{split}
        &\int_1^\infty \tr_s\left(N e^{-\frac{t}{\epsilon_0^2}D_{\epsilon_0, \tau h}^2}P_{\epsilon_0}^{(0,a]}\right) \frac{\d t}{t} = \tr_s\left(N\int_{\frac{1}{\epsilon_0^2}D_{\epsilon_0, \tau h}^2}^\infty e^{-u} \frac{\d u}{u} P_{\epsilon_0}^{(0,a]}\right) \\
        &= \tr_s\left(N\int_{\frac{1}{\epsilon_0^2}D_{\epsilon_0, \tau h}^2}^1 (e^{-u} - 1) \frac{\d u}{u} P_{\epsilon_0}^{(0,a]}\right) - \tr_s\left(N (\log \frac{1}{\epsilon_0^2}D_{\epsilon_0, \tau h}^2) P_{\epsilon_0}^{(0,a]} \right) \\
        &~~~~+ \tr_s\left(N\int_{1}^\infty e^{-u} \frac{\d u}{u} P_{\epsilon_0}^{(0,a]}\right) \\
        &\to 0.
    \end{split}
\end{equation}
Consequently, 
\begin{equation}
    \begin{split}
        I^1_1 \to I^2_1 := \int_{\sigma^2}^1 \tr_s\left(N_M e^{-tD_M^2}\right) \frac{\d t}{t} + \int_1^\infty \tr_s\left(N_M e^{-t\underline{D}_{M}^2}\right) \frac{\d t}{t}.
    \end{split}
\end{equation}
%\vspace{.3cm}
\begin{enumerate}
    \item[(3)] $\underline{\sigma \to 0}$: Recall that for $t$ small, 
\end{enumerate}
\begin{equation*}
    \tr_s\left(N_M e^{-t D_M^2}\right) = b_{-\frac{1}{2}} t^{-\frac{1}{2}} + o(t^{\frac{1}{2}}).
\end{equation*}
A straightforward calculation shows 
\begin{equation}
    \begin{split}
        I^2_1 - 2b_{-\frac{1}{2}} \frac{1}{\sigma} \to I^3_1 := &\int_{0}^1 \left[\tr_s\left(N_M e^{-tD_M^2}\right) - b_{-\frac{1}{2}} t^{-\frac{1}{2}}\right] \frac{\d t}{t} \\
       &  + \int_1^\infty \tr_s\left(N_M e^{-t\underline{D}_{M}^2}\right) \frac{\d t}{t} - 2b_{-\frac{1}{2}}.
    \end{split}
\end{equation}

\subsection{$\mathbf{I_2^0}$} \label{sect:I_2}
Let $P_\epsilon: L^2(\Lambda^*E) \to L^2(\ker D_{\epsilon, \tau h}^2) $ be the orthogonal projection onto the kernel. Notice that 
\begin{align}
    I^0_2 = - \int_1^{T_0} \tr_s\left(\tilde{*}_T^{-1}\partial_T \tilde{*}_T e^{-A^2T^2 D_{\epsilon, \tau h}^2}\right) \frac{\d T}{T}.
\end{align}

\begin{enumerate}
    \item $\underline{A \to \infty}$: Clearly we have 
\end{enumerate}
\begin{equation}
    I^0_2 \to I^1_2 := - \int_1^{T_0} \tr_s\left(\tilde{*}_T^{-1}\partial_T \tilde{*}_T P_\epsilon\right) \frac{\d T}{T}.
\end{equation}
 Then by \cite[Proposition 4.18]{BB1994}, we have 
\begin{equation}
    I^1_2 = \log \left(\frac{\abs{~}_{T_0}}{\abs{~}}\right)^2.
\end{equation}

\begin{enumerate}
    \item[(2)] $\underline{T_0 \to \infty}$: It follows from Theorem \ref{thm:interm_result_L^2_norm} that  
\end{enumerate}
\begin{equation}
    I^1_2  \to I^2_2:= \log \left(\frac{\leftindex_\infty{\abs{~}}}{\abs{~}}\right).
\end{equation}

\begin{enumerate}
    \item[(3)] $\underline{\sigma \to 0}$: $I^3_2 = I^2_2= \log \left(\frac{\leftindex_\infty{\abs{~}}}{\abs{~}}\right)$ .
\end{enumerate}

\subsection{$\mathbf{I_3^0}$} \label{sect:I_3}
Proposition \ref{prop:alpha} implies 
\begin{equation}
    \begin{split}
        I^0_3 &= - 2\int_\sigma^A \tr_s\left(N e^{-\tilde{D}_{t, 1, \tau h}^2}\right) \frac{\d t}{t} \\
        &= - \int_{\sigma^2}^{A^2} \tr_s\left(Ne^{-tD_{\tau h}^2}\right) \frac{\d t}{t}.
    \end{split}
\end{equation}

\begin{enumerate}
    \item $\underline{A \to \infty}$: As before, 
\end{enumerate}
\begin{equation}
    \begin{split}
        &I^0_3 + 2\chi_2(E) \log A \to \\
    &I^1_3:= -\int_{\sigma^2}^1 \tr_s\left(Ne^{-tD_{\tau h}^2}\right) \frac{\d t}{t} - \int_1^\infty \tr_s\left(Ne^{-t\underline{D}_{\tau h}^2}\right) \frac{\d t}{t}
    \end{split}
\end{equation}

\begin{enumerate}
    \item[(2)] $\underline{T_0 \to \infty}$: Since $I^1_3$ is independent of $T_0$, we have $I^2_3 = I^1_3$.
\end{enumerate}

\begin{enumerate}
    \item[(3)] $\underline{\sigma \to 0}$: Recall that by Theorem \ref{thm:a=b},   
\end{enumerate}
\[ \tr_s\left(Ne^{-t D_{\tau h}^2}\right) = a_{-1/2} t^{-\frac{1}{2}} + o(t^{\frac{1}{2}}). \]

Thus 
\begin{equation}
    \begin{split}
        I^2_3 + 2 a_{-1/2} \frac{1}{\sigma} \to 
        I^3_3:= &-\int_{0}^1 \left[\tr_s\left(Ne^{-tD_{\tau h}^2}\right) - a_{-1/2} t^{-\frac{1}{2}} \right]\frac{\d t}{t} \\
        &- \int_1^\infty \tr_s\left(Ne^{-t\underline{D}_{\tau h}^2}\right) \frac{\d t}{t} + 2a_{-1/2}.
    \end{split}
\end{equation}

\subsection{$\mathbf{I_4^0}$}  \label{sect:I_4}
To begin with, 
\begin{equation}
    I^0_4 = 2\int_1^{T_0} \tr_s\left( N_Ye^{-\sigma^2 \tilde{D}_{T,\tau h}^2} \right) \frac{\d T}{T}.
\end{equation}

\begin{enumerate}
    \item $\underline{A \to \infty}$: Obviously $I^1_4 = I^0_4$.
\end{enumerate}

\begin{enumerate}
    \item[(2)] $\underline{T_0 \to \infty}$: It follows from Proposition \ref{thm:interm_result_1} that 
\end{enumerate}
\begin{equation} \label{eq:I_4^2}
    \begin{split}
        I^1_4 \to  
        I^2_4 &:= 2\int_1^\infty \tr_s\left( N_Ye^{-\sigma^2 \tilde{D}_{T,\tau h}^2} \right) \frac{\d T}{T} \\
        &= 2\int_\sigma^{1} \tr_s\left( N_Ye^{-\sigma^2 \tilde{D}_{T/\sigma,\tau h}^2} \right) \frac{\d T}{T} + 2\int_1^\infty \tr_s\left( N_Ye^{-\sigma^2 \tilde{D}_{T/\sigma,\tau h}^2} \right) \frac{\d T}{T},
    \end{split}
\end{equation}
where in the last equality we make a change of variable $\tilde{T} = \sigma T$ and still write the new variable as $T$.

\begin{enumerate}
    \item[(3)] $\underline{\sigma \to 0}$:
For the first term in the last line of (\ref{eq:I_4^2}), using Theorem 
\end{enumerate}
\ref{thm:interm_result_I_4_1}, Theorem \ref{thm:interm_result_I_4_2}, and the dominated convergence theorem, we obtain
\begin{equation}
    2\int_\sigma^{1} \tr_s\left( N_Ye^{-\sigma^2 \tilde{D}_{T/\sigma,\tau h}^2} \right) \frac{\d T}{T} \to 0.
\end{equation}

For the second term, similarly we use Theorem \ref{thm:interm_result_I_4_1}, Theorem \ref{thm:interm_result_I_4_3} and dominated convergence theorem to deduce that 

\begin{equation}
    2\int_1^\infty \tr_s\left( N_Ye^{-\sigma^2 \tilde{D}_{T/\sigma,\tau h}^2} \right) \frac{\d T}{T} \to 0.
\end{equation}

Therefore 
\begin{equation}
    I_4^2 \to I_4^3 = 0.
\end{equation}

\subsection{Matching the divergences}
By Theorem \ref{thm:a=b}, we have the following cancelation of  the divergent terms appearing in section \ref{sect:I_1}-\ref{sect:I_4}:
\begin{equation}
    - 2 \chi'(E) \log A - 2b_{-1/2} \frac{1}{\sigma} + 2 \chi'(E) \log A + 2a_{-1/2} \frac{1}{\sigma} = 0.
\end{equation}

Therefore 
\begin{equation}
    \begin{split}
        \sum_{i=0}^4 I_i^3 = \lim_{\substack{A \to \infty \\ T_0 \to \infty \\ \sigma \to 0}} &\left[ \sum_{i=0}^4 I_i^0 - 2 \chi'(E) \log A - 2b_{-1/2} \frac{1}{\sigma} + 2 \chi'(E) \log A + 2a_{-1/2} \frac{1}{\sigma} \right] \\
        = \lim_{\substack{A \to \infty \\ T_0 \to \infty \\ \sigma \to 0}} &\left[I_1^0 + I_2^0 + I_3^0 + I_4^0\right] = 0.
    \end{split} 
\end{equation}

This completes the proof of Theorem \ref{thm:main2_limit}. \qed

\section{Adiabatic limit of heat operator: large time behavior} \label{sect:large_time}

This section is devoted to the study of the adiabatic limit of the heat operator when the time is large. As a result, we will prove Theorem \ref{thm:interm_result_1}. We start with the compact fiber case and provide a slightly different proof of some results in \cite{Dai91}. By introducing a Morse-Bott function and considering the Witten Laplacian, we show that these results remain true when the fiber is complete and noncompact. As an application, we prove that the index of the total space $E$ coincides with the index of $M$.

In the whole section, we denote by $\norm{~}_0 = \norm{~}_{L^2(\Lambda^*E)}$ the standard $L^2$-norm. If $A: L^2(\Lambda^*E) \to L^2(\Lambda^*E)$ is a bounded linear operator, we use $\norm{~}_{0,0}$ to denote the $L^2$-operator norm. Besides, there is no parity restriction on the dimensions of $E$ and $M$ in this section except in the proof of Theorem \ref{thm:interm_result_1}.

\subsection{Compact fiber case} \label{subsect:cpt_fiber}
Suppose $Y \hookrightarrow E \to M$ is a fibration of closed manifolds, where the fiber $Y$ is a compact manifold. We equip $TE$ with a metric $g_E$ as in (\ref{equa:metric}), and a connection $\nabla^E$ defined by (\ref{equa:conn}).
We further consider a family of metrics with an adiabatic parameter along the horizontal direction:
\begin{equation}
    g_{\epsilon} = \frac{1}{\epsilon^2}g_M + g_Y .
\end{equation}

For any $p \in M$, let $x^1, \cdots, x^n$ be a normal coordinate centered at $p$. Let $\{e_1, \cdots, e_n\}$ be a locally orthonormal frame around $p \in M$ of $TM$ such that $e_i(p) = \frac{\partial}{\partial x^i}(p)$ and $\nabla_{e_i}^L e_j (p) = 0$. Let $\{f_1, \cdots, f_k\}$ be an orthonormal frame for $TY$.
The connection $\nabla^E$ respects the horizontal-vertical decomposition, but may not be orthogonal in general.  To fix this, we consider the following connection
\[ \nabla^u = \nabla^E - \frac 12 \sum_{i=1}^k \inner{S(f_i)f_i, \cdot}. \]

%\begin{lemma}
    %$\nabla^u$ is an orthogonal connection with respect to $g_E$
%\end{lemma}

%\textcolor{red}{proof}

We denote by $\d^{*, \epsilon}$ the formal dual of the exterior differential operator $d$ with respect to the metric $g_{\epsilon}$. Let $D_\epsilon = \d + \d^{*, \epsilon}$ be the corresponding Dirac operator. Then
\begin{equation} \label{eq:D_epsilon}
    \begin{split}
        D_\epsilon &= \epsilon c(\tilde{e}^i) \nabla^u_{e_i} + c(f^j) \nabla^E_{f_j} - \frac{\epsilon^2}{4} c(\tilde{e}^\alpha) c(\tilde{e}^\beta) c(T) (e_\alpha, e_\beta) \\
        & := \epsilon \tilde{D}_M + D_Y - \frac{\epsilon^2}{4} T,
    \end{split}
\end{equation}
where $c(T)$ is the Clifford multiplication of the torsion tensor. We refer to \cite[P. 56]{BC} for the proof.

Although we are eventually interested in the heat operator corresponding to $D_\epsilon$, a detailed analysis of the resolvent of $\frac{1}{\epsilon}D_\epsilon$ is necessary to understand the spectrum of $D_\epsilon$. Formally, since $\frac{1}{\epsilon}D_\epsilon$ is self-adjoint, we have the following integration formula for the heat operator
\begin{equation}
    e^{-\frac{t}{\epsilon^2}D^2_{\epsilon}} = \frac{1}{2\pi \i} \int_\Gamma e^{-t \lambda^2} \left(\lambda - \frac{1}{\epsilon}D_\epsilon\right)^{-1} \d \lambda,
\end{equation}
provided that the integral on the right-hand side is uniformly convergent. Here $\Gamma$ is any contour surrounding the spectrum of $\frac{1}{\epsilon}D_\epsilon$. Given that $\frac{1}{\epsilon}D_\epsilon$ is unbounded, $\Gamma$ should be taken such that $\Gamma$, together with the infinity, surrounds its spectrum.

Now, as in \cite[Section 2.2]{Dai91}, by assuming that $\ker D_Y$ is a vector bundle, we can consider the following decomposition:
\begin{equation} \label{eq:L_2_decomp}
    L^2(\Lambda^* E) = L^2(\ker D_Y)^\perp \oplus L^2(\ker D_Y).
\end{equation}
Let $p$ and $p^\perp$ be the projections onto $L^2(\ker D_Y)$ and $L^2(\ker D_Y)^\perp$, respectively.  Using (\ref{eq:L_2_decomp}), we can write $\frac{1}{\epsilon}D_\epsilon$ as $\frac{1}{\epsilon}D_\epsilon = \begin{pmatrix}
    A_1 & A_2 \\
    A_2^* & A_3
\end{pmatrix}$, where $A_2^*$ is the dual operator of $A_2$, and
\begin{equation}
    \begin{split}
        A_1 &= p^\perp \tilde{D}_M p^\perp + \frac{1}{\epsilon} D_Y - \epsilon p^\perp \frac T4 p^\perp, \\
    A_2 &= p^\perp \tilde{D}_M p - \epsilon p^\perp \frac T4 p, \\
    A_3 &= D_0 - \epsilon p \frac T4 p.
    \end{split}
\end{equation}
Here we denote $D_0 := p \tilde{D}_M p$ for simplicity. 

The following lemma plays a crucial role in proving Theorem \ref{thm:large_time}. The proof essentially follows from {\cite[Proposition 4.41]{BC}}, {\cite[Lemma 2.6]{Dai91}}, so we omit it here.

\begin{lemma}\label{lem:spec}
    ~

    \begin{enumerate}
        \item There exists a positive constant $\lambda_0$ such that $\spec(\abs{A_1}) \subset [\frac{\lambda_0}{\epsilon}, +\infty)$.
        \item The operators $A_2$ and $A_2^*$ are bounded.
    \end{enumerate}
\end{lemma}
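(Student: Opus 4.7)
The strategy is to exploit the scale separation between the fiber operator $D_Y$, which is large on $L^2(\ker D_Y)^\perp$ after the $1/\epsilon$ scaling, and the remaining pieces in $A_1$. For part (1), I would write $A_1 = \frac{1}{\epsilon} D_Y + R_\epsilon$ with $R_\epsilon := p^\perp \tilde{D}_M p^\perp - \epsilon p^\perp \frac{T}{4} p^\perp$; both summands are self-adjoint, so the quadratic-form identity gives
\begin{equation*}
\norm{A_1 \psi}_0^2 = \frac{1}{\epsilon^2}\norm{D_Y \psi}_0^2 + \frac{1}{\epsilon}\inner{\psi, \{D_Y, R_\epsilon\}\psi} + \norm{R_\epsilon \psi}_0^2.
\end{equation*}
Because $\ker D_Y$ is a smooth subbundle over the compact base $M$, there is a uniform spectral gap $\lambda_1 > 0$ so that $\norm{D_Y \psi}_0 \ge \lambda_1 \norm{\psi}_0$ on $L^2(\ker D_Y)^\perp$. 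The key observation is that the anticommutator $\{D_Y, \tilde{D}_M\}$ is a zeroth-order operator: the principal symbols $c(\tilde{e}^i)\xi_i$ and $c(f^j)\eta_j$ involve Clifford multiplications along orthogonal horizontal and vertical directions, and $c(\tilde e^i) c(f^j) + c(f^j) c(\tilde e^i) = 0$. Combined with the $\epsilon$-suppressed torsion contribution, this gives $\norm{\{D_Y, R_\epsilon\}}_{0,0} \le C$ uniformly in $\epsilon$. Dropping the non-negative term $\norm{R_\epsilon \psi}_0^2$ and absorbing the cross term for $\epsilon$ small yields $\norm{A_1\psi}_0^2 \ge \frac{\lambda_1^2}{2\epsilon^2}\norm{\psi}_0^2$, which proves the claim with $\lambda_0 = \lambda_1/\sqrt{2}$.

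For part (2), the piece $-\epsilon p^\perp \frac{T}{4} p$ is a smooth zeroth-order endomorphism of $\Lambda^*E$ multiplied by $\epsilon$, hence manifestly bounded. For $p^\perp \tilde{D}_M p$, I would apply the Leibniz rule
\begin{equation*}
p^\perp \tilde{D}_M (p\psi) = p^\perp c(\tilde{e}^i)\bigl[(\nabla^u_{e_i} p)\psi + p\,\nabla^u_{e_i}\psi\bigr] = c(\tilde{e}^i)(\nabla^u_{e_i} p)\,p\psi,
\end{equation*}
where I used $p^\perp p = 0$ and the fact that differentiating $p^2 = p$ forces $\nabla p$ to be purely off-diagonal with respect to the splitting $\ker D_Y \oplus \ker D_Y^\perp$, so $p^\perp (\nabla^u_{e_i} p) = (\nabla^u_{e_i} p)p$. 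Since $\ker D_Y$ is a smooth finite-rank subbundle over the compact base, $(\nabla^u_{e_i} p)p$ is a smooth operator-valued section with uniformly bounded fiberwise norm; hence $A_2$ extends to a bounded operator on $L^2$, and boundedness of $A_2^*$ is automatic by duality.

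The main obstacle will be rigorously justifying the principal symbol cancellation in part (1). While the leading Clifford symbols anticommute, the full anticommutator $\{D_Y, \tilde{D}_M\}$ also involves the curvature of $\nabla^u$, the Clifford-connection commutators, and bracket terms coming from $[e_i, f_j]$ (which, by Proposition \ref{prop:S}, are encoded in the difference tensor $S$); all of these must be shown to be genuinely zeroth order and bounded uniformly. The references \cite[Proposition 4.41]{BC} and \cite[Lemma 2.6]{Dai91} carry out essentially this computation in the Bismut superconnection framework, and I would follow their argument closely, adding only the bookkeeping needed to absorb the extra torsion term $T$ that appears in our $D_\epsilon$.
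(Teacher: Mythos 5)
Your overall strategy — decomposing $A_1 = \frac{1}{\epsilon}D_Y + R_\epsilon$, expanding the quadratic form, invoking the uniform spectral gap of $D_Y$ on $L^2(\ker D_Y)^\perp$, and deferring the computational work to \cite[Proposition 4.41]{BC} and \cite[Lemma 2.6]{Dai91} — is exactly what the paper has in mind (the paper just cites those references and omits the proof). Your argument for part (2), via the Leibniz rule $p^\perp \tilde{D}_M p = c(\tilde e^i)(\nabla^u_{e_i} p)p$ together with the off-diagonality of $\nabla p$ forced by $p^2 = p$, is clean and correct, since $\ker D_Y$ is a smooth finite-rank subbundle over a compact base.

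There is, however, a genuine gap in part (1). You assert $\norm{\{D_Y, R_\epsilon\}}_{0,0} \le C$, and you justify it by observing that the principal symbols of $D_Y$ and $\tilde D_M$ anticommute. But that observation only shows the principal symbol of the anticommutator vanishes, i.e.\ that $\{D_Y,\tilde D_M\}$ is at most a \emph{first-order} operator. In fact it generically is first order: the subprincipal part contains $c(\tilde e^i)c(f^j)\nabla_{[e_i,f_j]}$, and for a nontrivial connection $[e_i,f_j]$ is a nonzero vertical vector field, so this term is a nontrivial first-order vertical differential operator and is not bounded on $L^2$. (A one-line toy model makes this concrete: with $D_Y = c(f)\partial_y$ and $\tilde D_M = c(e)(\partial_x + y\partial_y)$, one computes $\{D_Y,\tilde D_M\} = -c(e)c(f)\partial_y$.) So the inequality $\norm{\{D_Y, R_\epsilon\}}_{0,0} \le C$ is false, and your final "main obstacle" paragraph, where you say the bracket terms ``must be shown to be genuinely zeroth order,'' is asking for something stronger than what is true or needed. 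The correct form of the cross-term estimate, and the one that \cite[Proposition 4.41]{BC} actually gives (and the paper quotes later, in the proof of Proposition \ref{prop:tr_s_H}), is a bilinear bound of the type
\begin{equation*}
\bigl|\inner{\{D_Y,\tilde D_M\}\psi,\psi}\bigr| \le C\norm{D_Y\psi}_0\norm{\psi}_0 + C\norm{\psi}_0^2,
\end{equation*}
which reflects that the unbounded part of the anticommutator is \emph{vertical}. Feeding this into your quadratic-form identity and absorbing via Cauchy--Schwarz (using $\frac{C}{\epsilon}\norm{D_Y\psi}\norm{\psi} \le \frac{1}{2\epsilon^2}\norm{D_Y\psi}^2 + \frac{C^2}{2}\norm{\psi}^2$) still gives $\norm{A_1\psi}^2 \ge \frac{c}{\epsilon^2}\norm{\psi}^2$ for $\epsilon$ small, so the conclusion you want is recovered — but the intermediate step as written does not hold, and the fix is to control the cross term through $\norm{D_Y\psi}$ rather than through a nonexistent operator-norm bound on the anticommutator.
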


For any $\lambda \notin \spec(D_0)$, we consider the operator 
\begin{equation}
    S_\epsilon= \lambda - \frac{1}{\epsilon} D_\epsilon = \begin{pmatrix}
        \lambda - A_1 & -A_2 \\
        -A_2^* & \lambda - A_3 
    \end{pmatrix}.
\end{equation}
It is obvious that $S_\epsilon$ is invertible, and its inverse is given by 
\begin{equation}
    S_\epsilon^{-1} = \begin{pmatrix}
        T_1 & T_2 \\
        T_2^* & T_3
    \end{pmatrix},
\end{equation}
where 
\begin{align*}
    T_1 &= -\left(A_1 - \lambda - A_2 (A_3 - \lambda)^{-1} A_2^*\right)^{-1} \\
    &= (\lambda - A_1)^{-1} \left(I - (A_1 - \lambda)^{-1} A_2 (A_3 - \lambda)^{-1} A_2^*\right)^{-1}, \\
    T_2 &= (A_1 - \lambda)^{-1} A_2 \left(A_3 - \lambda - A_2^*(A_1 - \lambda)^{-1}A_2\right)^{-1} \\
    &= (A_1 - \lambda)^{-1} A_2 (A_3 - \lambda)^{-1} \left(I - (A_3 - \lambda)^{-1} A_2^* (A_1 - \lambda)^{-1} A_2\right)^{-1}, \\
    T_3 &= -\left(A_3 - \lambda - A_2^*(A_1 - \lambda)^{-1}A_2\right)^{-1} \\
    &= (\lambda - A_3)^{-1} \left(I - (A_3 - \lambda)^{-1} A_2^* (A_1 - \lambda)^{-1} A_2\right)^{-1}.
\end{align*}
We introduce the following notations: 
\begin{equation}
    \begin{split}
        B_1 = \left(A_1-\lambda\right)^{-1} A_2\left(A_3-\lambda\right)^{-1} A_2^*, \\
    B_2 = \left(A_3-\lambda\right)^{-1} A_2^*\left(A_1-\lambda\right)^{-1} A_2.
    \end{split}
\end{equation}
Therefore 
\begin{equation}
    \begin{split}
         T_1 &= (\lambda - A_1)^{-1} (I - B_1)^{-1}, \\
         T_2 &= (\lambda - A_1)^{-1} A_2 (\lambda - A_3)^{-1} (I - B_2)^{-1}, \\
         T_3 &= (\lambda - A_3)^{-1}(I - B_2)^{-1}.
    \end{split}
\end{equation}

%Here let us recall some basic results in functional analysis. Let $A$ be a closed operator on a Hilbert space $H$. ......

Since $\frac{1}{\epsilon} D_\epsilon$ and $D_0$ are self-adjoint, they have real spectra. Since the fiber is compact, we can pick a constant $b > 0$ such that $20 b > \max\{ \lambda_1(\frac{1}{\epsilon} D_\epsilon),\lambda_1(D_0)\}$, where $\lambda_1(\frac{1}{\epsilon} D_\epsilon)$ and $\lambda_1(D_0)$ are the smallest nonzero eigenvalues of $\frac{1}{\epsilon} D_\epsilon$ and $D_0$, respectively. Then $\spec(\frac{1}{\epsilon}D_\epsilon) \subset \{0\} \cup [20b, \infty)$. 

To define the integral, let $\Gamma = \Gamma' \cup \Gamma'' \subset \C$, where 
\begin{equation}
    \begin{split}
        \Gamma' &= \partial B_{b}(0), \\
        \Gamma'' &= \{\abs{\Re z} \ge 4b, \abs{\Im z} = 2b \} \cup \{\abs{\Re z} = 4b, \abs{\Im z} \le 2b \}.
    \end{split}
\end{equation}
Here $\Gamma_i$ and $\Gamma$ take the counterclockwise orientations. The following lemma plays a crucial role in proving Theorem \ref{thm:large_time}.
\begin{lemma} \label{lem:T_i}
    For any $\lambda \in \Gamma$, we have
    \begin{enumerate}
        \item $\lim_{\epsilon \to 0} \norm{T_1}_{0,0} = \lim_{\epsilon \to 0} \norm{T_2}_{0,0} = 0$.
        \item $\lim_{\epsilon \to 0} \norm{T_3 - (\lambda - D_0)^{-1}}_{0,0} = 0$.
    \end{enumerate}
\end{lemma}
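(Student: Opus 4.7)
The plan is to estimate each of the building blocks $(A_1-\lambda)^{-1}$, $(A_3-\lambda)^{-1}$, $A_2$, $A_2^*$ on the contour $\Gamma$, and then feed these bounds into the explicit formulas for $T_1, T_2, T_3$.

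First, I would use Lemma \ref{lem:spec}(1). Since $\spec(|A_1|) \subset [\lambda_0/\epsilon, \infty)$ and $\Gamma$ is a fixed bounded contour independent of $\epsilon$, for $\epsilon$ small enough we have $|\lambda| < \lambda_0/\epsilon$ uniformly for $\lambda \in \Gamma$, and by the spectral theorem applied to the self-adjoint operator $A_1$,
\begin{equation*}
    \|(A_1 - \lambda)^{-1}\|_{0,0} \le \frac{1}{\lambda_0/\epsilon - \sup_{\lambda\in\Gamma}|\lambda|} = O(\epsilon).
\end{equation*}
Next, I would handle $(A_3-\lambda)^{-1}$. Writing $A_3 = D_0 - \epsilon\, p(T/4)p$, the perturbation $\epsilon\, p(T/4)p$ is a bounded operator of norm $O(\epsilon)$ (since $T$ is a bounded tensor on the closed base $M$). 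The contour $\Gamma$ was chosen to avoid $\spec(D_0) \subset \{0\} \cup \{|\mu| \ge 20b\}$: on $\Gamma'$ the distance to $\spec(D_0)$ is at least $b$, and on $\Gamma''$ it is bounded below by a uniform constant. Hence $\|(D_0-\lambda)^{-1}\|_{0,0}$ is uniformly bounded on $\Gamma$. A standard Neumann series argument then gives, for $\epsilon$ small,
\begin{equation*}
    \|(A_3-\lambda)^{-1}\|_{0,0} \le \frac{\|(D_0-\lambda)^{-1}\|_{0,0}}{1 - C\epsilon\,\|(D_0-\lambda)^{-1}\|_{0,0}},
\end{equation*}
which is uniformly bounded, and moreover yields $(A_3-\lambda)^{-1} \to (D_0-\lambda)^{-1}$ in operator norm, uniformly in $\lambda \in \Gamma$.

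Combining these with Lemma \ref{lem:spec}(2), which says $\|A_2\|_{0,0}$ and $\|A_2^*\|_{0,0}$ are uniformly bounded (independent of $\epsilon$), we get
\begin{equation*}
    \|B_1\|_{0,0},\ \|B_2\|_{0,0} \le \|(A_1-\lambda)^{-1}\|_{0,0}\,\|A_2\|_{0,0}\,\|(A_3-\lambda)^{-1}\|_{0,0}\,\|A_2^*\|_{0,0} = O(\epsilon).
\end{equation*}
In particular, for $\epsilon$ small, $I - B_i$ is invertible by a Neumann series, and $(I - B_i)^{-1} \to I$ in operator norm.

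Plugging these estimates into the formulas for the $T_i$ finishes the proof. For $T_1 = (\lambda - A_1)^{-1}(I - B_1)^{-1}$, we obtain $\|T_1\|_{0,0} = O(\epsilon) \cdot O(1) \to 0$. For $T_2 = (\lambda - A_1)^{-1} A_2 (\lambda - A_3)^{-1}(I - B_2)^{-1}$, the same reasoning gives $\|T_2\|_{0,0} = O(\epsilon)$. For $T_3$, write
\begin{equation*}
    T_3 - (\lambda - D_0)^{-1} = \bigl[(\lambda - A_3)^{-1} - (\lambda - D_0)^{-1}\bigr] + (\lambda - A_3)^{-1}\bigl[(I - B_2)^{-1} - I\bigr];
\end{equation*}
the first bracket is $o(1)$ by the perturbation estimate and the second is $O(\epsilon)$ since $(I-B_2)^{-1} - I = B_2(I-B_2)^{-1}$. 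The main technical point is the uniformity in $\lambda \in \Gamma$, which is controlled by the geometric separation of $\Gamma$ from $\spec(D_0)$ together with the $O(1/\epsilon)$ spectral gap of $A_1$; everything else is a routine Neumann series argument.
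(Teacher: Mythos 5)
Your proof is correct and follows essentially the same route as the paper: estimate $\|(A_1-\lambda)^{-1}\| = O(\epsilon)$ from the spectral gap of $A_1$, estimate $\|(A_3-\lambda)^{-1}\|$ via the separation of $\Gamma$ from $\spec(D_0)$ plus the $O(\epsilon)$ perturbation, conclude $\|B_i\| = O(\epsilon)$ so Neumann series applies, and plug into the closed-form expressions for $T_1, T_2, T_3$. Your $T_3$ decomposition is an algebraically trivial variant of the paper's and works equally well.

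One small inaccuracy worth fixing: you assert that ``$\Gamma$ is a fixed bounded contour,'' but it isn't --- $\Gamma''$ contains the two horizontal rays $\{\lvert \Re z\rvert \ge 4b,\ \Im z = \pm 2b\}$ and is therefore unbounded. Consequently the bound $|\lambda| < \lambda_0/\epsilon$ cannot hold uniformly over all of $\Gamma$ for any fixed $\epsilon$. The lemma is stated pointwise (``for any $\lambda \in \Gamma$''), and that is exactly what saves the argument: for each fixed $\lambda$ one takes $\epsilon$ small enough that $|\lambda| \le \lambda_0/(2\epsilon)$, which is precisely what the paper does. The later application (Theorem \ref{thm:large_time}) recovers the needed control by splitting the contour into $|\lambda| \le \lambda_0/(2\epsilon)$ and its complement, handling the far region by the uniform $1/b$ resolvent bound and the Gaussian decay of $e^{-t\lambda^2}$. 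So your conclusion stands, but you should drop the boundedness claim and the phrase ``uniformly for $\lambda \in \Gamma$.''
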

\begin{proof}
    First we analyze the norms of operators that appear in the definitions of $T_1$, $T_2$ and $T_3$. For any $\lambda \in \Gamma$ fixed, we choose $\epsilon > 0$ sufficiently small such that $\abs{\lambda} \le \frac{\lambda_0}{2 \epsilon}$.
    It follows from lemma \ref{lem:spec} that 
\begin{align*}
    \left\|\left(A_1-\lambda\right) u\right\|_0 \geqslant\left\|A_1 u\right\|_0-|\lambda|\|u\|_0 \geqslant\left(\frac{\lambda_0}{\epsilon}-|\lambda|\right)\|u\|_0,
\end{align*}
which implies 
\begin{equation} \label{ineq:A_1}
    \norm{(A_1 - \lambda)^{-1}}_{0,0} \le \frac{1}{\frac{\lambda_0}{\epsilon} - \abs{\lambda}} \le \frac{2\epsilon}{\lambda_0}.
\end{equation}
Similarly, since $D_0$ is self-adjoint, we have
\begin{align*}
        \left\|\left(A_3-\lambda\right) u\right\|_0 &\geqslant\left\|\left(D_0-\lambda\right) u\right\|_0-\left\|\epsilon p \frac{T}{4} p u\right\|_0 \\
        & \geqslant b\|u\|_0-\epsilon C_1 \|u\|_0 =\left(b-\epsilon C_1\right)\|u\|_0.
\end{align*}
Thus 
\begin{equation} \label{ineq:A_3}
    \norm{(A_3 - \lambda)^{-1}}_{0,0} \le \frac{1}{b-\epsilon C_1}.
\end{equation}
By combining (\ref{ineq:A_1}), (\ref{ineq:A_3}), and the fact that $\norm{A_2}_{0,0} \le \sqrt{C_2}$ is bounded, we obtain
\begin{align} \label{ineq:B_1B_2} 
    \norm{B_1}_{0,0} =  \norm{B_2}_{0,0} \le \frac{2 C_2}{\lambda_0} \frac{\epsilon}{b-\epsilon C_1}  < 1, 
\end{align}
provided that $\epsilon$ is small enough. In fact, $ \lim_{\epsilon \to 0}\norm{B_i} = 0$ for $i = 1, 2$. As a result,
\begin{align}
    \norm{(I-B_i)^{-1}}_{0,0} \le \frac{1}{1-\norm{B_i}_{0,0}} \le \frac{1}{1 - \frac{2C_2}{\lambda_0} \frac{\epsilon}{b - \epsilon C_1}} \to 1, ~~~~ \text{ as } \epsilon \to 0.
\end{align}

Using (\ref{ineq:A_1}), (\ref{ineq:A_3}) and (\ref{ineq:B_1B_2}), we obtain the following estimates on the norms of the $T_i$'s. 
\begin{align}
        \norm{T_1}_{0,0} &= \norm{(\lambda - A_1)^{-1}(I - B_1)^{-1}}_{0,0} \le \frac{2\epsilon}{\lambda_0} \frac{1}{1 - \frac{2C_2}{\lambda_0} \frac{\epsilon}{b - \epsilon C_1}},  \label{ineq:T_1}\\
        \norm{T_2}_{0,0} &= \norm{(\lambda - A_1)^{-1} A_2 (\lambda - A_3)^{-1} (I - B_2)^{-1}}_{0,0} \notag \\
        &\le \frac{2\epsilon}{\lambda_0} \frac{C_2}{b - \epsilon C_1} \frac{1}{1 - \frac{2C_2}{\lambda_0} \frac{\epsilon}{b - \epsilon C_1}}. \label{ineq:T_2}
\end{align}
In particular, $\lim_{\epsilon \to 0} \norm{T_1}_{0,0} = \lim_{\epsilon \to 0} \norm{T_2}_{0,0} = 0$.

On the other hand, by the second resolvent identity, one has
\[ (\lambda - A_3)^{-1} - (\lambda - D_0)^{-1} = -(\lambda - A_3)^{-1} \left(\epsilon p \frac{T}{4} p \right) (\lambda - D_0)^{-1}, \]
which implies 
\[ \norm{(\lambda - A_3)^{-1} - (\lambda - D_0)^{-1}}_{0,0} \le  \frac{C_3}{b^2 - \epsilon b C_1} \sqrt{\epsilon} . \]
Therefore a direct computation shows
\begin{equation}
    \begin{split}
        &\norm{T_3 - (\lambda - D_0)^{-1}}_{0,0} \\
        &\le \norm{T_3 - (\lambda - D_0)^{-1}(I- B_2)^{-1}}_{0,0} + \norm{(\lambda - D_0)^{-1}(I- B_2)^{-1} - (\lambda - D_0)^{-1}}_{0,0} \\
        & \le \norm{(\lambda - A_3)^{-1} - (\lambda - D_0)^{-1}}_{0,0} \cdot \norm{(I - B_2)^{-1}}_{0,0} \\
        &+ \norm{(\lambda - D_0)^{-1}}_{0,0} \cdot \norm{(I - B_2)^{-1} - I}_{0,0} \\
        &\le o(\epsilon) + C \norm{(I - B_2)^{-1} B_2}_{0,0} = o(\epsilon). \label{ineq:T_3}
    \end{split}
\end{equation}
So we conclude that as $\epsilon \to 0$, $T_3$ approaches $(\lambda - D_0)^{-1}$ under the operator norm. 
\end{proof}

In the end, we obtain the following key technical result.
%\todo{This is just a technical result and doesn't need to be stated here}
\begin{theorem} \label{thm:large_time}
    Suppose $\ker D_Y$ is a vector bundle over $M$. Let $\delta > 0$ and $t \in [\delta, +\infty)$ be fixed. Then the adiabatic limit of the heat operator is
    \begin{equation*}
        \lim_{\epsilon \to 0}\bigg\| e^{- \frac{t}{\epsilon^2}D_\epsilon^2} - \begin{pmatrix}
            0 & 0 \\
            0 & e^{-t D_0^2}
        \end{pmatrix} \bigg\|_{0,0}  = 0.
    \end{equation*} 
\end{theorem}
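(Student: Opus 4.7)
The plan is to represent both operators via a Riesz--Dunford contour integral over $\Gamma = \Gamma' \cup \Gamma''$, compare integrands block by block, and then pass the limit $\epsilon \to 0$ inside by dominated convergence. Since $\frac{1}{\epsilon}D_\epsilon$ is self-adjoint with nonzero spectrum bounded away from $0$ by $20b$ (by the choice of $b$), and the same holds for $D_0$, the contour $\Gamma$ surrounds both spectra. Applying Theorem \ref{thm:funcional_calculus} to $f(\lambda) = e^{-t\lambda^2}$ (justified by absolute convergence of the contour integrals, as in the example following that theorem) yields
\[
e^{-\frac{t}{\epsilon^2}D_\epsilon^2}
= \frac{1}{2\pi\i}\int_\Gamma e^{-t\lambda^2}
\begin{pmatrix} T_1 & T_2 \\ T_2^* & T_3 \end{pmatrix} d\lambda,
\]
and, viewing $D_0$ as a self-adjoint operator on $L^2(\ker D_Y)$ and embedding $e^{-tD_0^2}$ as the $(2,2)$-block in the decomposition \eqref{eq:L_2_decomp},
\[
e^{-\frac{t}{\epsilon^2}D_\epsilon^2} - \begin{pmatrix} 0 & 0 \\ 0 & e^{-tD_0^2} \end{pmatrix}
= \frac{1}{2\pi\i}\int_\Gamma e^{-t\lambda^2}
\begin{pmatrix} T_1 & T_2 \\ T_2^* & T_3 - (\lambda - D_0)^{-1} \end{pmatrix} d\lambda.
\]

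On the compact piece $\Gamma'$, Lemma \ref{lem:T_i} provides pointwise operator-norm convergence to $0$ of every entry, while the explicit estimates \eqref{ineq:T_1}, \eqref{ineq:T_2}, \eqref{ineq:T_3} supply an $\epsilon$-independent, uniformly bounded dominating function; dominated convergence handles this part directly. On the unbounded piece $\Gamma''$, the factor $|e^{-t\lambda^2}|$ decays Gaussianly, because $\Re(\lambda^2) \ge 12b^2$ on the vertical segments and $\Re(\lambda^2) = (\Re\lambda)^2 - 4b^2 \to +\infty$ on the horizontal segments. Combined with the standard self-adjoint resolvent estimate $\norm{(A_i - \lambda)^{-1}}_{0,0} \le 1/\mathrm{dist}(\lambda, \spec A_i)$ and the uniform boundedness of $A_2$ (Lemma \ref{lem:spec}), one sees that $\norm{B_1}_{0,0}$ and $\norm{B_2}_{0,0}$ tend to $0$ as $|\lambda| \to \infty$, so each entry of the matrix integrand is bounded by $C/|\lambda|$ times a Gaussian in $\Re\lambda$, giving an integrable dominating function independent of $\epsilon$.

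The main obstacle is precisely this large-$|\lambda|$ analysis on $\Gamma''$: Lemma \ref{lem:T_i} is established only for $\lambda$ in a bounded region, so one must revisit the algebraic identities defining $T_1, T_2, T_3$ and track their $\lambda$-dependence out to infinity, verifying that $(I - B_i)^{-1}$ remains uniformly bounded and that the leading $1/|\lambda|$ decay of each resolvent factor survives. Once that dominating bound is in place, dominated convergence yields
\[
\lim_{\epsilon \to 0}\norm{e^{-\frac{t}{\epsilon^2}D_\epsilon^2} - \begin{pmatrix} 0 & 0 \\ 0 & e^{-tD_0^2} \end{pmatrix}}_{0,0} = 0,
\]
completing the proof.
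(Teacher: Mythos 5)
Your organizing idea --- Riesz--Dunford representation followed by dominated convergence over the fixed contour $\Gamma = \Gamma' \cup \Gamma''$ --- is a genuinely different way to package the argument from the paper's. The paper does not invoke dominated convergence; it splits $\Gamma$ into the $\epsilon$-dependent pieces $\{\lambda \in \Gamma : \abs{\lambda} \le \lambda_0/(2\epsilon)\}$, where the block estimates of Lemma~\ref{lem:T_i} hold and give an integrand that is $o(\epsilon)$ uniformly in $\lambda$ against a fixed integrable Gaussian factor, and $\{\lambda \in \Gamma : \abs{\lambda} > \lambda_0/(2\epsilon)\}$, a vanishing Gaussian tail controlled by the crude self-adjoint resolvent bound $\norm{(\lambda - \frac{1}{\epsilon}D_\epsilon)^{-1}}_{0,0} \le 1/b$. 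Both organizations rest on the same two inputs (the $O(\epsilon)$ estimates on the $T_i$ in the regime $\abs{\lambda} \le \lambda_0/(2\epsilon)$, and the crude uniform resolvent bound plus Gaussian decay at large $\abs{\lambda}$); the paper's version gives explicit rates while yours is conceptually cleaner.

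There is, however, a specific error in your large-$\abs{\lambda}$ analysis on $\Gamma''$. You assert that $\norm{B_1}_{0,0}$ and $\norm{B_2}_{0,0}$ tend to $0$ as $\abs{\lambda} \to \infty$ and that each block decays like $C/\abs{\lambda}$. This is false: on the horizontal parts of $\Gamma''$, $\lambda$ stays at distance exactly $2b$ from the real axis, but $A_3 = D_0 - \epsilon p \frac{T}{4} p$ is a Dirac-type operator on the compact base with unbounded discrete spectrum running out to $\pm\infty$ along $\R$, so $\norm{(\lambda - A_3)^{-1}}_{0,0}$ stays of order $1/(2b)$ as $\Re\lambda \to \pm\infty$; the same holds for $(\lambda - A_1)^{-1}$ once $\abs{\lambda}$ exceeds $\lambda_0/\epsilon$. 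In particular $\norm{B_i}_{0,0}$ need not drop below $1$ in that regime, so the Neumann-series identity $T_3 = (\lambda - A_3)^{-1}(I - B_2)^{-1}$ cannot be relied upon there, and the ``verify that $(I - B_i)^{-1}$ remains uniformly bounded'' part of your plan would not go through as stated. The fix is simple: on $\Gamma''$ do not use the $T_i$ formulas at all. Since $T_1, T_2, T_3$ are just blocks of $(\lambda - \frac{1}{\epsilon}D_\epsilon)^{-1}$ obtained by composing with orthogonal projections, one has $\norm{T_i}_{0,0} \le \norm{(\lambda - \frac{1}{\epsilon}D_\epsilon)^{-1}}_{0,0} \le 1/(2b)$ uniformly in $\epsilon$ on the horizontal segments, with similar constant bounds on the vertical segments and for $(\lambda - D_0)^{-1}$. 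Thus the dominating function $C\,\abs{e^{-t\lambda^2}}$ with no extra $1/\abs{\lambda}$ factor already suffices, and with that correction your dominated-convergence argument closes.
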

\begin{proof}
    First we show that the heat operator can be expressed as an integral. It follows from the proof of lemma \ref{lem:T_i} that
\begin{align*}
    \left| \int_\Gamma e^{-t \lambda^2} \left(\lambda - \frac{1}{\epsilon}D_\epsilon\right)^{-1} \d \lambda  \right| & \le \int_\Gamma \left|e^{-t \lambda^2}\right| \left\|\left(\lambda - \frac{1}{\epsilon}D_\epsilon\right)^{-1}\right\|_{0,0} \d \lambda \\
    &= 2 \int_\R e^{-t (x^2 - b^2)} \left\|\left(\lambda - \frac{1}{\epsilon}D_\epsilon\right)^{-1}\right\|_{0,0} \d x \\
    &\le 2(C + o(\epsilon)) \int_\R e^{-t (x^2 - b^2)}  \d x \\
    &= (C' + o(\epsilon)) e^{t b^2}.
\end{align*}
In particular, the integral is bounded for fixed $\epsilon$ and $t$. Therefore Theorem \ref{thm:funcional_calculus} implies  
\begin{equation}
     e^{- \frac{t}{\epsilon^2}D^2_{\epsilon}} = \int_\Gamma e^{-t \lambda^2} \left(\lambda - \frac{1}{\epsilon}D_\epsilon\right)^{-1} \d \lambda
\end{equation}
for any $\epsilon > 0$. To distinguish the contributions of different $\lambda$'s to the integral, we split the region as $\Gamma = \{\lambda \in \Gamma : \abs{\lambda} \le \frac{\lambda_0}{2\epsilon}\} \cup  \{\lambda \in \Gamma : \abs{\lambda} > \frac{\lambda_0}{2\epsilon}\}$.

For the first part, we consider the substitution $\tau = \epsilon \lambda$. Let $\Gamma_\epsilon = \{ x + \i \frac{b}{\epsilon} \mid x \in \R \}$. A direct computation shows that the following estimate is uniform in $\epsilon$:
\begin{equation}
    \left|\int_{\abs{\lambda} \le \frac{\lambda_0}{2\epsilon}, \lambda \in \Gamma} e^{-t \lambda^2} \epsilon \d \lambda\right| = \left|\int_{\abs{\tau} \le \frac{\lambda_0}{2}, \tau \in \Gamma_\epsilon} e^{- \frac{t}{\epsilon^2} \tau^2} \d \tau \right| \le C < \infty
\end{equation}
This, together with (\ref{ineq:T_1}), (\ref{ineq:T_2}) and (\ref{ineq:T_3}), implies
\begin{equation}   \label{limit:lambda_small}
    \begin{split}
    \lim_{\epsilon \to 0} \Bigg| \int_{\abs{\lambda} \le \frac{\lambda_0}{2\epsilon}, \lambda \in \Gamma}  &e^{-t \lambda^2} \begin{pmatrix}
            T_1 & T_2 \\
            T_2^* & T_3 - (\lambda - D_0)^{-1}
    \end{pmatrix} \d \lambda \Bigg|\\
    &\le \lim_{\epsilon \to 0} \int_{\abs{\lambda} \le \frac{\lambda_0}{2\epsilon}, \lambda \in \Gamma} \left| e^{-t \lambda^2} \right| o(\epsilon) \d x \\
    &\le \lim_{\epsilon \to 0} \int_\Gamma \left| e^{-t \lambda^2} \right| o(\epsilon)\d x \\
    &= 0.
    \end{split}
\end{equation}

For the second part, one has
\begin{equation}
    \begin{split}
        \label{limit:lambda_big}
        \lim_{\epsilon \to 0} \int_{\{x^2 > \lambda_0^2/4\epsilon^2 - b^2\}} &e^{-t(x^2 - b^2)} \left\|(\lambda - \frac{1}{\epsilon} D_\epsilon)\right\|^{-1}_{0,0} \d x \\
        &\le \lim_{\epsilon \to 0} \int_{\{x^2 > \lambda_0^2/4\epsilon^2 - b^2\}}  e^{-t(x^2 - b^2)} \frac{1}{b} \d x \\
        &= \lim_{\epsilon \to 0} \int_{\{y^2 > \lambda_0^2/4 - \epsilon^2 b^2\}}  e^{t b^2} e^{-t \frac{y^2}{\epsilon^2}} \frac{1}{b\epsilon} \d y \\
        &= 0,
    \end{split}
\end{equation}

Combining (\ref{limit:lambda_small}) and (\ref{limit:lambda_big}), one obtains 
\begin{align*}
    \lim_{\epsilon \to 0} e^{- \frac{t}{\epsilon^2} D_\epsilon^2} &= \lim_{\epsilon \to 0} \int_{\Gamma} e^{-t \lambda} \left(\lambda - \frac{1}{\epsilon} D_\epsilon\right)^{-1} \d \lambda \\
        &= \int_{\Gamma} e^{-t \lambda} \begin{pmatrix}
            0 & 0 \\
            0 & (\lambda - D_0)^{-1}
        \end{pmatrix} \d \lambda \\
        & = \begin{pmatrix}
            0 & 0 \\
            0 & e^{-t D_0^2}
        \end{pmatrix}.
\end{align*}
This completes the proof.
\end{proof}

   % If $A$ is self-adjoint, then $\left\|(A-\lambda)^{-1}\right\|=\frac{1}{d(\lambda, \sigma(A))}$. 
   % But in general, one only has $\left\|(A-\lambda)^{-1}\right\| \geqslant \frac{1}{d(\lambda, \sigma(A))}$. 

\subsection{Noncompact fiber case and Proof of Theorem \ref{thm:interm_result_1}} \label{subsect:noncpt_fiber}

Now we deal with the case that the fiber is complete but not compact. Assume $Y = \R^k$ is the flat Euclidean space, i.e. $E$ is a real vector bundle of rank $k$. One should notice that since the fiber is not compact, we shall no longer expect that the spectrum of $\abs{A_1}$ has a positive lower bound. 

Consider the Witten Dirac operator $D_{\epsilon, \tau h}^2$ defined in the previous section. As an analogue of (\ref{eq:D_epsilon}), we have
\begin{equation} \label{eq:D_epsilon_Witten}
    D_{\epsilon, \tau h} = \epsilon \tilde{D}_M + D_Y + \tau \hat{c}(d h) - \frac{\epsilon^2}{4}T.
\end{equation}

Let $\tilde{D}_Y := D_Y + \tau \hat{c}(d h)$.
Suppose for now that $\ker \tilde{D}_Y$ is a vector bundle. Consider the following decomposition of the $L^2$-space: 
    \begin{equation} \label{equ:decomp_noncompact}
        L^2(\Lambda^* E) = L^2(\ker\tilde{D}_Y)^\perp \oplus L^2(\ker\tilde{D}_Y).
    \end{equation}
Then we have $\frac{1}{\epsilon}D_{\epsilon, \tau h} = \begin{pmatrix}
        \tilde{A}_1 & \tilde{A}_2 \\
        \tilde{A}_2^* & \tilde{A}_3
    \end{pmatrix}$, where $\tilde{A}_2^*$ is the adjoint of $\tilde{A}_2$, and 
    \begin{align*}
        \tilde{A}_1 &= \frac{1}{\epsilon}D_Y + \frac{1}{\epsilon} \tau \hat{c}(\d h) + p^\perp \tilde{D}_M p^\perp + \epsilon p^\perp \frac{T}{4} p^\perp, \\
        \tilde{A}_2 &= p^\perp \tilde{D}_M p + \epsilon p^\perp \frac{T}{4} p, \\
        \tilde{A}_3 &= D_0 + \epsilon p \frac{T}{4} p .
    \end{align*}

A similar argument as in the proof of Lemma \ref{lem:spec} shows that  $\spec(\abs{\tilde{A}_1}) \subset [\frac{\tilde{\lambda}_0(\tau)}{\epsilon}, +\infty)$, and $\tilde{A}_2$ is a bounded operator. This time we take $b > 0$ so that $20 b > \max\{ \lambda'_0,\lambda_1(D_0)\}$, where $\lambda'_0$ is the constant given in Theorem \ref{thm:interm_result_spec}.
Then the rest of the argument in section \ref{subsect:cpt_fiber} can be extended to the noncompact case without any change. 

It turns out that, in the noncompact setting, the assumption that $\ker \tilde{D}_Y$ is a vector bundle is redundant. To see this, we observe that since $T^* E = \pi^* T^*M \oplus T^* Y$, we have 
\begin{equation} \label{equa:split_of_exterior_bundle}
    \Lambda^* E = \pi^* \left(\Lambda^* M\right) \otimes \Lambda^* Y.
\end{equation}
For simplicity, in the following we will still denote $\pi^* \left(\Lambda^* M\right)$ by $\Lambda^* M$. 

Since $\tilde{D}_Y$ only acts on the $\Lambda^* Y$ part of the exterior algebra bundle, it is easy to see 
\begin{equation} \label{equa:split_of_kernel}
    \ker \tilde{D}_Y = \Lambda^* M \otimes \ker (\tilde{D}_Y|_{\Lambda^* Y}).
\end{equation}

A direct computation in local coordinates shows 
\begin{align*}
    (\tilde{D}_Y|_{\Lambda^* Y})^2 &= - \sum_{j = 1}^k \frac{\partial^2}{\partial (y^j)^2} - k \tau + \tau \abs{y}^2 + \tau \cdot 2 \sum_{j = 1}^k f^j \wedge \iota_{f_j}.
    %&= H + TL,
\end{align*}
%where $H:= - \sum_{j = 1}^k \frac{\partial^2}{\partial (y^j)^2} - k \tau + \tau \abs{y}^2$ and $L = 2 \sum_{j = 1}^k f^j \wedge \iota_{f_j}$. 
It is a standard fact (see, for example,  \cite[Section 5.4]{Zhang_book}) that locally 
\begin{equation} \label{eq:ker_D_Y}
    \ker \tilde{D}_Y|_{\Lambda^* Y} = \ker (\tilde{D}_Y|_{\Lambda^* Y})^2 = \mathrm{span}_\R \big\{ e^{- \frac{\tau \abs{y}^2}{2}} \big\}.
\end{equation}

As we have seen, $e^{- \frac{\tau \abs{y}^2}{2}}$ can be extended to a global function on $E$. As a result, $\ker \tilde{D}_Y|_{\Lambda^* Y} \cong E \times \R^1$ is a trivial line bundle over $E$. Therefore we end up with the following noncompact version of theorem \ref{thm:large_time}.

\begin{theorem} \label{thm:large_time_noncompact}
    Let $E \to M$ be a vector bundle over a compact manifold $M$. For any $t > 0$ fixed, we have 
    \begin{equation}
        \lim_{\epsilon \to 0}\bigg\| e^{- \frac{t}{\epsilon^2}D_{\epsilon, \tau h}^2} - \begin{pmatrix}
            0 & 0 \\
            0 & e^{-t D_0^2}
        \end{pmatrix} \bigg\|_{0,0}  = 0.
    \end{equation} 
\end{theorem}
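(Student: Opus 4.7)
The plan is to mirror, with only minor adaptations, the proof of Theorem \ref{thm:large_time}. All the structural ingredients needed have in fact already been assembled just before the statement: the strong tameness of $h$ gives discrete spectrum for $D^2_{\epsilon, \tau h}$; the decomposition \eqref{equ:decomp_noncompact} and the block form of $\frac{1}{\epsilon}D_{\epsilon, \tau h}$ as $\begin{pmatrix}\tilde{A}_1 & \tilde{A}_2 \\ \tilde{A}_2^* & \tilde{A}_3\end{pmatrix}$ are set up; $\spec(|\tilde{A}_1|) \subset [\tilde{\lambda}_0(\tau)/\epsilon, +\infty)$ with $\tilde{A}_2$ bounded; and, by \eqref{eq:ker_D_Y}, $\ker(\tilde{D}_Y|_{\Lambda^*Y})$ is a genuine (trivial) line bundle, so that $p$ and $p^\perp$ are well-defined projections and $D_0 = p \tilde{D}_M p$ makes sense exactly as in the compact fiber case.

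First I would apply Theorem \ref{thm:funcional_calculus} to write
\begin{equation*}
    e^{-\frac{t}{\epsilon^2} D^2_{\epsilon, \tau h}} = \frac{1}{2\pi \i}\int_\Gamma e^{-t\lambda^2}\Bigl(\lambda - \tfrac{1}{\epsilon} D_{\epsilon, \tau h}\Bigr)^{-1} \d\lambda,
\end{equation*}
with the same contour $\Gamma = \Gamma' \cup \Gamma''$ used in Section \ref{subsect:cpt_fiber}, where now $20b$ is chosen larger than both $\lambda'_0$ (from Theorem \ref{thm:interm_result_spec}) and $\lambda_1(D_0)$. Next, I would invert $\lambda - \frac{1}{\epsilon}D_{\epsilon, \tau h}$ via the Schur complement to obtain a block form $\begin{pmatrix}\tilde{T}_1 & \tilde{T}_2 \\ \tilde{T}_2^* & \tilde{T}_3\end{pmatrix}$, defined by the same formulas in terms of $\tilde{A}_1, \tilde{A}_2, \tilde{A}_3$ and the analogues $\tilde{B}_1, \tilde{B}_2$ of $B_1, B_2$.

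The core technical step is to prove the noncompact analogue of Lemma \ref{lem:T_i}, namely that for $\lambda \in \Gamma$ one has $\|\tilde{T}_1\|_{0,0}, \|\tilde{T}_2\|_{0,0} \to 0$ and $\|\tilde{T}_3 - (\lambda - D_0)^{-1}\|_{0,0} \to 0$ as $\epsilon \to 0$. The argument is identical, word for word, to the compact case: using $\|(\tilde{A}_1 - \lambda)^{-1}\|_{0,0} \le 2\epsilon/\tilde{\lambda}_0(\tau)$, the boundedness of $\tilde{A}_2$, and the estimate $\|(\tilde{A}_3 - \lambda)^{-1}\|_{0,0} \le 1/(b - \epsilon C_1)$, one gets $\|\tilde{B}_i\|_{0,0} \to 0$, and the three bounds follow together with the second resolvent identity applied to $\tilde{A}_3$ and $D_0$.

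Finally, splitting $\Gamma$ into $\{|\lambda| \le \tilde{\lambda}_0(\tau)/(2\epsilon)\}$ and its complement, exactly the two estimates \eqref{limit:lambda_small} and \eqref{limit:lambda_big} go through verbatim, yielding
\begin{equation*}
    \lim_{\epsilon \to 0} e^{-\frac{t}{\epsilon^2} D^2_{\epsilon, \tau h}} = \frac{1}{2\pi\i}\int_\Gamma e^{-t\lambda^2}\begin{pmatrix}0 & 0 \\ 0 & (\lambda - D_0)^{-1}\end{pmatrix} \d\lambda = \begin{pmatrix}0 & 0 \\ 0 & e^{-tD_0^2}\end{pmatrix}
\end{equation*}
in operator norm. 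The only conceptual obstacle anywhere in this argument is the spectral gap for $\tilde{A}_1$, which would fail catastrophically without the Witten deformation since the unperturbed $D_Y$ on $\R^k$ has continuous spectrum starting at $0$; this is precisely the reason the Morse--Bott function $h$ was introduced, and it is already handled by Theorem \ref{thm:interm_result_spec} together with \eqref{eq:ker_D_Y}.
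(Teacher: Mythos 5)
Your proposal is correct and follows essentially the same route as the paper: the paper also observes that $\spec(|\tilde{A}_1|) \subset [\tilde{\lambda}_0(\tau)/\epsilon, +\infty)$ and $\tilde{A}_2$ is bounded by a noncompact analogue of Lemma \ref{lem:spec}, then picks $20b > \max\{\lambda'_0, \lambda_1(D_0)\}$ and declares that the argument of Section \ref{subsect:cpt_fiber} (contour integral via Theorem \ref{thm:funcional_calculus}, Schur-complement block inversion, estimates on $\tilde{T}_1, \tilde{T}_2, \tilde{T}_3$, splitting of $\Gamma$ at $|\lambda| = \lambda_0/(2\epsilon)$) carries over without change. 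You correctly pinpoint the only genuinely new issue — that $\ker(\tilde{D}_Y|_{\Lambda^*Y})$ is a trivial line bundle by \eqref{eq:ker_D_Y} and that the spectral gap for $\tilde{A}_1$ is supplied by the Witten deformation — which is exactly the content the paper emphasizes in the discussion just before the theorem.
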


\begin{remark} \label{rmk:T_tilde_i}
    If we decompose $(\lambda - \frac{1}{\epsilon}D_{\epsilon, \tau h})^{-1}$ with respect to (\ref{equ:decomp_noncompact}) as 
\begin{equation}
    \left(\lambda - \frac{1}{\epsilon} D_{\epsilon, \tau h} \right)^{-1} = \begin{pmatrix}
        \widetilde{T}_1 & \widetilde{T}_2 \\
        \widetilde{T}_2^* & \widetilde{T}_3 
    \end{pmatrix},
\end{equation}
    then from the proof of Lemma \ref{lem:T_i} one can actually deduce that for $\abs{\lambda} \le \frac{\lambda_0}{2 \epsilon}$, 
    \begin{equation}
        \norm{\widetilde{T}_1}_{0,0}, \norm{\widetilde{T}_2}_{0,0}, \norm{\widetilde{T}_3 - (\lambda - D_0^{-1})}_{0,0} \le C \epsilon,
    \end{equation}
\end{remark}

\begin{proof}[Proof of Theorem \ref{thm:interm_result_1}]
    Recall that we assume $M$ is odd-dimensional, thus $\tr_s (e^{-t_0 D_M^2}) = 0$. The proof is completed by using Remark \ref{rmk:T_tilde_i} and an argument similar to \cite[Section 5(c)-5(e)]{BB1994}.
\end{proof}

    % $e^{-t D^2_{\epsilon, \tau h}}$ is of trace class. 

\subsection{Index of vector bundle}
As an interesting and important application of the discussion above, we show that the index of the total space $E$ of a vector bundle coincides with that of the base manifold $M$.

\begin{theorem} \label{thm:tr_s_limit}
    For any $t >0$, 
    \begin{equation} 
        \lim_{\epsilon \to 0} \tr_s \left(e^{-\frac{t}{\epsilon^2}D_{\epsilon, \tau h}^2}\right) = \tr_s \left(e^{-t D_0^2}\right).
    \end{equation}
\end{theorem}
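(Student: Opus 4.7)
The plan is to upgrade the operator-norm convergence of Theorem \ref{thm:large_time_noncompact} to convergence of the supertrace by the same semigroup-factorization device used to prove Theorem \ref{thm:interm_result_1}. Since the supertrace is continuous under the trace norm but not under the operator norm, the crux is to obtain a uniform-in-$\epsilon$ trace-norm bound on the heat operator at each fixed positive time.

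Concretely, I would write
\begin{equation*}
e^{-\frac{t}{\epsilon^2} D_{\epsilon,\tau h}^2} \;=\; e^{-\frac{t/2}{\epsilon^2} D_{\epsilon,\tau h}^2} \cdot e^{-\frac{t/2}{\epsilon^2} D_{\epsilon,\tau h}^2}.
\end{equation*}
By Theorem \ref{thm:large_time_noncompact}, the second factor converges in operator norm to $\begin{pmatrix} 0 & 0 \\ 0 & e^{-(t/2) D_0^2} \end{pmatrix}$ with respect to the splitting $L^2(\Lambda^*E) = L^2(\ker \tilde{D}_Y)^\perp \oplus L^2(\ker \tilde{D}_Y)$. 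If the first factor is bounded in trace norm uniformly in $\epsilon \in (0, \epsilon_0)$, then the inequality $\norm{AB}_{\mathrm{tr}} \le \norm{A}_{\mathrm{tr}} \norm{B}_{0,0}$ together with a standard $\epsilon$--$\delta$ argument yields trace-norm convergence of the product. Because the $\Z_2$-grading commutes with the orthogonal projection onto $\ker \tilde{D}_Y$ (the latter being of pure degree $0$ in the fiber directions, cf.\ \eqref{eq:ker_D_Y}), the supertrace is additive across the two blocks, so the limiting supertrace reduces to $\tr_s(e^{-t D_0^2})$ on $L^2(\ker \tilde{D}_Y)$, which is precisely the right-hand side.

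The hard part is the uniform trace-norm bound. I would derive it from the Riesz--Dunford representation
\begin{equation*}
e^{-\frac{s}{\epsilon^2} D_{\epsilon,\tau h}^2} \;=\; \frac{1}{2\pi \i} \int_\Gamma e^{-s \lambda^2}\left(\lambda - \tfrac{1}{\epsilon} D_{\epsilon,\tau h}\right)^{-1} \d\lambda
\end{equation*}
with the contour $\Gamma = \Gamma' \cup \Gamma''$ from Section \ref{subsect:cpt_fiber}, combined with the block estimates of Remark \ref{rmk:T_tilde_i}. On the portion $\abs{\lambda} > \lambda'_0/(2\epsilon)$ the Gaussian factor $e^{-s\lambda^2}$ crushes any trace-norm growth, while on $\abs{\lambda} \le \lambda'_0/(2\epsilon)$ the diagonal block $\widetilde{T}_3$ converges to $(\lambda - D_0)^{-1}$, a resolvent of an operator pulled back from the compact base $M$ and therefore trace-class on the relevant sector, whereas $\widetilde{T}_1, \widetilde{T}_2$ are $O(\epsilon)$ in operator norm. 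Pairing these estimates with a reference heat semigroup at a fixed small time supplies the uniform trace-norm control.

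Once this bound is in hand, the remainder of the argument follows \cite[Section 5(c)--5(e)]{BB1994} essentially verbatim: the off-diagonal contributions vanish in the limit by Remark \ref{rmk:T_tilde_i}, the diagonal contour integral of $(\lambda - D_0)^{-1}$ reproduces $e^{-t D_0^2}$ via Theorem \ref{thm:funcional_calculus}, and taking the supertrace produces the claimed identity. The present statement is in fact easier than Theorem \ref{thm:interm_result_1} since the absence of the number operator $N$ removes the need to separate the horizontal and vertical degrees, and in particular no compensating $\epsilon$-power needs to be extracted from $N_Y$.
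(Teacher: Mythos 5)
Your overall framework — operator-norm convergence of the resolvent (Remark \ref{rmk:T_tilde_i}), the Riesz--Dunford contour formula, and an appeal to \cite[Section 5(c)--5(e)]{BB1994} to pass from operator-norm to trace convergence — is exactly the paper's own route, which is stated (quite tersely) as "the same as Theorem \ref{thm:interm_result_1}." Your additional device of splitting $e^{-\frac{t}{\epsilon^2}D^2_{\epsilon,\tau h}} = e^{-\frac{t/2}{\epsilon^2}D^2_{\epsilon,\tau h}}\cdot e^{-\frac{t/2}{\epsilon^2}D^2_{\epsilon,\tau h}}$ and using $\norm{AB}_{\mathrm{tr}}\le\norm{A}_{\mathrm{tr}}\norm{B}_{0,0}$ is a perfectly reasonable way to organize that passage, and your observation that the absence of $N$ makes this statement easier than Theorem \ref{thm:interm_result_1} is correct.

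There is one concrete error in the "hard part." You claim that on the inner portion of the contour, $(\lambda - D_0)^{-1}$ is "a resolvent of an operator pulled back from the compact base $M$ and therefore trace-class on the relevant sector." This is false: since $\ker\tilde{D}_Y|_{\Lambda^*Y}$ is a line bundle, $D_0$ is essentially a first-order elliptic operator on the compact $n$-manifold $M$, and its resolvent lies in the Schatten class $\mathcal{S}^p$ only for $p>n$, never in the trace class for $n\ge 1$. What makes the heat operator trace class is the Gaussian $e^{-t\lambda^2}$ under the contour integral, not the resolvent itself. Consequently, pointwise bounds on $\widetilde{T}_1, \widetilde{T}_2, \widetilde{T}_3-(\lambda-D_0)^{-1}$ in operator norm (which is all Remark \ref{rmk:T_tilde_i} gives) do not, by integration alone, yield the uniform trace-norm control you need; the Riesz--Dunford formula produces a bounded operator, and upgrading to trace class requires an independent input.

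The uniform-in-$\epsilon$ trace-norm bound on $e^{-\frac{s}{\epsilon^2}D^2_{\epsilon,\tau h}}$ is genuinely the crux, and in the Bismut--Berthomieu framework it comes from pointwise heat-kernel Gaussian estimates plus locally uniform convergence of the kernel on the diagonal (the ``5(c)--5(e)'' machinery), not from resolvent estimates. In this paper the analogous input is supplied by the off-diagonal kernel estimates of \cite{DY22}, which give Gaussian decay with constants uniform for $\epsilon$ in a bounded interval, and by the diagonal asymptotic expansion of Theorem \ref{thm:tr_s_asymp_expan}. Your factorization step is then sound: plug a uniform trace-norm bound at time $t/2$ into $\norm{AB}_{\mathrm{tr}}\le\norm{A}_{\mathrm{tr}}\norm{B}_{0,0}$, use Theorem \ref{thm:large_time_noncompact} for the other factor, and conclude. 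Just replace the trace-class-resolvent claim with a citation to the kernel estimates.
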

\begin{proof}
    The proof is essentially the same as Theorem \ref{thm:interm_result_1}, so we omit it here.
\end{proof}

\begin{proof}[Proof of Theorem \ref{thm:ind_equal}]
    The isomorphism between cohomologies is given in (\ref{eq:cohom_isom}).
    
    Recall that $D_0 = p \tilde{D}_M p$ maps $\ker \tilde{D}_Y$ to itself. Using the splittings (\ref{equa:split_of_exterior_bundle}) and (\ref{equa:split_of_kernel}), we can write $D_0$ as a twisted Dirac operator 
    \begin{equation*}
        D_0 = D_M \otimes 1 + 1 \otimes D^u: \Lambda^*M \otimes \ker (\tilde{D}_Y|_{\Lambda^* Y}) \to \Lambda^* M \otimes \ker (\tilde{D}_Y|_{\Lambda^* Y}),
    \end{equation*}
    where $D^u$ is the induced Dirac operator on $\ker (\tilde{D}_Y|_{\Lambda^* Y})$. With respect to the natural $\Z_2$ grading 
    \begin{align*}
        \Lambda^+ E &= \Big( \Lambda^+ M \otimes \Lambda^+ Y \Big) \oplus \Big( \Lambda^- M \otimes \Lambda^- Y \Big), \\
        \Lambda^- E &= \Big( \Lambda^- M \otimes \Lambda^+ Y \Big) \oplus \Big( \Lambda^+ M \otimes \Lambda^- Y \Big),
    \end{align*}
    the "positive" part of $D_0$ is $ D^+_0 = D_M^+ \otimes 1 + 1 \otimes D^u$. One should notice that $D_M^+ \otimes 1$ and $1 \otimes D^u$ map $\Lambda^+_M \otimes \ker (\tilde{D}_Y|_{\Lambda^* Y})$ to $\Lambda^- M \otimes \Lambda^+ Y$ and $\Lambda^+ M \otimes \Lambda^- Y$ separately. When coupling with the projection to the kernel of $\tilde{D}_Y|_{\Lambda^* Y}$, $1 \otimes D^u$ is the zero map.  Therefore the kernel and cokernel of $D_0^+$ split as 
    \begin{equation}
        \begin{split}
            \ker D^+_0 &= \ker D^+_M \otimes \ker (\tilde{D}_Y|_{\Lambda^* Y}), \\
        \coker D^+_0 &= \coker D^+_M \otimes \ker (\tilde{D}_Y|_{\Lambda^* Y}).
        \end{split}
    \end{equation}
    Since $\ker (\tilde{D}_Y|_{\Lambda^* Y})$ is $1$-dimensional, we obtain 
    \begin{equation}
        \begin{split}
            \ind D_0^+ &= \dim \ker D_0^+ - \dim \coker D_0^+ \\
        &= \dim \ker D_M^+ - \dim \coker D_M^+ \\
        &= \ind (M).
        \end{split}
    \end{equation}
    On the other hand, Theorem \ref{thm:tr_s_limit} and the heat equation formulation for index imply 
    \begin{equation}
        \begin{split}
            \ind D_0^+ = \tr_s e^{-t D_0^2} 
        = \tr_s \begin{pmatrix}
            0 & 0 \\
            0 & e^{-t D_0^2}
        \end{pmatrix} 
        = \lim_{\epsilon \to 0} \tr_s e^{- \frac{t}{\epsilon^2} D^2_{\epsilon, \tau h}} 
        = \ind(E),
        \end{split}
    \end{equation}
    which proves the theorem.
\end{proof}

\section{Local index theorems and small time behavior} 
From now on, we will assume $n$ is odd and $k$ is even.

\label{sect:local_ind_thm}
To evaluate $\int_0^\infty t^{s-1}\tr_s(N e^{-t \underline{\Delta}}) \d t$, we still need to understand the behavior of the supertrace when $t$ is small. 
The key to this is the local index theory, which is usually obtained by applying a rescaling technique introduced by Getzler \cite{G86}, and studying the limit of the supertrace of the heat kernel.

The standard Getzler's rescaling technique amounts to a rescale operator $G$ acting on functions, differential forms, as well as differential operators over $M$. More precisely: 
\begin{enumerate}
    \item For $f(t, x) \in C^\infty(\R^1_+ \times M)$, we define $(G f)(t ,x) = f(\epsilon^2 t, \epsilon x)$. One can easily verify that 
    \[  G  \circ \frac{\partial}{\partial x_i} \circ G^{-1} = \frac{1}{\epsilon} \frac{\partial}{\partial x_i}, ~~~ G \circ \frac{\partial}{\partial t} \circ G^{-1} = \frac{1}{\epsilon^2} \frac{\partial}{\partial t}. \]
    \item Suppose $e^1, \cdots, e^n$ is a local orthonormal frame for $T^*M$. Then $G$ acts on $\Lambda^p M$ as a rescale by its degree:
    \[ G (e^{i_1} \wedge \cdots \wedge e^{i_p}) = \epsilon^{-p} e^{i_1} \wedge \cdots \wedge e^{i_p}.  \]
    Thus it acts on Clifford multiplications by conjugation as 
    \[ G \circ c(e_i) \circ G^{-1} = \frac{1}{\epsilon} e^i \wedge - \epsilon \iota_{e_i}. \]
\end{enumerate}

In this section, we will introduce several modified rescalings that fit into different settings. Then we will use them to prove Theorem \ref{thm:a=b}, Theorem \ref{thm:interm_result_I_4_1}, and Theorem \ref{thm:interm_result_I_4_2}.

\subsection{Matching the leading terms: Proof of Theorem \ref{thm:a=b}} \label{subsect:a=b}

%In this subsection we give a proof of Theorem \ref{thm:a=b}. 
By Theorem \ref{thm:tr_s_asymp_expan}, for $t$ small enough $\Tr_s \left(N e^{-t D_{\tau h}^2}\right)$ admits an asymptotic expansion as 
\begin{equation} \label{equ:tr_s_expantion_on_E}
    \Tr_s \left(N e^{-t D_{\tau h}^2}\right) \sim \sum_{p=- n-2k}^\infty \sum_{q\in \Z} a_{p/2,q} ~ t^{\frac p2} \tau^q.
\end{equation}
By the construction in \cite{DY22}, we immediately see that $a_{p/2,q} = 0$ for $p$ even, i.e. the expansion only consists of terms with half-integer powers in $t$. 

Now, we study the convergence of the coefficients in the asymptotic expansion. We adopt the construction in \cite{BZ92} and consider a rescaling $G_1$ defined as follows:
\begin{enumerate}
    \item For $f(t, x, y) \in C^{\infty}(\R^1 \times E)$, $(G_{1}f)(t, x, y) = f(t, \sqrt{t} x, \sqrt{t} y)$. It can be checked easily that as operators acting on functions, 
    \[ G_{1} \circ e_i \circ G_{1}^{-1} = \frac{1}{\sqrt{t}} e_i, ~~ G_{1} \circ f_j \circ G_{1}^{-1} = \frac{1}{\sqrt{t}} f_j.\]
    \item The action on differential forms is defined accordingly. For our purpose, we rescale both the left and right Clifford multiplications. As a result, we have 
    \begin{align*}
        G_1 \circ c(e^i) \circ G_1^{-1} &= t^{-\frac{1}{4}} \left(e^i \wedge - t^{\frac{1}{2}} \iota_{e_i}\right) := t^{-\frac{1}{4}} c_t(e^i), \\
        G_1 \circ \hat{c}(e^i) \circ G_1^{-1} &= t^{-\frac{1}{4}} \left(e^i \wedge + t^{\frac{1}{2}} \iota_{e_i}\right) := t^{-\frac{1}{4}} \hat{c}_t(e^i)).
    \end{align*}
    We define the actions on $c(f^j)$ and $\hat{c}(f^j)$ similarly.
    A useful observation is that $c_t$ and $\hat{c}_t$ converge to wedge products as $t \to 0$.
    \item In analogy to the treatment in \cite{DY22}, we rescale the time parameter of Witten deformation by $G_1(\tau) = t^{- \frac 12} \tau$.
\end{enumerate}

\begin{remark}
    In fact, $G_1$ amounts to localizing the calculation from $(x, y)$ to any fixed point $(x_0, y_0)$. In the definition above we set $(x_0, y_0) = (0,0)$ for convenience.
\end{remark}

Next, we derive the corresponding Bochner-Lichnerowicz-Weitzenbock formula (or BLW formula for short). In what follows, we will write the index corresponding to $f_i$ as $\bar{i}$, and use the notation $\overline\sum_{ijkl}$ to denote the summation over all possible combinations of $\{i, j, k, l\}$, while allowing each index to be either with or without a "bar". For instance,  
\begin{align*}
        \overline{\sum_{i, j, k, l}} &R_{ijkl} c(e_i) c(e_j) \hat{c}(e_k) \hat{c}(e_l) \\
        := &\sum_{i, j, k, l}R_{ijkl}c(e_i) c(e_j) \hat{c}(e_k) \hat{c}(e_l) + \sum_{\bar{i}, j, k, l}R_{\bar{i}jkl}c(f_i) c(e_j) \hat{c}(e_k) \hat{c}(e_l) + \cdots  \\
        &+ \sum_{\bar{i}, \bar{j}, k, l}R_{\bar{i}\bar{j}kl}c(f_i) c(f_j) \hat{c}(e_k) \hat{c}(e_l) + \cdots + \sum_{\bar{i}, \bar{j}, \bar{k}, l}R_{\bar{i}\bar{j}\bar{k}l}c(f_i) c(f_j) \hat{c}(f_k) \hat{c}(e_l) \\
        &+ \sum_{\bar{i}, \bar{j}, \bar{k}, \bar{l}}R_{\bar{i}\bar{j}\bar{k}\bar{l}}c(f_i) c(f_j) \hat{c}(f_k) \hat{c}(f_l).
    \end{align*}

\begin{lemma}\label{lem:BLW_formula}
Let $R^{E}$ denote the scalar curvature of $E$ with respect to $g_1 = g_E$. Then
    \begin{align*}
        D_{\tau h}^2
            = &-\sum_i\Big[ e_i + \frac{1}{8} \overline{\sum_{j, k, l}}R^{L}_{ijkl}(0)x^l \big(c(e^j)c(e^k)) - \hat{c}(e^j)\hat{c}(e^k)\big) \Big]^2 \\
            &-\sum_i\Big[ f_i + \frac{1}{8} \overline{\sum_{j, k, l}}R^{L}_{\bar{i}jkl}(0)x^l \big(c(e^j)c(e^k)) - \hat{c}(e^j)\hat{c}(e^k)\big) \Big]^2 \\
            & + \frac{R^{E}}{4} + \frac{1}{8}\overline{\sum_{i, j, k, l}}R^{L}_{ijkl}c(e^i) c(e^j) \hat{c}(e^k) \hat{c}(e^l) + 2\tau \sum_i c(f^i) \hat{c}(f^i) + \tau^2 \abs{y}^2.
    \end{align*}
\end{lemma}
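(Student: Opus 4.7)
The plan is to derive the identity in three steps: first write down the standard Bochner--Lichnerowicz--Weitzen\"ock (BLW) identity for the undeformed Hodge--de Rham operator $D^{2}=(d+d^{*})^{2}$ on $\Lambda^{*}E$, then add the Witten correction from $d_{\tau h}=d+\tau\,dh\wedge$, and finally expand the resulting generalized Laplacian in normal coordinates centred at $(0,0)$ so that the kinetic part takes the completed-square form displayed.

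For step one, I write $D=\sum_{a}c(\varepsilon^{a})\nabla^{u}_{\varepsilon_{a}}$, where $\{\varepsilon_{a}\}=\{e_{i}\}\cup\{f_{j}\}$ is the combined horizontal--vertical orthonormal frame and the mild torsion contribution from Proposition \ref{prop:S} is absorbed into lower-order terms. Squaring this using $\{c(\varepsilon^{a}),c(\varepsilon^{b})\}=-2\delta_{ab}$, tracking the curvature of $\nabla^{u}$ on the Clifford module $\Lambda^{*}E$, and exploiting the pairwise anticommutation $\{c,\hat c\}=0$ from section \ref{subsection:Clifford multiplication}, one obtains
\[
D^{2}=(\nabla^{u})^{*}\nabla^{u}+\frac{R^{E}}{4}+\frac{1}{8}\overline{\sum_{i,j,k,l}}R^{L}_{ijkl}\,c(e^{i})c(e^{j})\hat c(e^{k})\hat c(e^{l}),
\]
where $R^{E}/4$ is the ``scalar'' part of the curvature endomorphism and the four-Clifford term encodes the remaining curvature via the identification of $\Lambda^{*}E$ as a Clifford bimodule, in the pattern of the treatment of $d+d^{*}$ as a twisted Dirac operator.

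For step two, I square $D_{\tau h}=D+\tau\hat c(dh)$. Using $\hat c(dh)^{2}=|\nabla h|^{2}$ and $\{c(\varepsilon^{a}),\hat c(\varepsilon^{b})\}=0$, the cross term collapses to the zeroth-order operator
\[
\tau\{D,\hat c(dh)\}=\tau\sum_{a}c(\varepsilon^{a})\,\hat c(\nabla_{\varepsilon_{a}}dh)=\tau\sum_{a,b}(\nabla^{2}h)_{ab}\,c(\varepsilon^{a})\hat c(\varepsilon^{b}).
\]
For the Morse--Bott function $h=\tfrac12|y|^{2}$, the Hessian is the orthogonal projector onto $VE$, so this reduces (modulo terms vanishing at $(0,0)$ in the synchronous frame) to a multiple of $\sum_{i}c(f^{i})\hat c(f^{i})$; combined with $\tau^{2}|\nabla h|^{2}=\tau^{2}|y|^{2}$ these account for the last two summands of the lemma.

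For step three, I expand $\nabla^{u}$ in normal coordinates $(x^{1},\ldots,x^{n},y^{1},\ldots,y^{k})$ about $(0,0)$ with a synchronous orthonormal frame. The lift of $\nabla^{u}$ to the Clifford module $\Lambda^{*}E$ has a Taylor expansion whose linear-in-coordinate part is determined by the curvature $R^{L}$ and, because of the opposite Clifford relations $\{c,c\}=-2$ versus $\{\hat c,\hat c\}=+2$, splits as
\[
\nabla^{u}_{e_{i}}=e_{i}+\tfrac{1}{8}\overline{\sum_{j,k,l}}R^{L}_{ijkl}(0)\,x^{l}\bigl(c(e^{j})c(e^{k})-\hat c(e^{j})\hat c(e^{k})\bigr)+O(|x|^{2}),
\]
and similarly for $\nabla^{u}_{f_{i}}$ with $R^{L}_{\bar i jkl}$. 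Since $(\nabla^{u})^{*}\nabla^{u}=-\sum_{a}(\nabla^{u}_{\varepsilon_{a}})^{2}$ modulo $\nabla_{\varepsilon_{a}}\varepsilon_{a}$ terms that vanish in a synchronous frame, assembling the three steps produces the stated identity. The main obstacle I anticipate lies here: the precise coefficient $\tfrac18$ and the relative minus sign between the $c$- and $\hat c$-pieces require careful bookkeeping of the Taylor expansion of the Clifford connection, and one must verify that the torsion tensor $S$ of Proposition \ref{prop:S} can be absorbed into the remainders without disturbing the completed-square structure.
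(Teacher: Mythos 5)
Your overall route---split $D_{\tau h}^2$ into $D^2$ plus a Witten correction, invoke a Lichnerowicz-type formula for $D^2$, then expand the Clifford connection in normal coordinates---is the same as the paper's very terse proof, which asserts the Witten identity and cites the ``standard BLW formula.'' But there is a concrete gap in your step two. Your anticommutator computation yields $\{D,\hat c(dh)\} = \sum_{a,b}(\nabla^2 h)_{ab}\,c(\varepsilon^a)\hat c(\varepsilon^b)$, so for $h=\tfrac12\abs{y}^2$ the Witten cross term is $\tau\sum_i c(f^i)\hat c(f^i)$, with coefficient $\tau$, whereas the lemma displays $2\tau$. You pass over this by calling your result ``a multiple of'' the lemma's term and asserting it ``accounts for'' that summand; it does not, and the mismatch must be addressed, not waved away. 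A one-dimensional check ($Y=\R$, $h=\tfrac12 y^2$) gives $D_{\tau h}^2|_{\Lambda^0}=-\partial_y^2+\tau^2y^2-\tau$, which annihilates $e^{-\tau y^2/2}$; a coefficient $2\tau$ would make this operator indefinite, so the anticommutator really does produce $\tau$, and you should say so explicitly and flag the factor-of-two discrepancy with the statement rather than claim agreement.

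More fundamentally, the proof is not finished. You explicitly defer the $\tfrac18$ prefactor, the relative minus sign between the $c(e^j)c(e^k)$ and $\hat c(e^j)\hat c(e^k)$ pieces, and the absorption of the torsion tensor $S$ to ``careful bookkeeping'' that you announce as an anticipated obstacle but do not carry out. That bookkeeping is the entire content of the lemma: the completed-square normal-coordinate expression with coefficient $\tfrac18 R^L_{ijkl}(0)x^l\big(c(e^j)c(e^k)-\hat c(e^j)\hat c(e^k)\big)$ is the thing to be proved, and announcing a plan does not prove it. Finally, your choice to write $D=\sum_a c(\varepsilon^a)\nabla^u_{\varepsilon_a}$ manufactures the torsion worry: the identity $d+d^*=\sum_a c(\varepsilon^a)\nabla^L_{\varepsilon_a}$ holds exactly for the Levi-Civita connection, so carrying out Lichnerowicz and the Taylor expansion for $\nabla^L$ directly sidesteps $S$ altogether and is also consistent with the appearance of $R^L_{ijkl}$ in the statement.
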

\begin{proof}
    For the Witten Laplacian, since $h = \sum \frac{1}{2}(y^i)^2$, one can easily verify that 
    \begin{equation}
        D_{ \tau h}^2 = D^2 + 2\tau \sum_i c(f^i) \hat{c}(f^i) + \tau^2 \abs{y}^2.
    \end{equation}
    By this and the standard BLW formula, we complete the proof.
\end{proof}

Next we consider the conjugation action of $G_1$ on $-t D^2_{\tau h}$ and compute the limit.

\begin{lemma}\label{lem:G_1_Lap_limit}
    \begin{align*}
       L_1 :&= \lim_{t \to 0} G_{1} \circ (-t D^2_{\tau h}) \circ G_{1}^{-1}  \\
        &= - \Delta_0
        -\frac{1}{8} \overline{\sum_{i, j, k, l}}R^{L}_{ijkl}(x_0, y_0)e^i\wedge e^j\wedge \hat{e}^k\wedge \hat{e}^l\wedge  - 2 \tau \sum_i f^i \wedge \hat{f}^i - \tau^2 \abs{y_0}^2,
    \end{align*}
    where $\Delta_0 = - \sum_i e_i^2 - \sum_i f_i^2$ is the Euclidean Laplacian.
\end{lemma}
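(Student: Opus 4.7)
The plan is to apply the Getzler-type rescaling $G_1$ term by term to the BLW formula of Lemma \ref{lem:BLW_formula}, multiply by $-t$, and pass to the limit $t\to 0$. The scaling rules needed are collected from the definition of $G_1$: $G_1\circ e_i\circ G_1^{-1}=t^{-1/2}e_i$ and the same for $f_j$; $G_1\circ c(e^i)\circ G_1^{-1}=t^{-1/4}c_t(e^i)$ and $G_1\circ\hat{c}(e^i)\circ G_1^{-1}=t^{-1/4}\hat{c}_t(e^i)$, where in the Berezin splitting $\Lambda^*E\,\hat{\otimes}\,\Lambda^*\hat{E}$ we have $c_t(e^i)\to e^i\wedge$ and $\hat{c}_t(e^i)\to\hat{e}^i\wedge$ as $t\to 0$; the Witten parameter is rescaled by $\tau\mapsto t^{-1/2}\tau$; and multiplication by a coordinate function $x^l$ or $y^l$ in a chart centered at $(x_0,y_0)$ picks up a factor of $\sqrt{t}$ under the conjugation.

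The two ``harmonic-oscillator'' pieces $-\sum_i[e_i+\omega_i]^2$ and $-\sum_i[f_i+\omega_{\bar{i}}]^2$ are handled first. Each $\omega_i$ is proportional to $x^l$ times $c(e^j)c(e^k)-\hat{c}(e^j)\hat{c}(e^k)$, so after conjugation the factor $\sqrt{t}$ from $x^l$ cancels the $t^{-1/2}$ from the double Clifford, leaving $\tilde\omega_i:=G_1\omega_iG_1^{-1}$ of order $1$. Since $G_1e_iG_1^{-1}=t^{-1/2}e_i$, expanding the square gives
\[
-t\,G_1[e_i+\omega_i]^2G_1^{-1}=-e_i^2-t^{1/2}(e_i\tilde\omega_i+\tilde\omega_ie_i)-t\tilde\omega_i^2,
\]
and the last two pieces vanish as $t\to 0$ because the commutator $[e_i,\tilde\omega_i]$ is a bounded multiplication operator and $\tilde\omega_i^2$ is bounded on smooth compactly supported test forms. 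Summing over $i$ together with the analogous computation for the $f$ variables produces $-\sum_ie_i^2-\sum_jf_j^2=-\Delta_0$.

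The remaining four terms are direct. The scalar-curvature piece is $O(1)$ under $G_1$, so $-tR^E/4\to 0$. The curvature quartic $\frac{1}{8}\overline{\sum}R^L_{ijkl}c(e^i)c(e^j)\hat{c}(e^k)\hat{c}(e^l)$ scales as $t^{-1}$ from the four Clifford actions while the coefficient $R^L_{ijkl}$ localizes to $R^L_{ijkl}(x_0,y_0)$; multiplying by $-t$ and letting $t\to 0$ yields $-\frac{1}{8}\overline{\sum}R^L_{ijkl}(x_0,y_0)\,e^i\wedge e^j\wedge\hat{e}^k\wedge\hat{e}^l\wedge$. The Witten term $2\tau\sum_ic(f^i)\hat{c}(f^i)$ picks up one $t^{-1/2}$ from $\tau$ and one from $c\hat{c}$, so $-t$ times its conjugate converges to $-2\tau\sum_if^i\wedge\hat{f}^i\wedge$. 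Finally, $\tau^2|y|^2$ becomes $t^{-1}\tau^2|y_0+\sqrt{t}\,y|^2$ after conjugation, and $-t$ times this tends to $-\tau^2|y_0|^2$. Assembling the four contributions gives the stated formula for $L_1$.

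The one subtle point, which I expect to require the most care in writing out, is the argument in the second paragraph: I must verify that the cross and quadratic terms in the expansion of $[e_i+\omega_i]^2$, although formally present at orders $t^{1/2}$ and $t$, do not persist in the limit. Once this bookkeeping is confirmed, the remainder of the proof is a tabulation of scaling exponents together with Taylor expansions of $|y_0+\sqrt{t}\,y|^2$ and $R^L_{ijkl}(\sqrt{t}\,x,\sqrt{t}\,y)$ around the base point $(x_0,y_0)$.
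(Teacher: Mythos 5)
Your proposal is correct and follows essentially the same route as the paper: plug the BLW formula of Lemma \ref{lem:BLW_formula} into the conjugation by $G_1$, tabulate the scaling weight of each factor (a $t^{-1/4}$ for every $c$ and every $\hat{c}$, a $t^{-1/2}$ for each derivation, a $t^{1/2}$ for each Taylor coefficient $x^l$ or $y^l$, and a $t^{-1/2}$ for $\tau$), and let $t\to 0$. You correctly identify that the Bismut--Zhang rescaling, which shares the weight between $c$ and $\hat{c}$ at $t^{-1/4}$ each rather than concentrating it at $t^{-1/2}$ on $c$ alone, makes $\tilde\omega_i = G_1\omega_iG_1^{-1}$ of order one rather than order $t^{-1/2}$, so the harmonic-oscillator potential is washed out and only the flat Laplacian survives from the squared-bracket terms; this is exactly what distinguishes the limit here from the model operator in the standard Getzler computation. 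The scalings of the curvature quartic, the Witten term, and the potential $\tau^2|y|^2$ are handled correctly, as is the localization of the curvature coefficient to $(x_0,y_0)$.

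Two small points deserve a note. First, your last displayed identity in the second paragraph has a sign slip: since the BLW formula reads $D^2_{\tau h}=-\sum_i[e_i+\omega_i]^2-\sum_i[f_i+\omega_{\bar i}]^2+\cdots$, the quantity that converges is $+\,t\,G_1\sum_i[e_i+\omega_i]^2G_1^{-1}\to +\sum_i e_i^2$, so the limit of the bracketed terms is $\sum_ie_i^2+\sum_jf_j^2$, which with the paper's convention $\Delta_0=-\sum_ie_i^2-\sum_if_i^2$ is indeed $-\Delta_0$ — but the string you wrote, $-\sum_ie_i^2-\sum_jf_j^2=-\Delta_0$, is off by a sign on both sides and happens to land on the right answer for the wrong reason. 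Second, it is cleaner to phrase the vanishing of the cross and quadratic terms as coefficient-wise convergence of differential operators (the heat-kernel parametrix only sees the coefficients), since the anticommutator $e_i\tilde\omega_i+\tilde\omega_ie_i$ is still a first-order operator and ``bounded multiplication operator'' applies only to the commutator $[e_i,\tilde\omega_i]$. Neither point affects the validity of the argument.
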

\begin{proof}
    To apply Getzler's rescaling technique, for any $p_0 = (x_0, y_0) \in E$, we trivialize $\Lambda T^*E$ over a normal coordinate neighborhood of $p_0$ via parallel transport, with respect to $\nabla^L$, along radical geodesics. Thus if $q\in E$ is sufficiently close to $p_0$, we can identify $T_q E$ with $T_{p_0} E$.
    
    Notice that under the conjugate action of $G_1$, each Clifford multiplication $c$ (or $\hat{c}$) comes with a factor $t^{-\frac{1}{4}}$. The rest of the proof follows from a straightforward computation.
\end{proof}

\begin{corollary} \label{coro:t_limit}
    \begin{align*}
        \lim_{t \to 0} \sqrt{t} G_{1} \circ \left(N e^{-t D^2_{\tau h}}\right) \circ G_{1}^{-1} = \left(\frac{1}{2} \sum_r e^r \wedge \hat{e}^r\wedge + \frac{1}{2} \sum_r f^r \wedge \hat{f}^r\wedge\right) e^{L_1}.
    \end{align*}
\end{corollary}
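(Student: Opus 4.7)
The strategy is to decompose the product $N e^{-tD^2_{\tau h}}$ under conjugation by $G_1$ and handle the two factors separately, using Lemma \ref{lem:G_1_Lap_limit} for the heat factor and a direct algebraic rescaling for the number operator.

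The first step is to rewrite $N$ in terms of Clifford multiplications. Using $c(e^r) = e^r\wedge - \iota_{e_r}$ and $\hat{c}(e^r) = e^r\wedge + \iota_{e_r}$, one verifies the identity $e^r\wedge\iota_{e_r} = \tfrac12(c(e^r)\hat{c}(e^r)+1)$, and the analogous identity for $f^r$. Summing over $r$ gives
\[
N = \tfrac12\sum_{r=1}^n c(e^r)\hat{c}(e^r) + \tfrac12\sum_{r=1}^k c(f^r)\hat{c}(f^r) + \tfrac{n+k}{2}.
\]
Although $N$ is just degree-counting, this rewrite is what couples to the rescaling.

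The second step is the rescaling of $N$. Conjugation by $G_1$ turns each $c(e^r)\hat{c}(e^r)$ into $t^{-1/2}c_t(e^r)\hat{c}_t(e^r)$, and similarly for $f^r$. Multiplying by the external $\sqrt{t}$,
\[
\sqrt{t}\,G_1 N G_1^{-1} = \tfrac12\sum_r c_t(e^r)\hat{c}_t(e^r) + \tfrac12\sum_r c_t(f^r)\hat{c}_t(f^r) + \tfrac{n+k}{2}\sqrt{t}.
\]
Passing to the limit in the Berezin-doubled framework of Proposition \ref{prop:tr_s_Berezin_int}, where $c_t(e^r)\to e^r\wedge$ and $\hat{c}_t(e^r)\to\hat{e}^r\wedge$ as $t\to 0$, the scalar remainder $\tfrac{n+k}{2}\sqrt{t}$ vanishes and the limit is exactly the prefactor $\tfrac12\sum_r e^r\wedge\hat{e}^r\wedge + \tfrac12\sum_r f^r\wedge\hat{f}^r\wedge$ appearing in the corollary.

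The third step is to promote Lemma \ref{lem:G_1_Lap_limit} from convergence of generators to convergence of heat operators. I would apply the Riesz--Dunford contour representation from Section \ref{subsection:Clifford multiplication} together with a Volterra (Duhamel) expansion to conclude $G_1 e^{-tD^2_{\tau h}} G_1^{-1}\to e^{L_1}$, as in the standard proof of Getzler's local index theorem; the prefactor is uniformly bounded in the Berezin picture so the factorization $G_1(N e^{-tD^2_{\tau h}}) G_1^{-1} = (G_1 N G_1^{-1})(G_1 e^{-tD^2_{\tau h}} G_1^{-1})$ passes to the product limit, yielding the corollary. The main technical difficulty lies in this last step: the noncompactness of the fiber and the harmonic-oscillator-type content of $L_1$ through the $-2\tau\sum_i f^i\wedge\hat{f}^i$ and $-\tau^2|y_0|^2$ terms make the Volterra comparison delicate, but the strong-tameness condition on $h$ and the uniform heat-kernel estimates recalled from \cite{DY22} in Section \ref{section:asymtotic_expansion} provide off-diagonal control that is preserved by the parabolic rescaling, which is enough for the diagonal/supertrace applications needed in the proof of Theorem \ref{thm:a=b}.
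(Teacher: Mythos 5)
Your proposal is correct and takes essentially the same route as the paper: decompose $N$ as $\frac{n+k}{2} + \frac12\sum_r c(e^r)\hat c(e^r) + \frac12\sum_r c(f^r)\hat c(f^r)$, observe that $G_1$-conjugation contributes $t^{-1/2}$ against each $c\hat c$ pair so the external $\sqrt t$ cancels while the scalar $\frac{n+k}{2}\sqrt t$ dies, and then invoke Lemma \ref{lem:G_1_Lap_limit} for the heat factor. The only difference is that the paper states the final step in one line whereas you sketch how one would justify passing from convergence of the generators to convergence of the heat operators via Duhamel/Volterra; that is a reasonable expansion of what the paper leaves implicit. (Minor point: the Riesz--Dunford calculus is recalled in a separate subsection of Section 2, not in Section \ref{subsection:Clifford multiplication}.)
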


\begin{proof}
    Recall that the number operator decomposes as $N = N_M \otimes 1 + 1 \otimes N_Y$, where 
    \begin{align*}
        N_M &= e^i\wedge \iota_{e_i} = \frac{n}{2} + \frac{1}{2} \sum_i c(e^i) \hat{c}(e^i),\\
        N_Y &= f^i\wedge \iota_{f_i} = \frac{k}{2} + \frac{1}{2} \sum_i c(f^i) \hat{c}(f^i)
    \end{align*}
    are the horizontal and vertical number operators. Therefore by the definition of $G_1$ and Lemma \ref{lem:G_1_Lap_limit}, we complete the proof.
\end{proof}

The above limit should be understood in the sense that the coefficients of $\sqrt{t} G_{1} \circ \left(N e^{-t D^2_{\tau h}}\right) \circ G_{1}^{-1}$ converge to those of the limit operator as $t \to 0$. In order to see the effect on the supertraces, we recall that the heat kernel $K_\tau(t, x, y)$ (along the diagonal) of $N e^{-t D^2_{\tau h}}$ admits an asymptotic expansion of the form 
\begin{equation}
    K_\tau(t, x,y) \sim \sum_{j,l} t^j\tau^l A_{j,l;I,J,I',J'}(x,y) c(e^I f^J) \hat{c}(e^{I'} f^{J'}), 
\end{equation}
while for $\tr_s\left(N e^{-t D^2_{\tau h}}\right)$ we have 
\begin{equation}
    \tr_s \left(N e^{-t D^2_{\tau h}}\right) \sim \sum_{p \text{ odd}} \sum_q a_{p/2,q} t^{p/2}\tau^q.
\end{equation}

The corresponding heat kernel of the rescaled operator $G_1 \circ Ne^{-t D^2_{\tau h}} \circ G_1^{-1}$ is $t^{\frac{n+k}{2}} G_1 (K_\tau)$. By the definition of $G_1$, one has 
\begin{align*}
    &t^{\frac{n+k}{2}} G_1 (K_\tau)(t, x,y) \\
    &\sim t^{\frac{n+k}{2}} \sum_{j,l} t^j \cdot t^{-\frac{l}{2}} \tau^l A_{j,l;I,J,I',J'}(\sqrt{t} x,\sqrt{t} y) t^{-\frac{\abs{I} + \abs{I'} + \abs{J} + \abs{J'}}{4}} c_t(e^I f^J) \hat{c}_t(e^{I'} f^{J'}).
\end{align*}

Since only terms with $\abs{I} = \abs{I'} = n$ and $\abs{J} = \abs{J'} = k$ can survive under the supertrace, the two factors $t^{\frac{n+k}{2}}$ and $t^{-\frac{\abs{I} + \abs{I'} + \abs{J} + \abs{J'}}{4}}$ are canceled. This leads to 
\begin{align*}
    \tr_s \left(t^{\frac{n+k}{2}} G_1 (K_\tau)(t, 0, 0)\right) \sim (-1)^{\frac{(n+k)(n+k+1)}{2}} 2^{n+k}\sum_{j,l} t^{j - \frac{l}{2}} \tau^lA_{j,l;\text{top}}(0,0) .
\end{align*}
In particular, 
\begin{equation} \label{eq:tr_s_G_1_tau}
    \tr_s \left(G_1 \circ N e^{-t D^2_{\tau h}} \circ G_1^{-1}\right) \sim \sum_{p \text{ odd}} \sum_q a_{p/2,q} t^{\frac{p-q}{2}} \tau^q .
\end{equation}

Recall that by assumption, the base manifold $M$ is compact and odd-dimensional. It follows from \cite[Theorem 5.1]{DM} that 
\begin{equation} \label{equ:tr_s_expantion_on_M}
    \tr_s \left(N_M e^{-t D_M^2}\right) \sim b_{-1/2}t^{-\frac 12} + \sum_{j \ge 1, odd} b_{j/2} t^{\frac{j}{2}},
\end{equation}
and
\begin{equation} \label{equ:b_-1/2}
    b_{-1/2} e^1 \wedge \cdots \wedge e^n =  2\i (-1)^{\frac{n+1}{2}} (16\pi)^{-\frac{n}{2}} \sum_{k=1}^n (-1)^k \mathrm{Pf}(R_k) \wedge e^k,
\end{equation}
where $R_k = \left( \sum_{p, q \ne k} R_{ijkl} e^p \wedge e^k \right)_{i, j \ne k}$ is the matrix $2$-form obtained by deleting the $k$-th row and column from the curvature matrix $2$-form. 

Now we are ready to prove Theorem \ref{thm:a=b}. 
\begin{proof}[Proof of Theorem \ref{thm:a=b}]
    Let $\tilde{K}_\tau(x,y; x,y) \in \Gamma(\Lambda^*E \hat{\otimes} \Lambda^*\hat{E})$ denote the diagonal heat kernel of the limiting operator $e^{L_1}$ in Lemma \ref{lem:G_1_Lap_limit}. It is easy to see that 
    \begin{equation} \label{eq:tilde_K_tau}
        \tilde{K}_\tau = \frac{1}{(4\pi)^{\frac{n+k}{2}}} e^{-\tau^2\abs{y}^2} e^{-\frac{1}{8}\overline{\sum}_{ijkl}R^L_{ijkl}e^i \wedge e^j \wedge \hat{e}^k \wedge \hat{e}^l - 2\tau\sum_i f^i \wedge \hat{f}^i}.
    \end{equation}
    
    Recall that as an element of $\Lambda^*E \hat{\otimes} \Lambda^*\hat{E}$, the curvature $R^L:= (\nabla^L)^2$ is given by 
    \begin{equation}
        R^L = \frac{1}{4} \overline{\sum_{i,j,k,l}} R^L_{ijkl} e^i \wedge e^j \wedge \hat{e}^k \wedge \hat{e}^l.
    \end{equation}
    To deal with the mixed curvature terms in (\ref{eq:tilde_K_tau}), we consider another connection $\tilde{\nabla}$ on $TE$ defined by
    \begin{equation}
        \tilde{\nabla}_{e_i} e_j = \nabla^M_{e_i}e_j, ~ \tilde{\nabla}_{e_i} f_j = \tilde{\nabla}_{f_i} e_j = \tilde{\nabla}_{f_i} f_j = 0.
    \end{equation}
    Clearly $\tilde{\nabla}$ is the extension of the pullback connection of $\nabla^M$, therefore its curvature form $\tilde{R}$ is 
    \begin{equation}
        \tilde{R} = \frac{1}{4} \sum_{i,j,k,l} R^M_{ijkl} e^i \wedge e^j \wedge \hat{e}^k \wedge \hat{e}^l = R^M.
    \end{equation}

    By Chern-Weil theory, there is an $\omega \in \Omega^{n-1}(E)$ so that 
    \begin{equation}
        \int^Be^{-\frac{1}{2}R^L} = \int^Be^{-\frac{1}{2}R^M} + \d \omega.
    \end{equation}
     Moreover, $\omega$ can be written as polynomials of $g^E$ and $g^M$ as well as their derivatives. Since clearly $(E, g^E)$ has bounded geometry, both $e^{-\tau^2\abs{y}^2} \omega$ and $ \d \left(e^{-\tau^2\abs{y}^2} \omega\right)$ are integrable. It follows from a generalized Stokes theorem (see \cite{Gaffney_stokes}) that 
    \begin{equation}
        \int_E \d \left(e^{-\tau^2\abs{y}^2} \omega\right) = 0.
    \end{equation}
    
    Notice that 
    \begin{equation}
        \d \left(e^{-\tau^2\abs{y}^2} \omega\right) = \sum_{i=0}^k 2\tau^2y_i e^{-\tau^2\abs{y}^2} \d y^i \wedge \omega + e^{-\tau^2\abs{y}^2} \d \omega.
    \end{equation}
    By the construction above we see that $\omega$ is independent of $y$. Since $y_i e^{-\tau^2\abs{y}^2}$ is an odd function in $y$, we have $\int_E \sum_{i=0}^k 2y_i e^{-\tau^2\abs{y}^2} \d y^i \wedge \omega  = 0$, and 
    \begin{equation}
        \int_E e^{-\tau^2\abs{y}^2} \d \omega = \int_E \d \left(e^{-\tau^2\abs{y}^2} \omega\right) = 0.
    \end{equation}
    In particular, this shows that in (\ref{eq:tilde_K_tau}), only the curvature terms coming from the base manifold can make a contribution to the integration over $E$.
    
    It follows from Proposition \ref{prop:tr_s_Berezin_int}, Corollary \ref{coro:t_limit}, and the standard Mckean-Singer formula that 

    \begin{align} 
        &\lim_{t \to 0} \sqrt{t} \tr_s \left(G_1 \circ N e^{-t D^2_{\tau h}} \circ G_1^{-1}\right) \label{eq:a_-1/2_lim}\\
            &= (-1)^{\frac{(n+k)(n+k+1)}{2}} 2^{n+k} \int_E \int^B \left[\left(\frac{1}{2} \sum_r e^r \wedge \hat{e}^r\wedge + \frac{1}{2} \sum_r f^r \wedge \hat{f}^r\wedge\right) e^{L_1}\right] \d vol_E \notag\\
            &= (-1)^{\frac{n+k+1}{2}} 2^{n+k-1} \frac{1}{(4\pi)^{\frac{n+k}{2}}} \cdot \notag\\
            &\int_E e^{-\tau^2\abs{y}^2}\left(\int^B \sum_r e^r \wedge \hat{e}^r \wedge  e^{-\frac{1}{8}\overline{\sum}_{ijkl}R^L_{ijkl}e^i \wedge e^j \wedge \hat{e}^k \wedge \hat{e}^l - 2 \tau \sum_i f^i \wedge \hat{f}^i}\right) \d vol_E \notag\\
            &= (-1)^{\frac{n+k+1}{2}} 2^{n+k-1} \frac{1}{(4\pi)^{\frac{n+k}{2}}} \cdot \notag\\
            & \left( \int_{\R^k} (-1)^{\frac{k(k-1)}{2}} (2\tau)^k e^{-\tau^2\abs{y}^2} \d y \right) \cdot \int_M \left( \int^B \sum_r e^r \wedge \hat{e}^r \wedge  e^{-\frac{1}{2} R^M} \right) \d vol_M \notag\\
            &= b_{-1/2}.\notag
    \end{align}

    On the other hand, equation (\ref{eq:tr_s_G_1_tau}) implies 
    \begin{align}
        \lim_{t \to 0} \sqrt{t} \tr_s \left(G_1 \circ N e^{-t D^2_{\tau h}} \circ G_1^{-1}\right) &= \sum_{p-q+1 = 0} a_{p/2, q} \tau^q.
    \end{align}
    However by (\ref{eq:a_-1/2_lim}), the right hand side should be independent of $\tau$. Therefore $a_{p/2,q} = 0$ for all $q = p+1 \ne 0$ and 
\begin{equation}
    \begin{split}
        a_{-1/2,0} = \lim_{t \to 0} \sqrt{t} \tr_s \left(G_1 \circ N e^{-t D^2_{\tau h}} \circ G_1^{-1}\right) = b_{-1/2}.
    \end{split}
\end{equation}
\end{proof}

\subsection{A Vanishing result: Proof of Theorem \ref{thm:interm_result_I_4_1}} \label{subsect:proof_int_res_I_4_1}
%In this subsection, we prove Theorem \ref{thm:interm_result_I_4_1}.

We consider a new metric $g_{\sigma, T}$ on $E$ given by 
\begin{equation}
    g_{\sigma, T} := \frac{1}{\sigma^2}g_M + \frac{1}{T^2} g_Y.
\end{equation}
One should notice that this metric depends on two parameters $\sigma, T > 0$ in a non-trivial way.  Let $D_{\sigma, T, \tau h}$ denote the Witten-Dirac operator associated with $g_{\sigma, T}$. Observe that $g_{\sigma, T} = \frac{1}{\sigma^2} \tilde{g}_{T/\sigma}$. Therefore by Proposition \ref{prop:alpha},
\begin{equation} \label{eq:tr_sigma_T}
    \tr_s\left( N_Y e^{-D_{\sigma, T, \tau h}^2} \right) = \tr_s\left( N_Y e^{- \sigma^2 \tilde{D}_{T/\sigma, \tau h}^2} \right).
\end{equation}

Let $\{e^1, \cdots, e^n, f^1, \cdots, f^k\}$ be a locally orthonormal coframe for the metric $g_E = g_{1, 1}$. Set $\tilde{e}^i = \sigma e^i$ and $\tilde{f}^i = T f^i$. Then $\{\tilde{e}^1, \cdots, \tilde{e}^n, \tilde{f}^1, \cdots, \tilde{f}^k\}$ forms an orthonormal coframe for $g_{\sigma, T}$. 
We will use $c(\tilde{e}^{i_1 \cdots i_p} \tilde{f}^{j_1 \cdots j_q})$ to denote $c(\tilde{e}^{i_1})\cdots c(\tilde{e}^{i_p})c(\tilde{f}^{j_1})\cdots c(\tilde{f}^{j_p})$.

Inspired by \cite{BC}, we consider a new rescaling $G_2$ that only involves the base direction:
\begin{enumerate}
    \item For $f(x, y) \in C^{\infty}(E)$, $(G_{2}f)(x, y) = f(\sigma x, y)$. Thus  
    \[ G_{2} \circ \tilde{e}_i \circ G_{2}^{-1} = e_i.\]
    \item We only rescale the left Clifford multiplications along the base direction, i.e. 
    \begin{equation*}
        G_2 \circ c(\tilde{e}^i) \circ G_2^{-1} = \frac{1}{\sigma} \left(\tilde{e}^i \wedge - \sigma^2 \iota_{\tilde{e}_i}\right) := \frac{1}{\sigma}c_\sigma(\tilde{e}^i).
    \end{equation*}
\end{enumerate}

As an analogue of Lemma \ref{lem:BLW_formula}, we have:
\begin{lemma}\label{lem:BLW_formula_new}
Let $R^{E, \sigma, T}$ be the scalar curvature of $E$ with respect to $g_{\sigma, T}$. Then
    \begin{equation*}
        \begin{split}
            &D_{\sigma, T, \tau h}^2
            = - \sigma^2\sum_i\Big[ e_i + \frac{1}{8} \overline{\sum_{j, k, l}}R^{L, \sigma, T}_{ijkl}(0)x^l \big(c(\tilde{e}^j)c(\tilde{e}^k) - \hat{c}(\tilde{e}^j)\hat{c}(\tilde{e}^k)\big) \Big]^2 \\
            & \hspace{.3cm}-T^2\sum_i\Big[ \nabla^E_{f_i}  + \frac 12 \sum_{j,k} \inner{S^{\sigma, T}(f_i)\tilde{f}_j, \tilde{e}_k}_{\sigma, T} \left( c(\tilde{f}^j) c(\tilde{e}^k) - \hat{c}(\tilde{f}^j)\hat{c}(\tilde{e}^k) \right)\\
            & \hspace{.3cm}+ \frac 14 \sum_{j,k} \inner{S^{\sigma, T}(f_i)\tilde{e}_j, \tilde{e}_k}_{\sigma, T} \left( c(\tilde{e}^j)c(\tilde{e}^k) - \hat{c}(\tilde{e}^j)\hat{c}(\tilde{e}^k) \right)\Big]^2 + \frac{1}{4}R^{E, \sigma, T} \\
            &  \hspace{.3cm}+ \frac{1}{8}\overline{\sum_{i, j, k, l}}R^{L,\sigma, T}_{ijkl}c(\tilde{e}^i) c(\tilde{e}^j) \hat{c}(\tilde{e}^k) \hat{c}(\tilde{e}^l) + 2\tau \sum_i c(\tilde{f}^i) \hat{c}(\tilde{f}^i) + \tau^2 T^2 \abs{y}^2.
        \end{split}
    \end{equation*}
\end{lemma}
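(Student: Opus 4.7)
The plan is to derive Lemma \ref{lem:BLW_formula_new} as a direct two-parameter generalization of Lemma \ref{lem:BLW_formula}. The starting point is the standard Bochner-Lichnerowicz-Weitzenbock formula for the Riemannian Dirac operator $D_{\sigma,T}$ on $(E, g_{\sigma,T})$, combined with the Witten deformation identity
\begin{equation*}
    D_{\sigma,T,\tau h}^2 = D_{\sigma,T}^2 + \tau^2 \abs{\nabla^{\sigma,T} h}_{g_{\sigma,T}}^2 + \tau \bigl\{D_{\sigma,T}, \hat{c}^{\sigma,T}(dh)\bigr\}.
\end{equation*}
For $h = \frac{1}{2}\abs{y}^2$, direct computation in the flat vertical fibre yields $\abs{\nabla^{\sigma,T} h}_{g_{\sigma,T}}^2 = T^2 \abs{y}^2$, while the spin-connection compatibility $[\nabla_{E_a}, \hat{c}(\alpha)] = \hat{c}(\nabla_{E_a}\alpha)$ reduces the anticommutator to a pure Hessian contribution $\sum_{a,b} \mathrm{Hess}^{\sigma,T}(h)(\tilde{e}_a, \tilde{e}_b)\, c(\tilde{e}^a)\hat{c}(\tilde{e}^b)$. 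Since $h$ depends only on the vertical coordinates and the fibre metric is flat, this collapses, after book-keeping of the scaling and of the two separate contributions of $[d, \iota_{\nabla^{\sigma,T} h}]$ and $[d^{*,\sigma,T}, dh\wedge]$, to $2\tau \sum_i c(\tilde{f}^i)\hat{c}(\tilde{f}^i)$, producing the two Witten terms at the bottom of the statement.

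Next, split the connection Laplacian $(\nabla^{L,\sigma,T})^*\nabla^{L,\sigma,T}$ along the orthogonal decomposition $TE = HE \oplus VE$. With $\tilde{e}_i = \sigma e_i$ and $\tilde{f}_j = T f_j$ the $g_{\sigma,T}$-orthonormal rescalings of the $g_1$-orthonormal frame $e_i, f_j$, the squares $(\nabla^{L,\sigma,T}_{\tilde{e}_i})^2$ and $(\nabla^{L,\sigma,T}_{\tilde{f}_j})^2$ pick up the overall prefactors $\sigma^2$ and $T^2$ in front of the two bracketed sums. In a normal coordinate chart on $M$ centered at the base point, Getzler's standard expansion of the horizontal spin-connection one-form exhibits $\nabla^{L,\sigma,T}_{e_i}|_{\Lambda^*E}$ as $e_i$ plus the curvature-valued first-order correction shown inside the first square, via the familiar identification of $\mathfrak{so}(T_0 E)$ with the antisymmetric combination $\frac{1}{4}\bigl(c(\tilde{e}^j)c(\tilde{e}^k) - \hat{c}(\tilde{e}^j)\hat{c}(\tilde{e}^k)\bigr)$ acting on the doubled exterior algebra $\Lambda^*E \hat{\otimes} \Lambda^*\hat{E}$.

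For the vertical bracket, use the two-parameter analogue of Proposition \ref{prop:S_epsilon}, $\nabla^{L,\sigma,T} = \nabla^E + S^{\sigma,T}$, and lift $S^{\sigma,T}(f_i)$ to an endomorphism of $\Lambda^*E$ via the same spin identification. By (the two-parameter variant of) Proposition \ref{prop:S}, the non-vanishing components of $S^{\sigma,T}(f_i)$ come in exactly two families: mixed pairings $\inner{S^{\sigma,T}(f_i)\tilde{f}_j, \tilde{e}_k}_{\sigma,T}$, which produce off-diagonal Clifford combinations $c(\tilde{f}^j)c(\tilde{e}^k) - \hat{c}(\tilde{f}^j)\hat{c}(\tilde{e}^k)$, and purely horizontal rotations $\inner{S^{\sigma,T}(f_i)\tilde{e}_j, \tilde{e}_k}_{\sigma,T}$, which produce $c(\tilde{e}^j)c(\tilde{e}^k) - \hat{c}(\tilde{e}^j)\hat{c}(\tilde{e}^k)$. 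Together with $\nabla^E_{f_i}$, these reconstruct the bracket inside the second square, and assembling the pieces of the BLW identity yields the stated formula.

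The main obstacle is matching the coefficients $\frac{1}{2}$ versus $\frac{1}{4}$ in the vertical bracket. These different prefactors reflect the algebra of the spin lift: purely horizontal rotations carry the familiar $\frac{1}{4}$ coming from the $\mathfrak{so} \hookrightarrow \End(\Lambda^*)$ embedding applied to pairs of horizontal indices (which contribute symmetrically twice), whereas mixed horizontal-vertical entries appear only once in the connection matrix (no second symmetrization between $e$ and $f$ indices) and therefore enter with coefficient $\frac{1}{2}$. Careful accounting of this combinatorics, along with the verification that no extra zeroth-order terms leak out of $[\nabla^E_{f_i}, S^{\sigma,T}(f_i)]$ beyond those already captured by $R^{L,\sigma,T}_{ijkl}$ and the scalar curvature $R^{E,\sigma,T}$, is the only genuinely non-routine step; everything else follows mechanically from the BLW framework.
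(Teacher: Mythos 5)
Your proposal is correct and follows essentially the same route as the paper: decompose $\nabla^{L,\sigma,T} = \nabla^E + S^{\sigma,T}$ via Propositions~\ref{prop:S} and~\ref{prop:S_epsilon}, use the spin-lift formula for the antisymmetric derivation $S^{\sigma,T}(f_i)$, and add the Witten terms from $\tau\hat c(dh)$ as in Lemma~\ref{lem:BLW_formula}. The only point worth tightening is your verbal account of the $\frac12$ versus $\frac14$ prefactors: in the paper's own proof the vertical connection correction is written as the single sum $\frac14\,\overline{\sum}_{j,k}\inner{S^{\sigma,T}(f_i)\tilde e_j,\tilde e_k}_{\sigma,T}\bigl(c(\tilde e^j)c(\tilde e^k)-\hat c(\tilde e^j)\hat c(\tilde e^k)\bigr)$, where $\overline{\sum}$ runs over all horizontal-and-vertical index choices, and the $\frac12$ in the lemma's display arises because the two mixed orderings $(\bar j,k)$ and $(j,\bar k)$ contribute equally (by antisymmetry of $S$ and of the Clifford bilinear), so they coalesce into a single sum with doubled coefficient, while Proposition~\ref{prop:S} kills the purely vertical pairing $\inner{S(f_i)\tilde f_j,\tilde f_k}$. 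Your phrase ``mixed entries appear only once in the connection matrix'' is loose --- each antisymmetric entry appears with its transpose; what is actually happening is that the lemma's display records one ordering of the mixed pair and absorbs the other into the factor $2$ --- but your algebra and conclusion are right.
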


\begin{proof}
    Recall that $\nabla^{L, \sigma, T}_{f_i} = \nabla^E_{f_i} + S^{\sigma, T}(f_i)$. By \cite[(4.31)]{BZ92}, since $S^{\sigma, T}(f_i)$ is anti-symmetric as a derivation acting on $\Lambda^*(T^*E)$, its action is given by 
    \[ \frac{1}{4} \overline{\sum}_{j, k} \inner{S^{\sigma, T}(f_i)\tilde{e}_j, \tilde{e}_k}_{\sigma, T} \left( c(\tilde{e}^j)c(\tilde{e}^k) - \hat{c}(\tilde{e}^j)\hat{c}(\tilde{e}^k) \right) .\]
    Applying Proposition \ref{prop:S}, Proposition \ref{prop:S_epsilon}, and Lemma \ref{lem:BLW_formula}, we get the desired result.
\end{proof}

Let $S^{\sigma, T} = \nabla^{L, \sigma, T} - \nabla^E$, where $\nabla^{L, \sigma, T}$ is the Levi-Civita connection of $g_{\sigma, T}$ and $\nabla^E$ is the connection in Definition \ref{def:conn}.
\begin{lemma} \label{lem:S_sigma_T}
    $S^{\sigma, T} = \frac{\sigma^2}{T^2} P^H S + P^V S$.
\end{lemma}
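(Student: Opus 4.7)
The plan is to reduce the two-parameter lemma to the one-parameter statement of Proposition \ref{prop:S_epsilon} via a conformal rescaling. The key observation is that
\begin{equation*}
g_{\sigma, T} \;=\; \frac{1}{\sigma^2} g_M + \frac{1}{T^2} g_Y \;=\; \frac{1}{T^2}\!\left(\frac{T^2}{\sigma^2} g_M + g_Y\right) \;=\; \frac{1}{T^2}\, g_{\sigma/T},
\end{equation*}
where $g_\epsilon = \frac{1}{\epsilon^2} g_M + g_Y$ is the one-parameter adiabatic family from Proposition \ref{prop:S_epsilon}, evaluated at $\epsilon = \sigma/T$.

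Next I would use the well-known fact that the Levi-Civita connection is invariant under multiplication of the metric by a positive constant. Applied to the identity $g_{\sigma, T} = T^{-2}\, g_{\sigma/T}$, this yields $\nabla^{L, \sigma, T} = \nabla^{L, \sigma/T}$. On the other hand, the reference connection $\nabla^E$ of Definition \ref{def:conn} is built from $\nabla^M$, the projections $P^H, P^V$ associated with the fixed horizontal-vertical splitting of the vector bundle connection, and $\nabla^L$ for the fixed metric $g_E = g_1$; none of these objects depend on $\sigma$ or $T$. Consequently $\nabla^E$ is the same in both the $g_{\sigma,T}$ and $g_{\sigma/T}$ setups, so
\begin{equation*}
S^{\sigma, T} \;=\; \nabla^{L, \sigma, T} - \nabla^E \;=\; \nabla^{L, \sigma/T} - \nabla^E \;=\; S^{\sigma/T}.
\end{equation*}

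Finally, applying Proposition \ref{prop:S_epsilon}(1) with $\epsilon = \sigma/T$ gives
\begin{equation*}
S^{\sigma/T} \;=\; \left(\tfrac{\sigma}{T}\right)^{\!2} P^H S + P^V S \;=\; \tfrac{\sigma^2}{T^2} P^H S + P^V S,
\end{equation*}
which is the claimed formula. There is no real obstacle here; the content of the lemma is entirely the observation that the two-parameter family differs from the one-parameter family only by an overall constant conformal factor, combined with the metric-independence of the splitting and of $\nabla^E$.
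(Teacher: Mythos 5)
Your proof is correct, and it takes a genuinely different route from the paper. The paper's proof is a direct verification: using the explicit component formulas for $S$ from Proposition \ref{prop:S}, it checks each of the nonvanishing components of $S^{\sigma,T}$ (on pairs of horizontal/vertical vectors) and reads off the factor $\sigma^2/T^2$ in front of the horizontal part. You instead observe that $g_{\sigma,T} = T^{-2}\, g_{\sigma/T}$ is a constant conformal rescaling of the one-parameter adiabatic metric, invoke the invariance of the Levi-Civita connection under multiplication of the metric by a positive constant, note that $\nabla^E$ in Definition \ref{def:conn} is parameter-independent, and then simply apply Proposition \ref{prop:S_epsilon} with $\epsilon = \sigma/T$. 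This is cleaner and avoids redoing the component-by-component computation, at the small cost of leaning on Proposition \ref{prop:S_epsilon} as a black box; the paper's direct argument has the minor virtue of producing the explicit component identities along the way, which parallel those later used in Lemma \ref{lem:G_2_Lap_limit}. Both are valid, and your reduction is the more economical of the two.
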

\begin{proof}
    Let $U_i, U \in \Gamma(TM), W_i, W \in \Gamma(VE)$. Using Proposition \ref{prop:S}, one can verify easily that 
    \begin{align*}
        S^{\sigma, T}(W_1) W_2 &= \frac{\sigma^2}{T^2}S(W_1)W_2 , ~~~ P^VS^{\sigma, T}(W) U = P^VS(W)U, \\
    S^{\sigma, T}(U) W &= \frac{\sigma^2}{T^2}S(U)W, ~~~ S^{\sigma, T}(U_1) U_2 = S(U_1)U_2,\\
        P^H S^{\sigma, T}(W) U &= \frac{\sigma^2}{T^2}S(W)U,
    \end{align*}
    which completes the proof.
\end{proof}

Now we calculate the limit of the rescaled Dirac operator.

\begin{lemma}\label{lem:G_2_Lap_limit}
    Let $R^{E}$ denote the curvature tensor of $\nabla^E$. Then
    \begin{align*}
       L_2 &:= \lim_{\sigma \to 0} G_{2} \circ (-D^2_{\sigma, T,  \tau h}) \circ G_{2}^{-1}  \\
        &= \sum_i e_i^2 + T^2\sum_i\Big[\nabla^E_{f_i} + \frac{1}{2}\inner{S^T(f_i)f_j, e_k}_T c(\tilde{f}_j) e^k\wedge \\
        &+ \frac{1}{4}\inner{S^T(f_i)e_j, e_k}_T e^j \wedge e^k\wedge \Big]^2 - \frac{1}{8} \sum_{ijkl}R^M_{ijkl} e^i \wedge e^j \wedge \hat{e}^k \wedge \hat{e}^l \\
        & - \frac{1}{8} \sum_{ij\bar{k}\bar{l}} R^{E}_{ij\bar{k}\bar{l}} e^i\wedge e^j\wedge \hat{c}(\tilde{f}^k) \hat{c}(\tilde{f}^l) - \frac{T}{4} \sum_{i\bar{j}\bar{k}\bar{l}}R^{E}_{i\bar{j}\bar{k}\bar{l}} e^i\wedge c(\tilde{f}^j) \hat{c}(\tilde{f}^k) \hat{c}(\tilde{f}^l) \\
        &- 2\tau \sum_{i} c(\tilde{f}^i) \hat{c}(\tilde{f}^i) + \tau^2 T^2 \abs{y}^2.
    \end{align*}
\end{lemma}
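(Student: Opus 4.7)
The plan is to apply the rescaling $G_2$ term by term to the Bochner--Lichnerowicz--Weitzenb\"ock formula in Lemma \ref{lem:BLW_formula_new}, expand each piece in powers of $\sigma$, and let $\sigma\to 0$. The basic transformation rules are $G_2\, e_i\, G_2^{-1} = \sigma^{-1} e_i$, $G_2\, x^l\, G_2^{-1} = \sigma x^l$, and $G_2\, c(\tilde{e}^i)\, G_2^{-1} = \sigma^{-1} c_\sigma(\tilde{e}^i) = \sigma^{-1}(\tilde{e}^i \wedge - \sigma^2 \iota_{\tilde{e}_i})$, which acts in the limit as wedging by $e^i$; all vertical operators and the right Clifford multiplications $\hat{c}(\tilde{e}^i)$ are $G_2$-invariant. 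So the proof reduces to tracking, term by term, which $\sigma$-power survives and what it multiplies.

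First I would handle the horizontal Laplacian piece $-\sigma^2 \sum_i[e_i + A_i]^2$. The prefactor $\sigma^2$ exactly balances the $\sigma^{-2}$ that $G_2$-conjugation produces from the two copies of $e_i$, so this contribution limits to $-\sum_i e_i^2$, producing the first line of $L_2$ (with sign). The connection term $A_i$ receives a factor of $\sigma$ from $x^l$ and a factor of $\sigma^{-2}$ from the two rescaled left Clifford multiplications; multiplied against the $\sigma^2$ prefactor, both its cross-terms with $e_i$ and its square are $O(\sigma)$, and therefore drop out, which is why no Getzler-style curvature term appears on the first line of $L_2$.

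Next I would treat the vertical square-bracket piece. Using Lemma \ref{lem:S_sigma_T}, $S^{\sigma,T}(f_i) = \tfrac{\sigma^2}{T^2} P^H S(f_i) + P^V S(f_i)$; evaluating the pairings $\inner{S^{\sigma,T}(f_i)\tilde{f}_j, \tilde{e}_k}_{\sigma,T}$ and $\inner{S^{\sigma,T}(f_i)\tilde{e}_j, \tilde{e}_k}_{\sigma,T}$ by Proposition \ref{prop:S} (with the orthonormal-frame scalings for $g_{\sigma,T}$ versus $\tilde g_T$) reduces them in the limit to the coefficients $\inner{S^T(f_i)f_j, e_k}_T$ and $\inner{S^T(f_i)e_j, e_k}_T$ recorded in $L_2$. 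After $G_2$-conjugation, the Clifford products $c(\tilde f^j) c(\tilde e^k)$ and $\hat c(\tilde f^j)\hat c(\tilde e^k)$ combine (through the $c(\tilde e^k)\to e^k\wedge$ limit and the same antisymmetric cancellations used in Lemma \ref{lem:G_1_Lap_limit}) into the structure $c(\tilde f_j)\, e^k\wedge$ and $e^j\wedge e^k\wedge$ appearing inside the bracket.

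The remaining three lines come from the term $\tfrac{1}{8}\overline{\sum_{ijkl}} R^{L,\sigma,T}_{ijkl}\, c(\tilde e^i)c(\tilde e^j)\hat c(\tilde e^k)\hat c(\tilde e^l)$ together with the scalar curvature and the Witten potential. I would split this sum by the number of barred (vertical) indices and expand $R^{L,\sigma,T}$ using $\nabla^{L,\sigma,T} = \nabla^E + S^{\sigma,T}$ together with Lemma \ref{lem:S_sigma_T}. In each case the surviving $\sigma$-power pairs with the $\sigma^{-1}$ factors from the left Clifford multiplications to produce: the fully horizontal contribution $-\tfrac18 \sum_{ijkl} R^M_{ijkl}\, e^i\wedge e^j\wedge \hat e^k\wedge \hat e^l$; the two-horizontal--two-vertical piece $R^E_{ij\bar k\bar l}$; the one-horizontal--three-vertical piece weighted by $T$; while the fully vertical component vanishes since the fiber is flat. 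The scalar curvature $\tfrac{1}{4}R^{E,\sigma,T}\to 0$ because every nonzero component is suppressed by a power of $\sigma^2$ once the base direction is blown up, and $2\tau\sum_i c(\tilde f^i)\hat c(\tilde f^i)$ and $\tau^2 T^2 \abs{y}^2$ are $G_2$-invariant and enter $L_2$ with the signs dictated by the overall minus in $-D^2_{\sigma,T,\tau h}$. The main obstacle will be exactly this last paragraph: one must identify for each index pattern in $R^{L,\sigma,T}_{ijkl}$ which $\sigma$-power survives after $G_2$-conjugation, and check that these survivors reassemble into $R^M$ and $R^E$ in the precise tensorial combinations demanded by $L_2$. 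This requires a careful application of Koszul's formula (or of Proposition \ref{prop:S_epsilon} adapted to the two-parameter metric), together with the correct treatment of the $c(\tilde e^i)c(\tilde e^j) \to e^i\wedge e^j\wedge$ limits, in order not to miss any contributing cross-term.
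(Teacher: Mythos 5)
Your proposal follows the same route as the paper: conjugate the Bochner--Lichnerowicz--Weitzenb\"ock formula of Lemma \ref{lem:BLW_formula_new} by $G_2$ term by term, use Lemma \ref{lem:S_sigma_T} to control the $(\sigma,T)$-dependence, and let $\sigma\to 0$. Your discussion of the horizontal Laplacian bracket, the vertical bracket, the flatness of the fiber, and the $G_2$-invariance of the Witten potential all track the paper's argument correctly.

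There is, however, a gap you yourself flag at the end: the main substance of the paper's proof is the explicit expansion of each curvature component $R^{L,\sigma,T}(\cdot,\cdot)\cdot$ using $\nabla^{L,\sigma,T}=\nabla^E+\tfrac{\sigma^2}{T^2}P^HS+P^VS$, from which it reads off precisely the orders
\[
R^{L,\sigma,T}_{ijkl}=\sigma^2R^M_{ijkl}+O(\sigma^4/T^2),\quad
R^{L,\sigma,T}_{ij\bar k\bar l}=\sigma^2R^E_{ij\bar k\bar l}+O(\sigma^4/T^2),\quad
R^{L,\sigma,T}_{i\bar j\bar k\bar l}=\sigma T\,R^E_{i\bar j\bar k\bar l}+O(\sigma^3/T),
\]
together with $R^{L,\sigma,T}_{\bar i\bar j\bar k\bar l}=T^2R^E_{\bar i\bar j\bar k\bar l}+O(\sigma^2)$ and $R^{L,\sigma,T}_{ijk\bar l}=O(\sigma^3/T)$, $R^{L,\sigma,T}_{i\bar jk\bar l}=O(\sigma^2)$. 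It is these exact exponents, weighed against the $\sigma^{-1}$ per rescaled $c(\tilde e^\bullet)$, that determine which three combinations survive with the coefficients $-\tfrac18$, $-\tfrac18$, $-\tfrac T4$; without doing that computation the statement of the lemma is not actually verified. One small imprecision in your sketch: you say the scalar curvature $\tfrac14 R^{E,\sigma,T}$ vanishes because ``every nonzero component is suppressed by a power of $\sigma^2$.'' In fact its fully vertical contraction is of size $T^2$ and is \emph{not} $\sigma$-suppressed; it drops out only because $R^E_{\bar i\bar j\bar k\bar l}$ is the Riemann curvature of the flat fiber and hence vanishes identically. You do invoke fiber flatness elsewhere, so the phrasing is likely shorthand, but as written it obscures the actual mechanism.
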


\begin{proof}
    First, we calculate the $(\sigma, T)$-dependence of the curvature components. By Lemma \ref{lem:S_sigma_T}, the Levi-Civita connection decomposes as 
    \[ \nabla^{L, \sigma, T} = \nabla^E + \frac{\sigma^2}{T^2} P^H S + P^V S. \] 
    By a direct calculation, we have
        \begin{align*}
            &R^{L,\sigma, T}(e_i, e_j) e_k = \nabla^{L,\sigma, T}_{e_i} \nabla^{L,\sigma, T}_{e_j} e_k - \nabla^{L,\sigma, T}_{e_j} \nabla^{L,\sigma, T}_{e_i} e_k - \nabla^{L,\sigma, T}_{[e_i, e_j]}e_k \\
            &= \Big[R^M(e_i, e_j)e_k + \frac{\sigma^2}{T^2}\Big(S(e_i)S(e_j)e_k - S(e_j)S(e_i)e_k - P^HS([e_i,e_j])e_k\Big)\Big] \\
            &+ \Big[S(e_i) \nabla^M_{e_j}e_k + \nabla^E_{e_i}S(e_j)e_k - S(e_j) \nabla^M_{e_i}e_k - \nabla^E_{e_j}S(e_i)e_k - P^VS([e_i,e_j])e_k \Big], \\
            &R^{L,\sigma, T}(e_i, e_j) f_k = \nabla^{L,\sigma, T}_{e_i} \nabla^{L,\sigma, T}_{e_j} f_k - \nabla^{L,\sigma, T}_{e_j} \nabla^{L,\sigma, T}_{e_i} f_k - \nabla^{L,\sigma, T}_{[e_i, e_j]}f_k \\
            &= \frac{\sigma^2}{T^2}\Big[ S(e_i)\nabla^E_{e_j}f_k + \nabla^M_{e_i}S(e_j)f_k - S(e_j)\nabla^E_{e_i}f_k - \nabla^M_{e_j}S(e_i)f_k - S([e_i, e_j])f_k \Big] \\
            &~~~+ \Big[R^E(e_i, e_j)f_k + \frac{\sigma^2}{T^2} \Big( S(e_i)S(e_j)f_k - S(e_j)S(e_i)f_k \Big)\Big], \\
            &R^{L,\sigma, T}(e_i, f_j) e_k = \nabla^{L,\sigma, T}_{e_i} \nabla^{L,\sigma, T}_{f_j} e_k - \nabla^{L,\sigma, T}_{f_j} \nabla^{L,\sigma, T}_{e_i} e_k - \nabla^{L,\sigma, T}_{[e_i, f_j]}e_k \\ 
            &= \frac{\sigma^2}{T^2} \Big[ \nabla^M_{e_i} P^HS(f_j) e_k + S(e_i) P^V S(f_j) e_k - P^HS(f_j) \nabla^M_{e_i}e_k - S(f_j)S(e_i) e_k \\
            &- P^HS([e_i, f_j])e_k\Big] + \Big[ \frac{\sigma^2}{T^2} S(e_i)P^HS(f_j) e_k + \nabla^E_{e_i}P^VS(f_j)e_k \\
            &- P^VS(f_j)\nabla^M_{e_i}e_k - \nabla^E_{f_j}S(e_i)e_k - P^VS([e_i, f_j]) e_k\Big], \\
            &R^{L,\sigma, T}(e_i, f_j) f_k = \nabla^{L,\sigma, T}_{e_i} \nabla^{L,\sigma, T}_{f_j} f_k - \nabla^{L,\sigma, T}_{f_j} \nabla^{L,\sigma, T}_{e_i} f_k - \nabla^{L,\sigma, T}_{[e_i, f_j]}f_k \\ 
            &= \frac{\sigma^2}{T^2} \Big[ S(e_i)S(f_j)f_k + \nabla^M_{e_i}S(f_j)f_k - \frac{\sigma^2}{T^2} P^HS(f_j)S(e_i)f_k - S(f_j)\nabla^E_{e_i} f_k \\
            &- S([e_i, f_j]) f_k\Big] + \Big[ R^E(e_i, f_j)f_k + \frac{\sigma^2}{T^2} S(e_i) S(f_j) f_k - \frac{\sigma^2}{T^2} P^VS(f_j)S(e_i)f_k \Big], \\
            &R^{L,\sigma, T}(f_i, f_j) f_k = \nabla^{L,\sigma, T}_{f_i} \nabla^{L,\sigma, T}_{f_j} f_k - \nabla^{L,\sigma, T}_{f_j} \nabla^{L,\sigma, T}_{f_i} f_k - \nabla^{L,\sigma, T}_{[f_i, f_j]}f_k \\ 
            &= \frac{\sigma^2}{T^2} \Big[ S(f_i)\nabla^E_{f_j}f_k + \frac{\sigma^2}{T^2} P^HS(f_i)S(f_j)f_k - S(f_j)\nabla^E_{f_i}f_k \\
            &- \frac{\sigma^2}{T^2} P^HS(f_j)S(f_i)f_k - S([f_i, f_j])f_k \Big] + \Big[  R^E(f_i, f_j)f_k +\frac{\sigma^2}{T^2} P^V S(f_i, f_j)f_k\Big].
        \end{align*}
        Recall that for the metric $g_{\sigma, T}$, $\{\sigma e_1, \cdots, \sigma e_n, T f_1, \cdots, T f_k\}$ is a local orthonormal frame. We let $R^{L,\sigma, T}_{ijkl} = Rm^{L,\sigma, T}(\sigma e_i, \sigma e_j, \sigma e_k, \sigma e_l)$, $R^M_{ijkl} = Rm^M(e_i, e_j, e_k, e_l)$, etc. Then
        \begin{align*}
            &R^{L,\sigma, T}_{ijkl} = \sigma^2 R^M_{ijkl} + O\Big(\frac{\sigma^4}{T^2}\Big),~~~  R^{L,\sigma, T}_{ijk\bar{l}} = O\Big(\frac{\sigma^3}{T}\Big), ~~~ R^{L,\sigma, T}_{ij\bar{k}\bar{l}} = \sigma^2 R^E_{ij\bar{k}\bar{l}} + O\Big(\frac{\sigma^4}{T^2}\Big), \\
            &R^{L,\sigma, T}_{i\bar{j}k\bar{l}} = O(\sigma^2),  R^{L,\sigma, T}_{i\bar{j}\bar{k}\bar{l}} = \sigma T R^E_{i \bar{j} \bar{k} \bar{l}} + O\Big(\frac{\sigma^3}{T}\Big), ~~~ R^{L,\sigma, T}_{\bar{i}\bar{j}\bar{k}\bar{l}} = T^2 R^E_{\bar{i}\bar{j}\bar{k}\bar{l}} + O(\sigma^2).
        \end{align*}

    It is easy to see that if we apply the conjugate $G_2 \circ \bullet \circ G_2^{-1}$ to Lemma \ref{lem:BLW_formula_new}, the first two terms become $\sum_i e_i^2 + T^2 \sum_i\Big[\nabla^E_{f_i} + \frac{1}{2}\inner{S^T(f_i)f_j, e_k}_Tc(\tilde{f}_j) e^k\wedge + \frac{1}{4}\inner{S^T(f_i)e_j, e_k}_Te^j \wedge e^k\wedge \Big]^2 + o(\epsilon^2)$.
    On the other hand, the scalar curvature is only a function of $x$ and $y$, and the conjugation acts as a scaling on the variables. Notice that 
    \[ R^{E, \sigma, T} = T^2 \sum_{\bar{i}\bar{j}\bar{k}\bar{l}} R^E_{\bar{i}\bar{j}\bar{k}\bar{l}} + O(\sigma^2), \]
    and $R^E_{\bar{i}\bar{j}\bar{k}\bar{l}}$ is actually the Riemannian curvature of the flat fiber. Therefore the scalar curvature term drops out in the limit. 
    
    Now we calculate the rest of the curvature terms. Under the conjugate action, each left Clifford multiplication along the base direction, like $c(\tilde{e}^i)$, comes with a factor $\sigma^{-1}$. Therefore by the $(\sigma,T)$-dependence of the curvature, the only terms that can survive in the limit are $R^M_{ijkl}$, $R^E_{ij \bar{k} \bar{l}}$, and $R^E_{i \bar{j} \bar{k} \bar{l}}$. This completes the proof.
\end{proof}

As a consequence, we have

\begin{corollary} \label{coro:G_2_heat_op_limit}
    \begin{align}
        \lim_{\sigma \to 0} G_2 \circ N_Ye^{-D^2_{\sigma, T, \tau h}} \circ G_2^{-1} = N_Y  e^{L_2},
    \end{align}
    where $L_2$ is the limit operator in Lemma \ref{lem:G_2_Lap_limit}.
\end{corollary}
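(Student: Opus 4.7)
The plan is to deduce the convergence of the heat semigroups from the convergence of the conjugated Laplacians in Lemma \ref{lem:G_2_Lap_limit}, using the Riesz--Dunford functional calculus recalled in Theorem \ref{thm:funcional_calculus}. Since $G_2$ only rescales the base coordinate and acts trivially on the vertical form degree, we have $[G_2, N_Y] = 0$, so
\begin{equation*}
G_2 \circ \bigl( N_Y e^{-D^2_{\sigma,T,\tau h}} \bigr) \circ G_2^{-1} = N_Y \cdot \exp\!\Bigl( G_2 \circ (-D^2_{\sigma,T,\tau h}) \circ G_2^{-1} \Bigr),
\end{equation*}
and it is enough to show that the right-hand exponential converges to $e^{L_2}$.

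Next, because $D^2_{\sigma,T,\tau h}$ is self-adjoint, the conjugated operator $G_2 \circ D^2_{\sigma,T,\tau h} \circ G_2^{-1}$ has real spectrum, and the strong-tame condition on $h$ together with Theorem~2.2 of \cite{DY23} ensures discreteness of this spectrum, with a uniform lower bound coming from the confining potential $\tau^2 T^2|y|^2$ which is unchanged by $G_2$. Hence one can choose a fixed contour $\Gamma\subset\C$ enclosing the spectra of all operators $G_2 \circ (-D^2_{\sigma,T,\tau h}) \circ G_2^{-1}$ and of $L_2$ simultaneously, and write
\begin{equation*}
e^{\,G_2\circ(-D^2_{\sigma,T,\tau h})\circ G_2^{-1}} = \frac{1}{2\pi \i} \int_\Gamma e^{\lambda} \Bigl( \lambda - G_2 \circ (-D^2_{\sigma,T,\tau h}) \circ G_2^{-1} \Bigr)^{-1} \d\lambda.
\end{equation*}
Lemma \ref{lem:G_2_Lap_limit}, combined with the $(\sigma,T)$-expansions of the curvature components of $\nabla^{L,\sigma,T}$ derived there, shows that the coefficients of $G_2 \circ D^2_{\sigma,T,\tau h} \circ G_2^{-1}$ converge in every $C^k$ norm on compact subsets of $E$ to those of $-L_2$. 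This coefficient-wise convergence, together with the quadratic confinement, yields the norm resolvent convergence
\begin{equation*}
\lim_{\sigma\to 0}\Bigl(\lambda - G_2\circ(-D^2_{\sigma,T,\tau h})\circ G_2^{-1}\Bigr)^{-1} = (\lambda - L_2)^{-1}
\end{equation*}
uniformly for $\lambda\in\Gamma$. Dominated convergence applied to the contour integral then gives $\exp\!\bigl(G_2\circ(-D^2_{\sigma,T,\tau h})\circ G_2^{-1}\bigr)\to e^{L_2}$, and multiplication by $N_Y$ yields the corollary.

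The main obstacle is justifying norm resolvent convergence on a Hilbert space of sections over the noncompact total space $E$. Pointwise convergence of the symbols on compact sets is a direct consequence of Lemma \ref{lem:G_2_Lap_limit}, but one must additionally control the operators uniformly in $\sigma$ as $|y|\to\infty$. The essential input is that the confining potential $\tau^2 T^2|y|^2$ is independent of $\sigma$ under our rescaling, so the Gaussian-type heat-kernel estimates of \cite{DY22} supply the uniform bounds on the resolvent needed to pass to the limit inside the Cauchy integral. Once these uniform bounds are in place the rest of the argument is routine functional calculus.
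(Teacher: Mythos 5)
The paper does not actually supply a separate proof for Corollary~\ref{coro:G_2_heat_op_limit}: it is stated as an immediate consequence of Lemma~\ref{lem:G_2_Lap_limit}, and the only non-obvious observation needed is the one you correctly make — that $G_2$ rescales only the horizontal direction, so $[G_2,N_Y]=0$. However, the way the paper means the limit and the way you try to justify it are quite different, and the machinery you invoke does not fit.

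Right after Corollary~\ref{coro:t_limit}, the paper says explicitly that these Getzler-type limits ``should be understood in the sense that the coefficients of [the rescaled heat operator] converge to those of the limit operator.'' That is the standard Getzler interpretation: the rescaled heat kernel, written in Duhamel/parametrix form with Clifford generators, has coefficient functions that converge (locally uniformly, with the remainder estimates supplied by \cite{DY22}) to the Mehler-type kernel of $e^{L_2}$. Your proposal instead asserts norm resolvent convergence of $G_2\circ(-D^2_{\sigma,T,\tau h})\circ G_2^{-1}$ to $L_2$ and then applies the Riesz--Dunford functional calculus. This runs into two obstructions that you do not address. First, the limit operator $L_2$ is \emph{not} self-adjoint: it contains the nilpotent terms $e^i\wedge e^j\wedge\hat{e}^k\wedge\hat{e}^l\wedge$ coming from replacing $c_\sigma(\tilde{e}^i)$ by $e^i\wedge$ in the $\sigma\to 0$ limit. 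Because of this, it is not legitimate to ``choose a fixed contour $\Gamma$ enclosing the spectra of all operators $\ldots$ and of $L_2$ simultaneously'' as if $L_2$ had real discrete spectrum; this needs a separate argument (e.g.\ treating the nilpotents as a finite-step perturbation of the harmonic oscillator), which is missing. Second, and more fundamentally, coefficient-wise convergence of the symbols of a family of unbounded differential operators on a noncompact space does not by itself yield norm resolvent convergence; the Getzler rescaling moves the Clifford algebra into a degenerate limit precisely because the raw operator norms of $G_2\circ c(\tilde e^i)\circ G_2^{-1}=\sigma^{-1}c_\sigma(\tilde e^i)$ blow up, and the cancellations that make the combined Laplacian behave well are visible at the level of the heat kernel expansion but not in the resolvent on the raw $L^2$ space. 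You acknowledge this is the ``main obstacle'' but the Gaussian bounds of \cite{DY22} control the kernel, not the resolvent of the non-self-adjoint limit.

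In short: the $[G_2,N_Y]=0$ step is correct and is the only new ingredient the corollary asks for; but the convergence should be read as convergence of heat-kernel coefficients, established by the Getzler/parametrix argument as in \cite{G86}, not by passing to the limit under a Cauchy integral for a non-self-adjoint limit operator. As written, the norm-resolvent step is an unjustified and probably false claim.
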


In the end, we prove Theorem \ref{thm:interm_result_I_4_1}.

\begin{proof}[Proof of Theorem \ref{thm:interm_result_I_4_1}]

    By (\ref{eq:tr_sigma_T}), Corollary \ref{coro:G_2_heat_op_limit}, and arguing as in \cite{G86}, we have 
    \[ \lim_{\sigma \to 0}\tr_s \left(N_Y e^{ - \sigma^2 \tilde{D}_{T/\sigma,\tau h}^2}\right) = \lim_{\sigma \to 0}\tr_s \left(N_Y e^{ -  D_{\sigma, T, \tau h}^2}\right) = \tr_s \left(N_Y e^{L_2}\right). \]
    
    Since the horizontal differential forms, i.e. those generated by $e^i$ and $\hat{e}^i$, in $L_2$ are all of even degree, but the dimension of $M$ is odd, we see that 
\[ \tr_s \left(N_Y e^{L_2}\right) = 0. \]
    This completes the proof.
\end{proof}

\subsection{Remainder estimate: Proof of Theorem \ref{thm:interm_result_I_4_2}} \label{subsect:proof_int_res_I_4_2}
%In this subsection we give a proof of Theorem \ref{thm:interm_result_I_4_2}, 

Inspired by \cite[section 8]{BB1994}, we consider a new parameter $a = \frac{T}{\sigma}$. Then 
\[ \sigma^2\tilde{D}_{T/\sigma, \tau h}^2 = \frac{T^2}{a^2} \tilde{D}_{a, \tau h}^2. \]
Since the ranges for $\sigma$ and $T$ are $\sigma \in (0,1]$ and $T \in [\sigma, 1]$, $a \in [1, \frac{1}{\sigma}]$ and thus Theorem \ref{thm:interm_result_I_4_2} amounts to a uniform upper bound when $a \to \infty$.

\begin{proof}[Proof of Theorem \ref{thm:interm_result_I_4_2}]
    Using notations in section \ref{subsect:proof_int_res_I_4_1}, we see that 
\begin{equation} \label{eq:7.3_1}
    \begin{split}
        \tr_s\left(N_Y e^{-\sigma^2\tilde{D}_{T/\sigma, \tau h}^2}\right) &= \tr_s\left(N_Y e^{-\frac{T^2}{a^2} \tilde{D}_{a, \tau h}^2}\right) \\
    &= \tr_s\left(N_Y e^{-T^2 D_{1/a, 1, \tau h}^2}\right),
    \end{split}
\end{equation}
where $D_{1/a, 1, \tau h}$ is the Witten-Dirac operator of 
\[ g_{1/a, 1} = a^2 g_M + g_Y. \]

Consider another rescaling $G_3$ which is the same as $G_2$ with $\sigma$ replaced by $\frac{1}{a}$. A similar argument as in the previous section shows that $G_3 \circ N_Y e^{-T^2 D^2_{1/a, 1, \tau h}} \circ G_3^{-1}$ approaches a finite limit $N_Y e^{T^2 L_3}$ as $a \to \infty$. Moreover, it is easy to see that $L_3$ is uniformly elliptic. 

By a similar argument as in Section \ref{subsect:a=b} we know for $T$ small, $\tr_s\left(N_Y e^{-T^2 D_{1/a, 1, \tau h}^2}\right)$ admits an expansion of the form 
\begin{equation} \label{eq:7.3_2}
    \tr_s\left(N_Y e^{-T^2 D_{1/a, 1, \tau h}^2}\right) \sim  d_1(a) T  + d_3(a) T^3 + \cdots .
\end{equation}

Since $\tr_s\left(N_Y e^{T^2 L_3}\right) = 0$ and the asymptotic expansion depends only on the local symbol, all the coefficients $d_i(a)$ converge to $0$ as $a \to \infty$. 

On the other hand, by an argument as in \cite[P.~64]{BC} (see also \cite[P.~194-197]{Cheeger87eta}), each $d_i(a)$ admits an expansion like $\sum d_{i, j} a^j$ for $a$ sufficiently large. In particular, the powers of $a$ can only be integers. This implies that $d_i(a) = O(\frac{1}{a})$ for each $i$. Thus by (\ref{eq:7.3_1}) and (\ref{eq:7.3_2}) we have 
\[ \left|\frac{1}{T}\tr_s\left(N_Y e^{-\sigma^2\tilde{D}_{T/\sigma, \tau h}^2}\right) \right| = \left|\frac{1}{T}\tr_s\left(N_Y e^{-T^2 D_{1/a, 1, \tau h}^2}\right) \right| \le \frac{C}{a} \le C.\]
\end{proof}

\section{Proof of Theorem \ref{thm:interm_result_I_4_3}} \label{sect:proof_last_thm} 

The proof essentially follows the argument in \cite[section 9]{BB1994} and \cite[section 11, section 13]{BL}.  

\subsection{Localization of the problem}
Let $\alpha > 0$ be a constant with $\alpha \ll \frac{1}{4} \operatorname{injrad}(M)$, $f: \R \to [0,1]$ be a smooth even cut-off function such that 
\begin{equation*}
    f(t) = \begin{cases}
        1, & \text{if } \abs{t} \le \frac{\alpha}{2}, \\
        0, & \text{if } \abs{t} \ge \alpha.
    \end{cases}
\end{equation*}
Let $g(t) = 1 - f(t)$. For any $v > 0$, let 
\begin{align*}
    F_v(a) &= \frac{1}{\sqrt{2\pi}}\int_{-\infty}^\infty e^{\i t\sqrt{2}a - t^2/2} f(vt) \d t, \\
    G_v(a) &= \frac{1}{\sqrt{2\pi}}\int_{-\infty}^\infty e^{\i t\sqrt{2}a - t^2/2} g(vt) \d t, \\
    H_v(a) &= \frac{1}{\sqrt{2\pi}v}\int_{-\infty}^\infty e^{\i t\sqrt{2}a - t^2/2v^2} g(t) \d t.
\end{align*}

In the following we collect properties about these functions. The proof is standard and is omitted here.
\begin{proposition}\label{prop:F_and_G}
    \begin{enumerate}
        \item $G_v(a) = H_v(\frac{a}{v})$. 
        \item $F_v(a) + G_v(a) = e^{-a^2}$. Therefore 
        \[ F_\sigma(\sigma \tilde{D}_{T/\sigma, \tau h}) + G_\sigma(\sigma \tilde{D}_{T/\sigma, \tau h}) = e^{ - \sigma^2 \tilde{D}^2_{T/\sigma, \tau h}} .\]
        \item There exist holomorphic functions $\tilde{F}_v(a)$ and $\tilde{H}_v(a)$ such that 
        \[ F_v(a) = \tilde{F}_v(a^2), ~~ H_v(a) = \tilde{H}_v(a^2). \]
        \item For any $c > 0 $, $m \in \N$, there exist constants $C_1, C_2 > 0$ such that for any $v \in (0,1], a \in \C$ with $\abs{\operatorname{Im} a} \le c$, 
        \[ \sup_a \abs{a^m H_v(a)} \le C_1 e^{-\frac{C_2}{v^2}}. \]
        \item For any $c > 0 \in$, $m \in \N$, there exists a constant $C> 0$ such that for any $v \in (0,1], a \in \C$ with $\operatorname{Re} a \ge \frac{(\operatorname{Im} a)^2}{4c^2} - c^2$, 
        \[ \sup_a \abs{a^m F_v(a)} \le C. \]
    \end{enumerate}
\end{proposition}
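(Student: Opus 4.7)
The plan is to handle items (1)--(3) as quick Fourier identities, item (4) by integration by parts exploiting the vanishing of $g$ near zero, and item (5), the most delicate, via the decomposition $F_v=e^{-a^2}-G_v$ combined with careful estimates on the parabolic region. For (1), substitute $s=vt$ in the integral defining $G_v(a)$ to recover $H_v(a/v)$ directly. For (2), use $f(vt)+g(vt)=1$ so that $F_v(a)+G_v(a)$ collapses to the full Gaussian integral $\frac{1}{\sqrt{2\pi}}\int e^{it\sqrt{2}a-t^2/2}\,dt$; completing the square in $t$ as $-\tfrac12(t-i\sqrt{2}a)^2-a^2$ and shifting the contour evaluates this to $e^{-a^2}$, and the operator identity then follows from the Riesz--Dunford functional calculus (Theorem \ref{thm:funcional_calculus}) applied to the self-adjoint $\sigma\tilde D_{T/\sigma,\tau h}$. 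For (3), both $f$ and $g$ are even, so $t\mapsto -t$ gives $F_v(-a)=F_v(a)$ and $H_v(-a)=H_v(a)$, hence each factors through $a^2$; analyticity of $\tilde F_v$ and $\tilde H_v$ follows from differentiation under the integral sign, justified by a Gaussian-integrable majorant in $t$.

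For (4), I would exploit that $g\equiv 0$ on $|t|<\alpha/2$, so the integrand of $H_v$ is supported on $|t|\ge\alpha/2$ where $e^{-t^2/(2v^2)}\le e^{-\alpha^2/(16v^2)}e^{-t^2/(4v^2)}$. Integrating by parts $m$ times transfers the factor $(i\sqrt{2}a)^m$ onto $\partial_t^m[e^{-t^2/(2v^2)}g(t)]$; by Leibniz this expands into a sum of terms bounded by $v^{-j}P_j(t/v)e^{-t^2/(2v^2)}g^{(k)}(t)$, with higher derivatives of $g$ supported on the transition zone $\alpha/2\le|t|\le\alpha$. The bound $|e^{it\sqrt{2}a}|\le e^{\sqrt{2}c|t|}$ under $|\Im a|\le c$ is absorbed by the residual $e^{-t^2/(4v^2)}$, leaving an $L^1$-integrable Gaussian times the explicit prefactor $e^{-\alpha^2/(16v^2)}$, which yields the bound $C_1 e^{-C_2/v^2}$.

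For (5), decompose $F_v(a)=e^{-a^2}-G_v(a)$ via (2) and estimate each summand on the region $\Re a\ge(\Im a)^2/(4c^2)-c^2$. Rearranging to $(\Im a)^2\le 4c^2(\Re a+c^2)$ gives $-\Re(a^2)=(\Im a)^2-(\Re a)^2\le 8c^4-(\Re a-2c^2)^2$; together with the complementary bound $|a|^2\le 2(\Re a)^2+8c^4$ on this region, this shows $|a^m e^{-a^2}|$ is uniformly bounded by a constant depending only on $c$ and $m$. For the $G_v$ piece, complete the square in $H_v(a/v)$ to write
\begin{equation*}
G_v(a)=\frac{e^{-a^2}}{\sqrt{2\pi}\,v}\int e^{-(t-i\sqrt{2}va)^2/(2v^2)}g(t)\,dt,
\end{equation*}
bound the modulus of the integrand by $e^{-(t+\sqrt{2}v\Im a)^2/(2v^2)+(\Re a)^2}$, and observe that the support $|t|\ge\alpha/2$ together with the parabolic control on $\Im a$ keeps the shifted Gaussian centered away from the bulk of $g$, producing an additional factor exponentially small in $1/v^2$ that absorbs the $v^{-1}$ prefactor and any polynomial growth in $|a|$. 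The main obstacle is precisely this simultaneous balancing of three ingredients---the decay of $|e^{-a^2}|$ along the parabolic region, the shifted Gaussian window produced by completing the square in $G_v$, and the compact-support constraint from $g$---which is the only place where the specific geometry of the region $\Re a\ge(\Im a)^2/(4c^2)-c^2$ enters nontrivially.
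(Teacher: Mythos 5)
The paper omits the proof of this proposition with the remark that it is ``standard,'' so there is no in-paper argument to compare against; I will assess the proposal on its own merits.

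Items (1)--(4) are handled correctly. For (1), the substitution $s=vt$ indeed gives $G_v(a)=H_v(a/v)$. For (2), $f+g\equiv 1$ together with the Gaussian contour shift yields $F_v+G_v=e^{-a^2}$, and the Riesz--Dunford calculus transfers this to the operator. For (3), evenness of $f$ and $g$ makes $F_v$ and $H_v$ even entire functions, which therefore factor through $a^2$. For (4), the $m$-fold integration by parts (justified because $\lvert e^{it\sqrt{2}a}\rvert$ grows only like $e^{\sqrt{2}c\lvert t\rvert}$ while every term of $\partial_t^m[e^{-t^2/(2v^2)}g(t)]$ carries Gaussian decay) reduces the estimate to bounding $\int e^{\sqrt{2}c\lvert t\rvert}\lvert\partial_t^m[e^{-t^2/(2v^2)}g(t)]\rvert\,dt$; splitting off $e^{-\alpha^2/(16v^2)}$ on the support $\lvert t\rvert\ge\alpha/2$ and using the residual $e^{-t^2/(4v^2)}$ to dominate the Hermite-polynomial growth $v^{-j}\lvert P_j(t/v)\rvert$ and the exponential $e^{\sqrt{2}c\lvert t\rvert}$ gives $C_1e^{-C_2/v^2}$ as claimed.

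Item (5), however, has a genuine gap. After completing the square you obtain
\begin{equation*}
G_v(a)=\frac{e^{-a^2}}{\sqrt{2\pi}\,v}\int e^{-(t-i\sqrt{2}va)^2/(2v^2)}\,g(t)\,dt,
\end{equation*}
and the shifted Gaussian kernel $\lvert e^{-(t-i\sqrt{2}va)^2/(2v^2)}\rvert = e^{p^2}\,e^{-(t+\sqrt{2}v\,\Im a)^2/(2v^2)}$ is centered at $t=-\sqrt{2}\,v\,\Im a$. Your key assertion that ``the parabolic control on $\Im a$ keeps the shifted Gaussian centered away from the bulk of $g$'' is not true on the whole region $\Re a \ge (\Im a)^2/(4c^2)-c^2$: that inequality bounds $\Re a$ from \emph{below} in terms of $\Im a$, but imposes no upper bound on $\lvert\Im a\rvert$ (one can take $\Im a$ arbitrarily large provided $\Re a$ grows like $(\Im a)^2$). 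When $v\lvert\Im a\rvert > \alpha/2$ the center $-\sqrt{2}v\,\Im a$ lies \emph{inside} the support $\lvert t\rvert\ge\alpha/2$ of $g$, so the exponential smallness in $1/v^2$ that your argument relies on disappears, and nothing in your estimate then compensates the factor $e^{(\Im a)^2}$ that appears after the $\lvert e^{-a^2}\rvert = e^{-(\Re a)^2+(\Im a)^2}$ prefactor is distributed. To close the argument you must treat the regime $v\lvert\Im a\rvert\gtrsim\alpha$ separately, which on the parabola forces $\Re a\gtrsim\alpha^2/(c^2v^2)$ to be large, and then exploit the oscillation $e^{-i\sqrt{2}(\Re a)\,t/v^2\cdot v}$ (absent from your modulus estimate) to produce polynomial decay in $\Re a$. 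Alternatively, one can observe that the proposition is only ever invoked for $a=\sqrt{\lambda}$ with $\lambda$ on the right-opening parabola $\tilde\Gamma$, and for such $a$ one has $\lvert\Im a\rvert$ uniformly bounded; under that additional constraint your completed-square argument for $G_v$ does go through. As written, though, the proposal claims more than it proves.
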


It is useful to observe that for the metric $g_{\sigma, T} = \frac{1}{\sigma^2} g_M + \frac{1}{T^2} g_Y$ considered in section \ref{subsect:proof_int_res_I_4_1}, we have
\[  \tr_s\left( N_Y e^{-D_{\sigma, T, \tau h}^2} \right) = \tr_s\left( N_Y e^{- \sigma^2 \tilde{D}_{T/\sigma, \tau h}^2} \right). \] 
It turns out that for $G_\sigma(\sigma \tilde{D}_{T/\sigma, \tau h})$, the computation would be easier if we consider the original metric $\tilde{g}_{T/\sigma, \tau h}$, while for $F_\sigma(\sigma \tilde{D}_{T/\sigma, \tau h})$ it is better to use the metric $g_{\sigma, T}$.

Using Lemma \ref{lem:S_sigma_T}, we obtain the following analogue of (\ref{eq:D_epsilon_Witten}).
\begin{lemma} \label{lem:D_sigma_T}
\begin{align*}
    D_{\sigma, T} &= \sigma c(\tilde{e}^i) \nabla^u_{e_i} + T c(\tilde{f}^j) \nabla^E_{f_j}  + T \tau \tilde{\hat{c}}(\d h) -  \frac{\sigma^2}{4} c(\tilde{e}^\alpha) c(\tilde{e}^\beta) \tilde{c}(\mathrm{Tor}) (e_\alpha, e_\beta) \\
        & := \sigma \tilde{D}_M + T \tilde{D}_{Y, T} - \frac{\sigma^2}{4}\mathrm{Tor},
\end{align*}
    where $\tilde{\hat{c}}(\d h) = \frac{1}{T}\d h \wedge + T \iota_{\nabla h}$.
\end{lemma}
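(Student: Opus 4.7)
The plan is to derive the formula by adapting the proof of \eqref{eq:D_epsilon} in \cite[Proposition~4.41]{BC} to the two-parameter metric $g_{\sigma,T} = \sigma^{-2} g_M + T^{-2} g_Y$. Using the orthonormal frame $\tilde{e}_i = \sigma e_i$, $\tilde{f}_j = T f_j$ for $g_{\sigma,T}$, the Witten--Dirac operator reads
\[
D_{\sigma, T} = \sum_i c(\tilde{e}^i)\,\nabla^{L,\sigma,T}_{\tilde{e}_i} + \sum_j c(\tilde{f}^j)\,\nabla^{L,\sigma,T}_{\tilde{f}_j} + \tau\,\hat{c}_{\sigma,T}(\d h).
\]
Pulling the rescalings out of the covariant derivatives produces the prefactors $\sigma$ and $T$ on the two main terms. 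Because $h = \frac{1}{2}\abs{y}^2$ depends only on the fiber variables, $\nabla^{\sigma,T} h = T^2 \nabla h$, and a direct computation in the orthonormal coframe (using the Clifford-algebra identification of Section~\ref{subsection:Clifford multiplication}) gives $\tau\,\hat{c}_{\sigma,T}(\d h) = T\tau\,\tilde{\hat{c}}(\d h)$ with $\tilde{\hat{c}}(\d h) = T^{-1}\d h\wedge + T\iota_{\nabla h}$, which accounts for the third term.

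Next, I would apply Lemma~\ref{lem:S_sigma_T} to decompose $\nabla^{L,\sigma,T} = \nabla^E + (\sigma/T)^2 P^H S + P^V S$, splitting the two covariant-derivative terms above into a main $\nabla^E$ contribution plus Clifford corrections. Each $S^{\sigma,T}(\tilde{E}_\alpha)$ is skew with respect to $g_{\sigma,T}$ and hence acts on $\Lambda^* E$ by the standard formula \cite[(4.31)]{BZ92}; Proposition~\ref{prop:S} and the flat-fiber identity $\nabla^E_{f_i} f_j = 0$ single out the surviving components. Mirroring the manipulation in \cite[\S4(a)]{BC}, the $P^V S$ contributions combine with the extra $-\frac{1}{2}\sum_i \inner{S(f_i)f_i,\cdot}$ term to convert $\nabla^E_{e_i}$ into $\nabla^u_{e_i}$ on the horizontal term and to cancel on the vertical term by Clifford antisymmetry, while the $P^H S$ contributions (together with the $\sigma$- and $T$-factors produced by the Clifford identifications on rescaled frames) reassemble into the Clifford multiplication by the torsion tensor with the clean coefficient $-\sigma^2/4$.

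The main obstacle will be the careful bookkeeping of the $\sigma$- and $T$-dependencies throughout the calculation. In particular one must distinguish the rescaled Clifford multiplications $c(\tilde{e}^i)$, $c(\tilde{f}^j)$, $\tilde{c}(\cdot)$ from the unrescaled $c(e^i)$, $c(f^j)$, $c(\cdot)$, and verify that the various cross terms produced by $S^{\sigma,T}$ reorganize into precisely the stated expression with no residual. The genuinely new feature compared with \eqref{eq:D_epsilon} is that here $g_Y$ is also rescaled by $T$, which modifies the Clifford action on the fiber direction; however, since $g_Y$ is flat and the nonvanishing components of $S^{\sigma,T}$ inherit the symmetry type of $S$, I expect the calculation to reduce to the template of \cite[\S4(a)]{BC} essentially verbatim once the two-parameter rescaling is correctly installed.
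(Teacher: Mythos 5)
Your proposal is correct and follows essentially the same route as the paper: the paper derives the lemma in one line as the "analogue of (\ref{eq:D_epsilon_Witten})" obtained from Lemma \ref{lem:S_sigma_T}, i.e.\ by redoing the computation of \cite[p.~56]{BC} in the $g_{\sigma,T}$-orthonormal frame $\tilde{e}_i=\sigma e_i$, $\tilde{f}_j=Tf_j$, with the rescaled difference tensor $S^{\sigma,T}$, exactly as you outline, together with the direct observation $\nabla^{\sigma,T}h=T^2\nabla h$ giving $T\tau\,\tilde{\hat{c}}(\d h)$. The bookkeeping of $\sigma$- and $T$-factors you flag is indeed the only content beyond \cite{BC}, and it proceeds as you describe.
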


One can easily verify that $T^2 \tilde{D}_{Y, T}^2 = T^2 \tilde{D}_Y^2$. Thus (\ref{eq:ker_D_Y}) implies that 
\[ \ker T^2 \tilde{D}_{Y,T}|_{\Lambda^* Y} = \ker \tilde{D}_{Y}|_{\Lambda^* Y} = \mathrm{span}_\R \left\{ e^{-\frac{\tau \abs{y}^2}{2}} \right\} \]
is independent of $\sigma$ and $T$.

Proposition \ref{prop:F_and_G} shows that 
\begin{align} \label{eq:tr_s_F+G}
        \tr_s&\left( N_Y e^{- \sigma^2 \tilde{D}_{T/\sigma, \tau h}^2} \right) \\
        &~~~~~~~~~~~~~~~~~= \tr_s\left( N_Y F_\sigma(\sigma \tilde{D}_{T/\sigma, \tau h}) \right) + \tr_s\left( N_Y G_\sigma(\sigma \tilde{D}_{T/\sigma, \tau h}) \right) \nonumber \\
        &~~~~~~~~~~~~~~~~~= \tr_s\left( N_Y F_\sigma(D_{\sigma, T, \tau h}) \right) + \tr_s\left( N_Y H_\sigma(\tilde{D}_{T/\sigma, \tau h}) \right). \nonumber
\end{align}
First we deal with the second term. Let $H^p(\Lambda^*E)$ be the $p$-th Sobolev space of sections, and $\norm{~}_p$ be the corresponding Sobolev norm.
\begin{proposition} \label{prop:tr_s_H}
    There exist constants $C > 0$ and $T_0 \ge 1$ such that for any $\sigma \in (0,1], T \in [T_0,\infty)$, 
    \[ \left|\tr_s\left( N_Y H_\sigma(\tilde{D}_{T/\sigma, \tau h}) \right)\right| \le \frac{C}{T}. \]
\end{proposition}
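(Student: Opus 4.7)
The plan is to exploit the finite propagation speed of the wave operator associated with $\tilde{D}_{T/\sigma,\tau h}$, together with the exponential decay of $H_\sigma$ given in Proposition~\ref{prop:F_and_G}(4), to obtain pointwise kernel bounds that are small uniformly in $T$; the factor $T^{-1}$ will then be extracted from the fiber volume form of $\tilde{g}_{T/\sigma}$. The overall strategy follows \cite[Section 9]{BB1994} and \cite[Sections 11--13]{BL}.

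First, since the cutoff $g(t)=1-f(t)$ vanishes on $(-\alpha/2,\alpha/2)$, I would write
\[
H_\sigma(\tilde{D}_{T/\sigma,\tau h}) = \frac{1}{\sqrt{2\pi}\,\sigma}\int_{|t|\ge \alpha/2} g(t)\,e^{-t^2/(2\sigma^2)}\, e^{\i t\sqrt{2}\,\tilde{D}_{T/\sigma,\tau h}}\,\d t.
\]
Since $\tilde{D}_{T/\sigma,\tau h}$ is self-adjoint with discrete spectrum (as $h$ is strong tame, see \cite[Theorem 2.2]{DY23}), applying Proposition~\ref{prop:F_and_G}(4) with $v=\sigma$ via the spectral theorem gives, for every integer $m\ge 0$,
\[
\bigl\|\tilde{D}_{T/\sigma,\tau h}^{\,m}\, H_\sigma(\tilde{D}_{T/\sigma,\tau h})\bigr\|_{0,0}\le C_m\,e^{-c_m/\sigma^2},
\]
uniformly in $T$.

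Next, I would translate the operator-norm bound into a pointwise kernel bound. The wave propagator $e^{\i t\sqrt{2}\tilde{D}_{T/\sigma,\tau h}}$ has finite propagation speed $\sqrt{2}$ in the metric $\tilde{g}_{T/\sigma}$, so its Schwartz kernel is supported where $\tilde{d}_{T/\sigma}(p,q)\le \sqrt{2}|t|$. This localizes the problem to a tubular neighborhood of the diagonal whose $\tilde{g}_{T/\sigma}$-diameter is controlled by the support of $g(t)e^{-t^2/(2\sigma^2)}$. Combining the localization with elliptic (Sobolev) regularity for $\tilde{D}_{T/\sigma,\tau h}^{\,2}$ and the heat-kernel bounds of \cite{DY22} upgrades the operator-norm decay into a pointwise bound of the form
\[
\bigl|K_{H_\sigma}(p,p)\bigr|\le C\,e^{-c/\sigma^2}\,\Phi(y),
\]
where $K_{H_\sigma}$ denotes the Schwartz kernel and $\Phi$ is a Gaussian-type fiber weight inherited from the ground state $e^{-\tau|y|^2/2}$ in (\ref{eq:ker_D_Y}).

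The $T^{-1}$ decay then comes from the volume factor. Writing
\[
\tr_s\bigl(N_Y H_\sigma(\tilde{D}_{T/\sigma,\tau h})\bigr)=\int_E \tr_s\bigl(N_Y K_{H_\sigma}(p,p)\bigr)\,\d\mathrm{vol}_{\tilde{g}_{T/\sigma}},
\]
and noting that $\d\mathrm{vol}_{\tilde{g}_{T/\sigma}}=(\sigma/T)^k\,\d\mathrm{vol}_{g_1}$, integration against the exponentially small pointwise bound produces an overall factor $(\sigma/T)^k\le 1/T^k\le 1/T$ for $T\ge T_0\ge 1$ (here $k\ge 2$ is even), which proves the proposition. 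The main obstacle I anticipate is the uniformity of all constants in $T$ as $T\to\infty$: the metric $\tilde{g}_{T/\sigma}$ degenerates in the fiber direction, so the wave-equation parametrix and the Sobolev embedding constants must be controlled via a rescaling $y\mapsto (T/\sigma)y'$ that converts the degenerate fiber metric into the flat Euclidean metric on $\mathbb{R}^k$. One then verifies that the rescaled operator and its kernels admit uniform estimates, using the strong-tame condition on $h$ and the compactness of $M$.
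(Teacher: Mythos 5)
Your proposal does not match the paper's proof and contains a genuine gap. The paper establishes the elliptic estimate $\norm{s}_1^2 \le C\bigl(\norm{D_{\sigma,T}s}_0^2 + \tfrac{T}{\sigma}\norm{s}_0^2\bigr)$ via the decomposition $D_{\sigma,T} = E + \tfrac{T}{\sigma}\tilde{D}_{Y,T/\sigma}$ and a commutator estimate borrowed from \cite[Proposition 4.41]{BC}, and then obtains the $C/T$ bound by feeding this Sobolev estimate and Proposition \ref{prop:F_and_G}(4) into the trace-class machinery of \cite[Section 8]{BB1994}. Your argument instead tries to extract the factor $T^{-1}$ from the volume form $\d\mathrm{vol}_{\tilde{g}_{T/\sigma}} = (\sigma/T)^k\d\mathrm{vol}_{g_1}$; this does not work, because the supertrace of a trace-class operator is independent of the inner product used to compute it. If you write $\tr_s(\cdot) = \int_E \tr_s K(p,p)\,\d\mathrm{vol}_{\tilde{g}_{T/\sigma}}$, the Schwartz kernel $K$ must be normalized with respect to the same volume form, and its pointwise size therefore carries a compensating factor $(T/\sigma)^k$; the two cancel and no free $T^{-1}$ emerges. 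The $T^{-1}$ decay must come from the interaction between the spectral structure of $\tilde{D}_{T/\sigma,\tau h}$ (in particular that $N_Y$ annihilates the fiber ground state $e^{-\tau|y|^2/2}$, so only states with vertical eigenvalues $\gtrsim T/\sigma$ contribute) and the decay of $H_\sigma$ — which is precisely what the elliptic estimate with explicit $T/\sigma$ coefficients captures.

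A second, smaller issue: finite propagation speed localizes $F_\sigma$ (whose cutoff $f$ is compactly supported near $0$) near the diagonal, not $H_\sigma$. Since $g(t) = 1 - f(t)$ vanishes on $(-\alpha/2,\alpha/2)$, the kernel of $H_\sigma(\tilde{D}_{T/\sigma,\tau h})$ is supported \emph{away} from the diagonal at $\tilde{g}_{T/\sigma}$-distance $\ge \sqrt{2}\alpha/2$, which in the original metric $g_1$ corresponds to fiber distance of order $T/\sigma$ — so the problem is delocalized, not localized, as $T\to\infty$. The bound you get from Proposition \ref{prop:F_and_G}(4) is uniform in $T$ and exponentially small in $\sigma$, but it carries no $T$-decay at all, so for $\sigma$ fixed and comparable to $1$ your argument would not yield any improvement over $O(1)$. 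To recover the $T^{-1}$ you would need to revisit the Sobolev-estimate route the paper actually takes.
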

\begin{proof}
    It follows from Lemma \ref{lem:D_sigma_T} that 
    \begin{align*}
        D_{\sigma, T} &= \tilde{D}_M - \frac{1}{4}\mathrm{Tor} + \frac{T}{\sigma} \tilde{D}_{Y, T/\sigma}  \\
        &:= E + \frac{T}{\sigma} \tilde{D}_{Y, T/\sigma},
    \end{align*}
    Obviously we have 
    \[ D_{\sigma, T}^2 = E^2 + \frac{T}{\sigma} [E, \tilde{D}_{Y, T/\sigma}] + \frac{T^2}{\sigma^2} \tilde{D}_{Y, T/\sigma}^2. \]
    
    Since the pair $(E, \tilde{D}_{Y, T/\sigma})$ is uniformly elliptic, \cite[Lemma 2.1]{DY23} and the standard elliptic estimate imply for any $s \in H^1(\Lambda^* E)$, 
    \begin{align*}
        \inner{(E^2 + \frac{T^2}{\sigma^2}\tilde{D}_{Y, T/\sigma}^2)s, s}_0 &= \inner{(E^2 + \tilde{D}_{Y, T/\sigma}^2)s, s}_0 + \left(\frac{T^2}{\sigma^2} - 1\right) \inner{\tilde{D}_{Y, T/\sigma}^2s, s}_0 \\
        &\ge C \left( \norm{s}_1 - \norm{s}_0\right) + \left(\frac{T^2}{\sigma^2} - 1\right) \norm{\tilde{D}_{Y, T/\sigma} s}^2_0.
    \end{align*}
    On the other hand, from the proof of \cite[Proposition 4.41]{BC} one has 
    \begin{align*}
        \frac{T}{\sigma} \inner{[E, \tilde{D}_{Y, T/\sigma}]s, s}_0 \le \frac{CT}{\sigma} \norm{\tilde{D}_{Y, T/\sigma} s}^2_0 + \frac{CT}{\sigma} \norm{s}_0^2.
    \end{align*}
    Here we have an extra term $\norm{s}_0^2$ since we no longer assume $s$ to be perpendicular to $\ker \tilde{D}_{Y, T/\sigma}$. This implies
    \begin{align*}
        \norm{D_{\sigma, T} s}_0^2 \ge C \left(\norm{s}_1^2 - \norm{s}_0^2 - \frac{T}{\sigma} \norm{s}_0^2\right) + \left(\frac{T^2}{\sigma^2} - 1 - \frac{CT}{\sigma}\right) \norm{\tilde{D}_{Y, T/\sigma} s}^2_0.
    \end{align*}
    Since we can always choose $T$ large enough so that $\frac{T^2}{\sigma^2} - 1 - \frac{CT}{\sigma} \ge T^2 - 1 - CT > 0$, it follows that 
    \begin{equation} \label{eq:s}
        \norm{s}_1^2 \le C \left( \norm{D_{\sigma, T} s}_0^2 + \frac{T}{\sigma}\norm{s}_0^2 \right).
    \end{equation}
    Using (\ref{eq:s}), Proposition \ref{prop:F_and_G} (4) and arguing as in \cite[Section 8]{BB1994} (see also \cite[Section 5]{BB1994}), we complete the proof.
\end{proof}

For the rest of the section, we aim to show that the estimate of the first term in (\ref{eq:tr_s_F+G}) can be localized. Let $p_0 \in \pi^{-1}(x_0) \subset E$ be a fixed point. Notice that $e^{\i t\sqrt{2} D_{\sigma, T, \tau h}}$ is a wave type operator. By the finite propagation speed of solutions of hyperbolic equations and the fact that $f(t) \equiv 0$ for $\abs{t} \ge \alpha$, the heat kernel of $F_\sigma(D_{\sigma, T, \tau h})$ depends only on the restriction of $D_{\sigma, T, \tau h}$ to $B_\alpha(p_0)$. For this reason, we introduce the following construction (see \cite[section 4.2]{DLM06}, \cite[section 13(e)]{BL}): 
\begin{enumerate}
    %\item Let $o : M \to E$ denote the zero section. Let $B_r(o)$ denote the $r$-tubular neighborhood of the image $o(M)$.
    \item Set $M_0 := T_{x_0}M \cong \R^{n}$. Let $\{e_i\}$ be an orthonormal basis of $M_0$, $X = \sum_{i=1}^n x_i e_i$ be the radial vector field on $M_0$. We will also use $X$ to denote an arbitrary point in $M_0$. 
    \item Pick $\epsilon > 0$ such that $4\epsilon < \alpha$. We identify $B_{4\epsilon}^{M_0}(0) \subset M_0$ with $B_{4\epsilon}^{M}(0) \subset M$ by the exponential map $\exp_{x_0}$. Then locally $E$ can be viewed as a vector bundle over $B_{4\epsilon}^{M_0}(0)$. Moreover, we trivialize $E|_{B_{4\epsilon}^{M_0}(0)}$ by identifying $E_X$ for $X \in B_{4\epsilon}^{M_0}(0)$ with $E_{x_0}$ by parallel transport along radial geodesics.
    \item Let $\rho: \R \to [0,1]$ be a smooth cutoff function such that 
    \begin{equation*}
        \rho(t) = \begin{cases}
            1, &\text{if } \abs{t}<2; \\
            0, &\text{if } \abs{t}>4.
        \end{cases}
    \end{equation*} 
    We define a map $\rho_{\epsilon}: M_0 \to M_0$ by $\rho_{\epsilon}(X) = \rho\left(\frac{\abs{X}}{\epsilon}\right)X$. Let $g_{M_0}$ be a Riemannian metric on $M_0$ defined by $g_{M_0}(X) = g_M(\rho_\epsilon(X))$. Since $g_{M_0}$ coincides with $g_M$ on $B_{2\epsilon}^{M_0}(0)$, in the following we will replace $(M, g_M)$ with $(M_0, g_{M_0})$. 
    \item Using $\rho_\epsilon$ again, we obtain the extended trivial vector bundle $(E_0 = M_0 \times Y, g_{E_0})$ over $M_0$, satisfying $(E_0, g_{E_0})|_{B_{2\epsilon}^{M_0}(0)} = (E, g_{E})|_{B_{2\epsilon}^{M_0}(0)}$. The family of metrics $g_{\sigma, T}$ also induces metrics $g_{E_0, \sigma, T}$ on $E_0$. Similarly, using parallel transport and $\rho_\epsilon$, for any $(X, y) \in E_0$ we can identify $(\Lambda^*E_0)_{(X, y)} = (\Lambda^*M_0)_X \otimes (\Lambda^*Y)_y$ with $(\Lambda^*M)_{x_0} \otimes (\Lambda^*Y)_0$.
    \item Similarly, we extend $\ker \tilde{D}_Y$ to $K  := \Lambda^*M_0 \otimes \mathrm{span}_\R \Big\{ e^{-\frac{\tau \abs{y}^2}{2}} \Big\}$. Then $K$ is a subbundle of $\Lambda^*E_0$ satisfying 
    \begin{align*}
        K = \begin{cases}
            \Lambda^*M \otimes \ker \tilde{D}_Y|_{\Lambda^* Y}, & \text{on } B_{2 \epsilon}^{M_0}(0) \times Y; \\
            (\Lambda^*M)_{x_0} \otimes \ker \tilde{D}_{Y}|_{\Lambda^* Y} \cong (\Lambda^*M)_{x_0}, & \text{on } E_0 - B_{4 \epsilon}^{M_0}(0) \times Y.
        \end{cases}
    \end{align*}
    Let $\tilde{P}: L^2(\Lambda^* E_0) \to L^2(K)$ denote the orthogonal projection with respect to the inner product induced by $g_{E_0}$. Set $\tilde{P}^\perp = 1 - \tilde{P}$.
    \item Set $\varphi(X) = \rho(\frac{\abs{X}}{\epsilon})$. Define 
    \begin{equation} \label{eq:L_sigma_T}
        L_{\sigma, T} := \varphi^2 D_{\sigma, T, \tau h}^2 + \left(1 - \varphi^2\right) \left( \sigma^2 D_{M_0}^2 + T^2 \tilde{P}^\perp\tilde{D}_{Y}^2\tilde{P}^\perp\right),
    \end{equation}
    where $\tilde{D}_{Y, x_0}^2$ is the Witten Laplacian on the fiber $\{x_0\} \times Y$.
    \item From Lemma \ref{lem:BLW_formula_new}, we see that $L_{\sigma, T}$ fails to be uniformly elliptic as $\sigma \to 0$. To correct this, we consider
    \begin{equation} \label{eq:L_sigma_T^tilde}
        \tilde{L}_{\sigma, T} := G_2 \circ L_{\sigma, T} \circ G_2^{-1},
    \end{equation}
    where $G_2$ is the rescaling defined in Section \ref{subsect:proof_int_res_I_4_1}.
\end{enumerate}
As a result of the construction above, 
\begin{equation} \label{eq:tr_F_D_and_F_L}
    \begin{split}
        \tr_s\left( N_Y F_\sigma(D_{\sigma, T, \tau h}) \right) &= \tr_s\left( N_Y \tilde{F}_\sigma(D_{\sigma, T, \tau h}^2) \right) \\
        &= \tr_s\left( N_Y \tilde{F}_\sigma(\tilde{L}_{\sigma, T}) \right).
    \end{split}
\end{equation}

Again we decompose $\tilde{L}_{\sigma, T}$ with respect to the splitting $L^2(\Lambda^* E_0) = L^2(K)^\perp \oplus L^2(K)$ as 
\begin{equation}
    \tilde{L}_{\sigma, T} = \begin{pmatrix}
        \tilde{L}_{\sigma, T, 1} & \tilde{L}_{\sigma, T, 2} \\
        \tilde{L}_{\sigma, T, 2}' & \tilde{L}_{\sigma, T, 3}
    \end{pmatrix}.
\end{equation}

\begin{theorem} \label{thm:L_1_L_2_L_3}
    Let $D_{Y,\sigma}$ denote the standard Dirac operator on the fiber $Y_{(\sigma X, y)}$. Then as $T \to \infty$, 
    \begin{align*}
        \tilde{L}_{\sigma, T, 1} &= T^2 L_1 + O \left(T\right), \\
        \tilde{L}_{\sigma, T, 2} &= T L_2 + O \left(1\right), \\
        \tilde{L}_{\sigma, T, 2}' &= T L_2' + O \left(1\right), \\
        \tilde{L}_{\sigma, T, 3} &=  L_3 + O \left(\frac 1T\right),
    \end{align*}
    where 
    \begin{align*}
        L_1 &=  \tilde{P}^\perp  \tilde{D}_{Y}^2 \tilde{P}^\perp ,\\
        L_2 &=  \tilde{P}^\perp \Big\{ - \frac{\varphi^2(\sigma X)}{2} \Big[ \nabla^E_{f_i},  \frac{\sigma}{2} \sum_{j,k} \inner{S(f_i)f_j, e_k} \left( c(\tilde{f}^j) c(\tilde{e}^k) - \hat{c}(\tilde{f}^j)\hat{c}(\tilde{e}^k) \right)\Big] \\
        &\hspace{1cm}+ \frac{\varphi^2(\sigma X)}{4} R^{E}_{i\bar{j}\bar{k}\bar{l}} c(\tilde{e}^i) c(\tilde{f}^j) \hat{c}(\tilde{f}^k) \hat{c}(\tilde{f}^l) \Big\} \tilde{P},\\
        L_2' &= L_2 \text{ with } \tilde{P} \text{ and } \tilde{P}^\perp \text{ interchanged},\\
        L_3 &=  \tilde{P} \Big\{D_{M_0}^2  + B(\sigma) \Big\} \tilde{P}.
    \end{align*}
    Here $B(\sigma)$ is a first order differential operator, for which the differential is only along the base direction.
\end{theorem}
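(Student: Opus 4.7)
The plan is to start from the Bochner-Lichnerowicz-Weitzenbock formula in Lemma \ref{lem:BLW_formula_new} for $D_{\sigma,T,\tau h}^2$, substitute it into the definition of $L_{\sigma,T}$ in (\ref{eq:L_sigma_T}), conjugate by the Getzler-type rescaling $G_2$, and then track the precise $T$-dependence of every resulting summand as $T\to\infty$. The four blocks $\tilde{L}_{\sigma,T,1},\tilde{L}_{\sigma,T,2},\tilde{L}_{\sigma,T,2}',\tilde{L}_{\sigma,T,3}$ will then be read off by applying the projections $\tilde{P},\tilde{P}^\perp$.

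First I would isolate the leading vertical contribution. Combining $-T^2\sum_i(\nabla^E_{f_i})^2$ (the principal part of the second squared bracket in Lemma \ref{lem:BLW_formula_new}) with $2\tau\sum_i c(\tilde{f}^i)\hat{c}(\tilde{f}^i)$ and $\tau^2T^2|y|^2$ reassembles exactly $T^2\tilde{D}_Y^2$. By construction $K=\Lambda^*M_0\otimes\ker(\tilde{D}_Y|_{\Lambda^*Y})$, so this operator annihilates $K$ and preserves $K^\perp$, contributing precisely $T^2L_1$ to $\tilde{L}_{\sigma,T,1}$ and nothing to the remaining three blocks. All further summands obtained by expanding that vertical squared bracket carry at least one factor of $S^{\sigma,T}$, whose horizontal output is $O(\sigma^2/T^2)$ by Lemma \ref{lem:S_sigma_T}, and therefore, after the outer $T^2$ and the $\sigma^{-1}$ released by $G_2$ for each base-direction left Clifford factor, contribute at most order $T$.

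Next I would track the curvature contributions using the $(\sigma,T)$-expansions of $R^{L,\sigma,T}$ worked out inside the proof of Lemma \ref{lem:G_2_Lap_limit}: most components are $O(\sigma^2)$, $R^{L,\sigma,T}_{ijk\bar{l}}=O(\sigma^3/T)$, the crucial mixed piece satisfies $R^{L,\sigma,T}_{i\bar{j}\bar{k}\bar{l}}=\sigma T\,R^E_{i\bar{j}\bar{k}\bar{l}}+O(\sigma^3/T)$, and $R^{L,\sigma,T}_{\bar{i}\bar{j}\bar{k}\bar{l}}=O(\sigma^2)$ because the fibers are flat. Under $G_2$ the factor $\sigma T$ is converted into $T$ by the $\sigma^{-1}$ from the base-direction $c(\tilde{e}^i)$, so $\tfrac{1}{8}R^{L,\sigma,T}_{i\bar{j}\bar{k}\bar{l}}c(\tilde{e}^i)c(\tilde{f}^j)\hat{c}(\tilde{f}^k)\hat{c}(\tilde{f}^l)$ (counted together with its antisymmetric reordering, which accounts for the $\tfrac{1}{4}$ in $L_2$) descends to the $T$-order curvature piece of $L_2$, while the cross term pairing $\nabla^E_{f_i}$ with the horizontal $S^{\sigma,T}$-connection inside the squared vertical bracket descends to the commutator piece of $L_2$. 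Both mix vertical and horizontal Clifford factors, so they land in the off-diagonal blocks and give $\tilde{L}_{\sigma,T,2}=TL_2+O(1)$ and $\tilde{L}_{\sigma,T,2}'=TL_2'+O(1)$.

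On the $\tilde{P}$-block the $T$-free contribution comes from the horizontal squared bracket $-\sigma^2\sum_i[e_i+\cdots]^2$ of Lemma \ref{lem:BLW_formula_new} inside the support of $\varphi$, together with the added piece $(1-\varphi^2)\sigma^2D_{M_0}^2$ from (\ref{eq:L_sigma_T}); both collapse under $G_2$ to $D_{M_0}^2$ plus a first-order-along-base connection correction which I would package into $B(\sigma)$, while the scalar curvature and $\tfrac{1}{8}R^{L,\sigma,T}_{ijkl}c(\tilde{e}^i)c(\tilde{e}^j)\hat{c}(\tilde{e}^k)\hat{c}(\tilde{e}^l)$ are $O(1)$ after $G_2$ and likewise absorb into $B(\sigma)$ on the $\tilde{P}$-block. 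Every other summand is at most $O(1/T)$, giving $\tilde{L}_{\sigma,T,3}=L_3+O(1/T)$. The hard part will be the exhaustive bookkeeping: the squared brackets in Lemma \ref{lem:BLW_formula_new} produce many cross terms, and for each one must verify, using Lemma \ref{lem:S_sigma_T}, the curvature expansions above, and the vanishing of $R^E_{\bar{i}\bar{j}\bar{k}\bar{l}}$, that it sits in the correct block at the claimed order in $T$, with no hidden $T$-growth sneaking into the $\tilde{P}$-block or into the $O(1)$ remainders of the off-diagonal blocks.
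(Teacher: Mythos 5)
Your proposal follows exactly the same route as the paper: substitute the Bochner--Lichnerowicz--Weitzenbock formula of Lemma~\ref{lem:BLW_formula_new} into the definition of $L_{\sigma,T}$ in (\ref{eq:L_sigma_T}), conjugate by $G_2$ as in (\ref{eq:L_sigma_T^tilde}), and use Lemma~\ref{lem:S_sigma_T} together with the $(\sigma,T)$-expansions of the curvature components (already worked out in the proof of Lemma~\ref{lem:G_2_Lap_limit}) to read off the $T$-dependence of each block under the $\tilde{P},\tilde{P}^\perp$ decomposition. The paper's own proof is a one-line citation of precisely those four ingredients, so your more detailed account of how the leading vertical piece reassembles into $T^2L_1$, how the $\sigma T$ factor in $R^{L,\sigma,T}_{i\bar j\bar k\bar l}$ becomes a $T$ after the $\sigma^{-1}$ released by $G_2$, and how the remaining terms land in the correct blocks is a correct, if slightly fuller, version of the intended argument.
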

\begin{proof}
    The proof follows directly from Lemma \ref{lem:BLW_formula_new}, Lemma \ref{lem:S_sigma_T}, (\ref{eq:L_sigma_T}), and (\ref{eq:L_sigma_T^tilde}).
\end{proof}

\subsection{Uniform estimate on heat kernel}

Let $H^p(\Lambda^*M_0\otimes\Lambda^*Y) = H^p(\Lambda^* E_0)$ denote the $p$-th Sobolev space of sections of $\Lambda^* E_0$. Obviously $H^0(\Lambda^*M_0 \otimes \Lambda^*Y) = L^2(\Lambda^*E_0)$.

To keep track of the dependence of $\sigma$ and $ T$, we introduce weighted Sobolev norms that fit into our construction. 
\begin{definition} \label{def:norm_sigma_T}
    Let $\nabla = \nabla^{M_0} + \nabla^Y$ be a connection on $E_0$, where $\nabla^{M_0}$ and $\nabla^Y$ are the pullback of the Levi-Civita connections on $M_0$ and $Y$ respectively.
    Set 
    \begin{equation} \label{eq:psi_sigma}
        \psi_\sigma(X) := 1 + (1 + \abs{X}^2)^{\frac{1}{2}} \varphi\left(\frac{\sigma X}{2}\right).
    \end{equation}
    For any $s \in H^0(\Lambda^qM_0 \otimes \Lambda^*Y)$ and $s', s'' \in H^1(\Lambda^*E_0)$, we define 
    \begin{align*}
        \abs{s}_{\sigma; 0}^2 &= \frac{1}{(2\pi)^n}\int_{E_0} \abs{s}^2 \psi_\sigma^{2(n-q)} dvol_{E_0}, \\
        \abs{s'}_{\sigma,T;1}^2 &= T^2 \abs{\tilde{P}^\perp s'}_{\sigma;0}^2 + \abs{\tilde{P} s'}_{\sigma;0}^2 + \sum_{i=1}^n \abs{\nabla_{e_i}s'}_{\sigma; 0}^2 + T^2 \sum_{j=1}^k \abs{\nabla_{f_j} \tilde{P}^\perp s'}_{\sigma; 0}^2,\\
        \abs{s''}_{\sigma, T; -1} &= \sup_{v \ne 0 \in H^1(\Lambda^* E_0) } \frac{\abs{\inner{s'', v}_{\sigma; 0}}}{\abs{v}_{\sigma, T; 1}},
    \end{align*}
    where $\inner{~,~}_{\sigma; 0}$ denotes the Hermitian inner product associated with $\abs{~}_{\sigma; 0}^2$.

    If $L : \big(H^m(\Lambda^* E_0), \abs{~}_{\sigma, T; m}\big) \to \big(H^{m'}(\Lambda^* E_0), \abs{~}_{\sigma, T; m'}\big)$ is a bounded linear operator, then we use $\norm{L}_{\sigma,T;m,m'}$ to denote its operator norm.
\end{definition}
The next result follows from \cite[Theorem 13.27]{BL}.
\begin{lemma} \label{lem:Re_Im_bd}
    There exist constants $C, C' > 0$ such that for any $\sigma \in (0,1]$, $T \in [1, \infty)$, and $s, s' \in H^1(\Lambda^*E_0)$, we have
    \begin{align*}
        \mathrm{Re}\inner{\tilde{L}_{\sigma, T} s, s}_{\sigma; 0} &\ge C \abs{s}_{\sigma,T;1}^2 - C' \abs{s}_{\sigma;0}^2, \\
        \left|\mathrm{Im}\inner{\tilde{L}_{\sigma, T} s, s}_{\sigma; 0}\right| &\le C \abs{s}_{\sigma,T;1} \abs{s}_{\sigma;0}, \\
        \abs{\inner{\tilde{L}_{\sigma, T} s, s'}_{\sigma; 0}} &\le C \abs{s}_{\sigma,T;1} \abs{s'}_{\sigma,T;1}.
    \end{align*}
    In particular, $\norm{\tilde{L}_{\sigma, T}}_{\sigma, T; 1, -1} \le C$.
\end{lemma}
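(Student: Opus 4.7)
The plan is to establish all three estimates via the orthogonal decomposition $s = s_1 + s_2$ with $s_1 = \tilde P^\perp s$ and $s_2 = \tilde P s$, combined with the block structure of $\tilde L_{\sigma,T}$ furnished by Theorem \ref{thm:L_1_L_2_L_3}. Two structural facts drive the analysis: first, the Witten--Dirac operator $\tilde D_Y$ has a uniform spectral gap on $L^2(K)^\perp$ (an immediate consequence of (\ref{eq:ker_D_Y})), which yields the coercive bound
\[
\langle L_1 s_1, s_1\rangle_{\sigma;0} \;\ge\; c\Bigl(|s_1|_{\sigma;0}^2 + \sum_j |\nabla_{f_j} s_1|_{\sigma;0}^2\Bigr);
\]
and second, the base Laplacian $D_{M_0}^2$ appearing in $L_3$ is uniformly elliptic in the horizontal direction. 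Note that the norm $|\cdot|_{\sigma,T;1}$ is tailored precisely to these two scales: $T^2|s_1|^2$ and $T^2|\nabla_Y s_1|^2$ come from the $(1,1)$ block, while $|s_2|^2$ and $|\nabla_H s|^2$ come from the $(3,3)$ block and horizontal ellipticity.

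For the coercivity estimate, I would expand
\[
\langle \tilde L_{\sigma,T}s,s\rangle_{\sigma;0}
 = \langle \tilde L_{\sigma,T,1} s_1, s_1\rangle_{\sigma;0}
 + 2\operatorname{Re}\langle \tilde L_{\sigma,T,2} s_2, s_1\rangle_{\sigma;0}
 + \langle \tilde L_{\sigma,T,3} s_2, s_2\rangle_{\sigma;0}.
\]
The leading $T^2 L_1$ on the $(1,1)$ block provides the $T^2(|s_1|^2 + |\nabla_Y s_1|^2)$ contribution; the $O(T)$ correction is a first-order operator with uniformly bounded coefficients and is absorbed via $|\operatorname{Re}\langle O(T)s_1,s_1\rangle|\le\epsilon T^2|\nabla s_1|^2_{\sigma;0} + C\epsilon^{-1}|s_1|_{\sigma;0}^2$ for $\epsilon$ small. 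On the $(3,3)$ block, standard G\aa rding for $D_{M_0}^2$ gives $\langle L_3 s_2,s_2\rangle_{\sigma;0}\ge c|\nabla_H s_2|_{\sigma;0}^2 - C|s_2|_{\sigma;0}^2$, with $B(\sigma)$ absorbed into a small fraction of the leading gradient. The off-diagonal pieces $T L_2, T L_2'$ are at most first-order with uniformly bounded coefficients, and Cauchy--Schwarz gives
\[
|T\operatorname{Re}\langle L_2 s_2, s_1\rangle_{\sigma;0}|
\;\le\; \tfrac{c}{2}T^2|s_1|_{\sigma;0}^2 + \tfrac{c}{2}|\nabla_H s_2|_{\sigma;0}^2 + C|s_2|_{\sigma;0}^2,
\]
the first two terms being absorbed into the $(1,1)$ and $(3,3)$ diagonal coercivities, the third joining the $-C'|s|_{\sigma;0}^2$ error.

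The imaginary-part estimate follows the same block decomposition but is easier: each anti-self-adjoint component of $\tilde L_{\sigma,T}$ is a first-order operator with uniformly bounded coefficients, and Cauchy--Schwarz with the $T$-weighting built into $|\cdot|_{\sigma,T;1}$ gives the desired $C|s|_{\sigma,T;1}|s|_{\sigma;0}$ bound directly. The bilinear estimate is proved block by block: for instance, on the $(1,1)$ block, $T^2|\langle L_1 s_1, s_1'\rangle_{\sigma;0}| = |T\tilde D_Y s_1|_{\sigma;0}\cdot|T\tilde D_Y s_1'|_{\sigma;0}\le C|s|_{\sigma,T;1}|s'|_{\sigma,T;1}$ using fiber ellipticity, and similarly for the other three blocks after using Theorem \ref{thm:L_1_L_2_L_3} to expose the correct $T$-scaling. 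The final operator-norm statement $\|\tilde L_{\sigma,T}\|_{\sigma,T;1,-1}\le C$ is then immediate from the definition of the dual norm $|\cdot|_{\sigma,T;-1}$.

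The chief technical obstacle is the $\sigma$-dependent weight $\psi_\sigma$ in the inner product: integration by parts produces commutator terms involving $\nabla\psi_\sigma^{2(n-q)}$, and the form degree $q$ shifts under both Clifford multiplications and covariant derivatives, so the weight itself changes through the calculation. These errors remain controllable because $\psi_\sigma$ satisfies pointwise bounds $1 \le \psi_\sigma \le C(1 + |X|)$ on the support of $\varphi(\sigma X/2)$, $\psi_\sigma \equiv 1$ elsewhere, and $|\nabla\psi_\sigma|\le C$ uniformly in $\sigma$; hence every commutator correction is dominated by $C|s|_{\sigma;0}^2$ and merges into the $-C'|s|_{\sigma;0}^2$ error. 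A secondary delicate point is verifying block by block that the implicit constants appearing in Theorem \ref{thm:L_1_L_2_L_3} remain uniform as $T\to\infty$ and $\sigma\to 0$ simultaneously, which is guaranteed by the explicit form of the remainders in that theorem.
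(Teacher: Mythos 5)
The paper's proof of this lemma is a one-line citation to \cite[Theorem 13.27]{BL}, so your block-decomposition argument is necessarily a different, self-contained route, and it is structurally the right one. The coercivity and imaginary-part estimates look workable along the lines you sketch: the spectral gap of $\tilde D_Y$ on $L^2(K)^\perp$ supplies the $T^2$-weighted fiber contribution, the off-diagonal blocks are absorbed by Young's inequality, and the $\psi_\sigma$-commutators are controlled because $\abs{\nabla\psi_\sigma}/\psi_\sigma$ is uniformly bounded.

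There is, however, a genuine gap in the bilinear estimate. On the $(1,1)$ block you write $T^2\abs{\inner{L_1 s_1, s_1'}_{\sigma;0}} = \abs{T\tilde D_Y s_1}_{\sigma;0}\,\abs{T\tilde D_Y s_1'}_{\sigma;0}\le C\abs{s}_{\sigma,T;1}\abs{s'}_{\sigma,T;1}$ ``using fiber ellipticity.'' But $\tilde D_Y = D_Y + \tau\hat c(\d h)$ carries the \emph{unbounded} zeroth-order term $\tau\hat c(\d h)$, of size $\tau\abs{y}$, so $\abs{\tilde D_Y s_1}_{\sigma;0}$ controls---and is not controlled by---the quantity $\abs{\,\abs{y}s_1}_{\sigma;0}+\sum_j\abs{\nabla_{f_j}s_1}_{\sigma;0}+\abs{s_1}_{\sigma;0}$. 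The norm $\abs{\cdot}_{\sigma,T;1}$ in Definition~\ref{def:norm_sigma_T} contains $T^2\abs{\tilde P^\perp s}_{\sigma;0}^2$ and $T^2\sum_j\abs{\nabla_{f_j}\tilde P^\perp s}_{\sigma;0}^2$ but no $T^2\abs{\,\abs{y}\tilde P^\perp s}_{\sigma;0}^2$ term, so your inequality fails: a unit-$L^2$ bump in the fiber concentrated near $\abs{y}=R$ (which lies in $L^2(K)^\perp$ for $R$ large) makes the left side grow like $T^2R^2$ while the right side stays $O(T^2)$. Fiber ellipticity does give $\abs{\tilde D_Y s_1}^2\gtrsim\abs{s_1}^2+\sum_j\abs{\nabla_{f_j}s_1}^2$, which is the right direction for the G\aa rding estimate, but this is the \emph{opposite} of what the bilinear bound requires. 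The remedy---which is what Bismut--Lebeau build into their weighted Sobolev norm---is to add a $T^2\abs{\,\abs{y}\tilde P^\perp s}_{\sigma;0}^2$ term to $\abs{\cdot}_{\sigma,T;1}$; with it, the $(1,1)$ estimate goes through and the extra term also absorbs the $\abs{y}$-dependent coefficients in the off-diagonal blocks. As stated, Definition~\ref{def:norm_sigma_T} appears to omit this weight, and your proposal inherits the omission without flagging it.
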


\begin{proposition} \label{prop:lambda-L_-1_to_1}
    Set $\tilde{\Gamma} := \{\lambda = a + \i b \in \C \mid a = \delta b^2 - A\}$. Then there exist constants $C > 0$, $0 < \delta \ll 1$, $A \gg 1$ such that for any $\sigma \in (0,1]$, $T \in [1, \infty)$, and $\lambda \in \tilde{\Gamma}$, 
    \begin{enumerate}
        \item the resolvent $(\lambda - \tilde{L}_{\sigma, T})^{-1}$ exists;
        \item \(\begin{aligned}[t]
            &\norm{(\lambda - \tilde{L}_{\sigma, T})^{-1}}_{\sigma,T;0,0} \le C, \\
            &\norm{(\lambda - \tilde{L}_{\sigma, T})^{-1}}_{\sigma,T;-1,1} \le C (1 + \abs{\lambda}^2).
        \end{aligned}\)
    \end{enumerate}
\end{proposition}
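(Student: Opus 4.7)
The plan is to deduce both (1) and (2) from a single a priori estimate on $\lambda - \tilde{L}_{\sigma, T}$ obtained by combining the real and imaginary parts of the Gårding inequality in Lemma~\ref{lem:Re_Im_bd}. For any $s \ne 0$, set $z(s) := \inner{\tilde{L}_{\sigma, T} s, s}_{\sigma; 0} / \abs{s}_{\sigma; 0}^2$ and $t := \abs{s}_{\sigma, T; 1}^2 / \abs{s}_{\sigma; 0}^2 \ge 1$. The first two estimates of Lemma~\ref{lem:Re_Im_bd} give $\Re z(s) \ge C t - C'$ and $\abs{\Im z(s)} \le C \sqrt{t}$; eliminating $t$ shows that the numerical range $W(\tilde{L}_{\sigma, T})$ is contained in the parabolic region $\{x + \i y : x \ge y^2/C - C'\}$, uniformly in $\sigma, T$. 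Fix $\delta \in (0, 1/(2C))$ and $A > 2C'$. Then for every $\lambda = a + \i b \in \tilde{\Gamma}$ one has $b^2/C - C' - a = (1/C - \delta) b^2 + (A - C') \ge A/2$, so $\operatorname{dist}\!\bigl(\lambda, W(\tilde{L}_{\sigma, T})\bigr) \ge c_0 > 0$ uniformly in $\sigma, T$ and $\lambda \in \tilde{\Gamma}$.

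Next, set $v := (\lambda - \tilde{L}_{\sigma, T}) s$ and pair with $s$. Taking real and imaginary parts of $\inner{v, s}_{\sigma; 0}$, and using Lemma~\ref{lem:Re_Im_bd} together with the duality bound $\abs{\inner{v, s}_{\sigma; 0}} \le \abs{v}_{\sigma, T; -1} \abs{s}_{\sigma, T; 1}$, one gets
\begin{equation*}
    C \abs{s}_{\sigma, T; 1}^2 \le \abs{v}_{\sigma, T; -1} \abs{s}_{\sigma, T; 1} + (a + C') \abs{s}_{\sigma; 0}^2, \qquad \abs{b}\, \abs{s}_{\sigma; 0}^2 \le \abs{v}_{\sigma, T; -1} \abs{s}_{\sigma, T; 1} + C \abs{s}_{\sigma, T; 1} \abs{s}_{\sigma; 0}.
\end{equation*}
If $\abs{b} \le B_0$ for some fixed $B_0$ and $A > B_0^2 \delta + C'$, the last term of the first inequality is non-positive and one reads off $\abs{s}_{\sigma, T; 1} \le C^{-1} \abs{v}_{\sigma, T; -1}$ directly. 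For $\abs{b} > B_0$, AM--GM applied to $C \abs{s}_{\sigma, T; 1} \abs{s}_{\sigma; 0}$ in the second inequality gives $\abs{s}_{\sigma; 0}^2 \le (2/\abs{b}) \abs{v}_{\sigma, T; -1} \abs{s}_{\sigma, T; 1} + (C^2/b^2) \abs{s}_{\sigma, T; 1}^2$. Since $a + C' \le \delta b^2$ on $\tilde{\Gamma}$, substituting back into the first inequality yields
\begin{equation*}
    (C - \delta C^2) \abs{s}_{\sigma, T; 1}^2 \le (1 + 2\delta \abs{b}) \abs{v}_{\sigma, T; -1} \abs{s}_{\sigma, T; 1}.
\end{equation*}
With $\delta < 1/(2C)$ the left coefficient is positive, hence $\abs{s}_{\sigma, T; 1} \le C'(1 + \abs{b}) \abs{v}_{\sigma, T; -1} \le C''(1 + \abs{\lambda}^2) \abs{v}_{\sigma, T; -1}$, proving the $H^{-1} \to H^1$ half of (2).

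Finally, existence of the resolvent follows from a Lax--Milgram / Banach--Nečas--Babuška argument: the a priori estimate just established gives injectivity of $\lambda - \tilde{L}_{\sigma, T}$ with closed range, and the same estimate applied to the adjoint problem $\bar\lambda - \tilde{L}_{\sigma, T}^*$ (whose Gårding constants are those of Lemma~\ref{lem:Re_Im_bd} after taking adjoints) gives denseness of the range, proving (1). Once the resolvent exists, the distance estimate of the first paragraph translates into $\abs{(\lambda - \tilde{L}_{\sigma, T}) s}_{\sigma; 0} \ge c_0 \abs{s}_{\sigma; 0}$, giving the $L^2$ bound $\norm{(\lambda - \tilde{L}_{\sigma, T})^{-1}}_{\sigma, T; 0, 0} \le 1/c_0$. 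The main obstacle is the AM--GM absorption in the middle paragraph: $\delta$ must be chosen small enough to simultaneously push $\tilde{\Gamma}$ outside the numerical range \emph{and} to absorb the $\delta C^2 \abs{s}_{\sigma, T; 1}^2$ term produced when the $\abs{s}_{\sigma; 0}^2$ estimate is substituted back. The uniformity of the constants in Lemma~\ref{lem:Re_Im_bd} then propagates to the final bounds uniformly in $\sigma \in (0,1]$ and $T \in [1, \infty)$.
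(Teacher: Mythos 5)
Your proof is correct and rests on the same Gårding-inequality foundation as the paper's, but you take a genuinely different route to the $H^{-1}\to H^{1}$ bound. For the $L^{2}$ bound and existence the two arguments are morally identical: the paper's inequality $\bigl|\inner{(\tilde{L}_{\sigma,T}-\lambda)s,s}_{\sigma;0}\bigr|\ge\max\{C\alpha^{2}-(C'+a),\,\abs{b}-C\alpha\}\abs{s}_{\sigma;0}^{2}$ is just an algebraic restatement of your observation that the numerical range sits in the parabola $\{x\ge y^{2}/C-C'\}$ and $\tilde{\Gamma}$ is uniformly separated from it. (Your first-paragraph sketch only exhibits a \emph{horizontal} gap; turning this into a uniform Euclidean separation takes a small additional case split on $\abs{\operatorname{Im}\lambda-\operatorname{Im}z}$, but this is minor and is glossed over at a comparable level in the paper.) The real divergence is in part (2): the paper fixes a real reference point $\lambda_{0}\le -2C'$ at which a clean $(-1,1)$ bound holds, then applies the first resolvent identity twice, first to bootstrap a $(-1,0)$ bound and then to get $(-1,1)$ at general $\lambda\in\tilde{\Gamma}$ — this is what produces the factor $(1+\abs{\lambda}^{2})$. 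You instead work directly with $\operatorname{Re}\inner{v,s}$ and $\operatorname{Im}\inner{v,s}$ for $v=(\lambda-\tilde{L}_{\sigma,T})s$, split on the size of $\abs{b}$, and absorb the cross term via AM--GM; this is shorter and in fact yields the slightly sharper bound $C(1+\abs{\lambda})$, which of course implies the $(1+\abs{\lambda}^{2})$ stated. You are also more explicit than the paper about surjectivity: the paper's lower bound $\abs{(\tilde{L}_{\sigma,T}-\lambda)s}_{\sigma;0}\ge C''\abs{s}_{\sigma;0}$ gives only injectivity and closed range, and the Lax--Milgram/adjoint step you invoke (valid because the Gårding constants are unchanged for $\tilde{L}_{\sigma,T}^{*}$ and $\bar\lambda\in\tilde{\Gamma}$) is exactly what is needed to close this gap. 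Overall: correct, equivalent in the easy half, genuinely different and marginally cleaner in the $H^{-1}\to H^{1}$ half.
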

\begin{proof}
    By Lemma \ref{lem:Re_Im_bd}, we find that for any $s \in H^1(\Lambda^* E_0)$ and $\lambda = a + \i b \in \tilde{\Gamma}$,
    \begin{align*}
        \Big| \inner{(\tilde{L}_{\sigma, T} &- \lambda) s, s }_{\sigma; 0} \Big| \\
        &=  \left| \left(\mathrm{Re}\inner{\tilde{L}_{\sigma, T} s, s }_{\sigma; 0} - a \abs{s}_{\sigma; 0}^2\right) + \i \left(\mathrm{Im}\inner{\tilde{L}_{\sigma, T} s, s }_{\sigma; 0} - b \abs{s}_{\sigma; 0}^2\right) \right| \\
        &\ge \max \left\{ \mathrm{Re}\inner{\tilde{L}_{\sigma, T} s, s }_{\sigma; 0} - a \abs{s}_{\sigma; 0}^2,  \left|  \mathrm{Im}\inner{\tilde{L}_{\sigma, T} s, s }_{\sigma; 0} - b \abs{s}_{\sigma; 0}^2 \right|\right\} \\
        &\ge \max \left\{ C\abs{s}_{\sigma, T; 1}^2 - (C' + a)\abs{s}_{\sigma; 0}^2,  -C \abs{s}_{\sigma, T;1} \abs{s}_{\sigma;0} + \abs{b} \abs{s}_{\sigma;0}^2\right\} \\
        &= \max \left\{  C \alpha^2  - (C' + a),   -C \alpha + \abs{b}  \right\} \cdot \abs{s}_{\sigma; 0}^2,
    \end{align*}
    where $\alpha= \frac{\abs{s}_{\sigma, T;1}}{\abs{s}_{\sigma;0}} \ge 1$. It is easy to see that if $\delta>0$ is sufficiently small and $A>1$ is sufficiently large, there exists a constant $C'' > 0$ such that 
    \[ \max \left\{  C \alpha^2  - (C' + a),   -C \alpha + \abs{b}  \right\} \ge C'' , ~~~ \forall \lambda \in \tilde{\Gamma}.\]
    
    Using this and the Cauchy-Schwarz inequality, we see that 
    \begin{equation} \label{eq:0,0}
        \abs{(\tilde{L}_{\sigma, T} - \lambda) s}_{\sigma; 0} \ge C'' \abs{s}_{\sigma; 0}.
    \end{equation}
    This proves (1) as well as the first inequality in (2). 

    Now, by the first inequality in Lemma \ref{lem:Re_Im_bd}, for $\lambda_0 \in \R$ with $\lambda_0 \le -2 C'$ and $s\in H^{1}(\Lambda^*E)$, we have 
    \begin{align*}
        \Big| \inner{(\tilde{L}_{\sigma, T} - \lambda_0) s, s }_{\sigma; 0} \Big| 
        &\ge \mathrm{Re}\inner{\tilde{L}_{\sigma, T} s, s }_{\sigma; 0} - \lambda_0 \abs{s}_{\sigma; 0}^2 \\
        &\ge C \abs{s}_{\sigma,T;1}^2 - (C' + \lambda_0) \abs{s}_{\sigma;0}^2 \\
        &\ge C \abs{s}_{\sigma,T;1}^2.
    \end{align*}
    As a consequence,
    \begin{equation} \label{eq:-1,1}
        \norm{(\lambda_0 - \tilde{L}_{\sigma, T})^{-1}}_{\sigma, T; -1, 1} \le \frac{1}{C}.
    \end{equation}

    For any $\lambda \in \tilde{\Gamma}$, the first resolvent identity yields
    \begin{equation} \label{eq:res_ide}
        (\lambda - \tilde{L}_{\sigma, T})^{-1} = (\lambda_0 - \tilde{L}_{\sigma, T})^{-1} - (\lambda - \lambda_0) (\lambda - \tilde{L}_{\sigma, T})^{-1} (\lambda_0 - \tilde{L}_{\sigma, T})^{-1}.
    \end{equation}

    Notice that (\ref{eq:0,0}) implies 
    \begin{equation} \label{eq:1,0}
        \norm{(\lambda - \tilde{L}_{\sigma, T})^{-1}}_{\sigma, T; 1, 0} \le \frac{1}{C''}.
    \end{equation}
    Using (\ref{eq:-1,1}), (\ref{eq:res_ide}) and (\ref{eq:1,0}), we find that 
    \begin{equation} \label{eq:-1, 0}
        \norm{(\lambda - \tilde{L}_{\sigma, T})^{-1}}_{\sigma, T; -1, 0} \le \frac{1}{C} + \frac{1}{C C''} \abs{\lambda - \lambda_0}.
    \end{equation}

    Now we interchange the last two factors $(\lambda - \tilde{L}_{\sigma, T})^{-1}$ and $(\lambda_0 - \tilde{L}_{\sigma, T})^{-1}$ in (\ref{eq:res_ide}) and apply (\ref{eq:-1,1}) and (\ref{eq:-1, 0}) to obtain 
    \begin{equation}
        \norm{(\lambda - \tilde{L}_{\sigma, T})^{-1}}_{\sigma, T; -1, 1} \le \frac{1}{C} + \frac{\abs{\lambda - \lambda_0}}{C} \left( \frac{1}{C} + \frac{1}{C C''}\abs{\lambda - \lambda_0} \right) \le C''' (1 + \abs{\lambda}^2).
    \end{equation}
\end{proof}

\begin{proposition} \label{prop:QQQQQ}
    Let $m \in \N$, $\mathcal{Q} = \{ \tilde{P} \nabla_{e_i} \tilde{P}, \tilde{P}^\perp \nabla_{e_i} \tilde{P}^\perp, \tilde{P}^\perp \nabla_{f_j} \tilde{P}^\perp \}^{i = 1, \cdots, n }_{j = 1, \cdots, k}$. Then there exists a constant $C_m > 0$ such that for any $\sigma \in (0,1]$, $ T \in [1, \infty)$, and $ Q_1, \cdots, Q_m \in \mathcal{Q}$, 
   \begin{equation} \label{eq:QQQQQQ}
       \left\|[Q_1,[Q_2, \cdots, [Q_m, \tilde{L}_{\sigma, T}] \cdots]]\right\|_{\sigma, T; 1, -1} \le C_m.
   \end{equation}
\end{proposition}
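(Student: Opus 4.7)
My plan is to argue by induction on $m$, generalizing the quadratic-form estimates from Lemma \ref{lem:Re_Im_bd}. The base case $m = 0$ is precisely the last assertion of that lemma: $\|\tilde{L}_{\sigma, T}\|_{\sigma, T; 1, -1} \le C$.

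Rather than attempting a direct ``operator-norm'' induction (which fails because first-order differential operators in $\mathcal{Q}$ do not act boundedly between the weighted Sobolev spaces with uniform constants in $T$), I will prove the following stronger structural claim: for any $Q_1, \ldots, Q_m \in \mathcal{Q}$, the iterated commutator
\[
    \mathcal{L}_m := [Q_1, [Q_2, \cdots, [Q_m, \tilde{L}_{\sigma, T}]\cdots]]
\]
is again a second-order operator that splits with respect to $L^2(K)^\perp \oplus L^2(K)$ into a $(1,1)$-block of the form $T^2 \cdot (\text{uniformly elliptic vertical second-order}) + O(T)$, off-diagonal blocks of order $T$, and a $(2,2)$-block that is the sum of an $O(1)$ piece and a uniformly bounded second-order operator in the horizontal variables only. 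All coefficients in this decomposition are smooth, $\sigma$- and $T$-uniformly bounded on the range where they act, with the Gaussian decay along the fiber directions carried by the projectors $\tilde{P}$ and $\tilde{P}^\perp$. Once this structural form is established, the argument of Lemma \ref{lem:Re_Im_bd} applies verbatim to $\mathcal{L}_m$, yielding $\|\mathcal{L}_m\|_{\sigma, T; 1, -1} \le C_m$.

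To verify the structural claim in the inductive step, I compute $[Q, \tilde{L}_{\sigma, T}]$ term-by-term using the explicit form of $\tilde{L}_{\sigma, T}$ derived from Lemma \ref{lem:BLW_formula_new}, Theorem \ref{thm:L_1_L_2_L_3}, and the rescaling (\ref{eq:L_sigma_T^tilde}). Since every $Q \in \mathcal{Q}$ is block-diagonal with respect to the $K$-$K^\perp$ splitting, the commutator preserves the block decomposition. The first-order Leibniz rule then reduces each commutator to (a) differentiation of the smooth coefficients of $\tilde{L}_{\sigma, T}$ along $e_i$ or $f_j$, and (b) zeroth-order correction terms coming from the interchange of $\nabla$ with $\tilde{P}^{(\perp)}$ and with the rescaled Clifford generators from $G_2$. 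The $T$-scaling is preserved block by block: $[Q, T^2 L_1]$ stays $T^2$-scaled (and purely vertical second-order up to $O(T)$); $[Q, T L_2]$ stays off-diagonal and of order $T$; and $[Q, L_3]$ stays $O(1)$. Iterating produces $\mathcal{L}_m$ with the claimed block structure.

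The main obstacle is the bookkeeping of several dozen commutator terms and verifying the $T$-scaling in all cases; the delicate situation is the commutator of $\tilde{P}^\perp \nabla_{f_j} \tilde{P}^\perp$ with the off-diagonal block $TL_2$, where one must check the outcome is still of order $T$ rather than $T^2$. This works because, per Theorem \ref{thm:L_1_L_2_L_3}, $L_2$ carries $\tilde{P}^\perp$ on the left and $\tilde{P}$ on the right, so differentiating its coefficients along $f_j$ introduces only bounded geometric factors (curvature and second fundamental form data) with no additional power of $T$. The deliberate exclusion of $\tilde{P}\nabla_{f_j}\tilde{P}$ from $\mathcal{Q}$ is crucial: this operator vanishes anyway because $K = \Lambda^*M_0 \otimes \mathrm{span}_\R\{e^{-\tau|y|^2/2}\}$ and $y_j\,e^{-\tau|y|^2/2}$ is $L^2$-orthogonal to $e^{-\tau|y|^2/2}$, and including it would spoil the induction by producing uncontrolled vertical derivatives acting on the $K$-side of $\tilde{L}_{\sigma, T, 3}$ where no $T$-weight is available to absorb them.
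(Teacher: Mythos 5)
Your proposal follows the paper's argument closely: both establish that the iterated commutator retains the second-order, $T$-weighted, bounded-coefficient form of $\tilde{L}_{\sigma, T}$ (the paper makes this explicit in (\ref{eq:[Q,L]_type}) for $m=1$ using the identities $[\nabla_{e_i},\tilde{P}]=0$, $\tilde{P}\nabla_{f_j}\tilde{P}=0$, $[\varphi^2,\tilde{P}]=0$, and handles general $m$ by iteration as in \cite[Proposition~4.9]{DLM06}), and then read off the $\|\cdot\|_{\sigma,T;1,-1}$ bound from a quadratic-form estimate against the weighted norms of Definition~\ref{def:norm_sigma_T}. Two minor imprecisions in your write-up: the $(1,1)$-block of a commutator with a first-order $Q$ is generally no longer uniformly elliptic, so ``Lemma~\ref{lem:Re_Im_bd} applies verbatim'' overstates matters---only the third, upper-bound inequality is needed (the coercivity estimate neither holds nor is required)---and you should invoke $[\nabla_{e_i},\tilde{P}]=0$ explicitly alongside $\tilde{P}\nabla_{f_j}\tilde{P}=0$, since it is what keeps the $\tilde{P}\nabla_{e_i}\tilde{P}$-commutator free of any $T^2$-scaled second-order vertical terms.
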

\begin{proof}
    We first assume $m=1$. By multiplying by a constant factor if necessary, we may assume that 
    \begin{equation*}
        \int_Y e^{-\tau \abs{y}^2} \d y = 1.
    \end{equation*}
    Then for $s \in L^2(\Lambda^*E_0)$, the projection $\tilde{P}s$ is given by 
    \begin{equation} \label{eq:P^tilde}
        \tilde{P}s = e^{-\frac{\tau \abs{y}^2}{2}}\int_Y e^{-\frac{\tau \abs{y}^2}{2}} s(X, y) \d vol_Y.
    \end{equation}
    By Definition \ref{def:norm_sigma_T} and (\ref{eq:P^tilde}), we see that 
    \begin{equation}\label{eq:vanish_nabla_e_f_P}
        [\nabla_{e_i}, \tilde{P}] = 0, ~~~ \tilde{P} \nabla_{f_i}\tilde{P} = 0, ~~~ [\varphi^2, \tilde{P}] = 0.
    \end{equation}

    Let $Q_1 = \tilde{P} \nabla_{e_i} \tilde{P}, Q_2 = \tilde{P}^\perp \nabla_{e_i} \tilde{P}^\perp, Q_3 = \tilde{P}^\perp \nabla_{f_j} \tilde{P}^\perp$. It follows from (\ref{eq:vanish_nabla_e_f_P}) that $[Q_1, \tilde{L}_{\sigma, T}]$ can be expressed as 
    \begin{equation}
        \sum_{k,l} a_{k,l}(\sigma, T) \nabla_{e_k} \nabla_{e_l} + \sum_{k} b_{k}(\sigma, T) \nabla_{e_k} + T \sum_{\bar{l}} b'_{l}(\sigma, T) \nabla_{f_l} \tilde{P}^\perp + c(\sigma, T),
    \end{equation}
    and $a_{k,l}, b_{k}, b'_{l}, c$, together with their derivatives, are uniformly bounded for any $(X, y) \in E_0, \sigma \in (0,1]$ and $T \in [1, \infty)$. Therefore by Definition \ref{def:norm_sigma_T} and integration by parts, (\ref{eq:QQQQQQ}) is verified for $[Q_1, \tilde{L}_{\sigma, T}]$.

    Similarly, for $i = 2, 3$ we have 
    \begin{equation} \label{eq:[Q,L]_type}
        \begin{split}
            [Q_i, \tilde{L}_{\sigma, T}] &= \sum_{k,l} a_{k,l}(\sigma, T) \nabla_{e_k} \nabla_{e_l} + T^2\sum_{\bar{k}, \bar{l}} a'_{k,l}(\sigma, T)  \tilde{P}^\perp \nabla_{f_k} \nabla_{f_l} \tilde{P}^\perp \\
            & \hspace{.5cm}+ \sum_{k} b_{k}(\sigma, T) \nabla_{e_k} + T \sum_{l} b'_{l}(\sigma, T) \nabla_{f_l} \tilde{P}^\perp + c(\sigma, T).
        \end{split}
    \end{equation}
    The coefficients are also uniformly bounded as before. Thus we have proved (\ref{eq:QQQQQQ}) for $m=1$.
    
    %In view of Theorem \ref{thm:L_1_L_2_L_3}, it suffices to verify that $[Q_i, T^2 L_1]$, $[Q_i, T L_2]$ and $[Q_i, L_3]$ satisfy (\ref{eq:QQQQQQ}). It is easy to see that for $j = 1, 2, 3$, $[Q_i, T^{3-j}L_j]$ must be of the form 
    %\begin{equation} \label{eq:[Q,L]_type}
        %\overline{\sum}_{k,l} a_{k,l}(\sigma, T) \nabla^L_{e_k} \nabla^L_{e_l} + \overline{\sum}_{k} b_{k}(\sigma, T) \nabla^L_{e_k} + c(\sigma, T).
    %\end{equation}
    %The types of $a_{k,l}(\sigma, T)$, $b_{k}(\sigma, T)$ and $c(\sigma, T)$ depend on cases:
    %\begin{enumerate}
        %\item If $Q_i = \tilde{P} \nabla^L_{e_i} \tilde{P}$, then for $j=1$:
        %\[ a_{k,l}(\sigma, T) =  b_{k}(\sigma, T) =  c(\sigma, T) = 0; \]
        %for $j=2$:
        %\[ a_{k,l}(\sigma, T) = 0,  ~~~ b_{k}(\sigma, T),  c(\sigma, T) = O( T); \]
        %for $j=3$:
        %\[ a_{k,l}(\sigma, T), b_{k}(\sigma, T),  c(\sigma, T) = O(1). \]
        
        %\item If $Q_i = \tilde{P}^\perp \nabla^L_{e_i} %\tilde{P}^\perp$, then for $j=1$:
        %\[ a_{k,l}(\sigma, T) = b_{k}(\sigma, T) = %c(\sigma, T) = O(T^2); \]
        %for $j=2$:
        %\[ a_{k,l}(\sigma, T) = b_{k}(\sigma, T) = 0,  c(\sigma, T) = O(\sigma T); \]
        %for $j=3$:
        %\[ a_{k,l}(\sigma, T) = b_{k}(\sigma, T) =  c(\sigma, T) = 0. \]

        %\item If $Q_i = \tilde{P}^\perp \nabla^L_{f_j} \tilde{P}^\perp$, then for $j=1$:
        %\[ a_{k,l}(\sigma, T) = b_{k}(\sigma, T) = c(\sigma, T) = O(T^2); \]
        %for $j=2$:
        %\[ a_{k,l}(\sigma, T) = b_{k}(\sigma, T) = 0,  c(\sigma, T) = O(T); \]
        %for $j=3$:
        %\[ a_{k,l}(\sigma, T) = b_{k}(\sigma, T) =  c(\sigma, T) = 0. \]
    %\end{enumerate}
    %Thus by Definition \ref{def:norm_sigma_T}, we see that $[Q_i, T^{3-j}L_j]$ verifies (\ref{eq:QQQQQQ}). 

    In general, using an iteration argument as in the proof of \cite[Proposition 4.9]{DLM06}, $[Q_1,[Q_2, \cdots, [Q_m, \tilde{L}_{\sigma, T}] \cdots]]$ has the same type (\ref{eq:[Q,L]_type}) as $[Q_i, \tilde{L}_{\sigma, T}]$. This completes the proof.
\end{proof}

\begin{definition} \label{def:norm_QQQQQ}
    Let $m \in \N$. We define a norm $\norm{~}^2_{\sigma; m}$ on $H^m(\Lambda^*E_0)$ by 
    \[ \norm{s}^2_{\sigma; m} =  \sum_{l = 0}^m \sum_{Q_i \in \mathcal{Q}} \abs{Q_1 \cdots Q_l s}_{\sigma; 0}^2.\]
    If $A: (H^m(\Lambda^*E_0), \norm{~}^2_{\sigma; m}) \to (H^{m'}(\Lambda^*E_0), \norm{~}^2_{\sigma; m'})$ is a bounded linear operator, we use $\vertiii{A}_{\sigma;m,m'}$ to denote the corresponding operator norm.
\end{definition}

\begin{proposition} \label{prop:rel_m_m+1}
    For any $m \in \N$, $\sigma \in (0,1]$, $T \in [1, \infty)$, and $\lambda \in \tilde{\Gamma}$, the resolvent $(\lambda - \tilde{L}_{\sigma, T})^{-1}$ maps $H^m(\Lambda^*E_0)$ to $H^{m+1}(\Lambda^*E_0)$. Moreover, there exist $N \in \N$ and $C_m > 0$ such that 
    \[ \vertiii{(\lambda - \tilde{L}_{\sigma, T})^{-1}}_{\sigma; m,m+1} \le C_m (1 + \abs{\lambda})^N. \]
\end{proposition}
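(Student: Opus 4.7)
The plan is to prove the estimate directly (no recursion on $m$) by expanding the product $Q_1\cdots Q_l R_\lambda$, with $R_\lambda := (\lambda-\tilde L_{\sigma,T})^{-1}$, via the algebraic identity
\begin{equation*}
Q R_\lambda \;=\; R_\lambda Q + R_\lambda\, [Q,\tilde L_{\sigma,T}]\, R_\lambda,
\end{equation*}
and then bounding each resulting term by combining Propositions \ref{prop:lambda-L_-1_to_1} and \ref{prop:QQQQQ}. First I would record the elementary norm comparisons valid for $\sigma\in(0,1]$ and $T\ge 1$: $\norm{s}_{\sigma;0}=\abs{s}_{\sigma;0}$, $\norm{s}_{\sigma;1}\le\abs{s}_{\sigma,T;1}$, and $\abs{s}_{\sigma,T;-1}\le\abs{s}_{\sigma;0}$, each immediate from Definitions \ref{def:norm_sigma_T} and \ref{def:norm_QQQQQ}. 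I would also verify that $\abs{Qs}_{\sigma,T;-1}\le C\abs{s}_{\sigma;0}$ uniformly in $(\sigma,T)$ for every $Q\in\mathcal Q$, which follows by pairing against a test vector $v\in H^1$ and noting that $\abs{\nabla_{e_i} v}_{\sigma;0}$ and $\abs{\nabla_{f_j}\tilde P^\perp v}_{\sigma;0}$ are both dominated by $\abs{v}_{\sigma,T;1}$.

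By Definition \ref{def:norm_QQQQQ}, controlling $\norm{R_\lambda s}_{\sigma;m+1}$ reduces to bounding $\abs{Q_1\cdots Q_l R_\lambda s}_{\sigma;0}$ for every $l\le m+1$ and every choice of $Q_i\in\mathcal Q$. The next step is to iterate the commutator identity above---first commuting each $Q_i$ past the leftmost $R_\lambda$, then commuting remaining $Q$'s through commutators generated at earlier stages---producing a finite expansion
\begin{equation*}
Q_1\cdots Q_l R_\lambda s \;=\; \sum_{\alpha} R_\lambda\, B_{1,\alpha}\, R_\lambda\, B_{2,\alpha}\cdots R_\lambda\, B_{r_\alpha,\alpha}\, R_\lambda\, P_\alpha s,
\end{equation*}
in which each $B_{j,\alpha}$ is an iterated commutator $[Q_{k_1},[Q_{k_2},\ldots,[Q_{k_{s_j}},\tilde L_{\sigma,T}]\cdots]]$ and $P_\alpha$ is the product of those $Q_i$'s not absorbed into any $B_{j,\alpha}$; writing $p_\alpha$ for the number of $Q$-factors in $P_\alpha$, the total count satisfies $p_\alpha+\sum_j s_j = l$.

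To estimate a typical summand, Proposition \ref{prop:QQQQQ} gives $\norm{B_{j,\alpha}}_{\sigma,T;1,-1}\le C$ uniformly in $(\sigma,T)$, and Proposition \ref{prop:lambda-L_-1_to_1} gives $\norm{R_\lambda}_{\sigma,T;-1,1}\le C(1+\abs{\lambda}^2)$, so each block $R_\lambda B_{j,\alpha}$ is bounded on $H^{-1}_{\sigma,T}$ with norm $C(1+\abs{\lambda}^2)$, yielding
\begin{equation*}
\abs{R_\lambda B_{1,\alpha}\cdots R_\lambda B_{r_\alpha,\alpha} R_\lambda P_\alpha s}_{\sigma,T;1} \;\le\; C(1+\abs{\lambda}^2)^{r_\alpha+1}\abs{P_\alpha s}_{\sigma,T;-1}.
\end{equation*}
When $p_\alpha\ge 1$, peeling off the outermost $Q$-factor via $\abs{Q\cdot}_{\sigma,T;-1}\le C\abs{\cdot}_{\sigma;0}$ bounds $\abs{P_\alpha s}_{\sigma,T;-1}$ by one of the summands defining $\norm{s}_{\sigma;p_\alpha-1}^2\le\norm{s}_{\sigma;m}^2$, the inequality $p_\alpha-1\le l-1\le m$ being the crucial point; the case $p_\alpha=0$ is handled by $\abs{s}_{\sigma,T;-1}\le\abs{s}_{\sigma;0}=\norm{s}_{\sigma;0}$. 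Invoking $\norm{\,\cdot\,}_{\sigma;m+1}\le\abs{\,\cdot\,}_{\sigma,T;1}$ and summing over the finitely many terms in the expansion yields the desired bound, with $N$ depending polynomially on $m$ (roughly $N=2(m+2)$).

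The principal technical obstacle is the combinatorial bookkeeping of the commutator expansion: after fully commuting every $Q_i$ through every $R_\lambda$ and every previously generated commutator, one must verify that the result has exactly the claimed structure, that both the number of summands and the maximum exponent $r_\alpha$ are bounded in terms of $m$ alone (independently of $\sigma,T$), and that the total count $p_\alpha+\sum_j s_j=l$ is preserved at every stage. The overall scheme parallels the Bismut--Lebeau-type arguments in \cite{BL} and \cite{DLM06}.
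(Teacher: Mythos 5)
Your argument is essentially the one the paper uses: both proofs expand $Q_1\cdots Q_l(\lambda-\tilde L_{\sigma,T})^{-1}$ by iterating the commutator identity $[Q,(\lambda-\tilde L_{\sigma,T})^{-1}]=(\lambda-\tilde L_{\sigma,T})^{-1}[Q,\tilde L_{\sigma,T}](\lambda-\tilde L_{\sigma,T})^{-1}$, arriving at a finite sum of products $R_\lambda B_1 R_\lambda\cdots B_r R_\lambda P_\alpha$ where the $B_j$ are iterated commutators of $Q$'s with $\tilde L_{\sigma,T}$ (the paper writes the same expansion in two stages: first iterated commutators with the resolvent times trailing $Q$'s, then expansion of those commutators), and both close the estimate by feeding $\norm{B_j}_{\sigma,T;1,-1}\le C$ from Proposition \ref{prop:QQQQQ} and $\norm{(\lambda-\tilde L_{\sigma,T})^{-1}}_{\sigma,T;-1,1}\le C(1+\abs{\lambda}^2)$ from Proposition \ref{prop:lambda-L_-1_to_1} through the chain $\abs{\cdot}_{\sigma,T;-1}\le\abs{\cdot}_{\sigma;0}\le\abs{\cdot}_{\sigma,T;1}$ and the mapping property of elements of $\mathcal Q$. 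The one cosmetic difference is that the paper quotes $\norm{Q_i}_{\sigma,T;1,0}\le C$ where you use the dual bound $\abs{Q\cdot}_{\sigma,T;-1}\le C\abs{\cdot}_{\sigma;0}$ to peel off a trailing $Q$-factor; these are adjoint statements (both resting on uniform control of $\partial\psi_\sigma/\psi_\sigma$, which you should make explicit when verifying the dual bound), and your version makes the final step $p_\alpha-1\le m$ more transparent than the paper's rather compressed ``the assertion follows.'' Your degree count $N\approx 2(m+2)$ is also consistent with the number of resolvent factors in the longest chain.
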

\begin{proof}
    First, we notice that for $Q_i \in \mathcal{Q}$, the operator $Q_1 \cdots Q_m (\lambda - \tilde{L}_{\sigma, T})^{-1}$ can be expressed, using commutators, as a linear combination of operators of the type 
    \[ [Q_1, [Q_2, \cdots [Q_{l}, (\lambda - \tilde{L}_{\sigma, T})^{-1}]\cdots]]Q_{l+1} \cdots Q_m \]
    for some $l \le m$. Let $\mathcal{C} = \{ [Q_{i_1}, [Q_{i_1}, \cdots [Q_{i_p}, (\lambda - \tilde{L}_{\sigma, T})^{-1}]\cdots]] \}$ be the set of commutators. Then each $[Q_1, [Q_2, \cdots [Q_{l}, (\lambda - \tilde{L}_{\sigma, T})^{-1}]\cdots]]$ has the form 
    \[ (\lambda - \tilde{L}_{\sigma, T})^{-1} R_1 (\lambda - \tilde{L}_{\sigma, T})^{-1} R_2 \cdots R_l (\lambda - \tilde{L}_{\sigma, T})^{-1}, \]
    where $R_i \in \mathcal{C}$.

    By Proposition \ref{prop:lambda-L_-1_to_1} and Proposition \ref{prop:QQQQQ}, there exists $C > 0$ such that 
    \[ \norm{R_i}_{\sigma, T; 1, -1} \le C,~~ \norm{(\lambda - \tilde{L}_{\sigma, T})^{-1}}_{\sigma, T; -1, 1} \le C(1 + \abs{\lambda}^2), \norm{Q_i}_{\sigma, T; 1, 0} \le C. \]
    Since we obviously have $\norm{s}_{\sigma; 1} \le C \abs{s}_{\sigma, T; 1}$, the assertion follows.
\end{proof}

Let $\tilde{K}_{\sigma,T}$ denote the heat kernel of $\tilde{F}_\sigma(\tilde{L}_{\sigma, T})$. Using the results above, we obtain the following uniform estimate for $\tilde{K}_{\sigma,T}$.

\begin{theorem} \label{thm:unif_bound}
    For any $\sigma \in (0,1]$ and $T \in [1,\infty)$, the heat kernel $\tilde{K}_{\sigma,T}$, together with all its derivatives, is uniformly bounded on compact subsets. 
\end{theorem}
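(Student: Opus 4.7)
The plan is to combine the Riesz--Dunford representation of $\tilde{F}_\sigma(\tilde{L}_{\sigma,T})$ with the uniform resolvent estimates of Proposition~\ref{prop:rel_m_m+1} and the rapid decay of $\tilde{F}_\sigma$ from Proposition~\ref{prop:F_and_G}(5) to show that $\tilde{F}_\sigma(\tilde{L}_{\sigma,T})$ is a smoothing operator uniformly in $(\sigma,T)\in(0,1]\times[1,\infty)$. Sobolev embedding, applied to the weighted norms $\vertiii{\cdot}_{\sigma;m}$ restricted to a compact subset where $\psi_\sigma$ is bounded above and below independently of $\sigma$, will then yield the claimed smoothness and uniform bounds on $\tilde{K}_{\sigma,T}$.

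First I would write, using Theorem~\ref{thm:funcional_calculus} and the contour $\tilde{\Gamma}$ constructed in Proposition~\ref{prop:lambda-L_-1_to_1},
\begin{equation*}
    \tilde{F}_\sigma(\tilde{L}_{\sigma,T}) = \frac{1}{2\pi\i}\int_{\tilde{\Gamma}} \tilde{F}_\sigma(\lambda)\,(\lambda-\tilde{L}_{\sigma,T})^{-1}\,\d\lambda,
\end{equation*}
and note that on $\tilde{\Gamma}$ one has $\operatorname{Re}(\lambda)\ge \delta b^2-A$, so Proposition~\ref{prop:F_and_G}(5) applied to $F_\sigma(a)=\tilde{F}_\sigma(a^2)$ yields, for every $N\in\mathbb{N}$, a uniform bound $|\tilde{F}_\sigma(\lambda)|\le C_N(1+|\lambda|)^{-N}$ for $\lambda\in\tilde{\Gamma}$, independently of $\sigma\in(0,1]$. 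The next step is to gain arbitrary Sobolev regularity: pick any $p\in\mathbb{N}$, choose $\lambda_0\in\R$ far to the left of $\tilde{\Gamma}$ as in the proof of Proposition~\ref{prop:lambda-L_-1_to_1}, and decompose
\begin{equation*}
    \tilde{F}_\sigma(\lambda)=(\lambda-\lambda_0)^{-p}\,\tilde{G}_{\sigma,p}(\lambda),\qquad \tilde{G}_{\sigma,p}(\lambda):=(\lambda-\lambda_0)^{p}\tilde{F}_\sigma(\lambda),
\end{equation*}
where $\tilde{G}_{\sigma,p}$ inherits the same rapid decay on $\tilde{\Gamma}$. Pulling the factor $(\lambda-\lambda_0)^{-p}$ through and using the identity
\begin{equation*}
    (\lambda-\lambda_0)^{-p}(\lambda-\tilde{L}_{\sigma,T})^{-1} = \sum_{j=0}^{p-1}(\lambda-\lambda_0)^{-(p-j)}(\tilde{L}_{\sigma,T}-\lambda_0)^{-(j+1)} + (\tilde{L}_{\sigma,T}-\lambda_0)^{-p}(\lambda-\tilde{L}_{\sigma,T})^{-1},
\end{equation*}
obtained by repeatedly applying the resolvent identity, I would rewrite the contour integral so that a factor $(\tilde{L}_{\sigma,T}-\lambda_0)^{-p}$ appears outside, with a remainder integral of the same type as before but with extra decay.

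Iterating Proposition~\ref{prop:rel_m_m+1} then gives $(\tilde{L}_{\sigma,T}-\lambda_0)^{-p}\colon H^m\to H^{m+p}$ with operator norm bounded by $C_{m,p}$, uniformly in $(\sigma,T)$, while the remaining contour integral is absolutely convergent in $\vertiii{\cdot}_{\sigma;m,m}$ by the rapid decay of $\tilde{G}_{\sigma,p}$ and the polynomial bound in Proposition~\ref{prop:rel_m_m+1}. Combining, one obtains $\vertiii{\tilde{F}_\sigma(\tilde{L}_{\sigma,T})}_{\sigma;-m,m+p}\le C_{m,p}$ for all $m,p\in\mathbb{N}$, uniformly in $\sigma\in(0,1]$ and $T\in[1,\infty)$. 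On any compact subset $K\subset E_0$, the weight $\psi_\sigma$ is bounded above and below by constants independent of $\sigma$, so $\vertiii{\cdot}_{\sigma;m}$ is comparable with the standard Sobolev norm on $K$. Hence $\tilde{F}_\sigma(\tilde{L}_{\sigma,T})$ is a smoothing operator with uniform estimates near the diagonal in $K\times K$, and the standard Sobolev embedding on a pair of compact neighborhoods of the diagonal gives that $\tilde{K}_{\sigma,T}(p,p')$ and all its derivatives are uniformly bounded on $K\times K$.

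The main obstacle, which I would address carefully rather than leave implicit, is ensuring that the resolvent estimate of Proposition~\ref{prop:rel_m_m+1} really iterates into higher order regularity in the \emph{weighted} norms $\vertiii{\cdot}_{\sigma;m}$ (as opposed to the abstract $\abs{\cdot}_{\sigma,T;\pm 1}$ norms of Proposition~\ref{prop:lambda-L_-1_to_1}), uniformly as $T\to\infty$ with $\sigma\to 0$; this is precisely where the commutator bounds of Proposition~\ref{prop:QQQQQ} and the identity $\tilde{P}\nabla_{f_i}\tilde{P}=0$ come in, and one must verify that the derivatives $\nabla_{f_j}$ applied to the $\tilde{P}^\perp$-part pick up the correct factors of $T$ that match the weight structure of $\abs{\cdot}_{\sigma,T;1}$, so that the resulting kernel estimates remain uniform in both parameters.
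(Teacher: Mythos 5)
Your overall strategy — expressing $\tilde{F}_\sigma(\tilde{L}_{\sigma,T})$ via Riesz--Dunford, feeding the uniform resolvent bounds of Proposition \ref{prop:rel_m_m+1} and the rapid decay from Proposition \ref{prop:F_and_G}(5) into the contour integral, and then using Sobolev embedding on a compact set where $\psi_\sigma$ is uniformly comparable to $1$ — is exactly the paper's strategy, so the proposal is essentially correct. The one genuine implementation difference is in how the extra Sobolev regularity is harvested: the paper rewrites the contour integral with $(\lambda-\tilde{L}_{\sigma,T})^{-m}$ in the integrand (an integration-by-parts identity), applies Proposition \ref{prop:rel_m_m+1} $m$ times to get $\vertiii{\tilde{F}_\sigma(\tilde{L}_{\sigma,T})}_{\sigma,T;0,m}\le C_m$, and then controls derivatives in \emph{both} kernel variables by conjugating with $\Delta^l(\cdot)\Delta^{l'}$ and invoking self-adjointness, whereas you factor $\tilde{F}_\sigma(\lambda)=(\lambda-\lambda_0)^{-p}\tilde{G}_{\sigma,p}(\lambda)$ and pull $(\tilde{L}_{\sigma,T}-\lambda_0)^{-p}$ out of the integral via an iterated resolvent identity. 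Both work, but two things in your version need tightening: (i) your telescoping identity has a sign error — the correct version reads
\begin{equation*}
(\lambda-\lambda_0)^{-p}(\lambda-L)^{-1} = -\sum_{j=0}^{p-1}(\lambda-\lambda_0)^{-(p-j)}(L-\lambda_0)^{-(j+1)} + (L-\lambda_0)^{-p}(\lambda-L)^{-1},
\end{equation*}
since $(\lambda-\lambda_0)^{-1}(\lambda-L)^{-1}= -(\lambda-\lambda_0)^{-1}(L-\lambda_0)^{-1}+(L-\lambda_0)^{-1}(\lambda-L)^{-1}$; and (ii) you invoke a norm $\vertiii{\cdot}_{\sigma;-m,m+p}$ which is not defined in the paper — only $\abs{\cdot}_{\sigma,T;-1}$ and $\norm{\cdot}_{\sigma;m}$ for $m\ge 0$ are; to make your duality step precise you would either have to define weighted negative Sobolev norms or switch to the paper's device of bounding $\vertiii{\Delta^l\tilde{F}_\sigma(\tilde{L}_{\sigma,T})\Delta^{l'}}_{\sigma,T;0,0}$, the adjoint bound coming from the (essential) self-adjointness of $\tilde{F}_\sigma(\tilde{L}_{\sigma,T})$. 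These are fixable gaps; the approach is sound.
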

\begin{proof}
    In the whole proof, $C$ (or $C_m$) will denote a constant that is independent of $\sigma, T$, and may vary from line to line.

    Let $\lambda \in \tilde{\Gamma}$. By Proposition \ref{prop:lambda-L_-1_to_1}, we can express $\tilde{F}_\sigma(\tilde{L}_{\sigma, T})$ as 
    \begin{equation} \label{eq:F_L_m}
        \tilde{F}_\sigma(\tilde{L}_{\sigma, T}) = \frac{(-1)^{m-1} (m-1)! 2^{m-1}}{2\pi\i} \int_{\tilde{\Gamma}}\tilde{F}_\sigma(\lambda) (\lambda - \tilde{L}_{\sigma, T})^{-m} \d \lambda
    \end{equation}
    for any $m \in \N$. Using (\ref{eq:F_L_m}) and applying Proposition \ref{prop:rel_m_m+1} for $m$ times, we find that 
    \[ \vertiii{\tilde{F}_\sigma(\tilde{L}_{\sigma, T})}_{\sigma, T; 0, m} \le C_m .\]

    This implies
    \begin{equation}
        \vertiii{\Delta^l\tilde{F}_\sigma(\tilde{L}_{\sigma, T})}_{\sigma, T; 0, 0} \le C, ~~ \forall l \in \N.
    \end{equation}
    By an adjoint argument, we find that 
    \begin{equation}
        \vertiii{\tilde{F}_\sigma(\tilde{L}_{\sigma, T}) \Delta^{l'}}_{\sigma, T; 0, 0} \le C, ~~ \forall l' \in \N.
    \end{equation}
    As a result, for any $l, l' \in \N$ we have 
    \begin{equation} \label{eq:l_F_l'_0,0}
        \vertiii{\Delta^{l}\tilde{F}_\sigma(\tilde{L}_{\sigma, T}) \Delta^{l'}}_{\sigma, T; 0, 0} \le C.
    \end{equation}

    Now we restrict our discussion to $D = \{\abs{X} \le 1, \abs{y} \le 1\}$. Let $\abs{~}_D$ denote the $L^2$-norm of $L^2(\Lambda^*E_0|_{D})$, and $\norm{~}_D$ denote the corresponding operator norm. Then using the trivial embedding $L^2(\Lambda^*E_0|_{D}) \hookrightarrow L^2(\Lambda^*E_0)$, we see that for $s \in L^2(\Lambda^*E_0|_{D})$, 
    \begin{equation} \label{eq:norm_D_0}
        \abs{s}_D \le \abs{s}_{\sigma; 0} \le C \abs{s}_D.
    \end{equation}

    (\ref{eq:l_F_l'_0,0}) and (\ref{eq:norm_D_0}) imply an $L^2$-bound in the standard sense: 
    \begin{equation}
        \norm{\Delta^{l}\tilde{F}_\sigma(\tilde{L}_{\sigma, T}) \Delta^{l'}}_D \le C.
    \end{equation}
    Thus by Sobolev's inequalities, we find that for any $q, q' \in \N$, 
    \begin{equation}
        \sup_{(p, p') \in D \times D} \abs{\Delta^q_p \Delta^{q'}_{p'} \tilde{K}_{\sigma, T}(p, p')} \le C.
    \end{equation}
    This completes the proof.
    
\end{proof}

\subsection{Uniform estimate on supertrace}
Now we are ready to prove Theorem \ref{thm:interm_result_I_4_3}.

In view of (\ref{eq:tr_s_F+G}), Proposition \ref{prop:tr_s_H}, and (\ref{eq:tr_F_D_and_F_L}), Theorem \ref{thm:interm_result_I_4_3} is a direct consequence of the following theorem, for which the proof will be delayed to the end of this section.
\begin{theorem} \label{thm:last_one}
    There exist constants $\delta > 0$, $C > 0$, such that for any $\sigma \in (0,1]$, $T \in [1.\infty)$, 
    \begin{equation}
        \left| \frac{1}{T}\tr_s \left( N_Y \tilde{F}_\sigma(\tilde{L}_{\sigma, T})\right) \right| \le \frac{C}{T^{1 + \delta}}.
    \end{equation}
\end{theorem}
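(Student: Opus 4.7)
The strategy rests on the observation that the number operator $N_Y$ annihilates the kernel subbundle $K$. Since $K = \Lambda^*M_0 \otimes \mathrm{span}_\R\{e^{-\tau|y|^2/2}\}$ consists of elements of vertical form-degree zero, we have $N_Y\tilde{P} = 0$, and by self-adjointness of $\tilde{P}$ and $N_Y$, also $\tilde{P}N_Y = 0$. The supertrace therefore localizes to the $(1,1)$-block of $\tilde{F}_\sigma(\tilde{L}_{\sigma,T})$:
\[
\tr_s\!\bigl(N_Y \tilde{F}_\sigma(\tilde{L}_{\sigma,T})\bigr) = \tr_s\!\bigl(N_Y\tilde{P}^\perp \tilde{F}_\sigma(\tilde{L}_{\sigma,T})\tilde{P}^\perp\bigr),
\]
so the problem reduces to producing an operator-level bound on this block that decays with $T$.

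Next I would represent the block via the Riesz--Dunford integral along the contour $\tilde{\Gamma}$ from Proposition \ref{prop:lambda-L_-1_to_1} and apply block inversion:
\[
\tilde{P}^\perp \tilde{F}_\sigma(\tilde{L}_{\sigma,T})\tilde{P}^\perp = \frac{1}{2\pi\i}\int_{\tilde{\Gamma}} \tilde{F}_\sigma(\lambda)\,\bigl(\lambda - \tilde{L}_{\sigma,T,1} - \tilde{L}_{\sigma,T,2}(\lambda - \tilde{L}_{\sigma,T,3})^{-1}\tilde{L}_{\sigma,T,2}'\bigr)^{-1}\, \d\lambda.
\]
By Theorem \ref{thm:L_1_L_2_L_3}, $\tilde{L}_{\sigma,T,1} = T^2 L_1 + O(T)$ with $L_1 = \tilde{P}^\perp \tilde{D}_Y^2 \tilde{P}^\perp$. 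Since $\ker(\tilde{D}_Y|_{\Lambda^*Y}) = \R\cdot e^{-\tau|y|^2/2}$ is the entire fiberwise kernel, $L_1$ has a uniform positive spectral gap $c>0$, so the spectrum of $\tilde{L}_{\sigma,T,1}$ lies in $[cT^2,\infty)$ for $T$ sufficiently large. For $\lambda \in \tilde{\Gamma}$ one should be able to argue that the Schur complement retains spectrum comparable to $T^2$, producing an inverse of norm $O(T^{-2})$ in the weighted Sobolev pairing of Definition \ref{def:norm_sigma_T}. Combining this with the rapid decay of $\tilde{F}_\sigma$ along $\tilde{\Gamma}$ from Proposition \ref{prop:F_and_G}(5), and with the uniform pointwise kernel control of Theorem \ref{thm:unif_bound} to convert operator-norm estimates into trace-norm ones, one expects
\[
\bigl|\tr_s\!\bigl(N_Y \tilde{F}_\sigma(\tilde{L}_{\sigma,T})\bigr)\bigr| \le C/T^2,
\]
which gives the claimed estimate with, for instance, $\delta = 1$.

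The principal obstacle lies in the Schur-complement estimate. The perturbation $\tilde{L}_{\sigma,T,2}(\lambda - \tilde{L}_{\sigma,T,3})^{-1}\tilde{L}_{\sigma,T,2}'$ is itself of size $T^2$ (since $\tilde{L}_{\sigma,T,2},\tilde{L}_{\sigma,T,2}' = O(T)$ while $\tilde{L}_{\sigma,T,3} = O(1)$), so a direct norm bound is inadequate. One must instead exploit the sesquilinear $B^*A^{-1}B$-structure --- with $A = \lambda - \tilde{L}_{\sigma,T,3}$ a small perturbation of the uniformly invertible $\lambda - L_3$, and $B$ essentially $\tilde{L}_{\sigma,T,2}'$ --- to show that this term is of non-negative type and therefore reinforces rather than cancels the $T^2$-positivity of $\tilde{L}_{\sigma,T,1}$. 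This is the analogue of the refined analysis of \cite[Section 9]{BB1994} and \cite[Section 13]{BL}: one strengthens the weighted norms of Definition \ref{def:norm_sigma_T} so as to expose the explicit $T$-dependence of each block and upgrades Propositions \ref{prop:lambda-L_-1_to_1}--\ref{prop:rel_m_m+1} with the extra factor of $T^{-2}$ on the $(1,1)$-block. Granting this uniform estimate, integration over $\tilde{\Gamma}$ concludes the proof.
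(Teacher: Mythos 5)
You correctly note that $N_Y$ kills both $\tilde{P}$ and $\tilde{P}N_Y$, so the supertrace localizes to the $(1,1)$-block. This observation is sound (and in fact also underlies why the paper's comparison operator $\tilde{P}\tilde{F}_\sigma(\Theta_\sigma)\tilde{P}$ contributes zero to the trace). However, from that point your argument diverges from the paper's and, as you yourself acknowledge, does not close.

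The paper does not estimate the $(1,1)$-block directly by a Schur-complement argument. Instead it compares $\tilde{F}_\sigma(\tilde{L}_{\sigma,T})$ with the model operator $\tilde{P}\tilde{F}_\sigma(\Theta_\sigma)\tilde{P}$, where $\Theta_\sigma$ is built from the $T\to\infty$ limits $L_1, L_2, L_2', L_3$ of Theorem \ref{thm:L_1_L_2_L_3}. Proposition \ref{prop:last_prop} packages the necessary estimates: each off-diagonal block of $\tilde{F}_\sigma(\tilde{L}_{\sigma,T})$ is $O(T^{-1})$ in $\lVert\cdot\rVert_{\sigma;0,0}$, and the $(2,2)$-block is within $O(T^{-1/2})$ of $\tilde{P}\tilde{F}_\sigma(\Theta_\sigma)\tilde{P}$; the hard work in that proof is precisely keeping the Schur-complement perturbation $\tilde{L}_{\sigma,T,2}'(\lambda-\tilde{L}_{\sigma,T,1})^{-1}\tilde{L}_{\sigma,T,2}$ under control, using the weighted norms of Definition \ref{def:norm_sigma_T} and a careful splitting of the contour at $\lvert\lambda\rvert = T^{1/8}$. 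The theorem's proof then converts the resulting $O(T^{-1/2})$ operator-norm bound into a pointwise bound on the diagonal heat kernel by a mollification/interpolation argument that plays the $C^1$ kernel bounds of Theorem \ref{thm:unif_bound} against the $L^2$ bounds of Proposition \ref{prop:last_prop}, balancing at $a = T^{-1/2}$. This delivers $\lvert\tr_s(N_Y\tilde{F}_\sigma(\tilde{L}_{\sigma,T}))\rvert \le C/T^{1/2}$, hence $\delta = 1/2$ after dividing by $T$.

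Your proposal has two genuine gaps. First, the Schur-complement positivity claim — that $\tilde{L}_{\sigma,T,2}(\lambda-\tilde{L}_{\sigma,T,3})^{-1}\tilde{L}_{\sigma,T,2}'$ is of non-negative type and reinforces the $T^2$-positivity of $\tilde{L}_{\sigma,T,1}$ — is not established; it is not even clear along the contour $\tilde{\Gamma}$ (where $\lambda$ is genuinely complex) that this term has a definite sign, and the paper's Proposition \ref{prop:last_prop} never proves the analogous $O(T^{-2})$ resolvent estimate on the $(1,1)$-block that you would need. Second, you pass from a (hypothetical) operator-norm bound to a supertrace bound without the mollification step; an operator-norm bound alone does not control the diagonal kernel values that enter $\tr_s$, and the paper's interpolation pays a square-root to do this conversion. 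For both reasons the target $\delta = 1$ is not obtained in the paper and would require a stronger argument than you provide; the established result is $\delta = 1/2$.
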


Recall that under the splitting $L^2(\Lambda^*E_0) = L^2(K)^\perp \oplus L^2(K)$, the operator $\tilde{L}_{\sigma, T}$ decomposes as 
\begin{equation}
    \tilde{L}_{\sigma, T} = \begin{pmatrix}
        \tilde{L}_{\sigma, T, 1} & \tilde{L}_{\sigma, T, 2} \\
        \tilde{L}_{\sigma, T, 2}' & \tilde{L}_{\sigma, T, 3}
    \end{pmatrix}.
\end{equation}
If we write $(\lambda - \tilde{L}_{\sigma, T})^{-1} = \begin{pmatrix}
    U_1 & U_2 \\
    U_2' & U_3
\end{pmatrix}$, then 
\begin{equation}
    U_3 = \left((\lambda - \tilde{L}_{\sigma, T, 3}) - \tilde{L}_{\sigma, T, 2}' (\lambda - \tilde{L}_{\sigma, T, 1})^{-1} \tilde{L}_{\sigma, T, 2}\right)^{-1}.
\end{equation}

\begin{definition}
    Let $\Theta_\sigma: L^2(\Lambda^*E_0) \to L^2(\Lambda^*E_0)$ be the operator 
    \begin{equation}
        \Theta_\sigma = \begin{pmatrix}
            0 & 0 \\
            0 & L_3 - L_2' L_1^{-1} L_2
        \end{pmatrix},
    \end{equation}
    where $L_i$'s are the operators given in Theorem \ref{thm:L_1_L_2_L_3}.
\end{definition}

\begin{remark} \label{rmk:ident_Theta_D_M}
    By Lemma \ref{lem:D_sigma_T}, if we let $E = \tilde{D}_M - \frac{1}{4}\mathrm{Tor}$, then 
    \begin{align*}
        \tilde{L}_{\sigma, T}^2 &= \varphi^2(\sigma X) \left[E^2 + T (E \tilde{D}_{Y, T} + \tilde{D}_{Y, T} E ) + T^2 \tilde{D}_{Y, T}^2\right] \\
        &+ (1 - \varphi^2(\sigma X)) \left(D_{M_0}^2 + T^2 \tilde{P}^\perp\tilde{D}_{Y}^2\tilde{P}^\perp\right).
    \end{align*}

    It is clear that $\tilde{D}_{Y, T} = \tilde{D}_{Y}^2 + O(T)$. Suppose $E = E_1 + O(T)$. Therefore, compared to Theorem \ref{thm:L_1_L_2_L_3}, we find that
    \begin{align*}
        L_1 &= \tilde{P}^\perp \tilde{D}_{Y}^2 \tilde{P}^\perp, \\
        L_2 &= \tilde{P}^\perp \Big[ \varphi^2(\sigma X)\tilde{D}_{Y} E_1  \Big] \tilde{P}, \\
        L_2' &= \tilde{P} \Big[ \varphi^2(\sigma X)E_1 \tilde{D}_{Y}   \Big] \tilde{P}^\perp, \\
        L_3 &= \tilde{P}\Big[ \varphi^2(\sigma X)E_1^2  \Big] \tilde{P}.
    \end{align*}

    When $\abs{X} \le \frac{\epsilon}{\sigma}$, $\varphi^2(\sigma X) \equiv 1$, and thus 
    \begin{equation}
        L_3 - L_2' L_1^{-1} L_2 = \tilde{P} E_1^2  \tilde{P} - \tilde{P} E_1 \tilde{P}^\perp E_1  \tilde{P} = (\tilde{P} E_1  \tilde{P})^2.
    \end{equation}
    Moreover, if we view $D_M^2$ as an operator acting on $\Lambda^*M \otimes \ker \tilde{D}_Y|_{\Lambda^* Y}$, then $(\tilde{P} E_1  \tilde{P})^2 = G_2 \circ (\sigma D_M)^2 \circ G_2^{-1}$ for $X \in B^{M_0}_{\epsilon/\sigma}(0)$.
\end{remark}

\begin{proposition} \label{prop:last_prop}
    There exists a constant $C > 0$ such that for any $\sigma \in (0,1]$ and $T \in [1, \infty)$, we have 
    \begin{enumerate}
        \item $\norm{\tilde{P}^\perp \tilde{F}_\sigma(\tilde{L}_{\sigma, T}) \tilde{P}^\perp}_{\sigma;0, 0} \le \frac{C}{T}$;
        \item $\norm{\tilde{P}^\perp\tilde{F}_\sigma(\tilde{L}_{\sigma, T})\tilde{P}}_{\sigma;0, 0} \le \frac{C}{T}$;
        \item $\norm{\tilde{P}\tilde{F}_\sigma(\tilde{L}_{\sigma, T})\tilde{P}^\perp}_{\sigma;0, 0} \le \frac{C}{T}$;
        \item $\norm{\tilde{P}\tilde{F}_\sigma(\tilde{L}_{\sigma, T})\tilde{P} - \tilde{P} \tilde{F}_\sigma (\Theta_\sigma) \tilde{P}}_{\sigma;0, 0} \le \frac{C}{T^{1/2}}$.
    \end{enumerate}
    In particular, $\norm{\tilde{F}_\sigma(\tilde{L}_{\sigma, T}) - \tilde{P} \tilde{F}_\sigma (\Theta_\sigma) \tilde{P}}_{\sigma;0, 0} \le \frac{C}{T^{1/2}}$.
\end{proposition}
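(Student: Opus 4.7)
The plan is to express $\tilde{F}_\sigma(\tilde{L}_{\sigma,T})$ as a Cauchy-type contour integral
\[ \tilde{F}_\sigma(\tilde{L}_{\sigma,T}) = \frac{1}{2\pi\i}\int_{\tilde{\Gamma}} \tilde{F}_\sigma(\lambda)(\lambda - \tilde{L}_{\sigma,T})^{-1}\,\d\lambda, \]
justified by Proposition \ref{prop:lambda-L_-1_to_1}, and to exploit the block structure of the resolvent with respect to the decomposition $L^2(\Lambda^*E_0) = L^2(K)^\perp \oplus L^2(K)$. Writing $(\lambda - \tilde{L}_{\sigma,T})^{-1} = \begin{pmatrix} U_1 & U_2 \\ U_2' & U_3 \end{pmatrix}$, the standard Schur complement formulas (already recorded in the excerpt for $U_3$) express all four blocks in terms of $(\lambda - \tilde{L}_{\sigma,T,1})^{-1}$, $\tilde{L}_{\sigma,T,2}$, $\tilde{L}_{\sigma,T,2}'$ and $U_3$.

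The key analytic input is the strictly positive spectral gap of $L_1 = \tilde{P}^\perp \tilde{D}_Y^2 \tilde{P}^\perp$ on $L^2(K)^\perp$, a consequence of (\ref{eq:ker_D_Y}) and the construction of $K$. Combined with Theorem \ref{thm:L_1_L_2_L_3}, which gives $\tilde{L}_{\sigma,T,1} = T^2 L_1 + O(T)$, $\tilde{L}_{\sigma,T,2}, \tilde{L}_{\sigma,T,2}' = O(T)$, and $\tilde{L}_{\sigma,T,3} = L_3 + O(T^{-1})$, this yields the uniform estimate $\|(\lambda - \tilde{L}_{\sigma,T,1})^{-1}\|_{\sigma;0,0} = O(T^{-2})$ for $\lambda$ in compact subsets of $\tilde{\Gamma}$. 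The Schur complement $(\lambda - \tilde{L}_{\sigma,T,3}) - \tilde{L}_{\sigma,T,2}'(\lambda-\tilde{L}_{\sigma,T,1})^{-1}\tilde{L}_{\sigma,T,2}$ therefore tends to $\lambda - (L_3 - L_2' L_1^{-1} L_2) = \lambda - \Theta_\sigma|_{L^2(K)}$ via Remark \ref{rmk:ident_Theta_D_M}, so $U_3$ is uniformly bounded, and from the block formulas one directly reads off $\|U_1\|, \|U_2\|, \|U_2'\| = O(T^{-1})$. Inserting these into the contour integral and invoking Proposition \ref{prop:F_and_G}(5) for the polynomial decay of $\tilde{F}_\sigma$ along $\tilde{\Gamma}$ produces (1), (2), (3).

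For (4), I would expand the Schur complement one order further, using
\[ (\lambda - \tilde{L}_{\sigma,T,1})^{-1} = -\tfrac{1}{T^2}L_1^{-1} + O(T^{-3}) \]
to get $\tilde{L}_{\sigma,T,2}'(\lambda-\tilde{L}_{\sigma,T,1})^{-1}\tilde{L}_{\sigma,T,2} = -L_2' L_1^{-1} L_2 + O(T^{-1})$, combined with $\tilde{L}_{\sigma,T,3} = L_3 + O(T^{-1})$. A second resolvent identity then yields the pointwise bound $\|U_3 - (\lambda - \Theta_\sigma|_{L^2(K)})^{-1}\| = O(T^{-1})$ on compact regions of $\tilde{\Gamma}$. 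The main obstacle, and the reason the bound in (4) is only $O(T^{-1/2})$ rather than $O(T^{-1})$, is that integrating this pointwise bound along the unbounded contour $\tilde{\Gamma}$ requires balancing the polynomial growth in $|\lambda|$ of the error against the decay of $\tilde{F}_\sigma$ from Proposition \ref{prop:F_and_G}(5). Splitting $\tilde{\Gamma}$ at $|\lambda| \sim T^{1/2}$, controlling the compact part by the $O(T^{-1})$ expansion above and the tail by the decay of $\tilde{F}_\sigma$ together with the crude bound from Proposition \ref{prop:lambda-L_-1_to_1}(2), balances the two contributions precisely at $T^{-1/2}$. The final ``in particular'' assertion is immediate from (1)--(4) and the triangle inequality.
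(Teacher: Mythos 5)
Your outline follows the same broad strategy as the paper (contour integral, Schur complement on $L^2(K)^\perp\oplus L^2(K)$, polynomial decay of $\tilde{F}_\sigma$ along $\tilde{\Gamma}$), but there are two genuine gaps, both traceable to bypassing the weighted Sobolev-norm machinery of Definition \ref{def:norm_sigma_T} and Lemma \ref{lem:Re_Im_bd}.

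For (1)--(3) you claim $\norm{(\lambda-\tilde{L}_{\sigma,T,1})^{-1}}_{\sigma;0,0}=O(T^{-2})$ directly from $\tilde{L}_{\sigma,T,1}=T^2L_1+O(T)$ and the spectral gap of $L_1$. But the $O(T)$ and $O(1)$ remainders in that expansion are \emph{unbounded} differential operators: they contain base derivatives (the $O(1)$ part of $\tilde{L}_{\sigma,T,1}$ involves $E^2$, second order in $x$) and first-order mixed base/fiber terms. Since $L_1=\tilde{P}^\perp\tilde{D}_Y^2\tilde{P}^\perp$ contains no base derivatives, $L_1^{-1}$ does not smooth in the base direction, so $T^{-2}L_1^{-1}\cdot(\text{error})$ is not a small bounded operator and a Neumann series cannot be run in $\norm{~}_{\sigma;0,0}$. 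The paper does not need any expansion of $\tilde{L}_{\sigma,T,1}$ here: the $T^2$ weight built into $\abs{~}_{\sigma,T;1}$ gives $\abs{\tilde{P}^\perp s}_{\sigma;0}\le T^{-1}\abs{s}_{\sigma,T;1}$, and one application of Proposition \ref{prop:lambda-L_-1_to_1} immediately yields the key estimate (\ref{eq:p_perp_rel}), from which (1)--(3) follow.

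For (4) you attribute the loss to $T^{-1/2}$ to a balance between a compact piece ($O(T^{-1})$, you claim) and the tail. This is not the mechanism. In the paper the tail over $\abs{\lambda}>T^{1/8}$ contributes only $O(T^{-1})$; the bottleneck is the operator-norm estimate of the Schur complement difference itself, which is only $O(T^{-1/2})$ (see (\ref{eq:L_Theta_2})), not $O(T^{-1})$. The source of the $T^{1/2}$ loss is a subtlety your sketch omits entirely: the projection $\tilde{P}$ is not self-adjoint with respect to the weighted inner product $\inner{~,~}_{\sigma;0}$, which forces the introduction of the auxiliary operator $\tilde{L}'_{\sigma,T,1}=\tilde{P}^\perp_\sigma\tilde{L}_{\sigma,T}\tilde{P}^\perp$ and an estimate of $\tilde{P}^{*,\sigma}-\tilde{P}$ via the weight $\psi_\sigma$ in (\ref{eq:L'-L})--(\ref{eq:P*-P}), and this costs the $(1+\abs{\lambda}^2)$ growth that is only controlled after restricting to $\abs{\lambda}\le T^{1/8}$. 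Your splitting at $\abs{\lambda}\sim T^{1/2}$ and the associated balancing heuristic do not reproduce either the exponent in the contour threshold or the exponent in the final bound.
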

\begin{proof}
    By Definition \ref{def:norm_sigma_T} and Proposition \ref{prop:lambda-L_-1_to_1}, we find that for any $\lambda \in \tilde{\Gamma}$ and $s \in L^2(\Lambda^*E_0)$, 
    \begin{align} \label{eq:p_perp_rel}
            \abs{\tilde{P}^\perp (\lambda - \tilde{L}_{\sigma, T})^{-1}s}_{\sigma;0} &\le \frac{C}{T} \abs{(\lambda - \tilde{L}_{\sigma, T})^{-1}s}_{\sigma,T;1} \\
            &\le \frac{C}{T} (1 + \abs{\lambda}^2) \abs{s}_{\sigma,T;-1} \nonumber \\
            &\le \frac{C}{T} (1 + \abs{\lambda}^2) \abs{s}_{\sigma; 0}. \nonumber
    \end{align}
    Using Proposition \ref{prop:F_and_G}(5), (\ref{eq:p_perp_rel}), and the fact that $\tilde{F}_\sigma(\tilde{L}_{\sigma, T}) = \frac{1}{2\pi \i} \int_{\tilde{\Gamma}} \tilde{F}_\sigma(\lambda) (\lambda  - \tilde{L}_{\sigma, T})^{-1}\d \lambda$, we obtain the first three inequalities.

    To derive the last inequality, notice that 
    \begin{equation} \label{eq:L_Theta_1}
        \begin{split}
        &\tilde{P}\tilde{F}_\sigma(\tilde{L}_{\sigma, T})\tilde{P} - \tilde{P}\tilde{F}_\sigma (\Theta_\sigma)\tilde{P} \\
        & \hspace{1cm}= \frac{1}{2\pi \i}\int_{\tilde{\Gamma}} \tilde{F}_\sigma(\lambda) \left[\tilde{P}(\lambda - \tilde{L}_{\sigma, T})^{-1}\tilde{P} - \tilde{P}(\lambda - \Theta_\sigma)^{-1}\tilde{P}\right] d \lambda,
        \end{split}
    \end{equation}
    and 
    \begin{equation} \label{eq:U_3_Theta}
        \begin{split}
            &\tilde{P}(\lambda - \tilde{L}_{\sigma, T})^{-1}\tilde{P} - \tilde{P}(\lambda - \Theta_\sigma)^{-1}\tilde{P} \\
            &= U_3 - \tilde{P}(\lambda - \Theta_\sigma)^{-1}\tilde{P} \\
            &= - U_3 \left[ \tilde{L}_{\sigma, T, 3} + \tilde{L}_{\sigma, T, 2}' (\lambda - \tilde{L}_{\sigma, T, 1})^{-1} \tilde{L}_{\sigma, T, 2} - L_3 + L_2' L_1^{-1} L_2 \right] \tilde{P}(\lambda - \Theta_\sigma)^{-1}\tilde{P}.
        \end{split}
    \end{equation}

    Now we estimate the operators in the last line of (\ref{eq:U_3_Theta}) separately.
    \begin{enumerate}
        \item[(a)] Since $U_3 = \tilde{P}(\lambda - \tilde{L}_{\sigma, T})^{-1}\tilde{P}$, by Proposition \ref{prop:lambda-L_-1_to_1} we deduce that 
    \end{enumerate} \label{eq:U_3}
        \begin{equation} \label{eq:est_1}
            \norm{U_3}_{\sigma,T;0,0} \le C, ~\norm{U_3}_{\sigma,T;-1,1} \le C (1 + \abs{\lambda}^2).
        \end{equation}
    \begin{enumerate}
        \item[(b)] By a similar argument as in the proof of Theorem \ref{prop:lambda-L_-1_to_1}, if $\delta, A, \sigma, T, \lambda$ 
    \end{enumerate}    
        satisfy the conditions therein, then 
        \begin{equation} \label{eq:est_2}
            \begin{split}
                &\norm{\tilde{P}(\lambda - \Theta_\sigma)^{-1}\tilde{P}}_{\sigma,T;0,0} \le C, \\
                &\norm{\tilde{P}(\lambda - \Theta_\sigma)^{-1}\tilde{P}}_{\sigma,T;-1,1} \le C (1 + \abs{\lambda}^2).
            \end{split}
        \end{equation}
    \begin{enumerate}
        \item[(c)] It is easy to see that $\tilde{L}_{\sigma, T, 3} - L_3 = \frac{1}{T}R$, where $R$ is a first order 
    \end{enumerate}   
        differential operator for which the differential only involves $\nabla_{e_i}$. Thus for any $s, s' \in L^2(\Lambda^*E_0)$, we have 
        \begin{align*}
            \left|\inner{(\tilde{L}_{\sigma, T, 3} - L_3)s, s'}_{\sigma; 0}\right| \le \frac{1}{T} \abs{s}_{\sigma, T;1} \abs{s'}_{\sigma, T; 1},
        \end{align*}
        which implies
        \begin{equation} \label{eq:est_3}
            \lVert\tilde{L}_{\sigma, T, 3} - L_3\rVert_{\sigma,T;1,-1} \le \frac{C}{T}
        \end{equation}
    \begin{enumerate}
        \item[(d)] Let $\tilde{P}_\sigma : L^2(\Lambda^*E_0) \to L^2(K)^\perp$ be the projection map with respect  
    \end{enumerate}   
        to the inner product $\inner{~,~}_{\sigma; 0}$. Define $\tilde{L}'_{\sigma, T, 1} = \tilde{P}_\sigma^\perp \tilde{L}_{\sigma, T} \tilde{P}^\perp$. Then clearly we have 
        \begin{equation}
            \tilde{L}'_{\sigma, T, 1} - \tilde{L}_{\sigma, T, 1} = (\tilde{P}_\sigma^\perp - \tilde{P}^\perp) \tilde{L}_{\sigma, T} \tilde{P}^\perp.
        \end{equation}

        For any $s, s' \in L^2(\Lambda^*E_0)$, 
        \begin{equation} \label{eq:L'_1}
            \inner{ \tilde{L}'_{\sigma, T, 1} \tilde{P}^\perp  s,  \tilde{P}^\perp s'}_{\sigma; 0} = \inner{ \tilde{L}_{\sigma, T} \tilde{P}^\perp s,  \tilde{P}^\perp s'}_{\sigma; 0}.
        \end{equation}
        Using (\ref{eq:L'_1}) and arguing as in Proposition \ref{prop:lambda-L_-1_to_1}, one can show that for $\lambda \in \tilde{\Gamma}$, $(\lambda - \tilde{L}'_{\sigma, T, 1})^{-1}$ maps $H^{-1}(\Lambda^* E_0)$ to $H^{1}(\Lambda^* E_0)$, and
        \begin{equation} \label{eq:lam-L'^-1}
            \norm{(\lambda - \tilde{L}'_{\sigma, T, 1})^{-1}}_{\sigma, T; -1, 1} \le C (1 + \abs{\lambda}^2).
        \end{equation}

        On the other hand, if we let $\tilde{P}^{*,\sigma}$ denote the adjoint of $\tilde{P}$ with respect to $\inner{~,~}_{\sigma; 0}$, then by Proposition \ref{lem:Re_Im_bd} we have 
    \begin{align}  \label{eq:L'-L}
            \bigg|\inner{(\tilde{L}'_{\sigma, T, 1} &- \tilde{L}_{\sigma, T, 1})\tilde{P}^\perp s, \tilde{P}^\perp s'}_{\sigma;0}\bigg| \\
            &= \bigg|\inner{(1 - \tilde{P}^\perp)\tilde{L}_{\sigma, T}\tilde{P}^\perp s, \tilde{P}^\perp s'}_{\sigma;0}\bigg| \nonumber\\
            &= \bigg|\inner{\tilde{P}\tilde{L}_{\sigma, T}\tilde{P}^\perp s, \tilde{P}^\perp s'}_{\sigma;0}\bigg| \nonumber\\
            &= \bigg|\inner{\tilde{L}_{\sigma, T}\tilde{P}^\perp s, \tilde{P}^{*,\sigma} \tilde{P}^\perp s'}_{\sigma;0} \bigg|\nonumber\\
            &= \bigg|\inner{\tilde{L}_{\sigma, T}\tilde{P}^\perp s, (\tilde{P}^{*,\sigma} - \tilde{P}) \tilde{P}^\perp s'}_{\sigma;0}\bigg| \nonumber\\
            &\le C\big|\tilde{P}^\perp s\big|_{\sigma, T; 1} \big|(\tilde{P}^{*,\sigma} - \tilde{P})\tilde{P}^\perp s\big|_{\sigma, T; 1}. \nonumber
    \end{align}

    In view of (\ref{eq:psi_sigma}) and Definition \ref{def:norm_sigma_T}, one has $\tilde{P}^{*,\sigma} = \psi_\sigma^{-2(n-q)} \tilde{P} \psi_{\sigma}^{2(n-q)}$. Since 
    \begin{equation*}
     \begin{split}
         \Bigg| \frac{\psi_{\sigma}^{2(n-q)}(X + X')}{\psi_{\sigma}^{2(n-q)}(X)} - 1 \Bigg| &= \Bigg| \Bigg(\frac{1 + \sqrt{1 + \abs{X + X'}^2} \varphi(\frac{\sigma(X+ X')}{2})}{1 + \sqrt{1 + \abs{X}^2} \varphi(\frac{\sigma X}{2})}\Bigg)^{2(n-q)} - 1 \Bigg| \\
         &\le C (1 + \abs{X'})^{2(n-q)},
     \end{split}
    \end{equation*}
    we find that 
    \begin{equation} \label{eq:P*-P}
        \big|(\tilde{P}^{*,\sigma} - \tilde{P})\tilde{P}^\perp s\big|_{\sigma, T; 1} \le C \big|\tilde{P}^\perp s\big|_{\sigma, T; 1}.
    \end{equation}
    Combining (\ref{eq:lam-L'^-1}), (\ref{eq:L'-L}) and (\ref{eq:P*-P}), we see that if $\abs{\lambda} \le T^{\frac{1}{8}}$, 
    \begin{equation} \label{eq:lam-L_1^-1}
            \norm{(\lambda - \tilde{L}_{\sigma, T, 1})^{-1}}_{\sigma, T; -1, 1} \le C (1 + \abs{\lambda}^2) \le C T^{\frac{1}{4}}.
    \end{equation}
    Since $(\lambda - \tilde{L}_{\sigma, T, 1})^{-1}$ maps $L^2(K)^\perp$ to $L^2(K)^\perp$, (\ref{eq:lam-L_1^-1}) implies 
    \begin{equation}
        \norm{(\lambda - \tilde{L}_{\sigma, T, 1})^{-1}}_{\sigma, T; -1, 0} \le \frac{C}{T^{3/4}}.
    \end{equation}

    Now, we notice that 
    \begin{equation}
        \begin{split}
            &\tilde{L}_{\sigma, T, 2}'  (\lambda - \tilde{L}_{\sigma, T, 1})^{-1} \tilde{L}_{\sigma, T, 2} - L_2' (-L_1)^{-1} L_2 \\
            &= TL_2'(\lambda - \tilde{L}_{\sigma, T, 1})^{-1} (\lambda - \tilde{L}_{\sigma, T, 1} + T^2 L_1) (-T^2  L_1)^{-1} TL_2  \\
            &+ T L_2' (\lambda - \tilde{L}_{\sigma, T, 1})^{-1} (\tilde{L}_{\sigma, T, 2} - T L_2) +  (\tilde{L}_{\sigma, T, 2}' - T L_2')(\lambda - \tilde{L}_{\sigma, T, 1})^{-1}  T L_2 \\
            & + (\tilde{L}_{\sigma, T, 2}' - T L_2')(\lambda - \tilde{L}_{\sigma, T, 1})^{-1} (\tilde{L}_{\sigma, T, 2} - T L_2)
        \end{split}
    \end{equation}

    By proceeding as above, we find that 
    \begin{enumerate}
        \item[i)]   $\norm{\tilde{L}_{\sigma, T, 2}}_{\sigma, T; 1, -1} \le C$, $\norm{TL_2}_{\sigma, T; 1, -1} \le C$, $\norm{TL_2}_{\sigma, T; 0, -1} \le C$.
        \item[ii)]  $\norm{\tilde{L}_{\sigma, T, 1}^{-1}}_{\sigma, T; -1, 1} \le C$, $\norm{(T^2L_1)^{-1}}_{\sigma, T; -1, 1} \le C$.
        \item[iii)] $\norm{\tilde{L}_{\sigma, T, 2} - TL_2}_{\sigma, T; 1,0} \le C$, $\norm{\tilde{L}_{\sigma, T, 2} - TL_2}_{\sigma, T; 1,-1} \le \frac{C}{T}$.
        \item[iv)]  For $\lambda \in \tilde{\Gamma}$ with $ \abs{\lambda}< T^{\frac{1}{8}}$, $\norm{\lambda - \tilde{L}_{\sigma, T, 1} + L_1}_{\sigma, T; 1,-1} \le C (1 + \abs{\lambda}) \le C T^{\frac{1}{8}}$.
        \item[v)] $\tilde{L}_{\sigma, T, 2}'$ and $L_2'$ satisfy the exact same estimate as $\tilde{L}_{\sigma, T, 2}$ and $L_2$.
    \end{enumerate}
    
    As a consequence, we have 
    \begin{equation} \label{eq:L_Theta_2}
        \norm{\tilde{L}_{\sigma, T, 2}'  (\lambda - \tilde{L}_{\sigma, T, 1})^{-1} \tilde{L}_{\sigma, T, 2} - L_2' (-L_1)^{-1} L_2}_{\sigma, T; 1, -1} \le \frac{C}{T^{1/2}}.
    \end{equation}
    Therefore by (\ref{eq:L_Theta_1}), (\ref{eq:U_3_Theta}), and (\ref{eq:L_Theta_2}), 
    \begin{equation}
        \Big\|\frac{1}{2\pi \i}\int_{\tilde{\Gamma} \cap \{\abs{\lambda} \le T^{\frac 18}\}} \tilde{F}_\sigma(\lambda) \left[\tilde{P}(\lambda - \tilde{L}_{\sigma, T})^{-1}\tilde{P} - \tilde{P}(\lambda - \Theta_\sigma)^{-1}\tilde{P}\right] d \lambda\Big\|_{\sigma; 0,0} \le \frac{C}{T^{1/2}}.
    \end{equation}

    On the other hand, using Proposition \ref{prop:F_and_G}, (\ref{eq:p_perp_rel}) and (\ref{eq:U_3}), it is not hard to see 
    \begin{equation}
        \begin{split}
            &\Big\|\frac{1}{2\pi \i}\int_{\tilde{\Gamma} \cap \{\abs{\lambda} \ge T^{\frac 18}\}} \tilde{F}_\sigma(\lambda) \left[\tilde{P}(\lambda - \tilde{L}_{\sigma, T})^{-1}\tilde{P} - \tilde{P}(\lambda - \Theta_\sigma)^{-1}\tilde{P}\right] d \lambda\Big\|_{\sigma; 0,0} \\
            &\le \Big\|\frac{1}{2\pi \i}\int_{\tilde{\Gamma} \cap \{\abs{\lambda} \ge T^{\frac 18}\}} \tilde{F}_\sigma(\lambda) \left[\tilde{P}(\lambda - \tilde{L}_{\sigma, T})^{-1}\tilde{P} \right] d \lambda\Big\|_{\sigma; 0,0} \\
            &+ \Big\|\frac{1}{2\pi \i}\int_{\tilde{\Gamma} \cap \{\abs{\lambda} \ge T^{\frac 18}\}} \tilde{F}_\sigma(\lambda) \left[\tilde{P}(\lambda - \Theta_\sigma)^{-1}\tilde{P}\right] d \lambda\Big\|_{\sigma; 0,0} \\
            & \le \frac{C}{T},
        \end{split}
    \end{equation}
    from which the last inequality follows.
\end{proof}

\begin{proof}[Proof of Theorem \ref{thm:last_one}]
    In the proof we will use $\d p$ to denote the volume element $\d vol_{E_0}(p)$.
    Recall that we use $\tilde{K}_{\sigma, T}$ to denote the heat kernel of $\tilde{F}_\sigma(\tilde{L}_{\sigma, T})$. Let $K'_\sigma$ be the heat kernel of $\tilde{P} \tilde{F}_\sigma (\Theta_\sigma) \tilde{P}$. Set $f_{\sigma, T}(p, p') = \tilde{K}_{\sigma, T}(p, p') - K'_\sigma(p, p')$. 
    Let $\xi$ be a smooth nonnegative function on $E_0$ such that 
    \begin{equation} \label{eq:xi}
        \operatorname{supp} \xi \subset B^{E_0}_1(0), \text{ and }\int_{E_0} \xi(p) \d p= 1.
    \end{equation}

    Let $a \in (0,1]$ be a constant. Since $f_{\sigma, T}$ is $\End(\Lambda^*E_0)$-valued, we take $U, U' \in \Lambda^*E_0$. By Proposition \ref{thm:unif_bound}, for any $p, p' \in E_0$, 
    \begin{align}
        \label{eq:f_0-f_p}
            \bigg| &\inner{f_{\sigma, T}(0,0)U, U'} 
        - \iint_{E_0\times E_0} \inner{f_{\sigma, T}(p, p')U, U'}\frac{\xi(p/a)}{a^{2(n+k)}} \frac{\xi(p'/a)}{a^{2(n+k)}} \d p \d p' \bigg| \\
        &= \bigg| \iint_{E_0\times E_0} \inner{(f_{\sigma, T}(p, p') - f_{\sigma, T}(0,0))U, U'}\frac{\xi(p/a)}{a^{2(n+k)}} \frac{\xi(p'/a)}{a^{2(n+k)}} \d p \d p' \bigg|\nonumber \\
        &= \bigg| \iint_{E_0\times E_0} \inner{(f_{\sigma, T}(ap, ap') - f_{\sigma, T}(0,0))U, U'}\xi(p) \xi(p') \d p \d p' \bigg|\nonumber \\
        &\le C a \cdot \iint_{E_0\times E_0} \xi(p) \xi(p') \d p \d p' = Ca.\nonumber
    \end{align}

    On the other hand, using Proposition \ref{prop:last_prop} we deduce that 
    \begin{align}  \label{eq:f_p}
            &\bigg|\iint_{E_0\times E_0} \inner{f_{\sigma, T}(p, p')U, U'}\frac{\xi(p/a)}{a^{2(n+k)}} \frac{\xi(p'/a)}{a^{2(n+k)}} \d p \d p'\bigg| \\
            &\le \bigg[ \iint_{E_0\times E_0} \inner{f_{\sigma, T}(p, p')U, U'}^2 \d p \d p'\bigg]^{\frac 12} \cdot \norm{\xi}_0^2 \le \frac{C}{T^{1/2}}. \nonumber
    \end{align}

    Take $a = \frac{1}{T^{1/2}} \le 1$. By (\ref{eq:f_0-f_p}) and (\ref{eq:f_p}), we find that 
\begin{equation}
    \abs{\inner{f_{\sigma, T}(0,0)U, U'}} \le \frac{C}{T^{1/2}}. 
\end{equation}
The exact same estimate holds for any point $(p,p) \in E_0 \times E_0$ on the diagonal. Using the McKean-Singer formula, we have 
\begin{equation}
    \left| \tr_s \left( N_Y \tilde{F}_\sigma(\tilde{L}_{\sigma, T})\right) - \tr_s \left( N_Y \tilde{P} \tilde{F}_\sigma (\Theta_\sigma) \tilde{P}\right) \right| \le \frac{C}{T^{1/2}}.
\end{equation}

By Remark \ref{rmk:ident_Theta_D_M} and the finite propagation speed property, we find that 
\begin{equation}
    \tr_s \left( N_Y \tilde{P} \tilde{F}_\sigma (\Theta_\sigma) \tilde{P}\right) = \tr_s \left( N_Y G_2 \circ (\sigma D_M)^2 \circ G_2^{-1}\right) = 0,
\end{equation}
where the second equality is due to a parity argument as in the proof of Theorem \ref{thm:interm_result_I_4_1}. The proof is completed.
\end{proof}

\bigskip

\printbibliography

\end{document}